\newdimen\bibspace
\renewenvironment{thebibliography}[1]{%
 \section*{\refname 
       \@mkboth{\MakeUppercase\refname}{\MakeUppercase\refname}}%
     \list{\@biblabel{\@arabic\c@enumiv}}%
          {\settowidth\labelwidth{\@biblabel{#1}}%
           \leftmargin\labelwidth
           \advance\leftmargin\labelsep
           \itemsep\bibspace
           \parsep\z@skip     %
           \@openbib@code
           \usecounter{enumiv}%
           \let\p@enumiv\@empty
           \renewcommand\theenumiv{\@arabic\c@enumiv}}%
     \sloppy\clubpenalty4000\widowpenalty4000%
     \sfcode`\.\@m}
    {\def\@noitemerr
      {\@latex@warning{Empty `thebibliography' environment}}%
     \endlist}
\newtheorem{thm}{Theorem}[section]
\newtheorem{lem}[thm]{Lemma}
\newtheorem{prop}[thm]{Proposition}
\newtheorem{cor}[thm]{Corollary}
\def\XXint#1#2#3{{\setbox0=\hbox{$#1{#2#3}{\int}$}
  \vcenter{\hbox{$#2#3$}}\kern-.5\wd0}}
\newcommand{\al}{\alpha}                \newcommand{\lda}{\lambda}
\newcommand{\om}{\Omega}                \newcommand{\pa}{\partial}
\newcommand{\va}{\varepsilon}           \newcommand{\ud}{\mathrm{d}}
\newcommand{\be}{\begin{equation}}      \newcommand{\ee}{\end{equation}}
\newcommand{\R}{\mathbb{R}}              \newcommand{\Sn}{\mathbb{S}^n}
\begin{document}

\title{\textbf{Bubbling and extinction for some fast diffusion equations in bounded domains}
\bigskip}

\author{\medskip  Tianling Jin\footnote{T. Jin is partially supported by Hong Kong RGC grants GRF 16302217, GRF 16306320 and NSFC 12122120.}, \  \
Jingang Xiong\footnote{J. Xiong was is partially supported by the National Key R\&D Program of China No. 2020YFA0712900, and NSFC grants 11922104.}}

\date{\today}

\maketitle

\begin{abstract}
We study a Sobolev critical fast diffusion equation in bounded domains with the Brezis-Nirenberg effect. We obtain extinction profiles of its positive solutions, and show that the convergence rates of the relative error in regular norms are at least polynomial. Exponential decay rates are proved for generic domains. Our proof makes use of its regularity estimates, a curvature type evolution equation, as well as blow up analysis. Results for Sobolev subcritical fast diffusion equations are also obtained.

\medskip 

\noindent {\small \textbf{Keywords:} Sobolev critical fast diffusion equations, Extinction rates, Blow up analysis } 

\noindent {\small \textbf{2010 Mathematics Subject Classification:} Primary 35K57; Secondary 35B40, 53C21} 
\end{abstract}


\section{Introduction}

Let $\om$ be a bounded domain in $\R^n$, $n\ge 3$, with smooth boundary $\pa \om$. We consider the Cauchy-Dirichlet  problem for the  fast diffusion equation with the  Sobolev critical exponent
\be \label{eq:BN-1}
\begin{split}
\frac{\pa }{\pa t}u^{\frac{n+2}{n-2}} &= \Delta u+ b u\quad  \mbox{in }\om\times (0,\infty),\\
u&=0 \quad\quad\quad\quad \mbox{on }\pa\om \times (0,\infty),\\
u(\cdot,0)&=u_0\ge 0\quad\ \ \   \mbox{in }\om,
\end{split}
\ee
where $\Delta=\sum_{i=1}^n \frac{\pa^2 }{\pa x_i^2}$ is the Laplace operator, $u_0$ is not identically zero, and
\be\label{eq:b}
b\in[0,\lda_1)\mbox{ is a constant}
\ee
with $\lda_1$ being the first eigenvalue of $-\Delta$ in $\Omega$ with zero Dirichlet boundary condition. Hence, the operator $-\Delta -b$ is coercive on the Sobolev space $H_0^1(\om)$. The fast diffusion equations arise in the modelling of gas-kinetics, plasmas, thin liquid film dynamics driven by Van der Waals forces, and etc. If $b=0$, this Sobolev critical equation \eqref{eq:BN-1} can be viewed a unnormalized Yamabe flow with metrics degenerate on the boundary. 

The theory of existence and uniqueness of solutions to \eqref{eq:BN-1} is well understood, see V\'azquez \cite{Vaz,Vaz1}. If $u_0\in L^q(\om)$ for some $q>\frac{2n}{n-2}$, then the solution will become instantaneously positive in $\Omega$ and globally bounded. Moreover, the solution  will vanish in a finite time $T^*>0$. If we assume that $u_0\in H^1_0(\om)\cap L^q(\om)$ for some $q>\frac{2n}{n-2}$, then it follows from the work of Chen-DiBenedetto \cite{CDi}, DiBenedetto-Kwong-Vespri  \cite{DKV} and Jin-Xiong \cite{JX19} that the solution is of $C^{3,2}_{x,t}(\overline\om\times(0,T^*))$. In particular, the solutions are classical. Therefore, when we investigate the asymptotic behavior of nonnegative  solutions  to \eqref{eq:BN-1} as $t$ approaching to the extinction time $T^*$, there is no loss of generality to  consider classical (up to the boundary) solutions  to \eqref{eq:BN-1}.

When $\frac{n+2}{n-2}$ is replaced by $p\in (1,\frac{n+2}{n-2})$ if $n\ge 3$, or $p\in(1,\infty)$ if $n=1,2$, which is a Sobolev subcritical exponent, the extinction behavior of solutions to the fast diffusion equation
\be \label{eq:subcritical}
\begin{split}
\frac{\pa }{\pa t}u^{p} &= \Delta u\quad  \mbox{in }\om\times (0,\infty),\\
u&=0 \quad\quad \mbox{on }\pa\om \times (0,\infty)
\end{split}
\ee
has been well-studied.  By the scaling  
\be\label{eq:subchanging_variables}
v(x,t)=\Big (\frac{p}{(p-1)(T^*-\tau)}\Big)^{\frac{1}{p-1}} u(x,\tau), \quad t =\frac{p}{p-1}   \ln \Big(\frac{T^*}{T^*-\tau}\Big),
\ee
where $T^*$ is the extinction time, the equation \eqref{eq:subcritical} becomes
 \be \label{eq:subcritical2}
 \begin{split}
\frac{\pa }{\pa t}v^{p} &= \Delta v+v^{p} \quad \mbox{in }\om\times (0,\infty),\\
 v&=0 \quad \mbox{on }\pa\om \times (0,\infty). 
\end{split}
\ee  
Berryman-Holland \cite{BH} proved that the solution of \eqref{eq:subcritical2} converges to a stationary solution $v_\infty$ in $H^1_0(\Omega)$ along a sequence of times. Feireisl-Simondon \cite{FS} proved the full convergence in the $C^0(\overline\Omega)$ topology. Bonforte-Grillo-V\'azquez \cite{BGV}  proved that the relative error $v(\cdot,t)/v_\infty$ converges to $1$ in $L^\infty(\Omega)$.
Recently, Bonforte-Figalli \cite{BFig} proved the sharp exponential convergence of the relative error for generic domains $\Omega$, which means that the domains $\Omega$ satisfy
\be\label{condition2}
\begin{split}
&\mbox{For every nonnegative } H^1_0\ \mbox{ solution } v \mbox{ of }  -\Delta v- v^{p}=0 \mbox{ in } \Omega, \mbox{the linearized} \\[-1mm]
&\mbox{operator at }v, 
\mbox{that is } L_v:=-\Delta - p v^{p-1}, \mbox{ has a trivial kernel in } H^1_0(\Omega).  
\end{split}
\ee   
See Akagi \cite{Akagi} for another proof. The set of  smooth domains satisfying \eqref{condition2}  has generic properties, see Saut-Temam \cite{ST}. 

The main advantage of the subcritical regime is the upper bound of solutions $u$ to \eqref{eq:subcritical} proved in DiBenedetto-Kwong-Vespri  \cite{DKV}
\be \label{eq:idealbound}
u(x,t) \le C d(x)(T^*-t)^{\frac{1}{p-1}} \quad \mbox{for }t<T^*,
\ee
where $d(x)=dist(x,\pa \om)$. The estimate \eqref{eq:idealbound} implies that the function $v$ defined by \eqref{eq:subchanging_variables}, which satisfies \eqref{eq:subcritical2},  is uniformly bounded as $t\to\infty$, and consequently, has uniform regularity estimates up to the boundary $\partial\Omega$ by the work of \cite{CDi,DKV,JX19}. 

However, this uniform bound in general does not hold for \eqref{eq:subcritical2} if $p= \frac{n+2}{n-2}$. For instance, it is the case if $\om$ is star-shaped, since there is no stationary solution of \eqref{eq:subcritical2} due to the Pohozaev identity. In this paper, we will show that the uniform boundedness still holds for the equation \eqref{eq:BN-1} assuming $b>0$ and $n\ge 4$. The role of the positivity of $b$ when $n\ge 4$ was first discovered  in the seminal paper Brezis-Nirenberg \cite{BN}, and is similar to the role that the non-vanishing Weyl tensor and the positive mass theorem play in the resolution of the Yamabe problem on  compact manifolds by Aubin \cite{Aubin} and Schoen \cite{Schoen}.  

Under the scaling
 \be \label{eq:changing_variables}
 v(x,t)=\left(\frac{n+2}{4(T^*-\tau)}\right)^{\frac{n-2}{4}} u(x,\tau ), \quad t=\frac{n+2}{4} \ln \left(\frac{T^*}{T^*-\tau}\right),  
 \ee 
the equation \eqref{eq:BN-1} becomes
 \be \label{eq:BN-3}
 \begin{split}
\frac{\pa }{\pa t}v^{\frac{n+2}{n-2}} &= \Delta v+ b v +v^{\frac{n+2}{n-2}} \quad \mbox{in }\om\times (0,\infty),\\
 v&=0 \quad \mbox{on }\pa\om \times (0,\infty). 
\end{split}
\ee
We will show that every solution of \eqref{eq:BN-3}  converges to a stationary solution, that is a solution of
\be\label{eq:BNstationary}
\Delta v+ b v +v^{\frac{n+2}{n-2}}=0 \quad \mbox{in }\om,\quad v=0\quad \mbox{on }\partial\om,
\ee 
with at least polynomial rates. Moreover, the convergence rate will be  exponential if the domain $\Omega$ satisfies the following condition:
\be\label{condition}
\begin{split}
&\mbox{For every nonnegative } H^1_0\ \mbox{ solution } v \mbox{ of }  -\Delta v- b v -v^{\frac{n+2}{n-2}}=0 \mbox{ in } \Omega, \mbox{the linearized } \\[-1mm]
&\mbox{operator at }v, \mbox{that is } L_v:=-\Delta - b  -\frac{n+2}{n-2}v^{\frac{4}{n-2}}, \mbox{ has a trivial kernel in } H^1_0(\Omega).  
\end{split}
\ee   
The set of  smooth domains satisfying \eqref{condition} also has generic properties, see Saut-Temam \cite{ST}.

\begin{thm}\label{thm:critical}
Let $n\ge 4$, and $b>0$ satisfy \eqref{eq:b}. Let $u$ be a classical nonnegative solution of \eqref{eq:BN-1} with extinction time $T^*>0$. Let $v$ be defined by \eqref{eq:changing_variables}. Then there is a nonzero stationary solution $v_\infty$ of \eqref{eq:BN-3}, and two positive constants $\theta$ and $C$ such that 
\[
\left\|\frac{v(\cdot,t)}{v_\infty}-1\right\|_{C^2(\overline\om)}\le Ct^{-\theta}\quad\mbox{for all }t\ge 1.
\]
If $\Omega$ satisfies \eqref{condition}, then there exist  two positive constants $\gamma$ and $C$ such that 
\[
\left\|\frac{v(\cdot,t)}{v_\infty}-1\right\|_{C^2(\overline\om)}\le Ce^{-\gamma t}\quad\mbox{for all }t\ge 1.
\]
All the constants $\theta,\gamma$ and $C$ depend only on $n,b,\Omega$ and $u_0$.
\end{thm}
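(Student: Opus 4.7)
The plan is to follow a three-step strategy: first prove that the trajectory $\{v(\cdot,t)\}_{t\ge 1}$ is uniformly bounded and compact in $C^{2}(\overline\om)$; next extract a subsequential limit $v_\infty$ solving \eqref{eq:BNstationary} and show $v_\infty\not\equiv 0$; finally upgrade subsequential convergence to full convergence with the stated rates via a \L ojasiewicz--Simon inequality.

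Set $m=(n+2)/(n-2)$. Testing \eqref{eq:BN-3} against $\pa_t v$ produces the Lyapunov identity
\[
\frac{d}{dt}E(v(t))=-m\int_\om v^{m-1}(\pa_t v)^{2}\,dx,\qquad E(v)=\tfrac{1}{2}\!\int_\om|\nabla v|^{2}-\tfrac{b}{2}\!\int_\om v^{2}-\tfrac{n-2}{2n}\!\int_\om v^{2n/(n-2)}.
\]
Since $-\Delta-b$ is coercive on $H^{1}_0(\om)$, $E$ is bounded from below along the flow and $\int_{1}^{\infty}\!\!\int_\om v^{m-1}(\pa_t v)^{2}\,dx\,dt<\infty$. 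The core step is the uniform $L^{\infty}$ bound. If $\|v(\cdot,t_k)\|_{L^\infty}\to\infty$ along a sequence, then a parabolic blow-up analysis --- combining the local interior and boundary $C^{3,2}_{x,t}$ estimates of \cite{CDi,DKV,JX19} with a Struwe-type concentration compactness argument for the elliptic energy $E$ --- yields a decomposition of $v(\cdot,t_k)$ into a residual plus finitely many rescaled Aubin--Talenti bubbles concentrating at interior points of $\om$ (boundary blow-up being excluded by the zero Dirichlet condition and the boundary regularity). The energy expansion around a single bubble produces the Brezis--Nirenberg correction driven by $bv$: of order $\va^{2}$ when $n\ge 5$ and $\va^{2}\log\va^{-1}$ when $n=4$, with the sign from \cite{BN}. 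This correction is incompatible with the monotone decay of $E(v(t))$ and forces the bubble number to be zero. Ruling out bubbling gives a uniform $L^{\infty}$ bound, and the estimates of \cite{CDi,DKV,JX19} upgrade this to uniform $C^{3,2}_{x,t}(\overline\om\times[1,\infty))$ control.

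Given the uniform regularity, any sequence $t_k\to\infty$ admits a subsequence with $v(\cdot,t_k)\to v_\infty$ in $C^{2}(\overline\om)$, and time-integrability of the dissipation combined with a mean-value argument selects a nearby sequence $\tilde t_k$ along which $\pa_t v\to 0$ in weighted $L^{2}$; passing to the limit in \eqref{eq:BN-3} gives \eqref{eq:BNstationary}. To see $v_\infty\not\equiv 0$, observe that $v_\infty\equiv 0$ together with \eqref{eq:changing_variables} would force $u$ to vanish strictly before $T^{*}$, contradicting the definition of the extinction time; equivalently, the monotone $E(v(t))$ cannot drop to zero because coercivity of $-\Delta-b$ keeps the energy of the initial datum after rescaling strictly positive.

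For the rates I would invoke the \L ojasiewicz--Simon inequality for the analytic functional $E$ around the critical point $v_\infty$: there exist $\theta\in(0,\tfrac{1}{2}]$ and $C,\delta>0$ such that
\[
|E(v)-E(v_\infty)|^{1-\theta}\le C\,\|\Delta v+bv+v^{m}\|_{H^{-1}(\om)}\quad\text{whenever }\|v-v_\infty\|_{H^{1}_0}\le\delta,
\]
with the optimal exponent $\theta=\tfrac{1}{2}$ precisely when $L_{v_\infty}=-\Delta-b-m v_\infty^{m-1}$ has trivial kernel in $H^{1}_0(\om)$, i.e.\ under \eqref{condition} (cf.\ \cite{ST}). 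Since $v_\infty\sim d(x)$ near $\pa\om$ by the Hopf lemma, the weight $v^{m-1}$ in the dissipation is comparable to $v_\infty^{m-1}$ on the relevant time interval; combined with the uniform $L^\infty$ bound this allows the dissipation and the $H^{-1}$ norm of $\Delta v+bv+v^{m}=-m v^{m-1}\pa_t v$ to be compared through Cauchy--Schwarz, producing the scalar ODE
\[
-\frac{d}{dt}\bigl(E(v(t))-E(v_\infty)\bigr)\ge c\,\bigl(E(v(t))-E(v_\infty)\bigr)^{2(1-\theta)}.
\]
This yields polynomial decay $E(v(t))-E(v_\infty)\lesssim t^{-1/(1-2\theta)}$ for $\theta<\tfrac{1}{2}$ and exponential decay when $\theta=\tfrac{1}{2}$; integrating $\pa_t v$ over $(t,\infty)$ transfers the same rate to $\|v(\cdot,t)-v_\infty\|_{H^{1}_0}$, and interpolation with the uniform $C^{3,2}_{x,t}$ bound gives the claimed estimate in $C^{2}(\overline\om)$. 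The principal obstacle is expected to be the no-bubble step: carrying out the Brezis--Nirenberg energy defect calculation along a parabolic degenerate flow, and controlling concentration near $\pa\om$ through the weight $v^{m-1}$, constitutes the technical core of the theorem.
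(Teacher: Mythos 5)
The decisive step in your plan --- ruling out bubbling so as to obtain a uniform $L^\infty$ bound \emph{before} any Lojasiewicz argument --- is exactly the step your sketch does not prove, and the mechanism you propose for it does not work. Monotonicity of the energy only gives $F(v(t))\downarrow F_\infty$; if bubbles formed along some $t_k\to\infty$, one would simply have $F_\infty=F(v_\infty)+\frac{2m}{n}Y(\Sn)^{n/2}$, which is perfectly consistent with monotone decay, since there is no a priori bound placing $F_\infty$ below the bubbling threshold (the initial energy is arbitrary; this is not the variational Brezis--Nirenberg setting of minimizing sequences below the critical level). That a soft ``BN correction versus monotonicity'' argument cannot suffice is shown by the case $b=0$: the energy is still monotone there, yet bubbling does occur \cite{SWZ}. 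The positivity of $b$ for $n\ge4$ must enter quantitatively, and in the paper it does so not to exclude bubbles at the compactness stage but to make the Lojasiewicz-type inequality survive their possible presence: in Proposition \ref{prop:bubble-energy} and Corollaries \ref{cor:pure-singular}, \ref{cor:regular+singular} the negative term $-b\int_\om\xi_{x_{k,\nu},\lda_{k,\nu}}^2$ (of order $\lda_{k,\nu}^{-2}$, resp.\ $\lda_{k,\nu}^{-2}\ln\lda_{k,\nu}$ for $n=4$) absorbs the $O(\lda_{k,\nu}^{2-n})$ interaction and cut-off errors, yielding $F(v(t))-F_\infty\le C\,M_{\frac{2n}{n+2}}(t)^{\frac{n+2}{2n}(1+\gamma)}$ (Proposition \ref{prob:subsequenceestimate}, Corollary \ref{cor:fullestimate}) with bubbles still allowed. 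Only from this does one get $\int_T^\infty M_2(t)^{1/2}\,\ud t\le CT^{-\theta}$ (Proposition \ref{prop:converge-1}), hence that $t\mapsto v(\cdot,t)$ is Cauchy in $L^{2n/(n-2)}(\om)$ with a rate, and only then the uniform $L^\infty$ and higher-order bounds via Brezis--Kato/Moser, \cite{DKV} and \cite{JX19} (Proposition \ref{prop:boundedness}, Theorem \ref{thm:smoothbound}); the absence of bubbles is a \emph{consequence} of the decay estimate, not an input to it. As written, your ordering begs the question at the crucial point, and the ``technical core'' you defer is precisely what is missing.

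A secondary issue: $F$ is not analytic on $H^1_0(\om)$ for the critical (generically non-integer) exponent, since $s\mapsto s^{\frac{n+2}{n-2}}$ is not analytic at $s=0$, which is relevant near $\pa\om$ where $v_\infty$ vanishes; an off-the-shelf Lojasiewicz--Simon inequality cannot simply be invoked. The paper instead runs Brendle's finite-dimensional reduction (Lemmas \ref{lem:ift} and \ref{lem:lojasiewiczgradient}, using the analyticity lemma of \cite{FS} in $W^{2,p}$, $p>n$, in a neighborhood of the positive function $v_\infty$). Also, ``$\theta=\tfrac12$ precisely when $L_{v_\infty}$ has trivial kernel'' should be an implication rather than an equivalence, and the paper's exponential rate is obtained directly from $F(v(t))-F_\infty\le CM_2(t)$ using the invertibility furnished by \eqref{condition}; that part of your route is essentially equivalent once convergence and uniform regularity are available. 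These latter points are repairable; the bubbling step is the genuine gap.
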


When $n=3$, it was shown in Brezis-Nirenberg \cite{BN} that the situation for the stationary equation \eqref{eq:BNstationary} changes drastically from dimensions $n\ge 4$. The positivity of $b$ is not sufficient to give a minimal energy solution of \eqref{eq:BNstationary}. Druet \cite{Druet} showed that the necessary and sufficient condition is the positivity of the regular part of the Green's function of $-\Delta-b$ at a diagonal point. There should be similar changes for the parabolic equation \eqref{eq:BN-3} as well. 

When $b=0$,  Sire-Wei-Zheng \cite{SWZ}  recently proved the existence of some initial data such that the solution of \eqref{eq:BN-3} blows up at finitely many points with an explicit blow up rate as  $t\to\infty$, using the gluing method for parabolic equations in the spirit of Cort\'azar-del Pino-Musso \cite{CdM} and D\'avila-del Pino-Wei \cite{DdW}.  This generalizes and provides rigorous proof of a result of Galaktionov-King \cite{GKing} for the radially symmetric case, where the solution blows up at one point. A class of type II ancient solutions to the Yamabe flow, which are rotationally
symmetric and converge to a tower of spheres as $t\to-\infty$, was constructed by  Daskalopoulos-del Pino-Sesum \cite{DdS}.  Bubble tower solutions for the energy critical heat equation were constructed in  del Pino-Musso-Wei \cite{dMW}.  It is conjectured in Sire-Wei-Zheng \cite{SWZ} that bubble tower solutions to \eqref{eq:BN-3} with $b=0$ also exist. Nevertheless, if it is the global case ($\Omega$ replaced by $\R^n$), then it has been proved by del Pino-S\'aez \cite{delSaez} that the solution of \eqref{eq:BN-3} for $b=0$ with fast decay initial data will converge to a nontrivial stationary solution, which is in fact a standard bubble.

To prove Theorem \ref{thm:critical},  we will adapt the blow up analysis of Struwe \cite{St}, Bahri-Coron \cite{Bahri-C}, Schwetlick-Struwe \cite{SS} and Brendle \cite{Br05}.  See also Chen-Xu \cite{CX} and Mayer \cite{Mayer} for similar analysis of scalar curvature flows. Here, we define a curvature type quantity $\mathcal{R}$, and derive its equation along the parabolic equation \eqref{eq:BN-3}. Due to the lack of information of  $\mathcal{R}$ on the boundary $\partial\Omega$, extra work is needed to obtain estimates for $\mathcal{R}$. Here the optimal boundary regularity proved in our previous paper \cite{JX19} is crucial.   Part  of the blow up  analysis in this paper remains valid  when $b=0$ or $n=3$.  The condition  $n\ge 4$ and $b>0$ is used in the final step (i.e., Corollaries \ref{cor:pure-singular} and \ref{cor:regular+singular}) to rule out bubbles, which is in the same spirit of Brezis-Nirenberg \cite{BN} in obtaining compactness of minimizing sequences.

Our proof of the polynomial decay rates in Theorem \ref{thm:critical} can be applied to prove the polynomial rate of the convergence of the relative error for the Sobolev subcritical fast diffusion equation \eqref{eq:subcritical2} in all smooth domains. We also provide an alternative proof the exponential convergence result of Bonforte-Figalli \cite{BFig} for $\Omega$ satisfying \eqref{condition2}.
\begin{thm}\label{thm:subcritical}
Suppose $p\in (1,\frac{n+2}{n-2})$ if $n\ge 3$, and $p\in(1,\infty)$ if $n=1,2$. Let $u$ be a classical nonnegative solution of \eqref{eq:subcritical} with extinction time $T^*>0$. Let $v$ be defined by \eqref{eq:subchanging_variables}. Then there is a stationary solution $v_\infty$ of \eqref{eq:subcritical2}, and two positive constants $\theta$ and $C$ such that 
\[
\left\|\frac{v(\cdot,t)}{v_\infty}-1\right\|_{C^2(\overline\om)}\le Ct^{-\theta}\quad\mbox{for all }t\ge 1.
\]
If $\Omega$ satisfies \eqref{condition2}, then there exist  two positive constants $\gamma$ and $C$ such that 
\[
\left\|\frac{v(\cdot,t)}{v_\infty}-1\right\|_{C^2(\overline\om)}\le Ce^{-\gamma t}\quad\mbox{for all }t\ge 1.
\]
All the constants $\theta,\gamma$ and $C$ depend only on $n,p,\Omega$ and $u_0$.
\end{thm}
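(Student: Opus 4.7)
The plan is to adapt the curvature-flow style strategy the authors use for Theorem~\ref{thm:critical} to the subcritical setting, where matters are considerably simpler thanks to the upper bound \eqref{eq:idealbound} of DiBenedetto-Kwong-Vespri \cite{DKV}. That bound, combined with the rescaling \eqref{eq:subchanging_variables}, yields uniform-in-time control $\|v(\cdot,t)\|_{L^\infty(\Omega)}\le C$ for $t\ge 1$, and together with the interior/boundary regularity of \cite{CDi,DKV,JX19} it upgrades to uniform $C^{3,\alpha}$ estimates on $\overline\Omega$. Applying the Hopf lemma to any nontrivial stationary solution of \eqref{eq:subcritical2}, together with the sharp boundary regularity of \cite{JX19}, gives a uniform lower bound $v/d \ge c > 0$ for large $t$. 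By \cite{BH,FS,BGV} the trajectory $v(\cdot,t)$ converges in $C^0(\overline\Omega)$ to some stationary $v_\infty$, and the uniform higher-order estimates promote this to $C^2$ convergence via Arzel\`a-Ascoli. So no blow-up analysis is needed, and the entire task is to extract a rate.

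I then introduce the curvature-type quantity
\[
\mathcal{R} := \frac{\Delta v + v^{p}}{v^{p}},
\]
so that $\partial_t(v^{p}) = v^{p}\mathcal{R}$, $\mathcal{R}\equiv 0$ precisely on stationary solutions of \eqref{eq:subcritical2}, and the standard energy $E(v) = \tfrac12\int|\nabla v|^2 - \tfrac{1}{p+1}\int v^{p+1}$ dissipates as
\[
-\frac{d}{dt}E(v) \;=\; \frac{1}{p}\int_\Omega v^{p+1}\mathcal{R}^2\,dx.
\]
A direct computation gives a linear parabolic equation for $\mathcal{R}$ with degenerate weight $pv^{p-1}$ and bounded coefficients determined by $v$, from which uniform estimates for $\mathcal{R}$ up to $\partial\Omega$ follow by combining the sharp boundary regularity of \cite{JX19} with $v/d \ge c>0$. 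To obtain polynomial rates I would establish a Lojasiewicz-Simon gradient inequality for $E$ near $v_\infty$: since the nonlinearity is smooth enough on the positive cone and the linearization $L_{v_\infty} = -\Delta - pv_\infty^{p-1}$ is self-adjoint Fredholm on $H^1_0(\Omega)$, abstract arguments produce an exponent $\beta \ge 1$ and a neighborhood of $v_\infty$ on which
\[
|E(v) - E(v_\infty)|^{\beta} \;\le\; C\int_\Omega v^{p+1}\mathcal{R}^2\,dx \;=\; -Cp\,\frac{d}{dt}\bigl(E(v) - E(v_\infty)\bigr).
\]
Integrating yields polynomial decay $E(v(t)) - E(v_\infty) = O(t^{-\theta})$, hence polynomial decay in mean of $\|\mathcal{R}\|_{L^2}$; interpolating with the uniform $C^{3,\alpha}$ bound on $v-v_\infty$ and using $v_\infty/d\ge c$ converts this into the stated polynomial $C^2$ rate for the relative error $v/v_\infty - 1$.

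When $\Omega$ satisfies \eqref{condition2} the kernel of $L_{v_\infty}$ is trivial, so the Lojasiewicz exponent is optimal ($\beta = 1$) and the differential inequality becomes linear in $E(v)-E(v_\infty)$, yielding exponential decay of the energy and, by the same interpolation, of $\|v/v_\infty-1\|_{C^2(\overline\Omega)}$; equivalently, one may linearize \eqref{eq:subcritical2} directly at $v_\infty$ and exploit the spectral gap of $L_{v_\infty}$ to produce an exponential $L^2$ estimate that bootstraps to the full $C^2$ result via the uniform higher-order regularity. The main technical obstacle, as in the critical case, is controlling $\mathcal{R}$ up to $\partial\Omega$, where the weight $v^{p}$ degenerates and where $\mathcal{R}$ has no prescribed boundary values; this is precisely where the optimal boundary regularity from \cite{JX19} and the Hopf-type lower bound $v/d \ge c$ are used to close the estimates and to justify the Lojasiewicz-Simon inequality in the weighted $L^2(v^{p+1}\,dx)$ framework.
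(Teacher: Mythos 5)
Your proposal is correct and follows essentially the same route as the paper: uniform bounds and boundary comparability $v\asymp d$ from \cite{DKV}, full convergence and regularity from \cite{FS,JX19}, the energy dissipation identity together with a Lojasiewicz--Simon gradient inequality (which the paper simply quotes from Proposition 6.1 of \cite{FS} rather than rederiving abstractly) to get the polynomial rate, the nondegeneracy under \eqref{condition2} to upgrade the Lojasiewicz exponent and obtain a linear differential inequality hence exponential decay, and interpolation with the uniform higher-order estimates plus $v_\infty\gtrsim d$ to pass to the $C^2$ bound on the relative error. The extra machinery you import from the critical case (the evolution equation for $\mathcal{R}$ and uniform boundary control of $\mathcal{R}$) is not needed here, and the only implicit step you gloss over is converting the time-integrated decay of $M_2^{1/2}$ into decay of $\|v(\cdot,t)-v_\infty\|_{L^{p+1}}$, which the paper does via the elementary pointwise inequality and Minkowski's integral inequality as in \eqref{eq:stable-5}.
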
 

All the rates $\theta$ and $\gamma$ in both Theorem \ref{thm:critical} and Theorem \ref{thm:subcritical} are not explicit. As mentioned earlier, the sharp exponential convergence rate of the relative error for \eqref{eq:subcritical2}  under the condition \eqref{condition2} was  obtained by Bonforte-Figalli \cite{BFig} (see also Akagi \cite{Akagi} for a different method).  We do not pursuit the sharpness of $\gamma$ in this paper.

 We know from the work of Carlotto-Chodosh-Rubinstein \cite{CCR}  that  there exists a Yamabe flow on $\mathbb{S}^1(1/\sqrt{n-2})\times\mathbb{S}^{n-1}(1)$ such that it converges exactly at a polynomial rate. Recently, Choi-McCann-Seis \cite{CMS} proved that  for the solutions of the fast diffusion equation \eqref{eq:subcritical2}, the relative error either decays exponentially with the sharp rate or else decays algebraically at a rate $1/t$ or slower. 

This paper is organized as follows. Sections \ref{sec:integralbound}--\ref{sec:convergence} deal with the critical equation \eqref{eq:BN-3}. We first obtain certain integral bounds for solutions of this critical equation in Section \ref{sec:integralbound}. Sections \ref{sec:concentration} is for the possible concentration phenomenon for its solutions. In Section \ref{sec:blowup}, we use blow up analysis to rule out such possible concentration phenomenon. Section \ref{sec:convergence} is devoted to the proof of the uniform boundedness and convergence results in Theorem \ref{thm:critical}. In Section \ref{sec:subcritical}, we consider  the subcritical equation \eqref{eq:subcritical2} and prove Theorem \ref{thm:subcritical}.

\bigskip

\noindent \textbf{Acknowledgement:} Part of this work was completed while the second named author was visiting the Hong Kong University of Science and Technology and Rutgers University, to which he is grateful for providing  very stimulating research environments and supports. Both authors would like to thank Professors Ha\"im Brezis and YanYan Li for their interests and useful comments. 
 
\section{Integral bounds}\label{sec:integralbound}  
 
 For an open set $\om$, let $H_0^1(\om)$ be the closure of $C_c^\infty(\om)$ under the norm 
\[
\|u\|_{H_0^1(\om)}:=\left(\int_\om |\nabla u|^2 \,\ud x\right)^{1/2}.
\] 
For convenience, we define 
\be \label{eq:norm}
\|u\|:=\left(\int_\om (|\nabla u|^2-bu^2) \,\ud x\right)^{1/2}
\ee
and 
\be \label{eq:inner-product}
\langle u,v\rangle= \int_{\om} (\nabla u\nabla v-buv)\,\ud x
\ee
be the associated inner product.  Since 
\[
\left(1-\frac{b}{\lambda_1}\right)\int_\om |\nabla u|^2 \,\ud x\le \int_\om (|\nabla u|^2-bu^2) \,\ud x
\]
and we assumed $b<\lda_1$, by the Sobolev inequality, there exists a constant $K_b>0$ such that 
\be \label{eq:sobolev-b}
\|u\|_{L^{\frac{2n}{n-2}}(\Omega)} \le K_b^{1/2} \|u\| \quad \mbox{for any }u\in H_0^1(\om).
\ee 

Recall that from \cite{JX19}, we know that the solution $u(x,t)$ of \eqref{eq:BN-1} is smooth in $t\in (0,T^*)$ for every $x\in\overline\Omega$, and $ \pa_t^l u(\cdot,t) \in C^{\frac{3n-2}{n-2}}(\overline \om)$ for all $l\ge 0$ and all $t\in(0,T^*)$.  
 
\begin{lem}\label{lem:basic} Let $u$ be a solution of \eqref{eq:BN-1}, and $T^*$ be the extinction time of $u$. Then for every $0<t<T^*$,
 \[
\frac{1}{C}(T^*-t)^{\frac{n}{2}} \le  \int_\om u(x,t)^{\frac{2n}{n-2}}\,\ud x \le C(T^*-t)^{\frac{n}{2}},
 \]
 where $C$ is a positive constant depending only on $n,b,\om$ and $u_0$. 
 
 \end{lem}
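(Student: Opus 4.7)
Write $p = \frac{n+2}{n-2}$ and $q = p+1 = \frac{2n}{n-2}$, and set $y(t) := \int_\om u(x,t)^{q}\,\ud x$. My plan is to reduce both inequalities to a differential (in-)equality for $y$. Using $\pa_t u^{p+1} = \frac{p+1}{p}\, u\, \pa_t u^{p}$ together with equation \eqref{eq:BN-1} and integration by parts (boundary integrals vanish since $u = 0$ on $\pa\om$), I would first derive
\[
y'(t) \;=\; \tfrac{p+1}{p} \int_\om u\,(\Delta u + bu)\,\ud x \;=\; -\tfrac{q}{p}\, \|u(\cdot,t)\|^2,
\]
where $\|\cdot\|$ is the norm from \eqref{eq:norm}. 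The smoothness of $u$ up to the boundary quoted from \cite{JX19} makes all the differentiations and integrations by parts below rigorous.

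For the \textbf{lower bound}, plug the Sobolev-type inequality \eqref{eq:sobolev-b} into the above to get $y'(t)\le -\frac{q}{pK_b}\,y(t)^{(n-2)/n}$, which is the same as $\frac{\ud}{\ud t}\, y^{2/n}(t) \le -\frac{2q}{npK_b}$. Integrating from $t$ to $T^*$ and using $y(T^*)=0$ then yields $y(t)^{2/n} \ge \frac{2q}{npK_b}(T^*-t)$, i.e. the lower bound with $C = \bigl(npK_b/(2q)\bigr)^{n/2}$.

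For the \textbf{upper bound}, the strategy is to prove that $\psi(t) := y(t)^{2/n}$ is convex on $[0,T^*]$: together with $\psi(T^*) = 0$, convexity immediately yields the chord inequality $\psi(t) \le (1-t/T^*)\psi(0)$, equivalently $y(t) \le (1-t/T^*)^{n/2}\,y(0)$. Writing the nonlinear term as $\Delta u + bu = p u^{p-1} u_t$ gives the clean expressions
\[
y'(t) \;=\; q \int_\om u^{p}\, u_t\,\ud x, \qquad y''(t) \;=\; 2q \int_\om u^{p-1}\, u_t^2\,\ud x,
\]
where the second formula comes from differentiating $-\frac{q}{p}\|u\|^2$ once more and again integrating by parts. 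Applying Cauchy--Schwarz to the factorization $u^p u_t = u^{(p+1)/2}\cdot u^{(p-1)/2} u_t$ then gives
\[
(y')^2 \;\le\; q^2 \int_\om u^{p+1}\,\ud x\cdot \int_\om u^{p-1}u_t^2\,\ud x \;=\; \tfrac{q}{2}\, y\, y''.
\]
Since $q = 2n/(n-2)$, this reads $y y'' \ge \frac{n-2}{n}(y')^2$, which is precisely the condition $\psi''=(y^{2/n})''\ge 0$.

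The main technical point to watch is the behavior of $u^{p-1}u_t^2$ near $\pa\om$, since $u_t = (\Delta u + bu)/(pu^{p-1})$ implicitly involves a negative power of $u$. However, the boundary regularity $\pa_t u \in C^{(3n-2)/(n-2)}(\overline\om)$ from \cite{JX19} ensures $u$ and $u_t$ are both bounded up to $\overline\om$, so $u^{p-1}u_t^2 \le C u^{p-1}$ is integrable and the integration-by-parts and Cauchy--Schwarz steps above are all justified.
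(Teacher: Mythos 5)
Your proof is correct and is essentially the argument of Berryman--Holland \cite{BH} that the paper's own proof simply cites and declares valid for $b\in[0,\lambda_1)$: the energy identity $y'=-\frac{q}{p}\|u\|^2$ combined with \eqref{eq:sobolev-b} for the lower bound, and convexity of $y^{2/n}$ obtained from the Cauchy--Schwarz inequality $(y')^2\le \frac{q}{2}\,y\,y''$ for the upper bound. The only point worth noting is that your lower-bound constant involves $K_b$ and hence depends on $b$ as well as on $n$ and $\Omega$ (as any argument based on the coercive norm \eqref{eq:norm} must), whereas the lemma as stated lists only $n$ and $\Omega$; this is a harmless discrepancy in the paper's statement, not a gap in your argument.
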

 
 \begin{proof} If $b=0$, the lemma was proved by \cite{BH} (noting that by our regularity result in \cite{JX19}, the regularity assumptions in \cite{BH} are satisfied, and thus the calculations in [4] are justified).   The same proof applies if $b\in (0,\lambda_1)$ by using \eqref{eq:sobolev-b}. We sketch it in the below for reader's convenience.

Let
\[
\xi(t)=\left(\int_\om u(x,t)^{\frac{2n}{n-2}}\,\ud x\right)^\frac{2}{n}\quad\mbox{and}\quad S(t)=\frac{\|u(\cdot,t)\|^2}{\|u(\cdot,t)\|_{L^{\frac{2n}{n-2}}(\Omega)}^2}.
\]
Then
\[
\frac{\ud}{\ud t}\xi(t)=-\frac{4}{n+2}S(t)\le -\frac{4}{n+2} K_b^{-1},
\]
where we used \eqref{eq:sobolev-b} in the last inequality. Then the first inequality of this lemma follows by integrating the above inequality from $t$ to $T^*$. 

Making use of the equation \eqref{eq:BN-1} and the same arguments in \cite{BH},  we have
\[
\frac{\ud}{\ud t} \|u(\cdot,t)\|^2\le 0 \quad\mbox{and}\quad \frac{\ud}{\ud t} S(t)\le 0.
\]
Hence, both $\|u(\cdot,t)\|$ and $S(t)$ are non-increasing in $t$. Since
\[
\frac{\ud}{\ud t} \int_\om u(x,t)^{\frac{2n}{n-2}}\,\ud x=-\frac{2n}{n+2}\|u(\cdot,t)\|^2,
\]
then by integrating the above inequality from $t$ to $T^*$ and using the monotonicity of $\|u(\cdot,t)\|$ and $S(t)$, we have
\[
\int_\om u(x,t)^{\frac{2n}{n-2}}\,\ud x\le \frac{2n}{n+2} (T^*-t)\|u(\cdot,t)\|^2\le\frac{2n}{n+2} (T^*-t)S(0) \|u(\cdot,t)\|_{L^{\frac{2n}{n-2}}(\Omega)}^2.
\]
This leads to the second inequality of this lemma.
 \end{proof}
 
 Let $v$ be as in \eqref{eq:changing_variables}. By Lemma \ref{lem:basic}, we have 
\be \label{eq:BN-5} 
\frac{1}{C} \le \int_{\om} v(x,t)^{\frac{2n}{n-2}}\,\ud x\le C
\ee
for all $t>0$, where $C$ is a positive constant depending only on $n,b,\Omega$ and $u_0$.  Define 
\be \label{eq:F-funct}
F(v(t))= \int_{\om } \Big( |\nabla v(x,t)|^2 -b v(x,t)^2 -\frac{n-2}{n} v(x,t)^{\frac{2n}{n-2}}\Big)\,\ud x. 
\ee
It follows that $F(v(t))$ is bounded from below for all $t>0$. By the equation of $v$ and integrating by parts, 
\be \label{eq:F-dcr}
\frac{\ud}{\ud t} F(v(t))= - 2\int_{\om}(\Delta v +bv+v^{\frac{n+2}{n-2}}) \pa_t v \,\ud x= -\frac{2(n+2)}{n-2} \int_{\om }v^{\frac{4}{n-2}} |\pa_t v|^2 \,\ud x \le 0. 
\ee
Hence, $F(v(t))$ is non-increasing in $t$, and thus, together with \eqref{eq:BN-5}, we have $\|v(\cdot,t)\|_{H^1_0(\Omega)}$ is uniformly bounded. Moreover, there exists some constant $F_\infty$ such that 
\be \label{eq:f_infty}
\lim_{t\to \infty} F(v(t)) = F_\infty. 
\ee

Define 
\begin{equation}\label{eq:Q}
\mathcal{R}= v^{-\frac{n+2}{n-2}}(-\Delta v-bv)
\end{equation}
and 
\be\label{eq:definitionMq}
M_q(t)= \int_{\om} |\mathcal{R} -1|^q v^{\frac{2n}{n-2}} \,\ud x, \quad  q \ge 1. 
\ee 
In \cite{JX19}, we proved that $\mathcal{R}=1-\frac{n+2}{n-2} \frac{\pa_t v}{v}$ is $C^{2}$ up to the boundary $\partial\Omega$. However, all the estimates there for solutions of \eqref{eq:BN-3} are only locally uniform in $t\in (0,\infty)$. We shall prove some uniform estimates for all $t\in [1,\infty)$ and $M_q(t) \to 0$ as $t\to \infty$. 

To do this, we will first use Moser's iteration to obtain a uniform lower bound of $\mathcal{R}$ as an intermediate step. So we need the following evolution equation of $\mathcal{R}$ and integration by parts formula. 
\begin{lem} \label{lem:flow-properties} Let $g=v^{\frac{4}{n-2}} g_{flat}$. Then
\begin{itemize}
\item[(i).]  
\be\label{eq:volumn}
\pa_t v^{\frac{2n}{n-2}}= -\frac{2n}{n+2}(\mathcal{R}-1) v^{\frac{2n}{n-2}}.
\ee
\item[(ii).] 
\be\label{eq:R-1}
\pa_t (\mathcal{R}-1)=\frac{n-2}{n+2} \Delta_g (\mathcal{R}-1) +\frac{4}{n+2} (\mathcal{R}-1)^2 + \frac{4}{n+2} (\mathcal{R}-1),
\ee
where $\Delta_g $ is the Laplace-Beltrami operator of $g$.
\item[(iii).] For any $f\in H^2(\om)$ and $h\in  H^1(\om)$,  
\be\label{eq:integrationbyparts}
\int_{\om}h \Delta_g f\,\ud vol_g = -\int_{\om} \langle \nabla_g f, \nabla_g h\rangle_g \,\ud vol_g. 
\ee
\end{itemize}

\end{lem}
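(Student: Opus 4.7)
The three parts have very different flavors, so I would treat them in increasing order of difficulty.

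Part (iii) is the cleanest: it is just ordinary integration by parts packaged in conformal notation. Using $g_{ij}=v^{4/(n-2)}\delta_{ij}$, a short calculation gives $\sqrt{|g|}=v^{2n/(n-2)}$, $g^{ij}=v^{-4/(n-2)}\delta^{ij}$, and therefore the two identities $\Delta_g f=v^{-2n/(n-2)}\operatorname{div}(v^{2}\nabla f)$ and $\langle \nabla_g f,\nabla_g h\rangle_g\,\ud vol_g=v^{2}\nabla f\cdot\nabla h\,\ud x$. Multiplying the first by $h$ and $\ud vol_g$ and integrating by parts in the Euclidean sense yields the claim, with boundary contribution $\int_{\pa\om}h v^{2}\pa_\nu f\,\ud S=0$. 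The vanishing uses only that $v=0$ on $\pa\om$, so the factor $v^{2}$ kills the boundary term; the $H^{1},H^{2}$ hypotheses are exactly enough to justify the Euclidean integration by parts.

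Part (i) is pure algebra once one rewrites the PDE. From the definition of $\mathcal R$, the equation \eqref{eq:BN-3} becomes $\pa_t v^{(n+2)/(n-2)}=(1-\mathcal R)v^{(n+2)/(n-2)}$. Then $v^{2n/(n-2)}=\bigl(v^{(n+2)/(n-2)}\bigr)^{2n/(n+2)}$, so chain rule gives the factor $\frac{2n}{n+2}v^{(n-2)/(n+2)\cdot (n+2)/(n-2)}=\frac{2n}{n+2}v$, which combined with $(1-\mathcal R)v^{(n+2)/(n-2)}$ reproduces exactly $-\frac{2n}{n+2}(\mathcal R-1)v^{2n/(n-2)}$.

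Part (ii) is the main step and the one I expect to absorb most of the effort. The plan is to differentiate the identity $\mathcal R\, v^{(n+2)/(n-2)}=-\Delta v-bv$ in $t$, getting
\[
\pa_t\mathcal R=-v^{-(n+2)/(n-2)}\bigl(\Delta(\pa_t v)+b\,\pa_t v\bigr)-\mathcal R\,v^{-(n+2)/(n-2)}\pa_t v^{(n+2)/(n-2)}.
\]
From part (i), $\pa_t v=\tfrac{n-2}{n+2}(1-\mathcal R)v$, and the last term simplifies to $\mathcal R(\mathcal R-1)$. The main calculation is expanding $\Delta((1-\mathcal R)v)=(1-\mathcal R)\Delta v-2\nabla\mathcal R\cdot\nabla v-v\Delta\mathcal R$ and then reinserting $\Delta v=-bv-\mathcal R v^{(n+2)/(n-2)}$. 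After multiplying by $-v^{-(n+2)/(n-2)}$, the $b$--terms cancel between the Laplacian and the lower-order $-bv^{-(n+2)/(n-2)}\pa_t v$ contribution, and the remaining derivative expression collapses to $v^{-4/(n-2)}(\Delta\mathcal R+2v^{-1}\nabla v\cdot\nabla\mathcal R)$, which I would recognize as $\Delta_g\mathcal R$ using the formula from part (iii). Collecting coefficients gives $\pa_t\mathcal R=\tfrac{n-2}{n+2}\Delta_g\mathcal R+\tfrac{4}{n+2}\mathcal R(\mathcal R-1)$, and writing $\mathcal R(\mathcal R-1)=(\mathcal R-1)^{2}+(\mathcal R-1)$ produces the stated form. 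The only real pitfall is bookkeeping: the three mutually cancelling $(n-2)/(n+2)$ and $-1$ coefficients must be tracked carefully so that $\mathcal R(\mathcal R-1)$ emerges with the right sign and weight $4/(n+2)$; a sign error here would propagate through all of the curvature-type estimates later in the paper, so I would double-check by comparing with the standard unnormalized Yamabe-flow evolution of scalar curvature, to which this reduces when $b=0$.
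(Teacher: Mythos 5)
Your proposal is correct and follows essentially the same route as the paper: parts (i) and (iii) are the same direct computations (with $v=0$ on $\pa\om$ killing the boundary term through the weight $v^{2}$), and part (ii) likewise differentiates the definition of $\mathcal{R}$ in $t$, substitutes $\pa_t v=\tfrac{n-2}{n+2}(1-\mathcal{R})v$, and identifies the second-order term as $\tfrac{n-2}{n+2}\Delta_g(\mathcal{R}-1)$. The only cosmetic difference is that the paper packages this identification via the conformal Laplacian $L_g$ and the scalar-curvature identity $\tfrac{n-2}{4(n-1)}R_g=\mathcal{R}+bv^{-\frac{4}{n-2}}$, whereas you expand $\Delta((1-\mathcal{R})v)$ directly and recognize $\Delta_g$ from its coordinate formula; the underlying computation, cancellations, and coefficients ($1-\tfrac{n-2}{n+2}=\tfrac{4}{n+2}$) are the same.
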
 

\begin{proof} The equation \eqref{eq:volumn} follows immediately from \eqref{eq:BN-3} and \eqref{eq:Q}. We also have $\pa_t v= \frac{n-2}{n+2} v (1-\mathcal{R})$.  

By the definition of $\mathcal{R}$, we have
\begin{align*}
\pa_t (\mathcal{R}-1)&= \frac{n+2}{n-2} v^{-\frac{2n}{n-2}} \pa_t v (\Delta+ b) v- v^{-\frac{n+2}{n-2}}  (\Delta +b) \pa_t v\\&
= v^{-\frac{n+2}{n-2}} (1-\mathcal{R}) (\Delta+ b) v  - \frac{n-2}{n+2} v^{-\frac{n+2}{n-2}}  (\Delta +b) (v(1-\mathcal{R}) )\\&
=(\mathcal{R}-1)\mathcal{R} - \frac{n-2}{n+2} v^{-\frac{n+2}{n-2}}  (\Delta +b) (v(1-\mathcal{R}) ). 
\end{align*}
Let $L_g= \Delta _g -\frac{n-2}{4(n-1)}R_g$ be the conformal Laplacian of $g$, where $\Delta _g$ is the  Laplace--Beltrami operator of the metric $g$ and $R_g$ is the the scalar curvature of $g$. By the conformal transformation law
\[
L_g (v^{-1}\varphi)=v^{-\frac{n+2}{n-2}}\Delta\varphi,\quad\forall \varphi\in C^2(\Omega),
\] 
we have 
\[
\frac{n-2}{4(n-1)}R_g=-L_g(1)=-v^{-\frac{n+2}{n-2}}\Delta v=\mathcal{R} +b v^{-\frac{4}{n-2}}
\]
and 
\begin{align*}
v^{-\frac{n+2}{n-2}}  (\Delta +b) (v(1-\mathcal{R}) )&=  L_g (1-\mathcal{R}) + bv^{-\frac{4}{n-2}} (1-\mathcal{R})\\&
= \Delta _g (1-\mathcal{R}) -\frac{n-2}{4(n-1)} R_g (1-\mathcal{R}) + bv^{-\frac{4}{n-2}} (1-\mathcal{R}) \\&
=  \Delta _g (1-\mathcal{R}) -\mathcal{R} (1-\mathcal{R}).
\end{align*}
Then, \eqref{eq:R-1} follows.  
 
Finally,  
\begin{align*}
\int_{\om}h \Delta_g f\,\ud vol_g &= \int_\Omega h  v^{-\frac{2n}{n-2}} \pa_i (v^{\frac{2n}{n-2}}  v^{-\frac{4}{n-2}} \pa_i  f) v^{\frac{2n}{n-2}}\,\ud x\\& 
=  \int_\Omega h \pa_i (v^2 \pa_if) \,\ud x= - \int_\Omega v^2 \pa_i f \pa_i h\,\ud x= -\int_\Omega \langle \nabla_g f, \nabla_g h\rangle_g \,\ud vol_g,
\end{align*}
where we used $v=0$ on $\partial\Omega$ in the third equality.
\end{proof}

We have the following Sobolev inequality regarding the metric $g=v^{\frac{4}{n-2}} g_{flat}$:
\begin{lem} \label{lem:sob-elliptic}
There holds 
\[
\left(\int_{\om} |f|^{\frac{2n}{n-2}}\,\ud vol_g \right)^{\frac{n-2}{n}}\le K_b\int_{\om}( |\nabla_g f|_g^2+\mathcal{R}f^2) \,\ud vol_g
\]
for any $f\in H^1(\overline \om)$, where $K_b$ is the constant in \eqref{eq:sobolev-b}.
\end{lem}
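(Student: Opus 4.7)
The plan is to convert the inequality for the metric $g = v^{4/(n-2)} g_{\text{flat}}$ into the standard Euclidean Sobolev inequality \eqref{eq:sobolev-b} applied to the product $fv$. The point is that the conformal factors in the volume form, gradient, and the definition of $\mathcal{R}$ conspire perfectly so that the whole right-hand side becomes the $\|\cdot\|$-norm of $fv$.

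First, I would unwind the $g$-geometric quantities. Since $dvol_g = v^{\frac{2n}{n-2}}\,\ud x$, the left-hand side becomes
\[
\left(\int_{\om}|fv|^{\frac{2n}{n-2}}\,\ud x\right)^{\frac{n-2}{n}}.
\]
Because $g^{ij} = v^{-\frac{4}{n-2}}\delta^{ij}$, the gradient term contributes $|\nabla_g f|_g^2 \,\ud vol_g = v^2|\nabla f|^2\,\ud x$. For the curvature term, the definition \eqref{eq:Q} gives $\mathcal{R}\, v^{\frac{2n}{n-2}} = v(-\Delta v - bv)$, so
\[
\int_{\om}\mathcal{R}f^2\,\ud vol_g = \int_{\om}f^2 v(-\Delta v - bv)\,\ud x.
\]

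Next, I would integrate by parts. Since $v=0$ on $\pa\om$ and $v\in C^2(\overline\om)$, we may write
\[
\int_{\om}f^2 v(-\Delta v)\,\ud x = \int_{\om}\nabla(f^2 v)\cdot \nabla v\,\ud x = \int_{\om}\bigl(2fv\,\nabla f\cdot\nabla v + f^2|\nabla v|^2\bigr)\,\ud x,
\]
(by approximating $f\in H^1(\overline\om)$ by smooth functions to legitimize the step). Combining, the right-hand side of the claimed inequality equals
\[
K_b\int_{\om}\bigl(v^2|\nabla f|^2 + 2fv\,\nabla f\cdot\nabla v + f^2|\nabla v|^2 - bf^2 v^2\bigr)\,\ud x = K_b\int_{\om}\bigl(|\nabla(fv)|^2 - b(fv)^2\bigr)\,\ud x = K_b\|fv\|^2.
\]

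Finally, since $v\in C^2(\overline\om)$ vanishes on $\pa\om$ and $f\in H^1(\overline\om)$, the product $fv$ has zero trace on $\pa\om$, hence $fv\in H^1_0(\om)$. Applying the Sobolev inequality \eqref{eq:sobolev-b} to $fv$ yields exactly the desired bound. The mildly delicate point, and the only place where care is needed, is justifying the integration by parts for a general $H^1$ function $f$ and confirming that $fv$ indeed lies in $H_0^1(\om)$; both are standard density arguments given the boundary regularity of $v$ from \cite{JX19}, but they are the one step I would want to write out carefully.
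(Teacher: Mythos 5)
Your proof is correct and follows essentially the same route as the paper: rewrite the $g$-quantities in Euclidean terms, integrate by parts using $v=0$ on $\pa\om$ and the equation $-\Delta v - bv = \mathcal{R}v^{\frac{n+2}{n-2}}$ to recognize the right-hand side as $K_b\bigl(\int_\om |\nabla(fv)|^2 - b(fv)^2\,\ud x\bigr)$, and then apply \eqref{eq:sobolev-b} to $fv\in H^1_0(\om)$. The density/trace points you flag are exactly the (routine) justifications implicit in the paper's argument.
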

\begin{proof} 
Note that 
\[
|\nabla (fv)|^2= v^2 |\nabla f|^2 +f^2|\nabla v|^2 +2 vf\nabla v \cdot \nabla f, 
\]
\begin{align*}
\int_\Omega (f^2|\nabla v|^2 +2 v\nabla v f \nabla f)\,\ud x&=  \int_\Omega (f^2|\nabla v|^2 +v\nabla v  \nabla f^2 ) \,\ud x\\&=
-\int_\Omega vf^2\Delta v \,\ud x\\&
= \int_\Omega (\mathcal{R} f^2 v^{\frac{2n}{n-2}}+ b v^2 f^2) \,\ud x. 
\end{align*}
Hence, 
\[
\int_{\om}( |\nabla_g f|_g^2+\mathcal{R}f^2) \,\ud vol_g=\int_{\om}( v^2|\nabla f|^2+\mathcal{R}f^2v^{\frac{2n}{n-2}}) \,\ud x = \int_\om (|\nabla (fv)|^2- b(fv)^2 )\,\ud x. 
\]
Therefore, the lemma follows from \eqref{eq:sobolev-b}. 
\end{proof} 

For any $t_0\ge 0$ and $T> 0$, let 
\[
V^1(\om \times (t_0,t_0+T)) = C^0((t_0,t_0+T); L^2(\om)) \cap L^2 ((t_0,t_0+T); H^1(\om)), 
\]
equipped with the norm 
\[
\|f\|_{V^1(\om \times (t_0,t_0+T))}^2= \sup_{t_0<t<t_0+T} \int_{\om } f(x,t)^2 \,\ud vol_g +\int_{t_0}^{t_0+T} \int_{\om} ( |\nabla_g f|_g^2+\mathcal{R}f^2) \,\ud vol_g \ud t. 
\]
 
We have the following parabolic version of Sobolev inequality.
\begin{lem}\label{lem:sob-parabolic} For any $f\in V^1 (\om \times (t_0,t_0+T))$, we have
\[
\left(\int_{t_0}^{t_0+T} \int_{\om} |f|^{\frac{2(n+2)}{n}}\,\ud vol_g \ud t\right)^{\frac{n}{n+2}}\le K_b^{\frac{n}{n+2}} \|f\|_{V^1(\om \times (t_0,t_0+T))}^2. 
\]
\end{lem}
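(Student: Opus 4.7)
The plan is to prove this parabolic Sobolev inequality by combining Lemma \ref{lem:sob-elliptic} (the elliptic Sobolev inequality at each time) with the $L^\infty$-in-time $L^2$-control built into the $V^1$ norm, using Hölder interpolation in the spatial variable. This is a standard technique once one identifies the correct interpolation exponents.

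First, I would fix $t$ and interpolate $L^{\frac{2(n+2)}{n}}$ between $L^2$ and $L^{\frac{2n}{n-2}}$. A direct computation shows
\[
\frac{n}{2(n+2)}=\frac{\theta}{2}+\frac{(1-\theta)(n-2)}{2n}\quad\text{with}\quad \theta=\frac{2}{n+2},\ 1-\theta=\frac{n}{n+2},
\]
so Hölder gives
\[
\|f(\cdot,t)\|_{L^{\frac{2(n+2)}{n}}(dvol_g)}\le \|f(\cdot,t)\|_{L^2(dvol_g)}^{\frac{2}{n+2}}\,\|f(\cdot,t)\|_{L^{\frac{2n}{n-2}}(dvol_g)}^{\frac{n}{n+2}}.
\]
Raising to the power $\frac{2(n+2)}{n}$ and integrating in $t$,
\[
\int_{t_0}^{t_0+T}\int_\om |f|^{\frac{2(n+2)}{n}}\,\ud vol_g\,\ud t \le \Big(\sup_{t_0<t<t_0+T}\int_\om f^2\,\ud vol_g\Big)^{\!\!\frac{2}{n}}\int_{t_0}^{t_0+T}\|f(\cdot,t)\|_{L^{\frac{2n}{n-2}}(dvol_g)}^{2}\,\ud t.
\]

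Next I would apply Lemma \ref{lem:sob-elliptic} pointwise in $t$ to bound the last factor by $K_b\int_{t_0}^{t_0+T}\int_\om(|\nabla_g f|_g^2+\mathcal{R}f^2)\,\ud vol_g\,\ud t$. Writing
\[
A:=\sup_{t_0<t<t_0+T}\int_\om f^2\,\ud vol_g,\qquad B:=\int_{t_0}^{t_0+T}\int_\om(|\nabla_g f|_g^2+\mathcal{R}f^2)\,\ud vol_g\,\ud t,
\]
we have $A+B=\|f\|_{V^1}^2$ and
\[
\int_{t_0}^{t_0+T}\int_\om |f|^{\frac{2(n+2)}{n}}\,\ud vol_g\,\ud t \le K_b\, A^{\frac{2}{n}} B.
\]
Taking the $\frac{n}{n+2}$ power yields the factor $K_b^{\frac{n}{n+2}}A^{\frac{2}{n+2}}B^{\frac{n}{n+2}}$, and a final application of the weighted AM-GM inequality
\[
A^{\frac{2}{n+2}}B^{\frac{n}{n+2}}\le \tfrac{2}{n+2}A+\tfrac{n}{n+2}B\le A+B=\|f\|_{V^1}^2
\]
produces the desired bound.

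I do not expect any serious obstacle: the elliptic Sobolev inequality from Lemma \ref{lem:sob-elliptic} and the definition of the $V^1$ norm are tailored for exactly this argument. The only minor subtlety is that $f$ is only assumed in $V^1$, so the $L^2$ supremum in $t$ and the spatial $L^{\frac{2n}{n-2}}$ norm at almost every $t$ need to be interpreted in the standard Bochner-space sense; a density argument with smooth functions makes the interpolation rigorous.
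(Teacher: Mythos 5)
Your proof is correct and follows essentially the same route as the paper: spatial H\"older interpolation of $L^{\frac{2(n+2)}{n}}$ between $L^2$ and $L^{\frac{2n}{n-2}}$ at each fixed time, the elliptic Sobolev inequality of Lemma \ref{lem:sob-elliptic}, and then Young's inequality (your weighted AM--GM) to absorb the product $A^{\frac{2}{n+2}}B^{\frac{n}{n+2}}$ into $\|f\|_{V^1}^2$. No changes are needed.
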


\begin{proof} By H\"older's inequality and Lemma \ref{lem:sob-elliptic}, we have 
\begin{align*}
 \int_{\om} |f|^{\frac{2(n+2)}{n}}\,\ud vol_g & =  \int_{\om} |f|^2|f|^{\frac{4}{n}}\,\ud vol_g \\&
 \le \Big( \int_{\om} |f|^{\frac{2n}{n-2}}\,\ud vol_g\Big )^{\frac{n-2}{n}} \Big( \int_{\om} |f|^{2}\,\ud vol_g\Big)^{\frac{2}{n}}\\&
 \le K_b\int_{\om}( |\nabla_g f|_g^2+\mathcal{R}f^2) \,\ud vol_g \Big( \int_{\om} |f|^{2}\,\ud vol_g\Big)^{\frac{2}{n}}. 
\end{align*}
Hence, by  Young's inequality 
\begin{align*}
&\left(\int_{t_0}^{t_0+T} \int_{\om} |f|^{\frac{2(n+2)}{n}}\,\ud vol_g \ud t\right)^{\frac{n}{n+2}} \\&\le K_b^{\frac{n}{n+2}} \left(\int_{\om}( |\nabla_g f|_g^2+\mathcal{R}f^2) \,\ud vol_g \right)^{\frac{n}{n+2}} \left( \sup_{t_0<t<t_0+T} \int_{\om } f(x,t)^2 \,\ud vol_g \right)^{\frac{2}{n+2}}\\&
\le K_b^{\frac{n}{n+2}}  \|f\|_{V^1(\om \times (t_0,t_0+T))}^2. 
\end{align*}
Therefore, the proof is completed. 
\end{proof}

With the Sobolev inequality in Lemma \ref{lem:sob-parabolic}, we will apply Moser's iterations to the equation \eqref{eq:R-1} to obtain a uniform lower bound of $\mathcal{R}$.
\begin{lem}\label{lem:Rlowerbound} For $t\ge 1$, we have 
\[
\mathcal{R}-1\ge -C,
\]
where $C$ is a constant depending only on $\om,n,b$ and $v_0$.  

\end{lem}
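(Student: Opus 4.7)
The plan is to apply a Moser-type iteration to the non-negative function $\phi := (\mathcal{R}-1)_-$, exploiting the good quadratic absorption term $-\frac{4}{n+2}(\mathcal{R}-1)^2$ in the evolution equation \eqref{eq:R-1}. Setting $\psi := -(\mathcal{R}-1)$, equation \eqref{eq:R-1} becomes
\[ \pa_t \psi = \frac{n-2}{n+2}\Delta_g \psi - \frac{4}{n+2}\psi^2 + \frac{4}{n+2}\psi, \]
and by Kato's inequality its positive part $\phi = \psi_+$ satisfies, distributionally on $\om\times(0,\infty)$,
\[ \pa_t \phi \le \frac{n-2}{n+2}\Delta_g \phi - \frac{4}{n+2}\phi^2 + \frac{4}{n+2}\phi. \]

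For each fixed $q>n/2$, I will track $\Phi_q(t):=\int_\om \phi^q\,\ud vol_g$. Testing the inequality with $q\phi^{q-1}$, using Lemma \ref{lem:flow-properties}(i) for the time derivative of the volume form $\ud vol_g = v^{2n/(n-2)}\,\ud x$ and Lemma \ref{lem:flow-properties}(iii) to integrate by parts (the boundary term disappears because $v=0$ on $\pa\om$), a direct calculation yields
\[ \Phi_q'(t) + \frac{4(n-2)(q-1)}{(n+2)q}\int_\om |\nabla_g \phi^{q/2}|_g^2\,\ud vol_g + \frac{4q-2n}{n+2}\int_\om \phi^{q+1}\,\ud vol_g \le \frac{4q}{n+2}\Phi_q(t). \]
Set $V := \int_\om \ud vol_g$, which is uniformly bounded in $t$ by \eqref{eq:BN-5}. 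Discarding the non-negative gradient term and invoking H\"older's inequality in the form $\int_\om \phi^{q+1}\,\ud vol_g \ge V^{-1/q}\Phi_q(t)^{(q+1)/q}$ reduces the previous estimate to the scalar differential inequality
\[ \Phi_q'(t) \le A_q\Phi_q(t) - B_q\Phi_q(t)^{(q+1)/q},\qquad A_q:=\frac{4q}{n+2},\quad B_q:=\frac{4q-2n}{(n+2)V^{1/q}}. \]

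A one-line comparison argument now finishes the proof: whenever $\Phi_q(t_*) > (A_q/B_q)^q$ the right-hand side above is strictly negative, so $\Phi_q$ cannot cross the level $(A_q/B_q)^q$ from below, yielding
\[ \Phi_q(t)^{1/q} \le \max\!\left(\Phi_q(1)^{1/q},\ \frac{4qV^{1/q}}{4q-2n}\right) \quad\text{for all } t\ge 1. \]
Passing $q\to\infty$: the left-hand side tends to $\|\phi(\cdot,t)\|_{L^\infty(\om)}$; the first term on the right tends to $\|\phi(\cdot,1)\|_{L^\infty(\om)}$, which is finite because $\mathcal{R}(\cdot,1)\in C^2(\overline\om)$ by \cite{JX19} and depends only on $\om,n,b$, and $v_0$; and the second term tends to $1$. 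This gives $\|\phi(\cdot,t)\|_\infty\le C$, i.e., $\mathcal{R}-1\ge -C$, as claimed.

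The main technical point is the rigorous justification of the Kato-type inequality for $\phi$ and the legitimacy of testing with $\phi^{q-1}$ up to the boundary of $\om$ in the conformally degenerate metric $g$. Both are standard and can be handled by a regularization: work with $\phi_\va := (\mathcal{R}-1+\va)_-$, which is $C^2$ in a neighborhood of its support by the boundary regularity of $\mathcal{R}$ established in \cite{JX19}, carry out the computation with $\phi_\va$ in place of $\phi$, and pass to the limit $\va\to 0^+$.
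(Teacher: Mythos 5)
Your proof is correct, but it follows a genuinely different route from the paper's. The paper runs a full space--time Moser iteration: it multiplies the equation for $1-\mathcal{R}$ by $\eta^2\phi^{1+k}$ with a temporal cutoff $\eta$, only uses the sign of the cubic terms to discard them (via $\tfrac{1}{2+k}\tfrac{2n}{n+2}<\tfrac{4}{n+2}$), invokes the parabolic Sobolev inequality of Lemma \ref{lem:sob-parabolic} to iterate, and then controls the resulting quantity $\bigl(\int_1^T M_2\bigr)^{1/2}$ by the energy identity \eqref{eq:F-dcr}--\eqref{eq:m-2}, together with the $L^\infty$ bound on $\om\times(1/2,1)$ from \cite{JX19}. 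You instead keep the Riccati-type absorption term $-\tfrac{4}{n+2}\psi^2$ for $\psi=1-\mathcal{R}$, which after testing with $q\phi^{q-1}$ and using the volume bound \eqref{eq:BN-5} via H\"older yields the closed scalar inequality $\Phi_q'\le A_q\Phi_q-B_q\Phi_q^{(q+1)/q}$; an ODE comparison then gives a bound uniform in $t$, and letting $q\to\infty$ recovers the sup bound. Your argument dispenses with both the parabolic Sobolev inequality and the finiteness of $\int_1^\infty M_2\,\ud t$, using only the volume bound, the degenerate-metric integration by parts of Lemma \ref{lem:flow-properties}(iii) (the same device by which the paper avoids boundary information on $\mathcal{R}$), and the $C^2(\overline\om)$ regularity of $\mathcal{R}$ at time $1$ from \cite{JX19}; it is in effect an integral maximum principle exploiting the good quadratic term, and it even produces the explicit bound $\sup_\om(1-\mathcal{R})(\cdot,t)\le\max\bigl(\|(1-\mathcal{R})^+(\cdot,1)\|_{L^\infty(\om)},1\bigr)$. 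What the paper's heavier machinery buys is reusability: the same Moser scheme with the $L^1_t$ control of $M_q$ is recycled later (e.g.\ to show $\|\mathcal{R}-1\|_{L^\infty}\to0$ in the corollary following Proposition \ref{prop:Mq}), which your ODE argument, relying on the favorable sign of $(1-\mathcal{R})^2$ on $\{\mathcal{R}<1\}$ only, does not give for the two-sided bound. Two small points to tidy up: state the comparison with $B_q$ defined through $\sup_{t\ge1}\int_\om\ud vol_g$ rather than a time-dependent $V$, and note that $\lim_{q\to\infty}\Phi_q(t)^{1/q}$ is the essential sup with respect to $v^{\frac{2n}{n-2}}\ud x$, which coincides with the sup over $\om$ because $v>0$ in $\om$ and $\phi$ is continuous; both are exactly as routine as you indicate.
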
 

\begin{proof} 
 Let $T>2$, $\frac 12\le T_2<T_1\le1$, $\eta(t)$ be a smooth cut-off function so that $\eta(t)=0$ for all $t<T_2$, $0\le \eta(t)\le 1$ for $t\in [T_2,T_1]$, $\eta(t)=1$ for all $t>T_1$, and $|\eta'(t)|\le \frac{2}{T_1-T_2}$. Denote $\phi= (1-\mathcal{R})^+$. 
By \eqref{eq:R-1}, we have 
\[
\pa_t (1-\mathcal{R})= \frac{n-2}{n+2} \Delta_g (1-\mathcal{R})-\frac{4}{n+2} (1-\mathcal{R})^2 +\frac{4}{n+2} (1-\mathcal{R}).
\] 
Let $k\ge \frac{n}{2}-1$ be a real number. Multiplying both sides of the inequality by $\eta^2 \phi^{1+k}$ and integrating by parts, we see that, for any $0< s<T$, 
\begin{align*}
&\frac{1}{2+k}\int_0^s  \int_\om \eta^2 \pa_t \phi^{2+k}  \,\ud vol_g\ud t +  \frac{4(n-2)(k+1)}{(n+2)(k+2)^2} \int_0^s \int_\om \eta^2 |\nabla_g \phi^{\frac{k+2}{2}}|_g^2 \,\ud vol_g\ud t  \\&
\le -\frac{4}{n+2}\int_0^s \int_\om \phi^{3+k} \eta ^2 \,\ud vol_g\ud t  +\frac{4}{n+2} \int_0^s \int_\om \phi^{2+k} \eta ^2 \,\ud vol_g\ud t. 
\end{align*}
Note that using \eqref{eq:volumn},  we have
\begin{align*}
&\frac{1}{2+k}\int_0^s \int_\om \eta^2 \pa_t \phi^{2+k} \,\ud vol_g\ud t  \\
&= \frac{1}{2+k} \int_\om \phi^{2+k} \eta^2 \,\ud vol_g \Big|_{t=s} - \frac{1}{2+k} \int_0^s \int_\om \phi^{2+k} \left(2\eta\pa_t \eta +\frac{2n}{n+2} (1-\mathcal{R})\eta^2\right)  \,\ud vol_g\ud t\\
&= \frac{1}{2+k} \int_\om \phi^{2+k} \eta^2 \,\ud vol_g \Big|_{t=s} - \frac{1}{2+k} \int_0^s \int_\om  \left(2\phi^{2+k}\eta\pa_t \eta +\frac{2n}{n+2} \phi^{3+k}\eta^2\right)  \,\ud vol_g\ud t. 
\end{align*}
Note that the term $\frac{2n}{n+2} (1-\mathcal{R})\eta^2$ in the above comes from the derivative of the volume form $\ud vol_g$ in $t$. Since  $k\ge \frac{n}{2}-1$, $\frac{1}{2+k}\frac{2n}{n+2}< \frac{4}{n+2}$. Furthermore, 
\[
\int_{\om} \mathcal{R} \phi^{2+k}\eta^2 \,\ud vol_g= - \int_{\om} (1-\mathcal{R}) \phi^{2+k}\eta^2\,\ud vol_g + \int_\om \phi^{2+k}\eta^2 \,\ud vol_g\le  \int_\om \phi^{2+k}\eta^2\,\ud vol_g. 
\] It follows that  
\[
\|\eta \phi^{\frac{2+k}{2}}\|_{V^1(\om\times (0,T))}^2 \le C(2+k)  \int_0^T \int_\om \phi^{2+k} (\eta ^2+|\pa_t \eta| \eta) \,\ud vol_g\ud t,
\]
where $C>0$ depends only on $n$. Making use of Lemma  \ref{lem:sob-parabolic}, we have for all $\gamma:=k+2\ge \frac{n+2}{2}$ that
\[
\left(\int_{T_1}^T \int_\om \phi^{\frac{\gamma(n+2)}{n}} \,\ud vol_g\ud t\right)^{\frac{n}{\gamma(n+2)}} \le \left(\frac{C\gamma}{T_1-T_2}\right)^{\frac{1}{\gamma}}  \left(\int_{T_2}^T \int_\om \phi^{\gamma} \,\ud vol_g\ud t\right)^{\frac{1}{\gamma}}  ,
\]
By the standard Moser's iteration argument,  we have 
\[
\sup_{\om\times [1,T]} \phi \le C(n,K_b) \Big(\int_{1/2}^T\int_\om \phi^{\frac{n+2}{2}}\,\ud vol_g\ud t \Big)^{\frac{2}{n+2}},
\]
where $C(n,K_b)>0$ depending only on $n$ and $K_b$. Thus,
\begin{align*}
\sup_{\om\times [1,T]} \phi &\le C(n,K_b) \Big(\int_{1/2}^1\int_\om \phi^{\frac{n+2}{2}}\,\ud vol_g\ud t \Big)^{\frac{2}{n+2}}+C(n,K_b) \Big(\int_{1}^T\int_\om \phi^{\frac{n+2}{2}}\,\ud vol_g\ud t \Big)^{\frac{2}{n+2}}\\
&\le C(n,K_b) \|\mathcal{R}-1\|_{L^\infty (\om \times (1/2,1))}+C(n,K_b) \Big(\int_{1}^T\int_\om \phi^{\frac{n+2}{2}}\,\ud vol_g\ud t \Big)^{\frac{2}{n+2}},
\end{align*}
By Young's inequality, we have 
\begin{align*}
\Big(\int_1^T\int_\om \phi^{\frac{n+2}{2}}\,\ud vol_g\ud t \Big)^{\frac{2}{n+2}} &\le (\sup_{\om\times [1,T]} \phi)^{\frac{n-2}{n+2}} \Big(\int_1^T\int_\om \phi^{2}\,\ud vol_g\ud t \Big)^{\frac{2}{n+2}}\\&
\le \va \sup_{\om\times [1,T]} \phi +C(\va) \Big(\int_1^T\int_\om \phi^{2}\,\ud vol_g\ud t \Big)^{\frac{1}{2}},
\end{align*}
for any small constant $\va$. Therefore, by choosing a small $\va$, we have 
\be \label{eq:finite-T}
\sup_{\om\times [1,T]} \phi \le C(n,K_b)\left\{  \|\mathcal{R}-1\|_{L^\infty (\om \times (1/2,1))}+ \Big(\int_{1}^T M_2\,\ud t \Big)^{\frac{1}{2}} . \right\}
\ee 
By \eqref{eq:F-dcr} and the definition of $\mathcal{R}$, we have 
\[
\frac{\ud }{\ud t}F(v(t))= -\frac{2(n-2)}{n+2}M_2(t). 
\]
It follows that 
\be \label{eq:m-2}
\int_0^\infty M_2(t)\,\ud t \le \frac{n+2}{2(n-2)}(F(v(0))- F_\infty)<\infty.
\ee 
Moreover, it was proved in \cite{JX19} that   $\|\mathcal{R}-1\|_{L^\infty (\om \times (1/2,1))} \le C$.  Sending $T\to \infty$ in \eqref{eq:finite-T},  we have 
\[
\sup_{\om\times [1,\infty)} (1-\mathcal{R})^+ \le C.   
\]
Therefore, the proof is completed. 
\end{proof}  

Using this uniform lower bound of $\mathcal{R}$, we can derive some useful differential inequalities for $M_q$ defined in \eqref{eq:definitionMq}.

For $q>1$,  using Lemma \ref{lem:flow-properties}, 
 we have
\begin{align*}
\frac{\ud M_q}{\ud t}&= \int_\Omega q |\mathcal{R}-1|^{q-2} (\mathcal{R}-1 )\frac{\pa }{\pa t} (\mathcal{R}-1) \,\ud vol_g -\int_\Omega (\mathcal{R}-1)^{q} \frac{\partial}{\partial t}v^{\frac{2n}{n-2}}\,\ud x   \\&
=q\frac{n-2}{n+2} \int_\Omega |\mathcal{R}-1|^{q-2} ( \mathcal{R}-1) \Delta_g ( \mathcal{R}-1)\,\ud vol_g \\
&\quad + \frac{4q}{n+2}\int_\Omega   |\mathcal{R}-1|^{q} \,\ud vol_g  +\frac{4}{n+2}(q-\frac{n}{2}) \int_\Omega   |\mathcal{R}-1|^{q} ( \mathcal{R}-1)\,\ud vol_g.
\end{align*}
Using Lemma \ref{lem:Rlowerbound},  we have for $t\ge 1$ that
\[
\left|\int_\Omega   |\mathcal{R}-1|^{q} ( \mathcal{R}-1)\,\ud vol_g-\int_\Omega   |\mathcal{R}-1|^{q+1}\,\ud vol_g\right|= 2\int_\Omega   |\mathcal{R}-1|^{q} ( \mathcal{R}-1)^-\,\ud vol_g\le CM_q.
\]
Using Lemma \ref{lem:flow-properties}, we have
\begin{align*}
\int_\Omega |\mathcal{R}-1|^{q-2} ( \mathcal{R}-1) \Delta_g ( \mathcal{R}-1)\,\ud vol_g= -\frac{4(q-1)}{q^2}\int_\Omega  |\nabla_g |\mathcal{R}-1|^{\frac{q}{2}}|_g^2\,\ud vol_g\le 0.
\end{align*}
Therefore, for $q\le \frac{n}{2}$ we have,
\begin{equation}\label{eq:diff-ineq-2}
\frac{\ud M_q}{\ud t} + \frac{4}{n+2}\left(\frac{n}{2}-q\right) M_{q+1}\le CM_q\quad\mbox{for }t\ge 1,
\end{equation}
where $C>0$ is a constant depending on $q$.

For $q\ge \frac{n}{2}$, we first obtain from Lemma \ref{lem:sob-elliptic} that
\begin{align*}
&\int_\Omega |\mathcal{R}-1|^{q-2} ( \mathcal{R}-1) \Delta_g ( \mathcal{R}-1)\,\ud vol_g\\
&= -\frac{4(q-1)}{q^2}\int_\Omega  |\nabla_g |\mathcal{R}-1|^{\frac{q}{2}}|_g^2\,\ud vol_g\\
&\le -\beta M_{\frac{qn}{n-2}}^{\frac{n-2}{n}}  +\frac{4(q-1)}{q^2} \int_\Omega \mathcal{R} |\mathcal{R}-1|^q\,\ud vol_g\\
&\le -\beta M_{\frac{qn}{n-2}}^{\frac{n-2}{n}}  +\frac{4(q-1)}{q^2} \int_\Omega (\mathcal{R}-1) |\mathcal{R}-1|^q\,\ud vol_g +\frac{4(q-1)}{q^2} M_q,
\end{align*}
where $\beta>0$ is a constant depending on $K_b$ and $q$. Then, we have
\[
\frac{\ud M_q}{\ud t} +  \beta M_{\frac{qn}{n-2}}^{\frac{n-2}{n}} \le  \frac{4}{n+2}\left(q-\frac{n}{2}+\frac{(n-2)(q-1)}{q}\right) M_{q+1}+CM_q.
\]
By the interpolation inequality and Young's inequality we have
\begin{align*}
M_{q+1} \le M_{\frac{qn}{n-2}}^{\frac{n-2}{2q}} M_{q}^{\frac{2(q+1)-n}{2q}} \le \va M_{q(p+1)/2}^{\frac{n-2}{n}} +C(\va) M_{q}^{\frac{2(q+1)-n}{2q-n}}.
\end{align*}
By choosing a small $\va$, we obtain
\be \label{eq:diff-ineq-3}
\begin{split}
\frac{\ud }{\ud t} M_q(t)  +\beta M_{\frac{qn}{n-2}}(t)^{\frac{n-2}{n}} &\le C\Big(M_q(t)+  M_{q}(t)^{1+\frac{2}{2q-n}}\Big) \quad\mbox{for }t\ge 1
\end{split}
\ee 
for $q>\frac{n}{2}$, where $\beta$ and $C$ are positive constants depending on $q$.  

The differential inequalities \eqref{eq:diff-ineq-2} and \eqref{eq:diff-ineq-3} will be used recursively to prove the decay of $M_q$ for all $q\ge 1$. 

\begin{prop}\label{prop:Mq} For every $1\le q<\infty$, we have 
\[
\lim_{t\to \infty} M_q(t) =0. 
\]
\end{prop}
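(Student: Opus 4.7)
The plan is to first establish the base case $M_2(t)\to 0$, use H\"older's inequality to extend to $q\in[1,2]$, and then bootstrap upward in $q$ by iterating the two differential inequalities \eqref{eq:diff-ineq-2} and \eqref{eq:diff-ineq-3}.

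For the base case $M_2(t)\to 0$, I would combine the integrability $\int_0^\infty M_2(t)\,\ud t<\infty$ from \eqref{eq:m-2} with the linear upper bound $\frac{\ud }{\ud t}M_2\le CM_2$ obtained by discarding the nonnegative $M_3$ term in \eqref{eq:diff-ineq-2} (valid when $n\ge 4$ so that $q=2\le n/2$; when $n=3$ a parallel argument based on \eqref{eq:diff-ineq-3} together with a boundedness bootstrap would be needed). Gronwall's inequality going backward in time yields $M_2(s)\ge M_2(t_0)e^{-C(t_0-s)}$ for all $s\le t_0$, so that if $M_2(t_k)\ge\va$ along some sequence $t_k\to\infty$, one would have $\int_{t_k-1}^{t_k}M_2\,\ud s\ge \va e^{-C}$ for infinitely many $k$, contradicting integrability. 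H\"older's inequality, $M_q\le M_2^{q/2}\bigl(\int_\om v^{\frac{2n}{n-2}}\,\ud x\bigr)^{(2-q)/2}$, together with \eqref{eq:BN-5}, then gives $M_q(t)\to 0$ for every $q\in[1,2]$.

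For $q>2$ I would proceed by induction, passing from $q_0$ to some $q_1>q_0$ and iterating to send $q_1\to\infty$. Assume $M_{q_0}(t)\to 0$. If $q_0\le n/2$, integrating \eqref{eq:diff-ineq-2} over $[t,t+1]$ yields
\be
\frac{4}{n+2}\Big(\frac{n}{2}-q_0\Big)\int_t^{t+1}M_{q_0+1}(s)\,\ud s \le M_{q_0}(t)+C\int_t^{t+1}M_{q_0}(s)\,\ud s\longrightarrow 0,
\ee
and I set $q_1=q_0+1$. If $q_0>n/2$, the hypothesis $M_{q_0}\to 0$ combined with the local-in-time regularity from \cite{JX19} supplies a uniform bound $M_{q_0}\le B$ on $[1,\infty)$, so that $M_{q_0}^{1+2/(2q_0-n)}\le B^{2/(2q_0-n)}M_{q_0}$ reduces \eqref{eq:diff-ineq-3} to the linear bound $\frac{\ud M_{q_0}}{\ud t}+\beta M_{q_0 n/(n-2)}^{(n-2)/n}\le C'M_{q_0}$; integrating yields $\int_t^{t+1}M_{q_0n/(n-2)}^{(n-2)/n}\,\ud s\to 0$, from which one extracts $s_k\in[k,k+1]$ with $M_{q_1}(s_k)\to 0$ for $q_1:=q_0n/(n-2)>q_0$.

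It remains to upgrade the average-in-time smallness of $M_{q_1}$ to pointwise-in-time smallness. When $q_1\le n/2$ the differential inequality for $M_{q_1}$ is linear, $\frac{\ud M_{q_1}}{\ud t}\le CM_{q_1}$, and the backward-Gronwall argument used for $M_2$ applies verbatim. When $q_1>n/2$ it reads $\frac{\ud M_{q_1}}{\ud t}\le C(M_{q_1}+M_{q_1}^\alpha)$ with $\alpha=1+2/(2q_1-n)>1$; starting the ODE at a time $s_k$ with $M_{q_1}(s_k)\to 0$, the finite-time blow-up time scales like $M_{q_1}(s_k)^{-(\alpha-1)}\to\infty$, so $M_{q_1}$ stays arbitrarily small on $[s_k,s_k+2]$, and these intervals cover $[N,\infty)$ for $N$ large enough, yielding $M_{q_1}(t)\to 0$. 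Iterating the supercritical step forces $q_1\to\infty$, and a final H\"older interpolation fills in all remaining $q\ge 1$. The main obstacle is precisely this pointwise upgrade in the supercritical regime: the superlinearity of \eqref{eq:diff-ineq-3} obstructs a direct Gronwall argument, and the resolution rests on the observation that control is only needed on intervals of bounded length, while smallness of the initial data postpones finite-time ODE blow-up arbitrarily far into the future.
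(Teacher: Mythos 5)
Your overall strategy --- start from $\int_1^\infty M_2\,\ud t<\infty$, upgrade to $M_2(t)\to 0$ via the linear differential inequality, pass to $q\in[1,2]$ by H\"older, and then bootstrap upward by alternating \eqref{eq:diff-ineq-2} and \eqref{eq:diff-ineq-3} --- is the same as the paper's, and your device for converting unit-interval-average smallness into pointwise decay (extract good times $s_k\in[k,k+1]$ and compare with the ODE $y'=C(y+y^\alpha)$ on intervals of length $2$, where smallness of the data postpones blow-up beyond the interval) is a legitimate and somewhat more elementary alternative to the paper's recursive integrability statements and its comparison functions $H$ and $\arctan$.

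However, as written your induction stalls exactly at the borderline $q_0=n/2$, and this value is hit whenever $n$ is even --- in particular in the main case $n=4$, where it occurs at the very first bootstrap step $q_0=2$. Your subcritical step carries the factor $\frac{4}{n+2}\big(\frac n2-q_0\big)$ in front of $\int_t^{t+1}M_{q_0+1}$, which vanishes when $q_0=n/2$, so the displayed inequality degenerates to $0\le o(1)$ and gives no control on $M_{q_0+1}$; and your supercritical branch is unavailable there, since \eqref{eq:diff-ineq-3} and the exponent $1+\frac{2}{2q_0-n}$ require $q_0>n/2$ strictly. Thus for $n=4$ you cannot get past $q=2$, for $n=6$ past $q=3$, and so on. The repair is easy and stays inside your framework: since $M_q(t)\to 0$ for every $q\le q_0$ by H\"older, run the subcritical step at the parameter $q_0-\frac12<n/2$ to get unit-interval smallness of $M_{q_0+\frac12}$ with $q_0+\frac12>n/2$, and then continue with the supercritical branch --- but some such crossing of $n/2$ (the paper passes to $q_0=\frac{n^2}{2(n-2)}$ at this point) must be spelled out. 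A smaller point: for $n=3$ the base case $M_2\to0$ is only asserted (``a parallel argument ... would be needed''); note that no boundedness bootstrap is actually required, since your own mechanism applies verbatim: \eqref{eq:diff-ineq-3} gives $\frac{\ud}{\ud t}M_2\le C(M_2+M_2^{3})$, integrability of $M_2$ provides $s_k\in[k,k+1]$ with $M_2(s_k)\to0$, and the ODE comparison on $[s_k,s_k+2]$ finishes (the paper instead integrates to an $\arctan$ inequality).
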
 

\begin{proof} 
By H\"older's inequality and \eqref{eq:BN-5}, we only need to consider $q\ge 2$.

The idea of the proof will go recursively as follows.  Note that if the right hand sides of  \eqref{eq:diff-ineq-2} and \eqref{eq:diff-ineq-3}  are integrable in $[1,\infty)$, then by integrating both sides, and noticing that $ \frac{4}{n+2}\left(\frac{n}{2}-q\right) M_{q+1}$ with $q\le n/2$ and $\beta M_{\frac{qn}{n-2}}(t)^{\frac{n-2}{n}} $ are nonnegative and thus can be dropped,  we will have $M_q(t)\to 0$ as $t\to \infty$. Integrating again including these two nonnegative terms will in return show that they are integrable. This iteration shows the integrability and the limit of $M_{q+1}$ or $M_{\frac{qn}{n-2}}$ from $M_q$. The starting point of this iteration is $q=2$, because of \eqref{eq:m-2}. This gives us a desired sequence $\{q_k\}$ for which the proposition holds. The conclusion for all $q$ is then followed by H\"older's inequality and \eqref{eq:BN-5}. The details of the proof are given in the below.

Let us assume $n\ge 4$ first. 

Case 1. $2\le q\le \frac{n}{2}$. 

Since $M_2\in L^1(0,\infty)$, we can pick $t_j\to \infty$ such that $M_2(t_j) \to 0$ as $j\to \infty$.  By \eqref{eq:diff-ineq-2} we have 
\[
\frac{\ud }{\ud t} M_2(t) \le CM_2(t). 
\]
Integrating the above inequality we have 
\[
M_2(t)\le M_2(t_j) +C \int_{t_j}^\infty M_2(s)\,\ud s \quad \mbox{for }t\ge t_j. 
\]
Hence,  $\lim_{t\to \infty} M_2(t)=0$.  If $2<q\le \frac{n}{2}$, \eqref{eq:diff-ineq-2} we have 
\[
\int_{1}^\infty M_3(t)\,\ud t \le C \Big(\int_1^\infty M_2(t)\,\ud t +M_2(1)\Big) <\infty. 
\]
For any $2<q\le \min\{3,\frac{n}{2}\}$, we have $M_q(t)\le M_2(t)+M_3(t)$. Hence, $\int_{1}^\infty M_q(t)\,\ud t <\infty$. We can repeat the argument for $M_2$ to show that $M_q(t) \to 0$ as $t\to \infty$.  If $3<\frac{n}{2}$, we can show that $\int_1^\infty M_4<\infty$ and $M_{q}(t)\to 0$ as $t\to \infty$ for all $3< q\le  \min\{4,\frac{n}{2}\}$. Repeating this argument in finite times, and using H\"older's inequality with \eqref{eq:BN-5}, we then have $M_q \in L^1(1,\infty)$ and $M_{q}(t)\to 0$ as $t\to \infty$ for all $2\le q\le \frac{n}{2}$.

Case 2. $q> \max\{2, \frac{n}{2}\}$.

By \eqref{eq:diff-ineq-2} with $q=n/2$, we have  
\be\label{eq:q0}
\int_1^\infty M_{\frac{n^2}{2(n-2)}}(t)^{\frac{n-2}{n}}\,\ud t <\infty.
\ee
Using \eqref{eq:diff-ineq-3} to have 
\[
\frac{\ud }{\ud t} M_{q}(t) \le CM_{q}(t)^{\frac{n-2}{n}}\Big(M_{q}(t)^{\frac{2}{n}}+  M_{q}(t)^{\frac{2}{n}+\frac{2}{2q-n}}\Big) \quad\mbox{for }t\ge 1
\]
Hence,
\[
H(M_q(t))\le H(M_{q}(T)) +C\int_T^\infty M_{q}^{\frac{n-2}{n}}\,\ud t \quad \mbox{for } 1\le T<t<\infty
\]
where $$H(\rho)=\int_0^\rho \frac{1}{s^{\frac{2}{n}}+s^{\frac{2}{n}+\frac{2}{2q-n}}}\,\ud s.$$ 
Let 
\[
q_0=\frac{n^2}{2(n-2)},\quad q_k=\frac{n}{n-2}q_{k-1}, \quad k=1,2,\cdots.
\]  
Note that $\frac{2}{n}+\frac{2}{2q_0-n}=1$ and $\frac{2}{n}+\frac{2}{2q_k-n}<1$ for all $k\ge 1$. Hence, starting with \eqref{eq:q0} that $\displaystyle\int_1^\infty M_{q_{0}}(t)^{\frac{n-2}{n}}\,\ud t <\infty$, using similar arguments to those in Case 1, we can recursively prove in the order of $k=0,1,2,\cdots$ that $M_{q_k}(t)\to 0$ as $t\to\infty$,  $\displaystyle\int_1^\infty \Big(M_{q_k}(t)+  M_{q_k}(t)^{1+\frac{2}{2q-n}}\Big)<\infty$, $\displaystyle\int_1^\infty M_{q_{k+1}}(t)^{\frac{n-2}{n}}\,\ud t <\infty$, and $M_{q_{k+1}}(t)\to 0$ as $t\to\infty$. Hence, using H\"older's inequality with \eqref{eq:BN-5},  $M_q(t)\to 0$ for any $q\ge \frac{n^2}{2(n-2)}$.

Finally, let us consider $n=3$. By \eqref{eq:diff-ineq-3}, we have 
\[
\frac{\ud }{\ud t} M_2(t) \le CM_2(t)(1+M_2(t)^{2}). 
\]
Using \eqref{eq:m-2}, we can pick $t_i\to \infty$ such that $M_2(t_i) \to 0$. Hence, 
\[
\arctan M_2(t) \le \arctan M_2(t_i) +C \int_{t_i}^\infty M_2(t)\,\ud t.  
\] It follows that $\lim_{t\to \infty}\arctan M_2(t) =0$ and thus $\lim_{t\to \infty} M_2(t)=0$. Hence, $\displaystyle\int_1^\infty M_{6}(t)^{\frac{1}{3}}\,\ud t <\infty$. Since $6>q_0$ when $n=3$, we can use the argument of those in Case 2 to show that $M_q(t)\to 0$ for all $q\ge 6$. By H\"older inequality, we conclude that $M_q(t)\to 0$ for all $q\ge 1$.
\end{proof}

\begin{cor} \label{cor:curvature-convergence}
We have
\[
\lim_{t\to\infty}\|\mathcal{R}-1\|_{L^\infty(\Omega)}=0.
\]
\end{cor}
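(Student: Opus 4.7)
Since Lemma \ref{lem:Rlowerbound} provides the uniform lower bound $\mathcal{R}-1\ge -C$ on $\Omega\times[1,\infty)$, the corollary reduces to showing the two one-sided decays
\[
\lim_{t\to\infty}\sup_\Omega(1-\mathcal{R})^+(\cdot,t)=0 \quad\text{and}\quad \lim_{t\to\infty}\sup_\Omega(\mathcal{R}-1)^+(\cdot,t)=0.
\]
My plan is to rerun the Moser iteration of Lemma \ref{lem:Rlowerbound} on a time slab $[T_0,T_0+2]$, using a temporal cut-off $\eta(t)$ that vanishes for $t\le T_0$ and equals $1$ on $[T_0+1,T_0+2]$. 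The effect of arranging $\eta(T_0)=0$ is that the boundary contribution at the left end of the slab drops out of the energy identity, so that the iteration outputs a sup-bound controlled purely by integrals of $\mathcal{R}-1$ over the slab, which tend to zero for large $T_0$ by Proposition \ref{prop:Mq}.

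For the lower side $\phi=(1-\mathcal{R})^+$, the argument of Lemma \ref{lem:Rlowerbound} applied with this shifted cut-off produces
\[
\sup_{\Omega\times[T_0+1,T_0+2]}\phi \;\le\; C\Big(\int_{T_0}^{T_0+2} M_2(t)\,\ud t\Big)^{1/2},
\]
and the right-hand side tends to zero as $T_0\to\infty$ since $M_2\in L^1(0,\infty)$ by \eqref{eq:m-2}.

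For the upper side $\psi=(\mathcal{R}-1)^+$, multiplying the evolution equation of Lemma \ref{lem:flow-properties}(ii) by $\eta^2\psi^{k+1}$ and using Lemma \ref{lem:flow-properties}(i) to differentiate the volume form produces, in addition to the terms already appearing in the Lemma \ref{lem:Rlowerbound} computation, a super-critical contribution $\int\!\!\int\eta^2\psi^{k+3}\,\ud vol_g\,\ud t$ with net coefficient $\frac{4(k+2)-2n}{(n+2)(k+2)}$. In the sub-critical range $k\le n/2-2$ this coefficient is non-positive and the term drops, so the iteration proceeds verbatim as in Lemma \ref{lem:Rlowerbound}. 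For $k>n/2-2$ I use H\"older's inequality together with Lemma \ref{lem:sob-parabolic} to get
\[
\int_{T_0}^{T_0+2}\!\!\int_\Omega \eta^2\psi^{k+3}\,\ud vol_g\,\ud t \;\le\; C\Big(\int_{T_0}^{T_0+2}M_{(n+2)/2}(t)\,\ud t\Big)^{\frac{2}{n+2}}\|\eta\psi^{(k+2)/2}\|_{V^1(\Omega\times(T_0,T_0+2))}^2,
\]
and for $T_0$ large Proposition \ref{prop:Mq} makes the prefactor arbitrarily small, which lets me absorb this term into the left-hand side of the energy identity. Bootstrapping in $k$ then gives $\sup_{\Omega\times[T_0+1,T_0+2]}\psi\le\omega(T_0)$ with $\omega(T_0)\to0$.

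The main obstacle is the quadratic nonlinearity $(\mathcal{R}-1)^2$ in the evolution equation: it has the wrong sign for the upper-bound iteration and is genuinely super-critical above $L^{n/2}$. The rescue is Proposition \ref{prop:Mq}, which provides arbitrarily high-order integral smallness of $\mathcal{R}-1$ and hence exactly the slack needed to absorb the bad term into the parabolic Sobolev norm on any sufficiently late time slab.
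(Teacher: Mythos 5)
Your overall strategy (rerun the Moser iteration of Lemma \ref{lem:Rlowerbound} on late time slabs and let Proposition \ref{prop:Mq} supply the smallness) is in the spirit of the paper, and your treatment of the lower side $(1-\mathcal{R})^+$ is fine. The genuine gap is in the upper side, in the step ``absorb this term into the left-hand side and bootstrap in $k$''. Your bound for the bad cubic term is taken exactly at the parabolic-critical exponent: you pair $\int\!\!\int\eta^2\psi^{k+3}$ with $\bigl(\int M_{(n+2)/2}\bigr)^{2/(n+2)}\|\eta\psi^{(k+2)/2}\|_{V^1}^2$. But in the energy inequality the good terms carry degenerating constants: the gradient term has coefficient $\frac{4(n-2)(k+1)}{(n+2)(k+2)^2}\sim k^{-1}$ and the time term carries $\frac{1}{2+k}$, so (as in the paper's own derivation, where the $V^1$ bound is obtained only after multiplying through by $C(2+k)$) the cubic term effectively enters with a factor $C(2+k)$. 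Absorption therefore requires $C(2+k)\,\delta(T_0)\le \tfrac12$ with $\delta(T_0)=\bigl(\int_{T_0}^{T_0+2}M_{(n+2)/2}\bigr)^{2/(n+2)}$ fixed on a fixed slab, while the Moser iteration must run through exponents $k\to\infty$. So for any fixed $T_0$ the absorption fails beyond finitely many levels, and choosing $T_0=T_0(k)$ does not produce a single sup bound; moreover the same issue reappears when you complete the $V^1$ norm, since $\int\mathcal{R}\psi^{2+k}\eta^2=\int\psi^{2+k}\eta^2+\int\psi^{3+k}\eta^2$ on the support of $\psi$. The critical-exponent H\"older estimate simply leaves no room to interpolate, so smallness alone cannot compensate the $k$-growth.

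The paper closes this by using more than criticality: from the proof of Proposition \ref{prop:Mq} one has $\int_1^\infty M_q(t)\,\ud t<\infty$ for arbitrarily large $q$, so $|\mathcal{R}-1|$ is a space-time potential in $L^q(\ud vol_g\,\ud t)$ with $q$ strictly above $\frac{n+2}{2}$. Writing the quadratic term as $V\,(\mathcal{R}-1)$ with $V=\frac{4}{n+2}|\mathcal{R}-1|$, the H\"older pairing then lands strictly below the Sobolev exponent $\frac{2(n+2)}{n}$, and an interpolation/Young step with a level-dependent small parameter (whose constants accumulate harmlessly in the iteration) absorbs the bad term without any smallness condition tied to $k$. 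This treats both signs of $\mathcal{R}-1$ at once and yields $\|\mathcal{R}-1\|_{L^\infty(\Omega\times(T,\infty))}\le C\bigl(\int_{T-1}^\infty M_q\bigr)^{1/q}\to 0$. To repair your argument you should replace the critical pairing with $M_{(n+2)/2}$ by this supercritical $L^q$ treatment (or otherwise show the absorption constant can be made uniform in $k$); as written, the bootstrap in $k$ does not close. (Minor: your lower-side display with exponent $1/2$ also implicitly uses the uniform bound on $\phi$ from Lemma \ref{lem:Rlowerbound} to control $\int\!\!\int\phi^{(n+2)/2}$ by $\int M_2$; that is available, but worth saying.)
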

\begin{proof}
Consider the equation of $1-\mathcal{R}$ as in the proof of Lemma \ref{lem:Rlowerbound}:
\[
\pa_t (1-\mathcal{R})= \frac{n-2}{n+2} \Delta_g (1-\mathcal{R}) +c(x,t) (1-\mathcal{R}) +\frac{4}{n+2} (1-\mathcal{R}),
\]
where $c(x,t)=-\frac{4}{n+2} (1-\mathcal{R})$. This is a linear equation of $1-\mathcal{R}$. We know from the proof of Proposition \ref{prop:Mq} that there exists a sufficiently large $q>1$ such that
\[
\int_1^\infty M_q(t)\,\ud t<\infty.
\]
This means that $c(x,t)$ has very high integrability against $\ud vol_g$ in space-time. Then we can apply the Moser's iteration as in the proof of Lemma \ref{lem:Rlowerbound} to obtain
\[
\|\mathcal{R}-1\|_{L^\infty(\Omega\times(T,\infty))} \le C \left(\int_{T-1}^\infty M_q(t)\,\ud t\right)^{\frac 1q}
\]
for all large $T$. Hence, the corollary follows. 
\end{proof}

\section{Concentration compactness}\label{sec:concentration}

The solution of \eqref{eq:BN-3} may blow up as $t\to \infty$ because of the critical exponent $\frac{n+2}{n-2}$. Nevertheless, we also know how the solutions may blow up. 

\begin{prop}\label{prop:c-c} Let $v$ be a solution of \eqref{eq:BN-3}. For any $t_\nu \to \infty$, $\nu \to \infty$, $v_\nu=v(\cdot,t_\nu)$ is  a Palais-Smale sequence of the functional $F$ given by \eqref{eq:F-funct} in $H_0^1(\om)$. 

\end{prop}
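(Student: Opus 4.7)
The plan is to verify the two defining properties of a Palais--Smale sequence for $F$: boundedness of $F(v_\nu)$, and $F'(v_\nu)\to 0$ in $H^{-1}(\Omega)$.

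\textbf{Boundedness of $F(v_\nu)$.} This is immediate from the energy identity \eqref{eq:F-dcr}, which shows $F(v(t))$ is non-increasing, together with \eqref{eq:f_infty}, which gives $F(v_\nu)\to F_\infty$ in particular $F(v_\nu)$ is bounded. (One also gets boundedness of $\|v_\nu\|_{H^1_0(\Omega)}$ by combining this with \eqref{eq:BN-5}, though this is not strictly needed.)

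\textbf{Showing $F'(v_\nu)\to 0$ in $H^{-1}(\Omega)$.} I would compute the Fr\'echet derivative: for $\phi\in H^1_0(\Omega)$,
\[
F'(v)[\phi]=2\int_{\om}\bigl(\nabla v\cdot\nabla\phi-bv\phi-v^{\frac{n+2}{n-2}}\phi\bigr)\,\ud x=2\int_\om \bigl(-\Delta v-bv-v^{\frac{n+2}{n-2}}\bigr)\phi\,\ud x.
\]
Here the integration by parts is justified since $v(\cdot,t_\nu)$ is smooth up to $\partial\Omega$ and $\phi$ vanishes on $\pa\om$. The crucial point is then to rewrite the bulk factor using the definition \eqref{eq:Q} of $\mathcal{R}$: indeed $-\Delta v-bv=\mathcal{R}\,v^{(n+2)/(n-2)}$, so
\[
F'(v_\nu)[\phi]=2\int_\om (\mathcal{R}(\cdot,t_\nu)-1)\,v_\nu^{\frac{n+2}{n-2}}\,\phi\,\ud x.
\]

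\textbf{Reduction to $M_q$.} I would estimate the right-hand side by H\"older with conjugate exponents $p=\tfrac{2n}{n+2}$ and $p'=\tfrac{2n}{n-2}$, followed by the Sobolev embedding $H^1_0(\Omega)\hookrightarrow L^{2n/(n-2)}(\Omega)$:
\[
|F'(v_\nu)[\phi]|\le 2\Bigl(\int_\om |\mathcal{R}-1|^{\frac{2n}{n+2}}v_\nu^{\frac{2n}{n-2}}\,\ud x\Bigr)^{\frac{n+2}{2n}}\|\phi\|_{L^{\frac{2n}{n-2}}}\le C\,M_{\frac{2n}{n+2}}(t_\nu)^{\frac{n+2}{2n}}\|\phi\|_{H^1_0(\Omega)},
\]
because the integrand on the left-hand side is precisely the one in the definition \eqref{eq:definitionMq} of $M_q$ with $q=2n/(n+2)$. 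For $n\ge 3$ one has $2n/(n+2)>1$, so Proposition~\ref{prop:Mq} applies and $M_{2n/(n+2)}(t_\nu)\to 0$. Taking the supremum over $\phi$ with $\|\phi\|_{H^1_0(\Omega)}\le 1$ gives $\|F'(v_\nu)\|_{H^{-1}(\Omega)}\to 0$, completing the proof.

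There is no real obstacle here: the proposition is essentially a direct translation of the fact that along the flow, $\mathcal{R}-1$ measures precisely the $L^2(v^{2n/(n-2)}\,\ud x)$-deviation of $v$ from a critical point of $F$, and the $M_q$-decay of Proposition~\ref{prop:Mq} bundles exactly the information needed to pass this deviation into an $H^{-1}$-estimate via Sobolev embedding. The only point to check carefully is the arithmetic $p=2n/(n+2)>1$ in all admissible dimensions, which is immediate.
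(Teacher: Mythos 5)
Your proposal is correct and follows essentially the same route as the paper: compute $\langle \ud F(v_\nu),\varphi\rangle$, rewrite it as $2\int_\om(\mathcal{R}-1)v_\nu^{\frac{n+2}{n-2}}\varphi\,\ud x$ via \eqref{eq:Q}, and bound it by $CM_{\frac{2n}{n+2}}(t_\nu)^{\frac{n+2}{2n}}\|\varphi\|_{H^1_0(\om)}$ using H\"older, the Sobolev inequality, and Proposition \ref{prop:Mq}. The boundedness of $F(v_\nu)$ and of $\|v_\nu\|_{H^1_0(\om)}$ from \eqref{eq:F-dcr}, \eqref{eq:f_infty} and \eqref{eq:BN-5} is also exactly as in the paper.
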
 

\begin{proof}  We have already proved that $v_\nu$ is bounded in $H^1_0(\om)$ and $F(v_\nu)\to F_\infty$ as $\nu \to \infty$. It remains to show the derivative of $F$ at $v_\nu$ tends to zero. Indeed, for any $\varphi\in H^1_0(\om)$, we have 
{\allowdisplaybreaks
\begin{align*}
\langle \ud F(v_\nu ), \varphi \rangle&= 2\int_{\om} (-\Delta v_\nu-bv_\nu- v_\nu ^{\frac{n+2}{n-2}}) \varphi \,\ud x \\&
= 2\int_{\om} (\mathcal{R} -1 ) v_\nu ^{\frac{n+2}{n-2}} \varphi \,\ud x\\&
\le 2 \left(\int_{\om} |\mathcal{R} -1 |^{\frac{2n}{n+2}} v_\nu ^{\frac{2n}{n-2}} \,\ud x\right)^{\frac{n+2}{2n}} \left(\int_{\om} | \varphi|^{\frac{2n}{n-2}} \,\ud x \right)^{\frac{n-2}{2n}} \\&
\le C(n)M_{\frac{2n}{n+2}}(t_\nu)^{\frac{n+2}{2n} } \|\varphi \|_{H^1_0(\om)}, 
\end{align*}
}
where we used H\"older's inequality  and the Sobolev inequality. It follows from Proposition \ref{prop:Mq} that $\ud F(v_\nu)$ strongly converges to $0$ in $H^{-1}(\om)$. 

Therefore, the proof is completed. 
\end{proof}

The next proposition shows that the blow up points, if exist, will stay uniformly away from the boundary $\partial\Omega$.

\begin{prop} \label{prop:away} There exist two positive constants  $\delta_0$ and $C$, depending on $v(\cdot,1)$,   such that for all $x\in \om$ with $d(x):=\mbox{dist}(x,\partial\Omega)<\delta_0$ and $t\ge 1$,  
\[
v(x,t)\le C d(x). 
\]
\end{prop}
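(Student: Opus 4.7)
My plan is a parabolic comparison argument localized in a boundary strip $\om_{\delta_0}=\{x\in\om:d(x)<\delta_0\}$. At time $t=1$ the boundary regularity from \cite{JX19} provides $v(\cdot,1)\in C^{(3n-2)/(n-2)}(\overline\om)$ vanishing on $\pa\om$, so $v(x,1)\le C_0 d(x)$ in $\om$ with $C_0=C_0(v(\cdot,1))$. Combining Lemma \ref{lem:Rlowerbound} with its corollary gives $\|\mathcal{R}\|_{L^\infty(\om\times[1,\infty))}\le K$ with $K=K(v(\cdot,1))$, so the identity $-\Delta v-bv=\mathcal{R} v^{\frac{n+2}{n-2}}$ from \eqref{eq:Q} yields a family of elliptic problems with uniformly bounded right-hand side coefficient.

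The candidate time-independent supersolution is $w(x)=A\phi_1(x)$, where $\phi_1$ is the first Dirichlet eigenfunction of $-\Delta$ on $\om$, normalized so that $c_0 d(x)\le\phi_1(x)\le C'd(x)$ near $\pa\om$ by Hopf's lemma. Since $b<\lda_1$, the direct computation
\[
\Delta w+bw+w^{\frac{n+2}{n-2}}=A\phi_1\bigl[(b-\lda_1)+(A\phi_1)^{\frac{4}{n-2}}\bigr]\le 0\quad\text{in }\om_{\delta_0}
\]
holds whenever $A\sup_{\om_{\delta_0}}\phi_1\le(\lda_1-b)^{(n-2)/4}$. Taking $A=C_0/c_0$ arranges $w\ge v(\cdot,1)$ in $\overline{\om_{\delta_0}}$ and $w\le(AC')d=Cd$ automatically; the supersolution inequality is then met by shrinking $\delta_0$ (depending on $A$, hence on $v(\cdot,1)$). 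The parabolic comparison principle for \eqref{eq:BN-3} yields $v\le w\le Cd$ on $\om_{\delta_0}\times[1,\infty)$ provided one also knows $v(\cdot,t)\le w$ on the interior face $\{d=\delta_0\}$ for every $t\ge 1$.

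The main obstacle is exactly this uniform-in-$t$ bound for $v$ along $\{d=\delta_0\}$, which amounts to precluding concentration of $v$ near $\pa\om$. To handle it I would combine the uniform global energy bound \eqref{eq:BN-5} with an $\varepsilon$-regularity estimate for the semilinear elliptic equation $-\Delta v=(b+\mathcal{R} v^{4/(n-2)})v$, obtained via Moser iteration on the potential $V:=b+\mathcal{R} v^{4/(n-2)}$ (whose $L^{n/2}$-norm on small balls is small thanks to \eqref{eq:BN-5}): there is a universal $\varepsilon_0>0$ such that $\|v(\cdot,t)\|_{L^\infty(B_r(x_0))}$ is bounded uniformly in $t$ whenever $\|v(\cdot,t)\|_{L^{2n/(n-2)}(B_{2r}(x_0))}<\varepsilon_0$. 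If concentration nevertheless approached $\pa\om$, a blow-up rescaling around the concentration point would produce either a positive solution of $-\Delta V=V^{(n+2)/(n-2)}$ on $\mathbb{R}^n$ (a bubble pinned at bounded distance from $\pa\om$, which can be excluded from $\{d=\delta_0\}$ by shrinking $\delta_0$) or a nontrivial positive solution on a half-space with zero Dirichlet data (ruled out by the Pohozaev identity / moving planes). These alternatives, applied to finitely many concentration candidates permitted by \eqref{eq:BN-5}, force the required uniform bound on $\{d=\delta_0\}$ and close the comparison argument.
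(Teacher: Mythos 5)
Your outline splits into two parts: (a) a static supersolution $A\phi_1$ in a thin strip to upgrade a uniform bound to $v\le Cd$, and (b) a uniform-in-$t$ bound for $v$ on the inner face $\{d=\delta_0\}$ (and, implicitly, the exclusion of concentration inside the strip) obtained from $\varepsilon$-regularity plus a blow-up dichotomy. Part (a) is fine as far as it goes (the paper instead quotes the proof of Theorem 4.1 of \cite{DKV} for this step), but part (b) has a genuine gap, and it is exactly where the whole content of the proposition lies. The dichotomy ``entire bubble on $\R^n$ versus positive half-space solution with zero Dirichlet data'' only excludes concentration at points $x_\nu$ with $\lda_\nu d(x_\nu)=O(1)$; it says nothing about bubbles whose centers satisfy $d(x_\nu)\to 0$ while $\lda_\nu d(x_\nu)\to\infty$, which rescale to the same entire bubble as interior concentration does. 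Your remark that such bubbles ``can be excluded from $\{d=\delta_0\}$ by shrinking $\delta_0$'' does not work: $\delta_0$ must be fixed once and for all, before the (unknown, time-dependent) concentration locations, and if for every $\delta>0$ there were times and points with $d<\delta$ where $v$ is large, no choice of $\delta_0$ would help — ruling this out is precisely the statement being proved. Moreover, the lateral datum $v(\cdot,t)\le w$ on $\{d=\delta_0\}$ for all $t\ge 1$ is not available at this stage of the paper: interior bubbling (including at or arbitrarily near the fixed hypersurface $\{d=\delta_0\}$) is only excluded later, in Sections 4--5, using $n\ge 4$, $b>0$, and those arguments themselves rely on this proposition through Proposition \ref{prop:s-b-c}; the per-time finiteness of concentration candidates from \eqref{eq:BN-5} does not prevent their locations from drifting into the strip as $t$ varies.

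The paper's proof avoids needing any pointwise control on an interior hypersurface. It runs the moving plane method (following Han \cite{H}) for the parabolic problem — for general domains after a Kelvin transform at each boundary point, where $b\ge 0$ ensures the transformed potential $b|x|^{-4}$ is monotone in the right direction — to get, for all $t\ge 1$ simultaneously, monotonicity of $v(\cdot,t)$ along cone directions pointing into $\Omega$ near $\pa\om$. Then $v(x,t)$ is bounded by the average of $v(\cdot,t)$ over a set $\Gamma_x$ of fixed positive measure located at distance $\ge\delta/2$ from $\pa\om$, and this average is controlled by \eqref{eq:BN-5} and H\"older alone; crucially, this bound is insensitive to possible interior bubbles, since they carry only bounded $L^{2n/(n-2)}$ mass. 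To repair your argument you would need some substitute for this monotonicity (moving planes, or a localized Pohozaev/Kelvin-transform argument near $\pa\om$) to kill bubbles approaching the boundary with $\lda_\nu d(x_\nu)\to\infty$; the $\varepsilon$-regularity and half-space Liouville facts you invoke are correct but insufficient.
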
 

\begin{proof} We are going to use the moving plane method as Han \cite{H} did for the elliptic case.  By the Hopf Lemma, there exist $\rho_0>0$ and $\al_0>0$ such that $v(z-\rho e, 1)$ is nondecreasing for $0<\rho<\rho_0$, where $z\in \pa \om$, $e\in \R^n $ with $|e|=1$, and $(e,\nu(z))\ge \al_0$ with $\nu(z)$ the unit out normal to $\pa \om $ at $z$. If $\om$ is strictly convex,  using the moving plane method we can conclude that  $v(z-\rho e, t)$ is nondecreasing for $0<\rho<\rho_0$, and for all $t\ge 1$. Therefore, we can find $\gamma>0 $ and $\delta>0$ such that for any fixed $t\ge 1$, and any $x\in\Omega$ satisfying $0<d(x)<\delta$, there exists a measurable set $\Gamma_x$  with (i) $meas(\Gamma_x )\ge \gamma$, (ii) $\Gamma_x \subset \{z: d(z)\ge \delta/2 \}$, and (iii) $v(y,t)\ge v(x,t)$ for any $y\in \Gamma_x$. Actually, $\Gamma_x$ can be taken to a piece of cone with vertex at $x$. It follows that for any $x\in \{z:0<d(z)<\delta\}$, we have 
\[
v(x,t) \le \frac{1}{meas(\Gamma_x )} \int_{\Gamma_x}   v(y, t)\,\ud y \le \frac{C}{\gamma},
\]
where we used \eqref{eq:BN-5} and H\"older's inequality. Namely, $v(x,t)\le C$ for $(x,t)\in \{z:0<d(z)<\delta\} \times [1,\infty)$.  By the proof of Theorem 4.1 in \cite{DKV}, we have $v(x,t)\le Cd(x)$ for $(x,t)\in \{z:0<d(z)<\delta\} \times [1,\infty)$. 

For a general domain, one can first use a Kelvin transform near each boundary point, and then apply the moving plane method. Pick any point $P\in \pa \om$ for instance. Since we assume the boundary of the domain $\om$ is smooth, we may assume, without loss of generality, that the unit ball $B_1$ contacts $P$ from the exterior of $\om$ (i.e., $B_1\subset\Omega^c$ and $P\in\partial B_1$). Let $w(x,t)$ be the Kelvin transform of $v$: 
\[
w(x,t)= |x|^{2-n} v\left(\frac{x}{|x|^2}, t\right). 
\] 
Then 
\[
\begin{cases}
\pa_t w^{\frac{n+2}{n-2}} =\Delta w+b|x|^{-4} w +w^{\frac{n+2}{n-2}} \quad \mbox{in }\om_P \times (0,\infty)\\  
w=0 \quad \mbox{on }\pa \om_P \times (0,\infty),
\end{cases}
\]
where $\om_P$ is the image of $\om$ under the Kelvin transform. Since $b\ge 0$,  $b|x|^{-4}$ is nondecreasing along the $-P$ direction. Applying the moving plane method we have that  $w(\cdot,t)$ is nondecreasing along the $-P$ direction in a neighborhood (uniform in $t$) of $P$. Since the $L^{\frac{2n}{n-2}}$ norm is invariant under the Kelvin transform, using the above argument in the case of strictly convex domains, we conclude that $w(\cdot, t)$ is bounded in a neighborhood of $P$ independent of $t$ and so is $v(\cdot, t)$.  It follows that $v(x,t)\le Cd(x)$ for $(x,t)\in \{z:0<d(z)<\delta\} \times [1,\infty)$ for some $\delta>0$.

Therefore, the proof is completed. 
\end{proof}

 For $a\in \R^n$ and $\lda\in (0,\infty)$, let 
\be \label{eq:bubble-def}
\bar \xi_{a,\lda}(x)=c_0 \left(\frac{\lda}{1+\lda^2|x-a|^2}\right)^{\frac{n-2}{2}}
\ee
with $c_0=(n(n-2))^{\frac{n-2}{4}}$. Then we have 
\[
-\Delta \bar \xi_{a,\lda} = \bar \xi_{a,\lda}^{\frac{n+2}{n-2}} \quad \mbox{in }\R^n
\]
and 
\[
\int_{\R^n} \bar \xi_{a,\lda} ^{\frac{2n}{n-2}}= Y(\Sn)^{\frac{n}{2}},
\]
where $\Sn$ is the standard unit sphere in $\R^{n+1}$, 
\[
Y(\Sn)= \frac{n(n-2)}{4} |\Sn|^{\frac{2}{n}}=\inf_{u\in H^{1}(\mathbb{S}^n )} \frac{\int_{\Sn } |\nabla u|^2 +\frac{n(n-2)}{4} u^2\ud vol_{g_{\mathbb{S}^n}}}{(\int_{\Sn} | u|^{\frac{2n}{n-2}} vol_{g_{\mathbb{S}^n}} )^{\frac{n-2}{n}}}
\] 
and $|\Sn|$ is the area of $\Sn$. Define
 \be \label{eq:xi}
\xi_{a,\lda}(x)=  \bar \xi_{a,\lda}(x)-h_{a,\lda}(x), 
\ee
where $\Delta h_{a,\lda}(x) = 0$ in $\om$ and $h_{a,\lda}= \bar \xi_{a,\lda} $ on $\pa \om$.  By the maximum principle, $\xi_{a,\lda}>0$ in $\Omega$ and $h_{a,\lda}>0$ in $\overline\Omega$. 
 
\begin{prop}\label{prop:s-b-c} Let $v$ be a solution of \eqref{eq:BN-3}.  For any $t_\nu \to \infty$, $\nu \to \infty$,  after passing to a subsequence if necessary, $v_\nu$ weakly converges to $v_\infty$ in $H_0^1(\om)$ and  we can find a non-negative integer $m$ and a sequence of m-tuplets $(x^*_{k,\nu}, \lda_{k,\nu}^*)_{1\le k\le m}$, $(x^*_{k,\nu}, \lda_{k,\nu}^*) \in \om \times (0,\infty)$, with the following properities.  
\begin{enumerate}
 
 \item The function $v_\infty\in H_0^1(\om)$ satisfies the equation $-\Delta v_\infty-bv_\infty= v_\infty^{\frac{n+2}{n-2}} $ in $\om$.

\item There hold, for all $i\neq j$, 
\[
\frac{ \lda_{i, \nu}^* }{\lda_{j, \nu}^* }+\frac{ \lda_{j, \nu}^* }{\lda_{i, \nu}^* }+\lda_{i, \nu}^* \lda_{j, \nu}^* |x^*_{i,\nu}-x^*_{j,\nu} |^2 \to \infty,
\]
and for all $k$, $d(x^*_{k,\nu})\ge \delta_0/2$ with the constant $\delta_0>0$ in  Proposition \ref{prop:away}, 
\[
\lda_{k, \nu}^* d(x^*_{k,\nu}) \to \infty 
\]
as $\nu \to \infty$. 
 
\item We have 
\[
\left\|v_\nu- v_\infty-\sum_{k=1}^m \xi_{x^*_{k,\nu}, \lda_{k,\nu}^*}\right\| \to 0
\]
as $\nu\to \infty$. 

\item We have 
\[
F(v_\nu) = F(v_\infty) +\frac{2m}{n} Y(\mathbb{S}^n)^{n/2} +o(1),
\]
where $o(1)\to 0$ as $\nu\to \infty$.

\end{enumerate}

\end{prop}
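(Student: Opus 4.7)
My plan is to implement Struwe's global compactness decomposition for the Brezis-Nirenberg functional $F$, with the boundary bound of Proposition \ref{prop:away} ruling out concentration at $\partial\om$. First, extract a weak limit: since $\|v_\nu\|_{H^1_0(\om)}$ is uniformly bounded (Section \ref{sec:integralbound}), along a subsequence $v_\nu \rightharpoonup v_\infty$ in $H^1_0(\om)$, $v_\nu \to v_\infty$ a.e.\ and in $L^p(\om)$ for $p<\tfrac{2n}{n-2}$, and $v_\infty \ge 0$. Proposition \ref{prop:c-c} gives $dF(v_\nu)\to 0$ in $H^{-1}(\om)$; testing against $\varphi \in C_c^\infty(\om)$ and passing to the limit (Rellich handles $bv_\nu\varphi$, and a.e.\ convergence together with the uniform $L^{2n/(n-2)}$ bound handles the critical term) yields the limiting equation $-\Delta v_\infty - bv_\infty = v_\infty^{(n+2)/(n-2)}$ in $\om$.

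Next, iterate a bubble extraction. Set $w_\nu := v_\nu - v_\infty \rightharpoonup 0$. By Brezis-Lieb on the critical term and weak continuity of $\int u^2$,
\[
F(w_\nu) = F(v_\nu) - F(v_\infty) + o(1),\qquad -\Delta w_\nu - |w_\nu|^{\frac{4}{n-2}}w_\nu \to 0 \text{ in } H^{-1}(\om).
\]
If $\|w_\nu\|_{L^{2n/(n-2)}}\to 0$, testing against $w_\nu$ gives $w_\nu \to 0$ in $H^1_0(\om)$ and we are done with $m=0$. Otherwise, introduce the concentration function $Q_\nu(r) := \sup_{x\in\om}\int_{B_r(x)\cap\om}|w_\nu|^{2n/(n-2)}\,\ud y$, fix a small $\varepsilon_0 > 0$ (smaller than the $L^{2n/(n-2)}$-mass of an entire bubble), and choose $\lambda_{1,\nu}^*>0$, $x_{1,\nu}^*\in\om$ with $Q_\nu(1/\lambda_{1,\nu}^*) = \varepsilon_0 = \int_{B_{1/\lambda_{1,\nu}^*}(x_{1,\nu}^*)\cap\om}|w_\nu|^{2n/(n-2)}$. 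The main obstacle is to show $\lambda_{1,\nu}^* d(x_{1,\nu}^*) \to \infty$ so that the rescaled sequence $\tilde w_\nu(y) := (\lambda_{1,\nu}^*)^{-(n-2)/2}w_\nu(x_{1,\nu}^* + y/\lambda_{1,\nu}^*)$ converges to an entire bubble rather than vanishing through a half-space limit. Here Proposition \ref{prop:away} enters decisively: if $\lambda_{1,\nu}^*d(x_{1,\nu}^*)\le L$ along a subsequence with $d(x_{1,\nu}^*) < \delta_0$, then
\[
|\tilde w_\nu(y)| \le C(\lambda_{1,\nu}^*)^{-n/2}\bigl(\lambda_{1,\nu}^* d(x_{1,\nu}^*) + |y|\bigr) \le C(\lambda_{1,\nu}^*)^{-n/2}(L+|y|) \to 0
\]
uniformly on compact subsets of $\R^n$, contradicting $\int_{B_1(0)}|\tilde w_\nu|^{2n/(n-2)}\ge \varepsilon_0/2$; the case $d(x_{1,\nu}^*)\ge \delta_0$ is immediate. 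Consequently $\lambda_{1,\nu}^* d(x_{1,\nu}^*)\to\infty$, and elliptic regularity on compacta combined with the Caffarelli-Gidas-Spruck classification of nonnegative entire solutions of $-\Delta W = W^{(n+2)/(n-2)}$ identifies the weak $\mathcal{D}^{1,2}(\R^n)$-limit of $\tilde w_\nu$ as a standard bubble $\bar\xi_{a,\lambda}$; translating $(x_{1,\nu}^*,\lambda_{1,\nu}^*)$ by the limiting parameters furnishes the first projected bubble $\xi_{x_{1,\nu}^*,\lambda_{1,\nu}^*}$, where the harmonic correction $h_{x_{1,\nu}^*,\lambda_{1,\nu}^*}$ is negligible in $H^1_0(\om)$ precisely because $\lambda_{1,\nu}^* d(x_{1,\nu}^*)\to\infty$.

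Finally, set $w_\nu^{(2)} := w_\nu - \xi_{x_{1,\nu}^*,\lambda_{1,\nu}^*}$ and repeat. Brezis-Lieb on the critical term and the asymptotic $H^1_0$-orthogonality of the extracted bubble to the new residual (verified through the standard bubble interaction estimates) give
\[
\|w_\nu^{(k+1)}\|^2 = \|w_\nu^{(k)}\|^2 - Y(\Sn)^{n/2} + o(1),\qquad F(w_\nu^{(k+1)}) = F(w_\nu^{(k)}) - \tfrac{2}{n}Y(\Sn)^{n/2} + o(1),
\]
so boundedness of $\|v_\nu\|$ forces termination at some finite $m$ with residual converging to zero in $H^1_0(\om)$. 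This yields the strong decomposition in the statement and, by telescoping, the asserted energy identity. The separation condition on $(x_{i,\nu}^*,\lambda_{i,\nu}^*)$ and $(x_{j,\nu}^*,\lambda_{j,\nu}^*)$ then follows from the standard bubble interaction estimate: if it failed for some pair, the two profiles would coalesce into a single bubble in $\mathcal{D}^{1,2}(\R^n)$ after a common rescaling, violating the fresh $\varepsilon_0$-concentration extracted at the $j$-th stage.
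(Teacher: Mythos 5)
Your proposal is correct and follows essentially the same route as the paper: the paper's proof consists of citing Struwe and Bahri--Coron, noting that Propositions \ref{prop:c-c} and \ref{prop:away} supply exactly the needed inputs (the Palais--Smale property and the boundary bound ruling out concentration near $\partial\om$), and your argument is a detailed reconstruction of that standard global compactness decomposition using those same two ingredients in the same roles.
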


\begin{proof} 
This proposition follows from Propositions \ref{prop:c-c}, and the  compactness result of Brezis-Coron \cite{BC85} and Struwe \cite{St}. More precisely, the proposition except item 2 follows from Proposition 2.1 in Struwe \cite{St}.  By Proposition \ref{prop:away}, $d(x^*_{k,\nu})\ge \delta_0/2$ with the same $\delta_0/2>0$. Namely, the energy can not concentrate at a fixed neighborhood of the boundary.  By  Theorem 2 in Brezis-Coron \cite{BC85} or Proposition 4 in  Bahri-Coron \cite{Bahri-C}, we have, for all $i\neq j$, 
\[
\frac{ \lda_{i, \nu}^* }{\lda_{j, \nu}^* }+\frac{ \lda_{j, \nu}^* }{\lda_{i, \nu}^* }+\lda_{i, \nu}^* \lda_{j, \nu}^* |x^*_{i,\nu}-x^*_{j,\nu} |^2 \to \infty,
\]
and for all $k$ and $\lda_{k, \nu}^*  \to \infty $ as $\nu \to \infty$.  This is item 2.
\end{proof} 

A similar result for the harmonic map heat flow was proved by  Qing-Tian \cite{QT}.
The correction term $h_{a,\lambda}$ in \eqref{eq:xi} is small and can be controlled.

\begin{lem} \label{lem:bubble-derivative}  Let $\xi_{a,\lda}$ and $h_{a,\lda}$ be defined as in \eqref{eq:xi}. Suppose $a\in \om$ with $d(a)>\delta>0$ and $\lda>1$. Then we have, for  $ x\in \om,$
\[
|h_{a,\lda}(x)|+|\nabla_{a} h_{a,\lda}(x)|+\lda |\nabla_{\lda} h_{a,\lda}(x)| \le C(n, \om,\delta) \lda^{-\frac{n-2}{2}}, 
\]
\[
\nabla_{a} \xi_{a,\lda} (x)=(n-2)\xi_{a,\lda} \frac{\lda^2(x-a)}{1+\lda^2|x-a|^2} +O(\lda^{-\frac{n-2}{2}}),
\]
and 
\[
\nabla_{\lda} \xi_{a,\lda} (x)=\frac{(n-2)}{2\lda } \xi_{a,\lda} \frac{1-\lda^2|x-a|^2}{1+\lda^2 |x-a|^2} +O(\lda^{-\frac{n}{2}}),
\]
where $|O(\lda^{-\frac{n-2}{2}}) |\le C \lda^{-\frac{n-2}{2}}$ for some $C$ depending only on $n, \om$ and $\delta$.
\end{lem}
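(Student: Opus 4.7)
My plan is to reduce everything to the maximum principle for harmonic functions, combined with explicit differentiation of the standard bubble. The proof will split into three steps.

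First, I would establish the pointwise bound $|h_{a,\lda}|\le C\lda^{-(n-2)/2}$. Since $d(a)>\delta$ and $\lda>1$, for $x\in\pa\om$ we have $|x-a|\ge\delta$, so the explicit formula gives
\[
\bar\xi_{a,\lda}(x)=c_0\lda^{(n-2)/2}(1+\lda^2|x-a|^2)^{-(n-2)/2}\le C(n,\delta)\,\lda^{-(n-2)/2}
\]
on $\pa\om$. Because $h_{a,\lda}$ is harmonic in $\om$ with these boundary values, the maximum principle gives the claimed bound throughout $\overline\om$.

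Second, I would differentiate the explicit formula for $\bar\xi_{a,\lda}$ in $a$ and in $\lda$ to obtain
\[
\nabla_a\bar\xi_{a,\lda}=(n-2)\bar\xi_{a,\lda}\frac{\lda^2(x-a)}{1+\lda^2|x-a|^2},\qquad \nabla_\lda\bar\xi_{a,\lda}=\frac{n-2}{2\lda}\bar\xi_{a,\lda}\frac{1-\lda^2|x-a|^2}{1+\lda^2|x-a|^2}.
\]
Restricted to $\pa\om$, using $|x-a|\ge\delta$, these are bounded pointwise by $C\lda^{-(n-2)/2}$ and $C\lda^{-n/2}$ respectively. Because $\bar\xi_{a,\lda}$ depends smoothly on $(a,\lda)$ and the Poisson extension is linear and continuous, parameter differentiation commutes with harmonic extension; hence $\nabla_a h_{a,\lda}$ and $\nabla_\lda h_{a,\lda}$ are harmonic in $\om$ with the boundary data just computed. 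A second application of the maximum principle yields $|\nabla_a h_{a,\lda}|\le C\lda^{-(n-2)/2}$ and $|\nabla_\lda h_{a,\lda}|\le C\lda^{-n/2}$ in $\om$, which together with the bound on $h_{a,\lda}$ itself completes the first displayed inequality of the lemma.

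Finally, for the last two identities I would use the decomposition $\xi_{a,\lda}=\bar\xi_{a,\lda}-h_{a,\lda}$ together with the explicit derivative formulas from Step~2: the terms $-\nabla_a h_{a,\lda}$ and $-\nabla_\lda h_{a,\lda}$ are absorbed into the respective $O(\lda^{-(n-2)/2})$ and $O(\lda^{-n/2})$ remainders, and the principal part has the claimed form up to the harmless substitution of $\xi_{a,\lda}$ for $\bar\xi_{a,\lda}$, whose cost is again a term proportional to $h_{a,\lda}$ of the same order. I do not anticipate any serious obstacle; the only non-routine bookkeeping is the justification that parameter differentiation commutes with the Dirichlet problem, and this is standard given the smoothness of $\bar\xi_{a,\lda}|_{\pa\om}$ in $(a,\lda)$ for $a$ in compact subsets of $\om$ and $\lda>1$.
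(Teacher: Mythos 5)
Your Steps 1--2 are correct and are exactly the paper's argument: the paper disposes of the lemma with one line (``the estimate of $h_{a,\lambda}$ follows from the Poisson formula''), and your version --- observe that $\nabla_a h_{a,\lambda}$ and $\nabla_\lambda h_{a,\lambda}$ are harmonic with boundary data $\nabla_a\bar\xi_{a,\lambda}|_{\partial\Omega}$, $\nabla_\lambda\bar\xi_{a,\lambda}|_{\partial\Omega}$, bound that data by $C(\delta)\lambda^{-\frac{n-2}{2}}$ and $C(\delta)\lambda^{-\frac n2}$ using $|x-a|\ge\delta$ on $\partial\Omega$, and apply the maximum principle --- is the same standard estimate in an equivalent dress. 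One caveat about your Step 3: the substitution of $\xi_{a,\lambda}$ for $\bar\xi_{a,\lambda}$ in the principal term is \emph{not} ``of the same order'' pointwise in the $\nabla_a$ identity. The cost there is $(n-2)\,h_{a,\lambda}(x)\,\frac{\lambda^2(x-a)}{1+\lambda^2|x-a|^2}$, and the factor $\frac{\lambda^2|x-a|}{1+\lambda^2|x-a|^2}$ is of size $\lambda/2$ when $|x-a|\sim\lambda^{-1}$, where $h_{a,\lambda}\sim\lambda^{-\frac{n-2}{2}}$; so this term can be of order $\lambda^{-\frac{n-4}{2}}$, not $\lambda^{-\frac{n-2}{2}}$. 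What your computation (and the paper's one-line proof) actually yields is the exact identity $\nabla_a\xi_{a,\lambda}=(n-2)\,\bar\xi_{a,\lambda}\,\frac{\lambda^2(x-a)}{1+\lambda^2|x-a|^2}-\nabla_a h_{a,\lambda}$ with $|\nabla_a h_{a,\lambda}|\le C\lambda^{-\frac{n-2}{2}}$, i.e.\ the expansion with $\bar\xi$ in the main term, which is the form in which the lemma is used; the literal phrasing with $\xi$ is an imprecision shared with the statement itself. For the $\nabla_\lambda$ identity your absorption is genuinely harmless, since $\bigl|\frac{1-\lambda^2|x-a|^2}{1+\lambda^2|x-a|^2}\bigr|\le 1$ and the prefactor $\frac{n-2}{2\lambda}$ makes the cost $O(\lambda^{-1}\|h_{a,\lambda}\|_\infty)=O(\lambda^{-\frac n2})$, as you claim.
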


\begin{proof} 
Since $\Delta h_{a,\lda}(x) = 0$ in $\om$ and $h_{a,\lda}= \bar \xi_{a,\lda} $ on $\pa \om$, the estimate of $h_{a,\lda}$  follows from the Poisson formula for the Laplace equation. Then,
\begin{align*}
\nabla_{a} \xi_{a,\lda} (x)&=\nabla_{a} \bar \xi_{a,\lda} (x) -\nabla_{a} h_{a,\lda}\\&
=  (n-2)\bar \xi_{a,\lda} \frac{\lda^2(x-a)}{1+\lda^2|x-a|^2}+O(\lda^{-\frac{n-2}{2}})\\&
= (n-2) \xi_{a,\lda} \frac{\lda^2(x-a)}{1+\lda^2|x-a|^2}+O(\lda^{-\frac{n-2}{2}}).
\end{align*}
The estimate $\nabla_{\lda} \xi_{a,\lda} (x)$ can be obtained similarly.
\end{proof}

\section{Refined blow up analysis}\label{sec:blowup}

We continue from Proposition \ref{prop:s-b-c}. By the strong maximum principle, the nonnegative limit $v_\infty$ either is positive in $\om$ or identically equals to zero. We will treat these two cases separably in two subsections.  We will adapt the refined blow up analysis in Brendle \cite{Br05} to the equation \eqref{eq:BN-3}.  

\subsection{The case $v_\infty \equiv 0$}\label{sec: zero}

First, we shall project $v_\nu$ to an $m(n+2)$-dimensional surface in $H_0^1(\om)$ generated by $m$-bubbles. For every $\nu$, let $\mathcal{A}_\nu$ be the closed set of all $m$-tuplets $(x_k,\lda_k, \al_k)_{1\le k\le m}$ satisfying $(x_k,\lda_k, \al_k) \in \overline B_{\frac{1}{\lda_{k,\nu}^*}}(x_{k,\nu}^*) \times [\frac{\lda_{k,\nu}^*}{2}, \frac{3\lda_{k,\nu}^*}{2} ] \times [\frac{1}{2},\frac32]$.  Choose an $m$-tuplet $(x_{k,\nu},\lda_{k,\nu}, \al_{k,\nu})_{1\le k\le m}\in \mathcal{A}_\nu$ such that  
\be\label{eq:finite-minimum}
\left\|v_\nu- \sum_{k=1}^m \al_{k,\nu} \xi_{x_{k,\nu}, \lda_{k,\nu}}\right\| =\inf_{(x_k,\lda_k, \al_k)_{1\le k\le m} \in \mathcal{A}_{\nu}} \left\|v_\nu- \sum_{k=1}^m \al_{k} \xi_{x_{k}, \lda_{k}}\right\|.
\ee
By Proposition \ref{prop:s-b-c}, Proposition \ref{prop:away} and the definition of $(x_{k,\nu},\lda_{k,\nu}, \al_{k,\nu})_{1\le k\le m}$, we have, for all $i\neq j$, 
\be\label{eq:bubbles-1}
\frac{ \lda_{i, \nu} }{\lda_{j, \nu}}+\frac{ \lda_{j, \nu} }{\lda_{i, \nu}}+\lda_{i, \nu} \lda_{j, \nu} |x_{i,\nu}-x_{j,\nu} |^2 \to \infty,
\ee
and for all $k$
\be\label{eq:bubbles-2}
\lda_{k, \nu} d(x_{k,\nu}) \to \infty 
\ee
as $\nu \to \infty$. 
In addition, $d(x_{k,\nu})>\delta_0/2$ with same $\delta_0$ in Proposition \ref{prop:away}, and 
\be\label{eq:bubbles-3}
\left\|v_\nu-\sum_{k=1}^m  \al_k\xi_{x_{k,\nu}, \lda_{k,\nu}}\right\| \to 0
\ee
as $\nu\to \infty$. 

By the triangle inequality, 
\begin{align*}
&\left\|\sum_{k=1}^m  \al_k\xi_{x_{k,\nu}, \lda_{k,\nu}}- \sum_{k=1}^m \xi_{x^*_{k,\nu}, \lda_{k,\nu}^*} \right\| \\& \le \left\|v_\nu-\sum_{k=1}^m  \al_k\xi_{x_{k,\nu}, \lda_{k,\nu}} \right\|+\left\|v_\nu- \sum_{k=1}^m \xi_{x^*_{k,\nu}, \lda_{k,\nu}^*} \right\|=o(1).
\end{align*}
It follows that, for all $1\le k\le m$,  
\be\label{eq:bubbles-4}
|x_{k,\nu}- x_{k,\nu}^{*}|=o(1) \frac{1}{\lda_{k,\nu}^*},\quad  \frac{\lda_{k,\nu}}{\lda_{k,\nu}^*}=1+o(1), \quad \al_{k,\nu}=1+o(1).  
\ee
In particular, $(x_{k,\nu},\lda_{k,\nu}, \al_{k,\nu})_{1\le k\le m}$ is an interior point of $\mathcal{A}_{\nu}$.  

In the sequel, we assume 
\be \label{eq:order}
\lda_{1,\nu}\ge \lda_{2,\nu} \ge \dots \ge \lda_{m,\nu}.
\ee
 Let 
\be \label{eq:decomposition-1}
U_\nu= \sum_{k=1}^m  \al_{k,\nu}\xi_{x_{k,\nu}, \lda_{k,\nu}}, \quad w_\nu=v_{\nu}-U_{\nu}. 
\ee
Next, we shall estimate the orthogonal part  $w_\nu$ of the above projection. 

\begin{lem} \label{lem:proj-1} We have for $1\le k\le m$,  
\begin{align*}
&\Big|\int_\om \xi_{x_{k,\nu}, \lda_{k,\nu}}^{\frac{n+2}{n-2}}  w_\nu \,\ud x\Big|+\Big|\int_\om \xi_{x_{k,\nu}, \lda_{k,\nu}}^{\frac{n+2}{n-2}}  \frac{1-\lda^2|x-x_{k,\nu}|^2}{1+\lda^2|x-x_{k,\nu}|^2} w_\nu \,\ud x\Big |\\[2mm] &\quad  +\Big|\int_\om \xi_{x_{k,\nu}, \lda_{k,\nu}}^{\frac{n+2}{n-2}}  \frac{\lda^2(x-x_{k,\nu})}{1+\lda^2|x-x_{k,\nu}|^2} w_\nu \,\ud x\Big | \le o(1) \Big(\int_{\om} |w_\nu|^{\frac{2n}{n-2}}\,\ud x\Big)^{\frac{n-2}{2n}}
\end{align*}
\end{lem}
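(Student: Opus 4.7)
My plan is to extract orthogonality relations from the first-order optimality of the minimizer in \eqref{eq:finite-minimum} and convert each into one of the three integral bounds by integrating by parts against the identity $-\Delta \xi_{a,\lda} = \bar\xi_{a,\lda}^{\frac{n+2}{n-2}}$ in $\om$ (valid because $h_{a,\lda}$ is harmonic). Since by \eqref{eq:bubbles-4} the triple $(x_{k,\nu}, \lda_{k,\nu}, \al_{k,\nu})$ lies in the interior of $\mathcal{A}_\nu$, differentiating $\|v_\nu - \sum_j \al_j \xi_{x_j,\lda_j}\|^2$ in $\al_k$, in each component of $x_k$, and in $\lda_k$, together with $\al_{k,\nu} = 1+o(1)$, yields three orthogonality conditions per bubble:
\[
\langle w_\nu, \xi_{x_{k,\nu},\lda_{k,\nu}}\rangle = 0, \quad \langle w_\nu, \nabla_a \xi_{x_{k,\nu},\lda_{k,\nu}}\rangle = 0, \quad \langle w_\nu, \pa_\lda \xi_{x_{k,\nu},\lda_{k,\nu}}\rangle = 0.
\]

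Since $\xi_{a,\lda}$ vanishes on $\pa\om$ for every $(a,\lda)$, so do $\nabla_a \xi_{a,\lda}$ and $\pa_\lda \xi_{a,\lda}$, which lets me integrate by parts against $w_\nu\in H_0^1(\om)$ freely. Differentiating $-\Delta \bar\xi = \bar\xi^{\frac{n+2}{n-2}}$ directly in $a$ and $\lda$ using the explicit form \eqref{eq:bubble-def} of $\bar\xi$ gives
\[
-\Delta(\nabla_a \xi) = (n+2)\bar\xi^{\frac{n+2}{n-2}} \frac{\lda^2(x-a)}{1+\lda^2|x-a|^2},\ \ -\Delta(\pa_\lda \xi) = \frac{n+2}{2\lda} \bar\xi^{\frac{n+2}{n-2}} \frac{1-\lda^2|x-a|^2}{1+\lda^2|x-a|^2}.
\]
Combining these with the three orthogonality conditions and the $\ud F$-type identity $\langle w_\nu, \xi\rangle = \int_\om w_\nu(-\Delta\xi-b\xi)\,\ud x$ yields identities of the form
\[
\int_\om \bar\xi^{\frac{n+2}{n-2}} W\, w_\nu\,\ud x = c\, b\int_\om Z\, w_\nu\,\ud x
\]
for each of the three weights $W\in\{1,\ \frac{\lda^2(x-a)}{1+\lda^2|x-a|^2},\ \frac{1-\lda^2|x-a|^2}{1+\lda^2|x-a|^2}\}$, with $Z$ one of $\xi$, $\nabla_a\xi$, $\lda\pa_\lda\xi$.

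To finish I (i) replace $\bar\xi^{\frac{n+2}{n-2}}$ by $\xi^{\frac{n+2}{n-2}}$ on the left, and (ii) bound the right-hand sides by $o(1)\|w_\nu\|_{L^{2n/(n-2)}(\om)}$. For (i), the pointwise inequality $|\bar\xi^{\frac{n+2}{n-2}} - \xi^{\frac{n+2}{n-2}}|\le C\bar\xi^{\frac{4}{n-2}} h_{a,\lda}$ (valid since $0\le \xi \le \bar\xi$) together with $h_{a,\lda}=O(\lda^{-(n-2)/2})$ from Lemma \ref{lem:bubble-derivative} controls the substitution error by H\"older. For (ii), H\"older's inequality with exponents $(\frac{2n}{n+2},\frac{2n}{n-2})$ reduces the matter to showing
\[
\|\xi_{a,\lda}\|_{L^{\frac{2n}{n+2}}(\om)},\ \|\nabla_a \xi_{a,\lda}\|_{L^{\frac{2n}{n+2}}(\om)},\ \lda \|\pa_\lda \xi_{a,\lda}\|_{L^{\frac{2n}{n+2}}(\om)} \to 0
\]
as $\lda = \lda_{k,\nu}\to \infty$ with $d(x_{k,\nu})\ge \delta/2$, which follows by the rescaling $y=\lda(x-a)$ on the bounded domain $\om$ and the explicit decay of $\bar\xi$ and its parameter derivatives, combined with the harmonic-part bound of Lemma \ref{lem:bubble-derivative}.

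The main technical annoyance will be the dimension-dependent book-keeping of these three $L^{\frac{2n}{n+2}}$ norms: for low $n$ the rescaled integrals over $\R^n$ diverge, so one must exploit the boundedness of $\om$ to truncate the integration at radius $\sim \lda\,\mathrm{diam}(\om)$ and verify that the resulting polynomial (or logarithmic) growth is strictly dominated by the $\lda$-prefactor coming from the scaling. Once that bookkeeping is dispatched uniformly in $n\ge 3$, the three desired bounds follow immediately.
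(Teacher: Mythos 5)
Your proposal is essentially the paper's own proof: the paper likewise differentiates the minimization \eqref{eq:finite-minimum} at the interior point $(x_{k,\nu},\lda_{k,\nu},\al_{k,\nu})$ (interior by \eqref{eq:bubbles-4}) to get the orthogonality relations $\langle w_\nu,\xi_{x_{k,\nu},\lda_{k,\nu}}\rangle=\langle w_\nu,\nabla_{a,\lda}\xi_{x_{k,\nu},\lda_{k,\nu}}\rangle=0$, integrates by parts using $-\Delta \xi_{a,\lda}=\bar\xi_{a,\lda}^{\frac{n+2}{n-2}}$ (equivalently, the parameter-differentiated equation you write down, which is correct), and concludes with H\"older's inequality and Lemma \ref{lem:bubble-derivative}, so your argument matches the paper's at least as explicitly. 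The one shaky point is your closing claim that the bookkeeping goes through uniformly for $n\ge 3$: for the unbounded weight $\frac{\lda^2(x-a)}{1+\lda^2|x-a|^2}$ the harmonic-correction substitution error is controlled by $\|h_{a,\lda}\|_{L^\infty(\om)}\,\big\|\bar\xi_{a,\lda}^{\frac{4}{n-2}}\frac{\lda^2|x-a|}{1+\lda^2|x-a|^2}\big\|_{L^{\frac{2n}{n+2}}(\om)}\lesssim \lda^{\frac{4-n}{2}}\cdot\lda^{-\frac{n-2}{2}}$, which is $o(1)$ for $n\ge 4$ (the case needed for Theorem \ref{thm:critical}) but only $O(1)$ when $n=3$, so that corner case would need an extra idea rather than plain H\"older.
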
 

\begin{proof} By  the finite dimensional variational problem  \eqref{eq:finite-minimum} and \eqref{eq:bubbles-4}, taking derivatives in $\mathcal{A}_\nu$, we have 
\[
\int_{\om}\Big[\nabla (\nabla_{a,\lda}\xi_{x_{k,\nu},\lda_{k,\nu}}) \nabla w_\nu-b\nabla_{a,\lda}\xi_{x_{k,\nu},\lda_{k,\nu}} w_\nu\Big]\,\ud x=0
\]
and 
\[
\int_{\om}\Big[\nabla \xi_{x_{k,\nu},\lda_{k,\nu}} \nabla w_\nu-b\xi_{x_{k,\nu},\lda_{k,\nu}} w_\nu\Big]\,\ud x=0,
\]
where $\nabla_{a,\lda}\xi_{x_{k,\nu},\lda_{k,\nu}}= \nabla_{a,\lda}\xi_{a,\lda}\Big|_{(a,\lda)=(x_{k,\nu},\lda_{k,\nu})}$.  Integrating by parts, using the equation of $\bar \xi_{a,\lda}$, H\"older's inequality and Lemma \ref{lem:bubble-derivative}, the lemma follows. 
\end{proof} 

Note that the bubbles are non-degenerate, since we have the following well known lemma (see (3.14) in Rey \cite{Rey}). 

\begin{lem}\label{lem:coercive} Let $\bar \xi_{a,\lda}$ be defined in \eqref{eq:bubble-def}. Then there exists a constant $c_1>0$ depending only on $n$ such that 
\[
(1-c_1)\int_{\R^n} |\nabla \varphi|^2 \ge \frac{n+2}{n-2}\int_{\R^n} \bar \xi_{0,1}^\frac{4}{n-2} \varphi^2
\]
for any $\varphi\in H^{1}_0(\R^n)$ satisfying
\[
\int_{\R^n}  \bar \xi_{0,1}^\frac{4}{n-2} (\nabla_{a,\lda} \bar \xi_{0,1}) \varphi\,\ud x=0.
\]
\end{lem}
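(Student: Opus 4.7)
My plan is to conformally pull the inequality back from $\R^n$ to the round sphere $\Sn$ via inverse stereographic projection, where it becomes a spectral-gap statement for the Laplace--Beltrami operator $-\Delta_{\Sn}$. Let $\pi:\Sn\setminus\{N\}\to\R^n$ be stereographic projection with conformal factor $\Omega(x)=2/(1+|x|^2)$, so that the round metric pulls back to $g_{\Sn}=\Omega^2 g_{\R^n}$. A direct computation gives $\bar\xi_{0,1}(x)=c_\star\,\Omega(x)^{(n-2)/2}$ with $c_\star>0$ satisfying $c_\star^{4/(n-2)}=\tfrac{n(n-2)}{4}$. For $\varphi\in H^1_0(\R^n)$, set $u=c_\star\varphi/\bar\xi_{0,1}$, extended to a function on $\Sn$. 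Conformal covariance of the conformal Laplacian (the Yamabe identity) then yields
\[
\int_{\R^n}|\nabla\varphi|^2\,\ud x=\int_{\Sn}\bigl(|\nabla u|^2+\tfrac{n(n-2)}{4}u^2\bigr)\,\ud vol_g,\qquad \int_{\R^n}\bar\xi_{0,1}^{4/(n-2)}\varphi^2\,\ud x=\tfrac{n(n-2)}{4}\int_{\Sn}u^2\,\ud vol_g,
\]
so the target inequality becomes the claim $(1-c_1)\int|\nabla u|^2\ge\bigl(n+c_1\tfrac{n(n-2)}{4}\bigr)\int u^2$ on $\Sn$.

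Next I translate the orthogonality. Differentiating the explicit formula for $\bar\xi_{a,\lda}$ at $(a,\lda)=(0,1)$ gives
\[
\partial_{a_i}\bar\xi_{0,1}=(n-2)\bar\xi_{0,1}\frac{x_i}{1+|x|^2},\qquad \partial_{\lda}\bar\xi_{0,1}=\frac{n-2}{2}\bar\xi_{0,1}\frac{1-|x|^2}{1+|x|^2}.
\]
Under stereographic projection, $2x_i/(1+|x|^2)=y_i$ for $i\le n$ and $(|x|^2-1)/(1+|x|^2)=y_{n+1}$ are precisely the coordinate functions of $\Sn\subset\R^{n+1}$, i.e.\ the first spherical harmonics spanning the eigenvalue-$n$ eigenspace of $-\Delta_{\Sn}$. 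Consequently, the $n+1$ hypothesized orthogonalities $\int\bar\xi_{0,1}^{4/(n-2)}(\nabla_{a,\lda}\bar\xi_{0,1})\varphi\,\ud x=0$ transform into $\int_{\Sn}u\,y_j\,\ud vol_g=0$ for $j=1,\dots,n+1$, i.e., $u$ is $L^2(\Sn)$-orthogonal to the $k=1$ eigenspace of $-\Delta_{\Sn}$.

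Spectral analysis closes the argument. The eigenvalues of $-\Delta_{\Sn}$ are $\mu_k=k(k+n-1)$, with $\mu_0=0$ on constants, $\mu_1=n$ on the $y_j$'s, and $\mu_2=2(n+1)$ next. On the subspace of eigenmodes with $k\ge 2$ one has $\int|\nabla u|^2\ge 2(n+1)\int u^2$, and since $2(n+1)>n$ any sufficiently small $c_1=c_1(n)>0$ yields the inequality on that subspace. The only remaining direction is the $k=0$ (constant) mode, which corresponds to $\varphi$ being a multiple of $\bar\xi_{0,1}$ itself and is the single direction in which the raw coercivity cannot hold with $c_1>0$. In the paper's application this mode is killed by the companion amplitude-derivative orthogonality $\int(\nabla\xi\,\nabla w-b\xi w)\,\ud x=0$ from Lemma~\ref{lem:proj-1}, which in the blow-up limit (using $\lda_{k,\nu}d(x_{k,\nu})\to\infty$ and Lemma~\ref{lem:bubble-derivative}) reduces to $\int\bar\xi_{0,1}^{(n+2)/(n-2)}\varphi\,\ud x=0$, i.e.\ orthogonality to constants on $\Sn$.

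The main obstacle is precisely this last point: the $n+1$ kernel orthogonalities listed in the hypothesis eliminate only the $k=1$ eigenspace on $\Sn$, whereas the negative direction $\bar\xi_{0,1}$ (the $k=0$ constant mode on $\Sn$) must also be controlled for genuine coercivity. Recognizing and invoking the complementary amplitude orthogonality from Lemma~\ref{lem:proj-1} is the subtle step; once both orthogonalities are in force, the remaining eigenvalue arithmetic on $\Sn$ is routine.
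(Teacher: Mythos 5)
Your argument is correct in substance, and it is worth noting that the paper itself gives no proof here: Lemma \ref{lem:coercive} is quoted as a known fact with a pointer to (3.14) in Rey. The conformal transfer to $\Sn$ that you carry out (the identities $\int_{\R^n}|\nabla\varphi|^2=\int_{\Sn}(|\nabla u|^2+\tfrac{n(n-2)}{4}u^2)$ and $\int_{\R^n}\bar\xi_{0,1}^{4/(n-2)}\varphi^2=\tfrac{n(n-2)}{4}\int_{\Sn}u^2$, the identification of $\bar\xi_{0,1}^{4/(n-2)}\nabla_{a,\lda}\bar\xi_{0,1}$ with the first spherical harmonics $y_1,\dots,y_{n+1}$, and the spectral gap $\mu_2=2(n+1)>\mu_1=n$) is exactly the standard mechanism behind the cited non-degeneracy result, and your eigenvalue arithmetic yielding a small $c_1=c_1(n)$ is right.

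Your diagnosis of the constant mode is also accurate and is the one point where the written statement of the lemma is too weak: $\varphi=\bar\xi_{0,1}$ lies in $H^1_0(\R^n)$ and satisfies the stated constraints, since $\int_{\R^n}\bar\xi_{0,1}^{4/(n-2)}(\nabla_{a,\lda}\bar\xi_{0,1})\bar\xi_{0,1}\,\ud x$ is a multiple of $\nabla_{a,\lda}\int_{\R^n}\bar\xi_{a,\lda}^{2n/(n-2)}\,\ud x=0$ (equivalently, constants on $\Sn$ are orthogonal to the $y_j$), yet for this $\varphi$ the claimed inequality fails because $\tfrac{n+2}{n-2}>1$. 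So the lemma is only true with the additional orthogonality $\int_{\R^n}\bar\xi_{0,1}^{\frac{n+2}{n-2}}\varphi\,\ud x=0$ (i.e.\ $u\perp$ constants on $\Sn$), which is the form in which the result appears in Rey and which is precisely what the first term in the conclusion of Lemma \ref{lem:proj-1} supplies in the blow-up limit used in Lemma \ref{lem:2nd-variational}. In short: you have proved the version of the lemma that the paper actually needs and uses, and correctly identified that the hypothesis as printed omits the orthogonality to the bubble itself; nothing in your argument needs repair.
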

We have the following non-degeneracy estimates of the second variation of $F$ for $w_\nu$.
\begin{lem} \label{lem:2nd-variational} For large $\nu$, we have 
\[
\frac{n+2}{n-2} \int_\om \sum_{k=1}^m  \xi_{x_{k,\nu},\lda_{k,\nu}}  ^{\frac{4}{n-2}} w_\nu^2   \le (1-c) \int_{\om} (|\nabla w_\nu|^2-bw_\nu^2)\,\ud x,
\]
where $c>0$ is independent of $\nu$.

\end{lem}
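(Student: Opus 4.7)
The aim is a coercivity estimate with a genuine spectral gap: a direct H\"older--Sobolev bound only yields $\tfrac{n+2}{n-2}\sum_k\int\xi_{x_{k,\nu},\lda_{k,\nu}}^{4/(n-2)}w_\nu^2\le C\|w_\nu\|^2$ with $C$ of order $\tfrac{n+2}{n-2}$, which is larger than $1$. The gap $1-c$ must be produced by the approximate orthogonality of $w_\nu$ against the bubble kernel directions recorded in Lemma \ref{lem:proj-1}, combined with the non-degeneracy Lemma \ref{lem:coercive}. The plan is to localize around each bubble, rescale each local piece to the standard bubble $\bar\xi_{0,1}$ on $\R^n$, and apply Lemma \ref{lem:coercive} there.

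\medskip

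Using \eqref{eq:bubbles-1} and \eqref{eq:bubbles-2} I would first choose radii $r_{k,\nu}\to 0$ with $\lda_{k,\nu}r_{k,\nu}\to\infty$ so that the balls $B_{r_{k,\nu}}(x_{k,\nu})\subset\om$ are pairwise disjoint. Outside $B_{r_{k,\nu}}(x_{k,\nu})$ the $L^{n/2}$ mass of $\xi_{x_{k,\nu},\lda_{k,\nu}}^{4/(n-2)}$ tends to $0$, while on $B_{r_{k,\nu}}(x_{k,\nu})$ each foreign bubble ($j\neq k$) also has $o(1)$ mass by \eqref{eq:bubbles-1}; H\"older and Sobolev therefore confine each term of the LHS to its own ball, up to an $o(1)\|w_\nu\|^2$ error. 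The conformal rescaling $\tilde w_k(y):=\lda_{k,\nu}^{-(n-2)/2} w_\nu(x_{k,\nu}+y/\lda_{k,\nu})$, extended by zero outside $\lda_{k,\nu}(\om-x_{k,\nu})$ (legitimate since $w_\nu\in H^1_0(\om)$), preserves both the $L^{2n/(n-2)}$ and Dirichlet norms and sends $\xi_{x_{k,\nu},\lda_{k,\nu}}$ to $\bar\xi_{0,1}+o(1)$ locally, the harmonic correction $h_{a,\lda}$ contributing an $O(\lda^{-(n-2)/2})$ error by Lemma \ref{lem:bubble-derivative}.

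\medskip

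The three families of orthogonality integrals in Lemma \ref{lem:proj-1} rescale, up to the $h_{a,\lda}$ error, to $L^2(\bar\xi_{0,1}^{4/(n-2)}\,\ud y)$-inner products of $\tilde w_k$ against precisely the kernel modes $\bar\xi_{0,1},\pa_\lda\bar\xi_{0,1},\pa_a\bar\xi_{0,1}$ that appear in Lemma \ref{lem:coercive}, each of size $o(1)\|\tilde w_k\|_{L^{2n/(n-2)}}$. Projecting $\tilde w_k$ in $H^1(\R^n)$ onto the complement of the kernel therefore changes it by $o(1)$ in the Dirichlet norm. Applying Lemma \ref{lem:coercive} to the projection, summing over $k$, and undoing the rescaling yields
\[
\tfrac{n+2}{n-2}\sum_k\int_\om\xi_{x_{k,\nu},\lda_{k,\nu}}^{4/(n-2)}w_\nu^2\,\ud x\le (1-c_1+o(1))\int_\om|\nabla w_\nu|^2\,\ud x.
\]
Since $b<\lda_1$, the quantities $\int|\nabla w_\nu|^2$ and $\|w_\nu\|^2$ are comparable and the missing $b w_\nu^2$ term can be absorbed by slightly shrinking $c_1$, giving the claim with a fixed $c>0$ for large $\nu$; the factors $\al_{k,\nu}=1+o(1)$ produce only lower-order corrections.

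\medskip

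The main obstacle is the orthogonality transfer: Lemma \ref{lem:proj-1} is stated globally on $\om$ against the truncated bubble $\xi_{a,\lda}=\bar\xi_{a,\lda}-h_{a,\lda}$ and its $(a,\lda)$-derivatives, whereas Lemma \ref{lem:coercive} demands exact orthogonality on $\R^n$ to the kernel modes of $\bar\xi_{0,1}$. Controlling simultaneously the $h_{a,\lda}$ correction (via Lemma \ref{lem:bubble-derivative}), the cutoff error outside $B_{r_{k,\nu}}(x_{k,\nu})$, and the interaction between the $k$th local piece and the remaining bubbles, all uniformly in $\nu$, is where the separation and interior-distance conditions \eqref{eq:bubbles-1}--\eqref{eq:bubbles-2} play an essential role.
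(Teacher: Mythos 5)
Your overall strategy (rescale, transfer the approximate orthogonality of Lemma \ref{lem:proj-1} to the kernel modes of $\bar\xi_{0,1}$, invoke the non-degeneracy Lemma \ref{lem:coercive}) is in the right spirit, but two steps of the proposal do not work as written. First, the summation over $k$: you rescale the \emph{whole} function, $\tilde w_k(y)=\lda_{k,\nu}^{-(n-2)/2}w_\nu(x_{k,\nu}+y/\lda_{k,\nu})$ extended by zero, so each application of Lemma \ref{lem:coercive} bounds the $k$-th potential term by $(1-c_1)\int_{\R^n}|\nabla\tilde w_k|^2=(1-c_1)\int_\om|\nabla w_\nu|^2$; summing over $k=1,\dots,m$ then yields the factor $m(1-c_1)$, not $(1-c_1+o(1))$, which is useless for $m\ge2$. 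You localized only the left-hand side (the potential), not the Dirichlet energy. Repairing this requires comparing each local potential term with the energy on pairwise disjoint regions, which forces cutoffs whose energy errors are not automatically $o(1)\|w_\nu\|^2$; moreover, your ``pairwise disjoint balls $B_{r_{k,\nu}}(x_{k,\nu})$'' need not exist, since \eqref{eq:bubbles-1} allows bubble towers with $x_{i,\nu}=x_{j,\nu}$ and $\lda_{i,\nu}/\lda_{j,\nu}\to\infty$; the correct regions are of the form $B_{R_\nu\lda_{j,\nu}^{-1}}(x_{j,\nu})\setminus\bigcup_{i<j}B_{R_\nu\lda_{i,\nu}^{-1}}(x_{i,\nu})$. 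Second, the final absorption of the $-bw_\nu^2$ term is not legitimate: from $\mathrm{LHS}\le(1-c_1+o(1))\int_\om|\nabla w_\nu|^2$ one can only conclude $\mathrm{LHS}\le(1-c_1+o(1))\frac{\lda_1}{\lda_1-b}\|w_\nu\|^2$, and $\frac{(1-c_1)\lda_1}{\lda_1-b}<1$ forces $c_1>b/\lda_1$; since $c_1$ is a fixed universal constant while $b$ may be any number in $(0,\lda_1)$, ``slightly shrinking $c_1$'' does not produce a positive $c$ in general.

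The paper avoids both difficulties by arguing by contradiction: normalize $\tilde w_\nu=w_\nu/\|w_\nu\|$, and if the claimed gap fails, then by a pigeonhole argument (the regions $\om_{j,\nu}$ are disjoint and $\|\tilde w_\nu\|=1$) there is a \emph{single} index $j$ for which the potential mass of the $j$-th bubble does not vanish and $\int_{\om_{j,\nu}}(|\nabla\tilde w_\nu|^2-b\tilde w_\nu^2)\le\frac{n+2}{n-2}\int_\om\xi_{x_{j,\nu},\lda_{j,\nu}}^{4/(n-2)}\tilde w_\nu^2+o(1)$. Only this one local comparison is blown up at scale $\lda_{j,\nu}$; since $|\om_{j,\nu}|\to0$ (because $R_\nu\lda_{j,\nu}^{-1}\to0$), the $b$-term there is $o(1)$ by H\"older and the Sobolev bound, so no absorption of $b$ against the spectral gap is ever needed, and no summation over bubbles occurs. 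Passing to a weak limit and using Lemma \ref{lem:proj-1} to obtain exact orthogonality in the limit then contradicts Lemma \ref{lem:coercive}. If you want a direct (non-contradiction) proof along your lines, you would have to localize the energy to disjoint tower-compatible regions with cutoff errors made small (e.g.\ logarithmic cutoffs over widely separated scales) and keep the $b$-term only on shrinking sets; as written, the proposal has genuine gaps at exactly these points.
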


\begin{proof} We assume $w_\nu$ is not zero, otherwise there is nothing to prove. Define $\tilde w_\nu= \frac{w_\nu}{\|w_\nu\|}$. Suppose the lemma is not true.  Then we can find a subsequence of $\{\tilde w_\nu\}$ (still denoted by $\{\tilde w_\nu\}$) satisfying
\be \label{eq:coer-1}
\lim_{\nu\to \infty}\frac{n+2}{n-2}  \int_\om   \sum_{k=1}^m \xi_{x_{k,\nu},\lda_{k,\nu}}  ^{\frac{4}{n-2}} \tilde w_\nu^2  \ge 1. 
\ee
By \eqref{eq:sobolev-b},  
\be \label{eq:coer-2}
\int_{\om} |\tilde w_\nu|^{\frac{2n}{n-2}} \le K_b^{\frac{n}{n-2}} \|\tilde w_\nu\|= K_b^{\frac{n}{n-2}}. 
\ee
By \eqref{eq:bubbles-1} and \eqref{eq:order}, we can find $R_\nu\to \infty$, $R_\nu \lda_{j,\nu}^{-1} \to 0 $ for all $1\le j\le m$, and 
\be \label{eq:coer-3}
\frac{\lda_{i,\nu}}{R_\nu} (\lda_{j,\nu}^{-1} +|x_{i,\nu}-x_{j,\nu}|) \to \infty 
\ee
for all $i<j$. Set 
\[
\om_{j,\nu}= B_{R_\nu \lda_{j,\nu}^{-1}}(x_{j,\nu})\setminus \bigcup_{i=1}^{j-1} B_{R_\nu \lda_{i,\nu}^{-1}}(x_{i,\nu}).
\]
By \eqref{eq:coer-1} and $\|\tilde w_\nu\|=1$, we can find $1\le j\le m$ such that 
\[
\lim_{\nu\to \infty } \int_\om    \xi_{x_{j,\nu},\lda_{j,\nu}}  ^{\frac{4}{n-2}} \tilde w_\nu^2 >0
\]
and 
\[
\lim_{\nu\to \infty  } \int_{\om_{j,\nu}} (|\nabla \tilde w_\nu|^2- b\tilde w_\nu^2 ) \le \lim_{\nu\to \infty  }\frac{n+2}{n-2}  \int_\om    \xi_{x_{j,\nu},\lda_{j,\nu}}  ^{\frac{4}{n-2}} \tilde w_\nu^2. 
\]
Let $\hat w_\nu(x)= \lda_{j,\nu}^{-\frac{n-2}{2}} \tilde w_\nu(x_{j,\nu} +\lda_{j,\nu}^{-1} x)$. Under this scaling, by using \eqref{eq:coer-3}, we know that either $B_{R_\nu \lda_{j,\nu}^{-1}}(x_{j,\nu})$ will be disjoint with $\bigcup_{i=1}^{j-1} B_{R_\nu \lda_{i,\nu}^{-1}}(x_{i,\nu})$, or $B_{R_\nu \lda_{i,\nu}^{-1}}(x_{i,\nu})$ will shrink to a point for every $1\le i\le j-1$.  By passing to a weak limit in $H^1_{loc}(\R^n)$, and using the above two inequalities and Lemma \ref{lem:proj-1}, we then obtain a contradiction to Lemma \ref{lem:coercive}. 

Therefore, Lemma \ref{lem:2nd-variational} is proved. 
\end{proof} 

\begin{cor}\label{cor:coercive} 
For large $\nu$, we have 
\[
\frac{n+2}{n-2} \int_\om U_\nu ^{\frac{4}{n-2}} w_\nu^2   \le (1-c) \int_{\om} (|\nabla w_\nu|^2-bw_\nu^2)\,\ud x,
\]
where $c>0$ is independent of $\nu$.
\end{cor}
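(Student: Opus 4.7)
The plan is to reduce the corollary to Lemma \ref{lem:2nd-variational} by showing that the weight $U_\nu^{4/(n-2)}$ can be replaced by $\sum_k \xi_{x_{k,\nu},\lda_{k,\nu}}^{4/(n-2)}$ up to a $o(1)\|w_\nu\|^2$ error. Write $p:=\tfrac{4}{n-2}$ and $\xi_k:=\xi_{x_{k,\nu},\lda_{k,\nu}}$. First, we need a pointwise comparison of $U_\nu^p=(\sum_k \al_{k,\nu}\xi_k)^p$ with $\sum_k \al_{k,\nu}^p \xi_k^p$. When $n\ge 6$, one has $p\le 1$, so subadditivity of $t\mapsto t^p$ on $[0,\infty)$ gives immediately
\[
U_\nu^p \le \sum_{k=1}^m \al_{k,\nu}^p\,\xi_k^p.
\]
When $n\in\{3,4,5\}$ we have $p>1$, and an elementary expansion shows
\[
U_\nu^p \le \sum_{k=1}^m \al_{k,\nu}^p\,\xi_k^p + C_{m,p}\sum_{i\ne j}\xi_i^{a_{ij}}\xi_j^{b_{ij}},
\]
where every pair of exponents satisfies $a_{ij},b_{ij}>0$ and $a_{ij}+b_{ij}=p$ (the case $p=4$ for $n=3$ produces products like $\xi_i^2\xi_j^2$, which still have both exponents strictly positive).

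The second step is to show that each cross integral $\int_\om \xi_i^a\xi_j^b w_\nu^2\,\ud x$ with $a,b>0$, $a+b=p$ and $i\ne j$ is $o(1)\|w_\nu\|^2$ as $\nu\to\infty$. By Hölder's inequality and the Sobolev inequality \eqref{eq:sobolev-b},
\[
\int_\om \xi_i^a\xi_j^b w_\nu^2\,\ud x \le \Bigl(\int_\om \xi_i^{an/2}\xi_j^{bn/2}\,\ud x\Bigr)^{2/n} K_b\|w_\nu\|^2.
\]
Since $\tfrac{an}{2}+\tfrac{bn}{2}=\tfrac{pn}{2}=\tfrac{2n}{n-2}$, the integral on the right is scale invariant in the bubble parameters. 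Performing the change of variables $y=\lda_{i,\nu}(x-x_{i,\nu})$ rescales $\xi_i$ to (essentially) the standard bubble $\bar\xi_{0,1}$ and rescales $\xi_j$ to a bubble with scale $\lda'_\nu=\lda_{j,\nu}/\lda_{i,\nu}$ and centre $a'_\nu$ with $|a'_\nu|^2=\lda_{i,\nu}^2|x_{i,\nu}-x_{j,\nu}|^2$. The separation condition \eqref{eq:bubbles-1} becomes $\lda'_\nu+1/\lda'_\nu+\lda'_\nu|a'_\nu|^2\to\infty$, so the rescaled $\xi_j$ either spreads out, concentrates, or moves to infinity, and its mass escapes any fixed compact neighborhood where $\bar\xi_{0,1}$ is concentrated. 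The correction term $h_{x_{k,\nu},\lda_{k,\nu}}$ inside $\xi_k=\bar\xi_{x_{k,\nu},\lda_{k,\nu}}-h_{x_{k,\nu},\lda_{k,\nu}}$ is negligible by Lemma \ref{lem:bubble-derivative} and \eqref{eq:bubbles-2}. It follows that the scale-invariant integral tends to $0$.

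Combining the two steps with $\al_{k,\nu}=1+o(1)$ from \eqref{eq:bubbles-4} yields
\[
\int_\om U_\nu^p\,w_\nu^2\,\ud x \le (1+o(1))\sum_{k=1}^m\int_\om \xi_k^p w_\nu^2\,\ud x + o(1)\|w_\nu\|^2.
\]
Multiplying by $\tfrac{n+2}{n-2}$ and applying Lemma \ref{lem:2nd-variational} with its constant $c_0>0$, the right-hand side is at most $(1+o(1))(1-c_0)\|w_\nu\|^2+o(1)\|w_\nu\|^2$, which is bounded by $(1-c_0/2)\|w_\nu\|^2$ for all sufficiently large $\nu$. The main obstacle is the bubble-interaction smallness in the second step; it is classical but requires a careful case analysis according to which of $\lda_{i,\nu}/\lda_{j,\nu}$, $\lda_{j,\nu}/\lda_{i,\nu}$ or $\lda_{i,\nu}\lda_{j,\nu}|x_{i,\nu}-x_{j,\nu}|^2$ is the driver of divergence in \eqref{eq:bubbles-1}. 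The pointwise expansion in Step 1 is purely algebraic but slightly tedious in low dimensions because of the higher-order cross products.
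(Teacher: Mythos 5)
Your proposal is correct and follows essentially the same route as the paper: reduce to Lemma \ref{lem:2nd-variational} by replacing the weight $U_\nu^{\frac{4}{n-2}}$ with $\sum_k \xi_{x_{k,\nu},\lda_{k,\nu}}^{\frac{4}{n-2}}$, controlling the error via H\"older's inequality and the Sobolev inequality \eqref{eq:sobolev-b}. The only difference is that the paper simply cites the fact $\int_\Omega \bigl|U_\nu^{\frac{4}{n-2}}-\sum_k \xi_{x_{k,\nu},\lda_{k,\nu}}^{\frac{4}{n-2}}\bigr|^{\frac n2}=o(1)$, whereas you supply its proof through the pointwise expansion, the bubble-interaction estimates, and $\al_{k,\nu}=1+o(1)$.
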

\begin{proof}
It follows from Lemma \ref{lem:2nd-variational}, H\"older's inequality, the Sobolev inequality \eqref{eq:sobolev-b} and the fact that
\[
\int_\Omega \Big|U_\nu ^{\frac{4}{n-2}} - \sum_{k=1}^m  \xi_{x_{k,\nu},\lda_{k,\nu}}  ^{\frac{4}{n-2}}\Big|^{\frac{n}{2}}=o(1).
\]
\end{proof}

Now we can have an expansion of the Hamitonian $F$ defined in \eqref{eq:F-funct}.
\begin{prop} \label{prop:bubble-energy} When $n\ge 4$ and  $\nu$ is sufficiently large,  we have 
\[
F(U_\nu) \le  \sum_{k=1}^m F( \xi_{x_{k,\nu},\lda_{k,\nu}}  ) +o(1) \sum_{k=1}^m  \int_{\om} \xi_{x_{k,\nu},\lda_{k,\nu}}^2   +C \sum_{k=1}^m \lda_{k,\nu}^{2-n}. 
\]
\end{prop}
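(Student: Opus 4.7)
The strategy is to expand $F(U_\nu) = \|U_\nu\|^2 - \frac{n-2}{n}\int_\Omega U_\nu^{2n/(n-2)}\,\ud x$ and use the bubble separation to isolate single-bubble energies. Write $\xi_k = \xi_{x_{k,\nu},\lda_{k,\nu}}$ and $h_k = h_{x_{k,\nu},\lda_{k,\nu}}$. The quadratic part splits exactly as $\|U_\nu\|^2 = \sum_k \al_{k,\nu}^2\|\xi_k\|^2 + \sum_{i\ne j}\al_{i,\nu}\al_{j,\nu}\langle\xi_i,\xi_j\rangle$, and integration by parts (using $-\Delta\xi_i = \bar\xi_i^{(n+2)/(n-2)}$ in $\Omega$ with $\xi_i|_{\partial\Omega}=0$) gives $\langle\xi_i,\xi_j\rangle = \int_\Omega\bar\xi_i^{(n+2)/(n-2)}\xi_j\,\ud x - b\int_\Omega\xi_i\xi_j\,\ud x$.

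For the nonlinear part I aim for
\[
\int_\Omega U_\nu^{\frac{2n}{n-2}}\,\ud x \ge \sum_k\al_{k,\nu}^{\frac{2n}{n-2}}\int_\Omega\xi_k^{\frac{2n}{n-2}}\,\ud x + \tfrac{2n}{n-2}\sum_{i\ne j}\al_{i,\nu}^{\frac{n+2}{n-2}}\al_{j,\nu}\int_\Omega\xi_i^{\frac{n+2}{n-2}}\xi_j\,\ud x + \text{(lower order)}.
\]
Since $\tfrac{2n}{n-2}\in(2,4]$ for $n\ge 4$, the asymmetric Bernoulli inequality $(a+b)^{2n/(n-2)}\ge a^{2n/(n-2)}+\tfrac{2n}{n-2}a^{(n+2)/(n-2)}b$ is available, though its symmetric strengthening generally fails when $\tfrac{2n}{n-2}$ is non-integer. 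I would partition $\Omega$ into regions $A_1,\ldots,A_m$ adapted to the bubble hierarchy \eqref{eq:order} (each $A_i$ containing a ball $B_{R_\nu/\lda_{i,\nu}}(x_{i,\nu})$ as in Lemma \ref{lem:2nd-variational} on which $\al_{i,\nu}\xi_i$ dominates $\sum_{j\ne i}\al_{j,\nu}\xi_j$), apply the inequality on each $A_i$ with $a=\al_i\xi_i$ and $b=\sum_{j\ne i}\al_j\xi_j$, and absorb the tail losses using the concentration of each bubble and \eqref{eq:bubbles-1}. Because $\tfrac{n-2}{n}\cdot\tfrac{2n}{n-2}=2$, combining with the quadratic expansion yields $F(U_\nu)\le\sum_k F(\al_{k,\nu}\xi_k)+\sum_{i\ne j}\big(\al_{i,\nu}\al_{j,\nu}\langle\xi_i,\xi_j\rangle-2\al_{i,\nu}^{(n+2)/(n-2)}\al_{j,\nu}\int_\Omega\xi_i^{(n+2)/(n-2)}\xi_j\,\ud x\big)+\text{(lower order)}$. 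Substituting $\al_{k,\nu}=1+o(1)$ from \eqref{eq:bubbles-4}, writing $\bar\xi_i=\xi_i+h_i$, and Taylor-expanding, every cross term reduces to a standard bubble interaction $\int\xi_i^a\xi_j^b$ or $\int\xi_i^ah_i^b\xi_j$; by the tail estimates of Bahri--Coron \cite{Bahri-C} and Rey \cite{Rey}, combined with the bound $|h_k|\le C\lda_{k,\nu}^{-(n-2)/2}$ from Lemma \ref{lem:bubble-derivative}, each such integral is controlled by a multiple of $\va_{ij}:=(\lda_{i,\nu}/\lda_{j,\nu}+\lda_{j,\nu}/\lda_{i,\nu}+\lda_{i,\nu}\lda_{j,\nu}|x_{i,\nu}-x_{j,\nu}|^2)^{-(n-2)/2}\to 0$ by \eqref{eq:bubbles-1}, so the full cross sum fits into $o(1)\sum_k\int_\Omega\xi_k^2\,\ud x+C\sum_k\lda_{k,\nu}^{2-n}$.

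It remains to replace $F(\al_{k,\nu}\xi_k)$ by $F(\xi_k)$. A direct Taylor expansion in $\al$ at $\al=1$ yields
\[
F(\al\xi_k)-F(\xi_k) = -\tfrac{4}{n-2}(\al-1)^2\|\xi_k\|^2 + \tfrac{n-2}{n}\big(\al^{\frac{2n}{n-2}}-1\big)\Big(\|\xi_k\|^2-\int_\Omega\xi_k^{\frac{2n}{n-2}}\,\ud x\Big) + O(|\al-1|^3).
\]
The first term is nonpositive (hence helpful for the upper bound) and the last is negligible since $|\al-1|=o(1)$; the middle term is controlled via the identity $\|\xi_k\|^2-\int_\Omega\xi_k^{\frac{2n}{n-2}}\,\ud x = \tfrac{n+2}{n-2}\int_\Omega\bar\xi_k^{\frac{n+2}{n-2}}h_k\,\ud x - b\int_\Omega\xi_k^2\,\ud x + \text{(lower order)}$, where by Lemma \ref{lem:bubble-derivative} the first integral is $O(\lda_{k,\nu}^{2-n})$. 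Assembling these pieces gives the stated inequality. The main obstacle is executing the partition argument for the nonlinear expansion when $\tfrac{2n}{n-2}$ is not an integer, which forces use of the asymmetric pointwise inequality; the sharp concentration of each bubble together with \eqref{eq:bubbles-1} is what keeps the tail losses from restricting $\int\xi_k^{2n/(n-2)}$ and $\int\xi_i^{(n+2)/(n-2)}\xi_j$ to $A_k$ below the permissible error.
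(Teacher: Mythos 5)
Your setup (exact quadratic expansion, the identity $\langle\xi_i,\xi_j\rangle=\int_\Omega\bar\xi_i^{\frac{n+2}{n-2}}\xi_j-b\int_\Omega\xi_i\xi_j$, and the Taylor expansion of $F(\al\xi_k)$ in $\al$) is consistent with the paper, but the step that closes your argument is wrong: you claim that after the expansion ``every cross term reduces to a standard bubble interaction \dots controlled by a multiple of $\va_{ij}$'' and can therefore be absorbed into $o(1)\sum_k\int_\Omega\xi_{x_{k,\nu},\lda_{k,\nu}}^2+C\sum_k\lda_{k,\nu}^{2-n}$. Interactions of size $\va_{ij}$ are \emph{not} admissible errors. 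Take two bubbles with $\lda_{1,\nu}=\lda_{2,\nu}=\lda_\nu\to\infty$ and $|x_{1,\nu}-x_{2,\nu}|=(\ln\lda_\nu)/\lda_\nu$, which is admissible by \eqref{eq:bubbles-1}; then $\int_\Omega\xi_1^{\frac{n+2}{n-2}}\xi_2\asymp(\ln\lda_\nu)^{2-n}$, while $\int_\Omega\xi_k^2\le C\lda_\nu^{-2}\ln\lda_\nu$ and $\lda_{k,\nu}^{2-n}$ are polynomially small, so the interaction dwarfs the allowed error; multiplying by $|\al_{k,\nu}-1|=o(1)$ does not help, since that $o(1)$ carries no rate in $\lda_\nu$. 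Hence the interactions must be \emph{cancelled}, not discarded: at $\al=1$ the quadratic cross term $\langle\xi_i,\xi_j\rangle$ cancels $\int\xi_i^{\frac{n+2}{n-2}}\xi_j$ up to $O(\lda^{2-n})$ and a sign-favorable $-b\int\xi_i\xi_j$, and the genuine difficulty is the leftover piece of size $|\al_{i,\nu}-1|\int\xi_i^{\frac{n+2}{n-2}}\xi_j$. The paper handles it by Young's inequality: the $(\al_{i,\nu}-1)^2$ part is absorbed by the diagonal deficit $\al^2-\frac{n-2}{n}\al^{\frac{2n}{n-2}}\le\frac{2}{n}-\frac{4}{n-2}(\al-1)^2$ (the very term you call ``helpful'' and then never use), while the $\big(\int\xi_i^{\frac{n+2}{n-2}}\xi_j\big)^2$ part is absorbed by the extra negative remainder $-c_{n,m}\int(\xi_i\lor\xi_j)^{\frac{4}{n-2}}(\xi_i\land\xi_j)^2$ coming from the strengthened pointwise inequality \eqref{eq:basic}, using the comparison \eqref{eq:I1I2} proved in the Appendix. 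Your proposal contains no analogue of this mechanism.

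The partition argument you offer in place of \eqref{eq:basic} does not repair the gap. Applying the one-sided inequality $(a+b)^{\frac{2n}{n-2}}\ge a^{\frac{2n}{n-2}}+\frac{2n}{n-2}a^{\frac{n+2}{n-2}}b$ on regions $A_i$ where $\al_{i,\nu}\xi_i$ dominates yields cross terms only single-counted and only over $A_i$, and it loses both $\int_{\Omega\setminus A_i}\xi_i^{\frac{2n}{n-2}}$ and the portions of $\int_\Omega\xi_i^{\frac{n+2}{n-2}}\xi_j$ lying outside $A_i$ (including regions where a third bubble dominates); all of these losses are again of order $\va_{ij}$, so by the same counterexample they cannot be pushed into your error term, and whether the surviving negative cross terms still control the quadratic ones after these losses is exactly the quantitative point left unproved. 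A workable variant of your route would have to keep the sign structure and establish a double-counted expansion of $\int_\Omega U_\nu^{\frac{2n}{n-2}}$ with error $o\big(\int\xi_i^{\frac{n+2}{n-2}}\xi_j\big)$ (a Bahri--Coron type estimate), so that a surplus $-\int\xi_i^{\frac{n+2}{n-2}}\xi_j$ survives to absorb the $|\al_{k,\nu}-1|$ residuals; that is a genuinely different mechanism from the one you describe, and it is not executed in your sketch.
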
 

\begin{proof}  We shall need the following inequality
\be\label{eq:basic}
\left(\sum_{k=1}^m a_k \right)^{\frac{2n}{n-2}} \ge \sum_{k=1}^m a_k ^{\frac{2n}{n-2}}+\frac{2n}{n-2}\sum_{k< l} a_k^{\frac{n+2}{n-2}} a_l + c_{n,m}\sum_{k< l} (a_k \lor a_l) ^{\frac{4}{n-2}} (a_k\land a_l)^{2}
\ee for  any $a_1,\dots, a_m\ge 0$, where  $c_{n,m}>0$ is a constant, and $a_k \lor a_l=\max(a_k,a_l)$ and $a_k\land a_l=\min(a_k,a_l)$. This inequality can be proved using Lemma \ref{lem:basic-1} and induction.

Using the  inequality \eqref{eq:basic},  we have 
\begin{align}
&\int_\Omega (| \nabla U_\nu|^2 -bU_\nu^2) -\frac{n-2}{n} \int_\Omega U_\nu^{\frac{2n}{n-2}} \nonumber\\
&\le \sum_{k} \al_{k,\nu}^2 \int_\Omega ( |\nabla \xi_{x_{k,\nu},\lda_{k,\nu}} |^2-b\xi_{x_{k,\nu},\lda_{k,\nu}} ^2)- \sum_{k} \al_{k,\nu}^{\frac{2n}{n-2}} \frac{n-2}{n}\int_\Omega\xi_{x_{k,\nu},\lda_{k,\nu}}^{\frac{2n}{n-2}} \nonumber\\
&\quad +2 \sum_{i<j}  \al_{j,\nu} \left[\al_{i,\nu} \int_\Omega (\nabla \xi_{x_{i,\nu},\lda_{i,\nu}} \nabla \xi_{x_{j,\nu},\lda_{j,\nu}} -b\xi_{x_{i,\nu},\lda_{i,\nu}} \xi_{x_{j,\nu},\lda_{j,\nu}} )-  \al_{i,\nu}^{\frac{n+2}{n-2}}  \int_\Omega   \xi_{x_{i,\nu},\lda_{i,\nu}} ^{\frac{n+2}{n-2}}\xi_{x_{j,\nu},\lda_{j,\nu}}\right] \nonumber\\
&\quad-c_{n,m} \sum_{i<j}  \int_\Omega   (\xi_{x_{i,\nu},\lda_{i,\nu}} \lor \xi_{x_{j,\nu},\lda_{j,\nu}})^{\frac{4}{n-2}} (\xi_{x_{i,\nu},\lda_{i,\nu}} \land \xi_{x_{j,\nu},\lda_{j,\nu}})^{2}.\label{eq:computeFU1} 
\end{align}
By the equation of $\bar \xi_{x_{k,\nu},\lda_{k,\nu}}$ and the definition of $\xi_{x_{k,\nu},\lda_{k,\nu}}$, we have 
\begin{align}
&\al_{k,\nu}^2 \int_\Omega ( |\nabla \xi_{x_{k,\nu},\lda_{k,\nu}} |^2-b\xi_{x_{k,\nu},\lda_{k,\nu}} ^2)- \al_{k,\nu}^{\frac{2n}{n-2}} \frac{n-2}{n}\int_\Omega\xi_{x_{k,\nu},\lda_{k,\nu}}^{\frac{2n}{n-2}}  \nonumber\\
&\le \al_{k,\nu}^2 \int_{\R^n}  |\nabla \bar \xi_{x_{k,\nu},\lda_{k,\nu}} |^2- \al_{k,\nu}^{\frac{2n}{n-2}} \frac{n-2}{n}\int_{\R^n} \bar \xi_{x_{k,\nu},\lda_{k,\nu}}^{\frac{2n}{n-2}}  - b \al_{k,\nu}^2 \int_\Omega  \xi_{x_{k,\nu},\lda_{k,\nu}} ^2  +C \lda_{k,\nu}^{2-n}\nonumber\\
&\le \int_{\R^n}  |\nabla \bar \xi_{x_{k,\nu},\lda_{k,\nu}} |^2 - \frac{n-2}{n}\int_{\R^n}  \bar \xi_{x_{k,\nu},\lda_{k,\nu}}^{\frac{2n}{n-2}}  -\frac{4}{n-2}(\al_{k,\nu}-1)^2 Y(\Sn)^{\frac{n}{2}} \nonumber\\
&\quad -b \al_{k,\nu}^2 \int_\Omega  \xi_{x_{k,\nu},\lda_{k,\nu}} ^2 +C \lda_{k,\nu}^{2-n}\nonumber\\
&\le  F( \xi_{x_{k,\nu} ,\lda_{k,\nu}}) -\frac{4}{n-2}(\al_{k,\nu}-1)^2Y(\Sn)^{\frac{n}{2}}+o(1) \int_\Omega  \xi_{x_{k,\nu},\lda_{k,\nu}} ^2 +C\lda_{k,\nu}^{2-n},\label{eq:computeFU2} 
\end{align}
where we used $\al_{k,\nu}=1+o(1)$, $\al_{k,\nu}^2-\frac{n-2}{n} \al_{k,\nu}^{\frac{2n}{n-2}}\le  \frac{2}{n}-\frac{4}{n-2}(\al_{k,\nu}-1)^2 $ and 
\[
\int_{\R^n} |\nabla \bar \xi_{x_{k,\nu},\lda_{k,\nu}} |^2= \int_{\R^n} \bar  \xi_{x_{k,\nu},\lda_{k,\nu}} ^{\frac{2n}{n-2}}=Y(\Sn)^{\frac{n}{2}}.
\]
In addition, 
\begin{align}
& \al_{i,\nu}  \int_\Omega (\nabla \xi_{x_{i,\nu},\lda_{i,\nu}} \nabla \xi_{x_{j,\nu},\lda_{j,\nu}} -b\xi_{x_{i,\nu},\lda_{i,\nu}} \xi_{x_{j,\nu},\lda_{j,\nu}} ) -\al_{i,\nu}^{\frac{n+2}{n-2}}  \int_\Omega   \xi_{x_{i,\nu},\lda_{i,\nu}} ^{\frac{n+2}{n-2}}\xi_{x_{j,\nu},\lda_{j ,\nu}} \nonumber\\
& = \al_{i,\nu} \int_\Omega (-\Delta \xi_{x_{i,\nu},\lda_{i,\nu}}  -b\xi_{x_{i,\nu},\lda_{i,\nu}} -\al_{i,\nu}^{\frac{4}{n-2}}  \xi_{x_{i,\nu},\lda_{i,\nu}} ^{\frac{n+2}{n-2}} )\xi_{x_{j,\nu},\lda_{j,\nu}} \nonumber\\&
\le C|  \al_{i,\nu} -1| \int_\Omega  \xi_{x_{i,\nu},\lda_{i,\nu}} ^{\frac{n+2}{n-2}} \xi_{x_{j,\nu},\lda_{j,\nu}}  + \int_\Omega  \Big(\bar\xi_{x_{i,\nu},\lda_{i,\nu}} ^{\frac{n+2}{n-2}}- \xi_{x_{i,\nu},\lda_{i,\nu}} ^{\frac{n+2}{n-2}}\Big) \xi_{x_{j,\nu},\lda_{j,\nu}} \nonumber\\&
\le C|  \al_{i,\nu} -1| \int_\Omega  \xi_{x_{i,\nu},\lda_{i,\nu}} ^{\frac{n+2}{n-2}} \xi_{x_{j,\nu},\lda_{j,\nu}}  +O(\lda_{i,\nu} ^{2-n}+\lda_{j,\nu}^{2-n}) \nonumber\\
& \le \frac{2}{n-2}(\al_{k,\nu}-1)^2  Y(\Sn)^{\frac{n}{2}}+C\Big (\int_\Omega  \xi_{x_{i,\nu},\lda_{i,\nu}} ^{\frac{n+2}{n-2}} \xi_{x_{j,\nu},\lda_{j,\nu}}\Big)^2 +O(\lda_{i,\nu} ^{2-n}+\lda_{j,\nu}^{2-n}),\label{eq:computeFU3} 
\end{align}
where $C>0$ is independent of $\nu$, and in the second inequality we used
\begin{align*}
\int_\Omega  \Big(\bar\xi_{x_{i,\nu},\lda_{i,\nu}} ^{\frac{n+2}{n-2}}- \xi_{x_{i,\nu},\lda_{i,\nu}} ^{\frac{n+2}{n-2}}\Big) \xi_{x_{j,\nu},\lda_{j,\nu}} 
&\le C \int_\Omega  \bar\xi_{x_{i,\nu},\lda_{i,\nu}} ^{\frac{4}{n-2}}|h_{x_{i,\nu},\lda_{i,\nu}} |  \bar\xi_{x_{j,\nu},\lda_{j,\nu}} \\
&\le C\lda_{i,\nu}^{\frac{2-n}{2}} \left(\int_\Omega  \bar\xi_{x_{i,\nu},\lda_{i,\nu}} ^{\frac{n+2}{n-2}}\right)^{\frac{4}{n+2}} \left(\int_\Omega\bar\xi_{x_{j,\nu},\lda_{j,\nu}} ^{\frac{n+2}{n-2}}\right)^{\frac{n-2}{n+2}}\\
&\le C\lda_{i,\nu}^{\frac{2-n}{2}}\lda_{i,\nu}^{\frac{2-n}{2}\frac{4}{n+2}}\lda_{j,\nu}^{\frac{2-n}{2}\frac{n-2}{n+2}}\\
&\le C\lda_{i,\nu}^{\frac{2-n}{2}}(\lda_{i,\nu}^{\frac{2-n}{2}}+\lda_{j,\nu}^{\frac{2-n}{2}})\\
&\le C(\lda_{i,\nu}^{2-n}+\lda_{j,\nu}^{2-n}).
\end{align*}
Combining \eqref{eq:computeFU1}, \eqref{eq:computeFU2} and  \eqref{eq:computeFU3}, we have
\begin{align*}
&F(U_\nu) \\
&\le  \sum_{k=1}^m F( \xi_{x_{k,\nu},\lda_{k,\nu}}  ) +o(1) \sum_{k=1}^m  \int_{\om} \xi_{x_{k,\nu},\lda_{k,\nu}}^2   +C \sum_{k=1}^m \lda_{k,\nu}^{2-n}\\
& \ \ + \sum_{i<j} \left[C\Big (\int_\Omega  \xi_{x_{i,\nu},\lda_{i,\nu}} ^{\frac{n+2}{n-2}} \xi_{x_{j,\nu},\lda_{j,\nu}}\Big)^2 - c_{n,m}  \int_\Omega   (\xi_{x_{i,\nu},\lda_{i,\nu}} \lor \xi_{x_{j,\nu},\lda_{j,\nu}})^{\frac{4}{n-2}} (\xi_{x_{i,\nu},\lda_{i,\nu}} \land \xi_{x_{j,\nu},\lda_{j,\nu}})^{2}\right].
\end{align*}
Meanwhile, we have
\[
\begin{split}
&C\Big (\int_\Omega  \xi_{x_{i,\nu},\lda_{i,\nu}} ^{\frac{n+2}{n-2}} \xi_{x_{j,\nu},\lda_{j,\nu}}\Big)^2 - c_{n,m}  \int_\Omega   (\xi_{x_{i,\nu},\lda_{i,\nu}} \lor \xi_{x_{j,\nu},\lda_{j,\nu}})^{\frac{4}{n-2}} (\xi_{x_{i,\nu},\lda_{i,\nu}} \land \xi_{x_{j,\nu},\lda_{j,\nu}})^{2} \\
& \le C\Big (\int_{\R^n} \bar \xi_{x_{i,\nu},\lda_{i,\nu}} ^{\frac{n+2}{n-2}} \bar\xi_{x_{j,\nu},\lda_{j,\nu}}\Big)^2 - c_{n,m}  \int_{\R^n}   (\bar\xi_{x_{i,\nu},\lda_{i,\nu}} \lor \bar\xi_{x_{j,\nu},\lda_{j,\nu}})^{\frac{4}{n-2}} (\bar\xi_{x_{i,\nu},\lda_{i,\nu}} \land \bar\xi_{x_{j,\nu},\lda_{j,\nu}})^{2} \\
&\quad+C(\lda_{i,\nu}^{2-n}+\lda_{j,\nu}^{2-n})\\
&\le C(\lda_{i,\nu}^{2-n}+\lda_{j,\nu}^{2-n})\quad\mbox{for all large }\nu,
\end{split}
\]
where we used \eqref{eq:I1I2} in the last inequality. 

Therefore, the proof is completed. 
\end{proof}

\begin{cor}\label{cor:pure-singular} 
If $n\ge 4$ and $b>0$ satisfying \eqref{eq:b}, we have, for large $\nu$,  
\[
F(U_\nu) \le \frac{2m}{n} Y(\Sn)^{\frac{n}{2}}. 
\]

\end{cor}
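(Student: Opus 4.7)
The plan is to combine Proposition \ref{prop:bubble-energy} with the classical Brezis--Nirenberg expansion of the energy of a single bubble. By Proposition \ref{prop:bubble-energy}, it suffices to show that for each $k$,
\[
F(\xi_{x_{k,\nu},\lda_{k,\nu}}) \le \frac{2}{n} Y(\mathbb{S}^n)^{n/2} - b\int_\Omega \bar\xi_{x_{k,\nu},\lda_{k,\nu}}^{\,2}\,\ud x + O(\lda_{k,\nu}^{\,2-n}),
\]
and that the negative term $-b\int \bar\xi^{2}$ absorbs both the error $o(1)\int \xi^{2}$ from Proposition \ref{prop:bubble-energy} and the remainder $C\lda^{2-n}$.

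For the first step, I would expand $F(\xi_{a,\lda})$ using the decomposition $\xi_{a,\lda} = \bar\xi_{a,\lda} - h_{a,\lda}$ and the bubble equation $-\Delta \bar\xi_{a,\lda} = \bar\xi_{a,\lda}^{(n+2)/(n-2)}$ on $\R^n$. Since $h_{a,\lda}$ is harmonic in $\Omega$ with boundary data $\bar\xi_{a,\lda}$, integration by parts yields $\int_\Omega |\nabla \xi_{a,\lda}|^{2} = \int_\Omega \bar\xi_{a,\lda}^{(n+2)/(n-2)}\,\xi_{a,\lda}$, and the pointwise bound $|h_{a,\lda}| \le C\lda^{-(n-2)/2}$ from Lemma \ref{lem:bubble-derivative} then reduces every cross term involving $h_{a,\lda}$ to an error of size $O(\lda^{2-n})$. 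Combining with $\int_{\R^n} |\nabla \bar\xi|^{2} = \int_{\R^n} \bar\xi^{2n/(n-2)} = Y(\mathbb{S}^n)^{n/2}$ gives the claimed expansion of $F(\xi_{a,\lda})$.

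The decisive step is the Brezis--Nirenberg asymptotic for $\int_\Omega \bar\xi_{a,\lda}^{\,2}$ when $d(a) \ge \delta/2$ and $\lda \to \infty$:
\[
\int_\Omega \bar\xi_{a,\lda}^{\,2}\,\ud x \;\ge\; \begin{cases} c\,\lda^{-2}\log \lda, & n = 4,\\[1mm] c\,\lda^{-2}, & n \ge 5,\end{cases}
\]
which is a standard computation using the change of variables $z = \lda(x-a)$. In both cases this lower bound strictly dominates $\lda^{2-n}$ as $\lda \to \infty$, which is exactly where the hypothesis $n \ge 4$ is used. Since $b > 0$, for any fixed $k$ we have, for large $\nu$,
\[
b\int_\Omega \bar\xi_{x_{k,\nu},\lda_{k,\nu}}^{\,2} \;\ge\; 2C\lda_{k,\nu}^{\,2-n} + o(1)\int_\Omega \xi_{x_{k,\nu},\lda_{k,\nu}}^{\,2},
\]
where I also used that $\int \xi^{2}$ and $\int \bar\xi^{2}$ differ by $O(\lda^{2-n})$ via Lemma \ref{lem:bubble-derivative}.

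Finally, the divergence $\lda_{k,\nu} \to \infty$ needed for the above asymptotic follows from \eqref{eq:bubbles-2} together with $d(x_{k,\nu}) \ge \delta/2$. Summing over $k = 1, \dots, m$ and inserting into Proposition \ref{prop:bubble-energy} yields $F(U_\nu) \le \frac{2m}{n} Y(\mathbb{S}^n)^{n/2}$ for large $\nu$. The main obstacle is precisely the Brezis--Nirenberg asymptotic of $\int_\Omega \bar\xi_{a,\lda}^{\,2}$: this is the single place where both assumptions $n \ge 4$ and $b > 0$ are jointly essential, since in dimension $n = 3$ one only has $\int \bar\xi^{2} \sim \lda^{-1}$, of the same order as $\lda^{2-n}$, and the sign of the resulting leading term is governed by the regular part of the Green's function of $-\Delta - b$ rather than by $b$ itself.
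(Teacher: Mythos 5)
Your proof is correct and follows essentially the same route as the paper: Proposition \ref{prop:bubble-energy} plus the observation that $\int_\Omega \xi_{x_{k,\nu},\lda_{k,\nu}}^2$ (of order $\lda^{-2}\ln\lda$ for $n=4$ and $\lda^{-2}$ for $n\ge 5$) dominates $\lda_{k,\nu}^{2-n}$ precisely when $n\ge 4$, so that the term coming from $b>0$ absorbs both the $o(1)\int\xi^2$ error and the $C\lda^{2-n}$ remainder. The only difference is cosmetic: you spell out the expansion $F(\xi_{x_{k,\nu},\lda_{k,\nu}})\le \frac{2}{n}Y(\Sn)^{n/2}-b\int_\Omega\bar\xi_{x_{k,\nu},\lda_{k,\nu}}^2+O(\lda_{k,\nu}^{2-n})$, which the paper leaves implicit in \eqref{eq:computeFU2} within the proof of Proposition \ref{prop:bubble-energy}.
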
 

\begin{proof} 
By Proposition \ref{prop:away} and Lemma \ref{lem:bubble-derivative},  \begin{align*}
F( \xi_{x_{k,\nu},\lda_{k,\nu}}  ) &= \int_{\om } \Big( |\nabla \xi_{x_{k,\nu},\lda_{k,\nu}} |^2  -\frac{n-2}{n} \xi_{x_{k,\nu},\lda_{k,\nu}}^{\frac{2n}{n-2}}\Big)\,\ud x- b\int_{\om }  \xi_{x_{k,\nu},\lda_{k,\nu}}^2\,\ud x\\&\le \frac{2}{n} Y(\Sn)^{\frac{n}{2}}  + C \lda_{k,\nu}^{2-n} - b\int_{\om }  \xi_{x_{k,\nu},\lda_{k,\nu}}^2\,\ud x,
\end{align*}
where $C>0$ is independent of $\nu$.
Note that 
\[
\int_{\om}\xi_{x_{j,\nu},\lda_{j,\nu}}^{2}\ge\begin{cases}  \frac{1}{C} \lda_{j,\nu}^{-\frac{4}{n-2}},& \quad \mbox{if }n\neq 4, \\ 
 \frac{1}{C}  \lda_{j,\nu}^{-2} \ln \lda_{j,\nu},& \quad \mbox{if }n=4.
\end{cases}
\]
Hence, if $n\ge 4$ and $b>0$, for any large constant $N$ we can find $j_N>0$ such that for all $j\ge j_N$ there holds   $b\int_{\om}\xi_{x_{j,\nu},\lda_{j,\nu}}^{2} \ge N \lda_{j,\nu}^{2-n}$.
The corollary follows immediately from Proposition \ref{prop:bubble-energy}. 
\end{proof}

\subsection{The case $v_\infty >0$}

In this case, we shall also project $v_\nu$ to a finite-dimensional surface in $H_0^1(\om)$ generated by $v_\infty$ and $m$-bubbles. In order to understand the new contribution from $v_\infty$, we need to perform spectral analysis  of the linearized operator at $v_\infty$ as Brendle \cite{Br05} did for the Yamabe flow on compact manifolds.  Our current $H_0^1(\om)$  setting is more close to that in Section 2.1 of Bonforte-Figalli \cite{BFig}. Indeed, the analysis of \cite{BFig} applies here with little change and the election of $L$ below is the same as $k_p$ in \cite{BFig}.

Let $\mathcal{L}^2(\Omega):=\big\{f: \int_{\Omega}f^2v_\infty^{\frac{4}{n-2}}<\infty\big\}$ with the inner product $\langle f,g\rangle=\int_{\Omega}fgv_\infty^{\frac{4}{n-2}}\,\ud x$. Then the operator
\[
f \longmapsto \left[v_\infty^{-\frac{4}{n-2}}(-\Delta-b)\right]^{-1}f
\]
is a bounded linear compact symmetric operator mapping $\mathcal{L}^2(\Omega)$ into itself. Using the spectral theorem, there exists a sequence of $H^1_0(\Omega)$ functions $\{\phi_l: l\in\mathbb{N}\}$ and a sequence of positive real numbers $\{\mu_l:l\in\mathbb{N}\}$ such that $0<\mu_1<\mu_2\le \mu_3 \le \cdots \to\infty$,
\[
-\Delta \phi_l -b\phi_l =\mu_l v_\infty^{\frac{4}{n-2}} \phi_l \quad \mbox{in }\om, \quad \phi_l=0 \quad \mbox{on }\pa \om,
\]
 and $\{\phi_l: l\in\mathbb{N}\}$ forms an orthonormal  basis of $\mathcal{L}^2(\Omega)$. In particular,
\[
\int_{\om} v_\infty^{\frac{4}{n-2}} \phi_i \phi_j=\begin{cases} 
&1 \quad \mbox{for }i=j, \\
&0 \quad \mbox{for }i\neq j.
\end{cases}
\] By the regularity theory of linear elliptic equations, $\phi_l\in C^{2+\frac{4}{n-2}}(\bar \om) \cap C^\infty(\om)$ for every $l$.  By the equation of $v_\infty$ and the positivity of  $v_\infty$, we know that $\mu_1=1$ and $\phi_1= v_\infty(\int_{\om} v_\infty^{\frac{2n}{n-2}})^{-1/2}$. It is easy to check that $\{\frac{1}{\sqrt{\mu_l}}\phi_l\}$ is also an orthonormal basis of $H_0^1(\om)$ with respect to the inner product \eqref{eq:inner-product}.

Let $L$ be the largest number such that 
\[
\mu_l\le \frac{n+2}{n-2} \quad \mbox{for all } l\le L.
\]
For $f\in L^p(\Omega)$, $p\ge 1$, we denote by $\Pi$ the projection operator 
\[
\Pi f 
= f-\sum_{i=1}^L \left(\int_{\om }f \phi_i\,\ud x\right) v_\infty ^{\frac{4}{n-2}} \phi_i. 
\]
It is clear that $\Pi( L^p(\om))=\{f\in L^p(\R^n): \int_\Omega f\phi_i=0, \ i=1,2,\cdots,L\}.$ Hence, $\Pi( L^p(\om))$ is a closed subspace of $L^p(\om)$, and thus, is a Banach space with the inherited $L^p$ norm.

We have several estimates regarding this projection.

\begin{lem}\label{lem:Lpprojection} For every $1\le p<\infty$, we can find a constant $C$ depending only on $n,b,\Omega,p$ and $v_\infty$ such that 
\[
\|f\|_{L^p(\om)} \le C \Big \|\Delta f+bf+\frac{n+2}{n-2} v_\infty^{\frac{4}{n-2}} f \Big\|_{L^p(\om)}+C \sup_{1\le l\le L} \Big|\int_\om v_\infty^{\frac{4}{n-2}} \phi_l f \Big|
\]
for all $f\in W^{2,p}(\Omega)\cap W^{1,p}_0(\Omega)$.
\end{lem}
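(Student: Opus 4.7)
The plan is to argue by contradiction and compactness, exploiting the Fredholm nature of the linearized operator $\mathcal{L}f := \Delta f + bf + \tfrac{n+2}{n-2}v_\infty^{\frac{4}{n-2}}f$. Note that by the eigenfunction equation for $\phi_l$, we have $\mathcal{L}\phi_l = (\tfrac{n+2}{n-2}-\mu_l)v_\infty^{\frac{4}{n-2}}\phi_l$, so the $L^2$-kernel of $\mathcal{L}$ (with zero Dirichlet data) is spanned by those $\phi_l$ with $\mu_l = \tfrac{n+2}{n-2}$, a subset of $\{\phi_1,\dots,\phi_L\}$.

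Suppose the claimed inequality fails. Then for some $1 \le p < \infty$ there exists a sequence $f_k \in W^{2,p}(\Omega) \cap W^{1,p}_0(\Omega)$ with
\[
\|f_k\|_{L^p(\Omega)} = 1, \qquad \|\mathcal{L}f_k\|_{L^p(\Omega)} + \sup_{1\le l \le L} \Bigl|\int_\Omega v_\infty^{\frac{4}{n-2}}\phi_l f_k\Bigr| \longrightarrow 0.
\]
Since $v_\infty \in C^2(\overline\Omega)$ (by regularity for the stationary equation), the coefficient $\tfrac{n+2}{n-2}v_\infty^{\frac{4}{n-2}}$ is bounded, so $\|\Delta f_k\|_{L^p(\Omega)} \le \|\mathcal{L}f_k\|_{L^p} + C\|f_k\|_{L^p} \le C$. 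By the Calder\'on--Zygmund $L^p$ estimate for the Dirichlet Laplacian on $\Omega$, $\|f_k\|_{W^{2,p}(\Omega)} \le C$. Rellich--Kondrachov then gives a subsequence (still denoted $f_k$) converging strongly in $L^p(\Omega)$ to some $f_\infty$, weakly in $W^{2,p}$, with $f_\infty \in W^{2,p}(\Omega) \cap W^{1,p}_0(\Omega)$ and $\|f_\infty\|_{L^p(\Omega)} = 1$.

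Passing to the limit in the equation gives $\mathcal{L}f_\infty = 0$ a.e., and in the projections gives $\int_\Omega v_\infty^{\frac{4}{n-2}}\phi_l f_\infty = 0$ for all $l \le L$. Since $v_\infty$ and the coefficients of $\mathcal{L}$ are smooth in $\overline\Omega$ up to the vanishing of $v_\infty$ on the boundary (and $v_\infty^{\frac{4}{n-2}}$ is at worst bounded), standard elliptic bootstrap from $-\Delta f_\infty - bf_\infty = \tfrac{n+2}{n-2}v_\infty^{\frac{4}{n-2}}f_\infty$ promotes $f_\infty$ to $H^1_0(\Omega)$. Expanding $f_\infty = \sum_{l \ge 1} c_l \phi_l$ in the $\mathcal{L}^2(\Omega)$ orthonormal basis, the identity $\mathcal{L}f_\infty = 0$ forces $(\tfrac{n+2}{n-2}-\mu_l)c_l = 0$ for every $l$, i.e.\ $c_l = 0$ whenever $\mu_l \ne \tfrac{n+2}{n-2}$, so in particular $c_l = 0$ for $l > L$. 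The orthogonality $c_l = \int_\Omega v_\infty^{\frac{4}{n-2}}\phi_l f_\infty = 0$ for $l \le L$ then annihilates the remaining coefficients, yielding $f_\infty \equiv 0$, which contradicts $\|f_\infty\|_{L^p(\Omega)} = 1$.

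The main technical obstacle is ensuring that the uniform $W^{2,p}$ bound and the spectral expansion are both valid for every $p \in [1,\infty)$; this is handled by the Calder\'on--Zygmund theory for the Dirichlet Laplacian on smooth $\Omega$ in the first place, and by elliptic bootstrap (since $L^p \subset L^1$ and the coefficients are bounded) to place the limit in $H^1_0(\Omega)$ where the spectral theorem applies. One should also remark that a straightforward modification (replacing $\sup$ by a sum over finitely many $l$) shows the supremum term may equivalently be written as $\sum_{l \le L} |\int_\Omega v_\infty^{\frac{4}{n-2}}\phi_l f|$.
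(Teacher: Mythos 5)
Your overall strategy (contradiction, uniform elliptic bound, compactness, then identification of the limit as an element of the kernel of $\Delta+b+\frac{n+2}{n-2}v_\infty^{4/(n-2)}$ killed by the orthogonality conditions) is exactly the paper's argument, and for $1<p<\infty$ your proof is correct. However, there is a genuine gap at $p=1$: the Calder\'on--Zygmund estimate $\|f\|_{W^{2,p}}\le C\|\Delta f\|_{L^p}$ which you invoke to get the uniform $W^{2,p}$ bound is false for $p=1$ (the Laplacian is not bounded from $L^1$ into $W^{2,1}$), so your compactness step breaks down precisely in the endpoint case. This case cannot be discarded, since the lemma is later applied with $p=1$ (part (ii) of Lemma \ref{lem:projectionest1}, used in Lemma \ref{lem:1norm}). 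The paper circumvents this by treating $p=1$ separately: from $\|\Delta f_k\|_{L^1}\le C$ one only gets, via the Brezis--Strauss estimates for second-order equations with $L^1$ right-hand side, a uniform bound $\|f_k\|_{W^{1,q}(\om)}\le C$ for some $q>1$, and this weaker bound is still enough for compactness in $L^1$ and to pass to the limit; the rest of the argument (bootstrap of the limit to $L^\infty$, hence membership in $\mathcal{L}^2(\Omega)$, spectral expansion, and vanishing of all coefficients) then proceeds as you wrote. So your proof needs this substitute estimate (or some equivalent $W^{1,q}$ bound for $L^1$ data) to cover $p=1$; with that repair it matches the paper's proof.

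One further small point: your bootstrap claim that the limit lies in $H^1_0(\Omega)$ should be justified starting from the low-integrability case as well (for $p=1$ the limit is first only in $W^{1,q}_0$ with $q>1$ close to $1$), but since the equation is linear with bounded coefficients, finitely many Sobolev iterations do raise the limit to $L^\infty$, which is all that is needed to place it in $\mathcal{L}^2(\Omega)$ and run the spectral argument.
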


\begin{proof} 
Suppose that this is not true. Then there exists a sequence of functions $f_k\in W^{2,p}(\Omega)\cap W^{1,p}_0(\Omega)$ such that $\|f_k\|_{L^p(\om)}=1$ for all $k$, and 
\[
\lim_{k\to\infty}\Big \|\Delta f_k+bf_k+\frac{n+2}{n-2} v_\infty^{\frac{4}{n-2}} f_k \Big\|_{L^p(\om)}+ \lim_{k\to\infty}\sup_{1\le l\le L} \Big|\int_\om v_\infty^{\frac{4}{n-2}} \phi_l f_k \Big|=0.
\]
If $p>1$, then by the $W^{2,p}$ estimates, we have $\|f_k\|_{W^{2,p}(\om)}\le C$. If $p=1$, by the estimates of Brezis-Strauss \cite{BrezisS}, $\|f_k\|_{W^{1,q}(\om)}\le C$ for some $q>1$. Therefore, by the compactness, we obtain an $f$ such that $\|f\|_{L^p(\om)}=1$,  
$\int_\om v_\infty^{\frac{4}{n-2}} \phi_l f_k =0$ for all $1\le l\le L$, and
\[
\Delta f+bf+\frac{n+2}{n-2} v_\infty^{\frac{4}{n-2}} f=0
\]
in the distribution sense. Multiplying $\phi_l$ and integrating by parts, we have
\[
\left(\mu_l- \frac{n+2}{n-2} \right)\int_\om v_\infty^{\frac{4}{n-2}} \phi_l f_k =0.
\]
Hence, $\int_\om v_\infty^{\frac{4}{n-2}} \phi_l f_k =0$ for all $l> L$. 
Meanwhile, from the elliptic regularity, we know that $f\in L^\infty(\Omega)$. Hence, $f\in\mathcal{L}^2(\Omega)$, and thus, $f\equiv 0$, which is a contradiction.
\end{proof} 

\begin{lem} \label{lem:projectionest1}  There exists a constant $C$ depending only on $n,b,\Omega,p$ and $v_\infty$ such that 
\begin{itemize}
\item[(i)] 
\[
\|f\|_{L^{\frac{n+2}{n-2}}(\om)}\le C \Big \| \Pi(\Delta f+bf+\frac{n+2}{n-2} v_\infty^{\frac{4}{n-2}} f )\Big\|_{L^{\frac{n(n+2)}{n^2+4}}(\om)}+C \sup_{1\le l\le L} \Big|\int_\om v_\infty^{\frac{4}{n-2}} \phi_l f \Big| 
\]
for all $f\in W^{2,\frac{n(n+2)}{n^2+4}}(\Omega)\cap W^{1,\frac{n(n+2)}{n^2+4}}_0(\Omega)$.
\item[(ii)]
\[
\|f\|_{L^{1}(\om)}\le C \Big \| \Pi(\Delta f+bf+\frac{n+2}{n-2} v_\infty^{\frac{4}{n-2}} f )\Big\|_{L^{1}(\om)}+C \sup_{1\le l\le L} \Big|\int_\om v_\infty^{\frac{4}{n-2}} \phi_l f \Big|
\]
for all $f\in W^{2,1}(\Omega)\cap W^{1,1}_0(\Omega)$. 
\end{itemize}

\end{lem}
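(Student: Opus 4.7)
The plan is to reduce both estimates to Lemma \ref{lem:Lpprojection} by means of a finite-dimensional decomposition of $Tf := \Delta f + bf + \frac{n+2}{n-2}v_\infty^{\frac{4}{n-2}}f$. Integrating by parts against each $\phi_i$ and using the eigenvalue equation $-\Delta\phi_i - b\phi_i = \mu_i v_\infty^{\frac{4}{n-2}}\phi_i$, one computes
\[
\int_\om \phi_i\,(Tf)\,\ud x = \Big(\frac{n+2}{n-2}-\mu_i\Big)\int_\om v_\infty^{\frac{4}{n-2}}\phi_i f\,\ud x,
\]
so that
\[
Tf = \Pi(Tf) + \sum_{i=1}^L \Big(\frac{n+2}{n-2}-\mu_i\Big)\Big(\int_\om v_\infty^{\frac{4}{n-2}}\phi_i f\,\ud x\Big)\,v_\infty^{\frac{4}{n-2}}\phi_i.
\]
Since $v_\infty\in L^\infty(\om)$ by standard elliptic regularity for the stationary equation \eqref{eq:BNstationary}, and each $\phi_i\in L^\infty(\om)$, the finite-dimensional correction is controlled in any $L^p$, giving
\[
\|Tf\|_{L^p(\om)} \le \|\Pi(Tf)\|_{L^p(\om)} + C\sup_{1\le l\le L}\Big|\int_\om v_\infty^{\frac{4}{n-2}}\phi_l f\,\ud x\Big|
\]
for every $1\le p<\infty$, with $C$ depending on $p$.

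Part (ii) is then immediate: set $p=1$ in the inequality above and apply Lemma \ref{lem:Lpprojection} with $p=1$.

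For part (i), set $p_0 := \frac{n(n+2)}{n^2+4}$ and verify by direct computation that the Sobolev conjugate is $\frac{np_0}{n-2p_0} = \frac{n+2}{n-2}$, while $p_0>1$ and $2p_0<n$ for all $n\ge 3$. Applying Lemma \ref{lem:Lpprojection} with $p=p_0$ gives $\|f\|_{L^{p_0}(\om)} \le C\|\Pi(Tf)\|_{L^{p_0}(\om)} + C\sup$. Next, writing $\Delta f = Tf - (b+\frac{n+2}{n-2}v_\infty^{\frac{4}{n-2}})f$ and using that the coefficient lies in $L^\infty$, we obtain $\|\Delta f\|_{L^{p_0}(\om)} \le \|Tf\|_{L^{p_0}(\om)}+C\|f\|_{L^{p_0}(\om)} \le C(\|\Pi(Tf)\|_{L^{p_0}(\om)}+\sup)$. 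The Calder\'on--Zygmund $W^{2,p_0}$ estimate for the Dirichlet Laplacian on the smooth bounded domain $\om$ (valid since $p_0>1$) yields $\|f\|_{W^{2,p_0}(\om)} \le C\|\Delta f\|_{L^{p_0}(\om)}$, and the Sobolev embedding $W^{2,p_0}(\om)\hookrightarrow L^{(n+2)/(n-2)}(\om)$ closes the argument.

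The proof is essentially bookkeeping once the above decomposition is recorded; there is no genuinely delicate step, since Lemma \ref{lem:Lpprojection} is available as a black box. The one point that deserves attention is the need to know that the coefficient $b+\frac{n+2}{n-2}v_\infty^{\frac{4}{n-2}}$ and the densities $v_\infty^{\frac{4}{n-2}}\phi_i$ lie in $L^\infty$; both follow from the standard elliptic regularity of $v_\infty$ and of the spectral basis $\phi_i$ already recorded preceding the statement.
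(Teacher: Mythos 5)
Your argument is correct, and it is essentially the route the paper intends: the paper's proof simply defers to Lemma \ref{lem:Lpprojection} together with Brendle's Lemma 6.3, whose argument is exactly your finite-rank decomposition of $\Delta f+bf+\frac{n+2}{n-2}v_\infty^{\frac{4}{n-2}}f$ into $\Pi(\cdot)$ plus terms controlled by $\sup_l\big|\int_\om v_\infty^{\frac{4}{n-2}}\phi_l f\big|$, followed by Lemma \ref{lem:Lpprojection} (with $p=1$ for (ii), $p=\frac{n(n+2)}{n^2+4}$ for (i)) and, for (i), the Calder\'on--Zygmund $W^{2,p}$ estimate and the Sobolev embedding $W^{2,\frac{n(n+2)}{n^2+4}}\hookrightarrow L^{\frac{n+2}{n-2}}$.
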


\begin{proof} 
Given Lemma \ref{lem:Lpprojection}, the proof is the same as that of Lemma 6.3 in \cite{Br05}. We include it for reader's convenience. By the definition of $\Pi$, we have for $f\in W^{2,p}(\Omega)\cap W^{1,p}_0(\Omega)$ that
\begin{align*}
&\Delta f+bf+\frac{n+2}{n-2} v_\infty^{\frac{4}{n-2}} f \\
&= \Pi(\Delta f+bf+\frac{n+2}{n-2} v_\infty^{\frac{4}{n-2}} f )+\sum_{i=1}^L \left(\frac{n+2}{n-2}-\mu_i\right) \left(\int_{\om }f \phi_iv_\infty ^{\frac{4}{n-2}}\,\ud x\right) v_\infty ^{\frac{4}{n-2}} \phi_i. 
\end{align*}
Hence,
\[
\|\Delta f+bf+\frac{n+2}{n-2} v_\infty^{\frac{4}{n-2}} f \|_{L^p(\om)}\le \|\Pi(\Delta f+bf+\frac{n+2}{n-2} v_\infty^{\frac{4}{n-2}} f )\|_{L^p(\om)}+C \sup_{1\le l\le L} \Big|\int_\om v_\infty^{\frac{4}{n-2}} \phi_l f \Big|.
\]
The assertion (ii) follows from the above inequality with $p=1$ and Lemma \ref{lem:Lpprojection}.

For the assertion (i), by choosing $p=\frac{n(n+2)}{n^2+4}$ in the above inequality and using Lemma \ref{lem:Lpprojection}, we have 
\begin{align*}
&\|\Delta f+bf+\frac{n+2}{n-2} v_\infty^{\frac{4}{n-2}} f \|_{L^\frac{n(n+2)}{n^2+4}(\om)}\\
&\le \|\Pi(\Delta f+bf+\frac{n+2}{n-2} v_\infty^{\frac{4}{n-2}} f )\|_{L^\frac{n(n+2)}{n^2+4}(\om)}+C \sup_{1\le l\le L} \Big|\int_\om v_\infty^{\frac{4}{n-2}} \phi_l f \Big|
\end{align*}
and
\[
\|f\|_{L^\frac{n(n+2)}{n^2+4}(\om)} \le C \Big \|\Delta f+bf+\frac{n+2}{n-2} v_\infty^{\frac{4}{n-2}} f \Big\|_{L^\frac{n(n+2)}{n^2+4}(\om)}+C \sup_{1\le l\le L} \Big|\int_\om v_\infty^{\frac{4}{n-2}} \phi_l f \Big|.
\]
By the $W^{2,p}$ regularity theory for the Laplace equation and the Sobolev embedding $W^{2,\frac{n(n+2)}{n^2+4}}\hookrightarrow L^{\frac{n+2}{n-2}}$, we have
\[
\|f\|_{L^{\frac{n+2}{n-2}}(\om)}\le C \|f\|_{W^{2,\frac{n(n+2)}{n^2+4}}(\Omega)}\le C \Big \| \Delta f+bf+\frac{n+2}{n-2} v_\infty^{\frac{4}{n-2}} f \Big\|_{L^{\frac{n(n+2)}{n^2+4}}(\om)}+C \Big \| f \Big\|_{L^{\frac{n(n+2)}{n^2+4}}(\om)}.
\]
Then the assertion (i) is followed by combining these three inequalities.
\end{proof}

\begin{lem} \label{lem:ift} There exists $\delta_1>0$ such that for every $z=(z_1,\dots,z_L)\in \R^L$ with $|z|\le \delta_1$, there exists $\xi_z \in C_0^{\frac{3n-2}{n-2}}(\overline \om)$ satisfying $1/2\le \xi_z/v_\infty\le 2$ in $\om$, 
\[
\int_{\om} v_\infty^{\frac{4}{n-2}} (\xi_z- v_\infty) \phi_l\,\ud x= z_l, \quad l=1,\dots, L,
\]
and 
\begin{equation}\label{eq:projection0}
\Pi(\Delta \xi_z+b\xi_z+ \xi_z ^{\frac{n+2}{n-2}})=0. 
\end{equation}
Furthermore, the map $z\mapsto \xi_z$ is real analytic and $\frac{\pa }{\pa z_1}\xi_z(0)=v_\infty$, $\frac{\pa }{\pa z_l}\xi_z(0)=\phi_l$ for $2\le l\le L$.
\end{lem}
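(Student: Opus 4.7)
The plan is to construct $\xi_z$ via the analytic implicit function theorem (IFT) in Banach spaces. Fix $p$ large enough that $W^{2,p}(\om)\hookrightarrow L^\infty(\om)$, and set
\[
E=\Pi\bigl(W^{2,p}(\om)\cap W^{1,p}_0(\om)\bigr),\qquad Y=\Pi(L^p(\om)),
\]
regarded as closed subspaces with inherited norms. On a neighborhood of the origin in $\R^L\times E$, define
\[
\Phi(z,w)=\Pi\!\left(\Delta\xi+b\xi+\xi^{\frac{n+2}{n-2}}\right),\qquad \xi:=v_\infty+\sum_{l=1}^L z_l\phi_l+w.
\]
Since $v_\infty>0$ in $\om$ with boundary decay comparable to that of each $\phi_l$ and any $w\in E$, shrinking the neighborhood keeps $\xi$ pointwise close to $v_\infty$ in $L^\infty$ and bounded away from zero away from the boundary; then the Nemytskii map $\xi\mapsto\xi^{(n+2)/(n-2)}$ is real-analytic from a neighborhood of $v_\infty$ in $L^\infty$ into $L^\infty\subset L^p$, so $\Phi$ is real-analytic. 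Because $v_\infty$ satisfies \eqref{eq:BNstationary}, we have $\Phi(0,0)=\Pi(0)=0$.

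The key step is to show that the partial Fr\'echet derivative
\[
D_w\Phi(0,0)[h]=\Pi\!\left(\Delta h+bh+\tfrac{n+2}{n-2}v_\infty^{\frac{4}{n-2}}h\right),\qquad h\in E,
\]
is a Banach-space isomorphism $E\to Y$. Injectivity with a quantitative bound is precisely Lemma \ref{lem:Lpprojection} applied to $f=h\in E$: the correction terms $\sup_{l\le L}|\int_{\om} v_\infty^{4/(n-2)}\phi_l h\,\ud x|$ vanish for $h\in\Pi(L^p)$, giving $\|h\|_{L^p}\le C\|D_w\Phi(0,0)[h]\|_{L^p}$, which upgrades to $\|h\|_{W^{2,p}}\le C\|D_w\Phi(0,0)[h]\|_{L^p}$ by standard $L^p$ elliptic regularity. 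For surjectivity I would use the Fredholm alternative: the operator $T:=-\Delta-b-\frac{n+2}{n-2}v_\infty^{4/(n-2)}$ is self-adjoint on $\mathcal{L}^2(\om)$ with purely discrete spectrum, and by the definition of $L$ its $\mathcal{L}^2$-kernel lies in $\mathrm{span}\{\phi_l:\mu_l=(n+2)/(n-2)\}\subset\mathrm{span}\{\phi_1,\dots,\phi_L\}$, exactly the subspace that $\Pi$ removes. Hence $T$ restricts to a Fredholm operator of index zero from $E$ to $Y$ with trivial kernel, and is therefore bijective.

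Applying the analytic IFT yields $\delta_1>0$ and a real-analytic map $z\mapsto w(z)\in E$ with $w(0)=0$ and $\Phi(z,w(z))=0$ for $|z|<\delta_1$. Setting $\xi_z:=v_\infty+\sum_l z_l\phi_l+w(z)$, the equation \eqref{eq:projection0} is immediate; the identity $\int v_\infty^{4/(n-2)}(\xi_z-v_\infty)\phi_l\,\ud x=z_l$ follows from the weighted orthonormality of $\{\phi_l\}$ together with $\int v_\infty^{4/(n-2)}\phi_l w(z)\,\ud x=0$ for $l\le L$. Shrinking $\delta_1$ and invoking $W^{2,p}\hookrightarrow L^\infty$ with a Hopf-type comparison near $\pa\om$ (where $v_\infty$, each $\phi_l$, and $w(z)$ all vanish at the same linear rate) gives $\tfrac12 v_\infty\le\xi_z\le 2v_\infty$. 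The sharp regularity $\xi_z\in C_0^{(3n-2)/(n-2)}(\overline\om)$ follows by Schauder bootstrap up to the smooth boundary applied to
\[
-\Delta\xi_z=b\xi_z+\xi_z^{\frac{n+2}{n-2}}-\sum_{l\le L}c_l(z)\,v_\infty^{\frac{4}{n-2}}\phi_l,
\]
where the finite-rank right-hand correction comes from \eqref{eq:projection0}, matching the optimal boundary regularity from \cite{JX19}. Finally, implicit differentiation of $\Phi(z,w(z))=0$ combined with the fact that $D_w\Phi(0,0)[\phi_l]=0$ for $l\le L$ (since $\Pi(v_\infty^{4/(n-2)}\phi_l)=0$) forces $\pa_{z_l}w(0)=0$, so $\pa_{z_l}\xi_z(0)=\phi_l$, which agrees with the stated derivatives up to the paper's normalization convention for $\phi_1$.

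The main obstacle is the isomorphism property of the linearized operator, but this is exactly what Lemmas \ref{lem:Lpprojection} and \ref{lem:projectionest1} were designed to deliver, so the work reduces to bookkeeping once the function-space framework is set up. A secondary technical point is choosing a space in which the supercritical Nemytskii map $\xi\mapsto\xi^{(n+2)/(n-2)}$ is analytic; the choice $p>n/2$ combined with the strict positivity of $v_\infty$ in $\om$ (with uniform decay at the boundary) resolves this cleanly.
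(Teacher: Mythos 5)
Your overall strategy is the same as the paper's: split $\xi_z$ into low modes plus a correction, and solve for the correction with the analytic implicit function theorem after inverting the projected linearized operator. The genuine gap is in your choice of correction space. Membership in $E=\Pi\bigl(W^{2,p}(\om)\cap W^{1,p}_0(\om)\bigr)$ means the \emph{unweighted} conditions $\int_\om w\,\phi_l\,\ud x=0$ for $l\le L$, but every subsequent step silently uses the \emph{weighted} conditions $\int_\om v_\infty^{\frac{4}{n-2}}\phi_l\,w\,\ud x=0$, and these are not the same. Concretely: (a) the injectivity bound is not ``precisely Lemma \ref{lem:Lpprojection}'': the correction terms there are exactly $\sup_l\bigl|\int_\om v_\infty^{\frac{4}{n-2}}\phi_l h\bigr|$, which do \emph{not} vanish for $h\in E$; moreover that lemma controls $\|h\|_{L^p}$ by the unprojected quantity $\|\Delta h+bh+\tfrac{n+2}{n-2}v_\infty^{\frac{4}{n-2}}h\|_{L^p}$, and passing to $\|\Pi(\cdots)\|_{L^p}$ costs terms $(\tfrac{n+2}{n-2}-\mu_l)\bigl(\int_\om v_\infty^{\frac{4}{n-2}}\phi_l h\bigr)v_\infty^{\frac{4}{n-2}}\phi_l$, i.e.\ the same weighted integrals again; (b) with your parametrization, $\int_\om v_\infty^{\frac{4}{n-2}}(\xi_z-v_\infty)\phi_l=z_l+\int_\om v_\infty^{\frac{4}{n-2}}\phi_l\,w(z)$, and the second term does not vanish on $E$, so the normalization identity of the lemma is not obtained; (c) in the surjectivity step you identify the kernel of the restricted operator with the $\mathcal{L}^2$-kernel of $T$, but $h\in E$ with $\Pi Th=0$ only gives $Th\in\mathrm{span}\{v_\infty^{\frac{4}{n-2}}\phi_i\}$, and excluding such $h$ needs an extra argument (expansion in the weighted eigenbasis, using $\mu_j>\tfrac{n+2}{n-2}$ for $j>L$, plus invertibility of the Gram matrix of $\phi_1,\dots,\phi_L$ in unweighted $L^2$). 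All three defects disappear with the paper's choice: take the correction in $\mathcal{H}=\mathrm{span}\{\phi_1,\dots,\phi_L\}^{\perp}$ with respect to the inner product \eqref{eq:inner-product}, which (since $\mu_l>0$) is exactly the weighted orthogonality; there the linearization is coercive because the remaining spectrum satisfies $\mu_j>\tfrac{n+2}{n-2}$, giving the isomorphism onto $\Pi(L^p(\om))$, and the normalization identity and $\pa_z h(0)=0$ follow at once.

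A secondary flaw: analyticity of $\xi\mapsto\xi^{\frac{n+2}{n-2}}$ cannot be had on an $L^\infty$-neighborhood of $v_\infty$, because $v_\infty$ vanishes on $\pa\om$; an $L^\infty$-small perturbation is not small relative to $v_\infty$ near the boundary, which is what the power series expansion requires. One needs $p>n$ so that $W^{2,p}\cap W^{1,p}_0$ gives $C^1$ control and zero boundary data, hence $|w|\le C\delta\,d(x)\le C'\delta\,v_\infty$; this is how the paper proceeds via Lemma 5.3 of \cite{FS}. Relatedly, for large $p$ (and $n\ge 7$) the functions $v_\infty^{\frac{4}{n-2}}\phi_l$ are not in $W^{2,p}(\om)$, so $\Pi$ does not map $W^{2,p}\cap W^{1,p}_0$ into itself and your definition of $E$ as $\Pi(W^{2,p}\cap W^{1,p}_0)$ with the ``inherited'' $W^{2,p}$ norm is itself problematic --- one more reason to set up the correction space as the paper does.
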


\begin{proof} Let $\xi_{z}= (1+z_1) v_\infty +\sum_{l=2}^L z_l \phi_l+h$, where $$h\in \mathcal{H}:=span\{\phi_1,\dots, \phi_L\}^{\perp}$$ and the ``$\perp$" is with respect to the inner product \eqref{eq:inner-product}. By a direct computation, 
\begin{align*}
&\Pi(\Delta \xi_z+b\xi_z+ \xi_z ^{\frac{n+2}{n-2}}) \\&= (\Delta +b)\xi_z - \sum_{l=1}^L\Big( \int_\Omega (\Delta +b)\xi_z \phi_l\Big) v_\infty^{\frac{4}{n-2}} \phi_l
+ \xi_z^{\frac{n+2}{n-2}}- \sum_{l=1}^L\Big( \int_\Omega \xi_z^{\frac{n+2}{n-2}} \phi_l\Big) v_\infty^{\frac{4}{n-2}} \phi_l\\&
= (\Delta+b)h +\xi_z^{\frac{n+2}{n-2}}- \sum_{l=1}^L\Big( \int_\Omega \xi_z^{\frac{n+2}{n-2}} \phi_l\Big) v_\infty^{\frac{4}{n-2}} \phi_l=:G(z,h).
\end{align*}
For any $p>n$, we claim that there exists a small constant $\delta>0$ such that 
\[
G: \{|z|<\delta \}\times \{\|h\|_{W^{2,p}(\om) \cap W^{1,p}_0 (\om)}<\delta\} \to \Pi( L^p(\om))
\]
is analytic. Indeed, let $\Phi(z,h)=\xi_{z}$, $\mathcal{L}u= \Delta u +bu+u^{\frac{n+2}{n-2}}$. Then we have $G=\Pi \circ \mathcal{L} \circ \Phi $. Obviously, the linear maps $\Phi$ and $\Pi$ are analytic. By Lemma 5.3 of Feireisl-Simondon \cite{FS}, $\mathcal{L}$ is also analytic in some small neighborhood of $v_\infty$ in $W^{2,p}(\om) \cap W^{1,p}_0 (\om)$.  

Note that $G(0,0)=0$ and 
\[
G_{h}(0,0)\varphi= (\Delta+b) \varphi +\frac{n+2}{n-2}\left( v_\infty^{\frac{4}{n-2}} \varphi -\sum_{l=1}^L \Big(\int_\Omega   v_\infty^{\frac{4}{n-2}} \varphi \phi_l\Big)  v_\infty^{\frac{4}{n-2}} \phi_l\right).
\]
Since $G_h(0,0)$ is coercive on $H^1_0(\om)\cap\mathcal{H}$, then $G_h(0,0): W^{2,p}(\om) \cap W^{1,p}_0 (\om)\cap\mathcal{H} \to  \Pi( L^p(\om))$ is invertible, and both $G_h(0,0)$ and $(G_h(0,0))^{-1}$ are continuous. By the Implicit Function Theorem we can find $h(z)\in W^{2,p}(\om) \cap W^{1,p}_0 (\om)\cap\mathcal{H}$ such that $G(z,h(z))=0$ and $h$ is analytic in $z$, see, e.g., Section 3.3B of Berger \cite{Berger}.  The regularity of $h(z)(\cdot)$ follows from elliptic regularity theory for the linear elliptic equation $G(z,h)=0$ in $\om$ and $h=0$ on $\pa \om$.  Since $h(0)=0$, and $0=G_z(0,0)+G_h(0,0)\pa_z h(0)= G_h(0,0)\pa_z h(0)$, we have $\pa_z h(0)=0$. It follows that $\frac{\pa }{\pa z_1}\xi_z(0)=v_\infty$ and $\frac{\pa }{\pa z_l}\xi_z(0)=\phi_l$ for $2\le l\le L$.  Therefore, the proof is completed. 
\end{proof}

The difference of the energy at $\xi_z$ and $v_\infty$ can be controlled as follows.
\begin{lem} \label{lem:lojasiewiczgradient} 
There exists a real number $\gamma\in(0,1)$ depending only on $n,b,\Omega$ and $v_\infty$ such that 
\[
F(\xi_z)-F(v_\infty) \le 2 \sup_{1\le l\le L} \Big|  \int_{\om} (\Delta \xi_z+b\xi_z +\xi_z^{\frac{n+2}{n-2}})  \phi_l\,\ud x \Big|^{1+\gamma}
\]
if $z$ is sufficiently small. 
\end{lem}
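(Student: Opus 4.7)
The plan is to apply the classical {\L}ojasiewicz gradient inequality to the finite-dimensional real analytic function $g(z):=F(\xi_z)-F(v_\infty)$ defined on a neighborhood of $0\in\R^L$, and then use the projection equation \eqref{eq:projection0} to convert the resulting gradient estimate into the bound on the integrals $\int_{\om}(\Delta\xi_z+b\xi_z+\xi_z^{\frac{n+2}{n-2}})\phi_l\,\ud x$.

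First I would verify that $g$ is real analytic near $0$ and has a critical point there. Real analyticity of $g$ follows from Lemma \ref{lem:ift}, which provides a real analytic map $z\mapsto \xi_z$ into $C_0^{\frac{3n-2}{n-2}}(\overline\om)$, together with the fact that $F$ is an integral polynomial in $\xi_z$ and $\nabla\xi_z$. A direct differentiation gives
\be\label{eq:pzl}
\pa_{z_l} g(z) = -2\int_{\om}\bigl(\Delta\xi_z+b\xi_z+\xi_z^{\frac{n+2}{n-2}}\bigr)\pa_{z_l}\xi_z\,\ud x.
\ee
At $z=0$ we have $\xi_0=v_\infty$, and since $v_\infty$ solves \eqref{eq:BNstationary}, both $g(0)=0$ and $\nabla g(0)=0$. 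Hence the {\L}ojasiewicz inequality yields $\gamma\in(0,1]$ and $C>0$ with $|g(z)|\le C|\nabla g(z)|^{1+\gamma}$ for $|z|$ small; by shrinking the exponent slightly (using $|\nabla g(z)|\le 1$ near $0$), we may assume $\gamma<1$.

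Next I would translate $|\nabla g(z)|$ into the desired supremum. By \eqref{eq:projection0} we may write
\[
\Delta\xi_z+b\xi_z+\xi_z^{\frac{n+2}{n-2}}=\sum_{k=1}^L c_k(z)\, v_\infty^{\frac{4}{n-2}}\phi_k,\qquad c_k(z):=\int_{\om}\bigl(\Delta\xi_z+b\xi_z+\xi_z^{\frac{n+2}{n-2}}\bigr)\phi_k\,\ud x,
\]
using the orthonormality of $\{\phi_k\}$ in $\mathcal{L}^2(\om)$. Inserting this into \eqref{eq:pzl} gives
\[
\pa_{z_l}g(z) = -2\sum_{k=1}^L c_k(z)\, A_{lk}(z),\qquad A_{lk}(z):=\int_{\om} v_\infty^{\frac{4}{n-2}}\phi_k\,\pa_{z_l}\xi_z\,\ud x.
\]
From $\pa_{z_1}\xi_0=v_\infty$ and $\pa_{z_l}\xi_0=\phi_l$ for $2\le l\le L$ (Lemma \ref{lem:ift}), the matrix $A(0)$ is diagonal with nonzero diagonal entries, so $A(z)$ is invertible for $|z|$ small. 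Consequently $\sup_l|c_l(z)|\le C'|\nabla g(z)|$, and combining with the {\L}ojasiewicz bound produces
\[
F(\xi_z)-F(v_\infty)\le C''\sup_{1\le l\le L}|c_l(z)|^{1+\gamma},
\]
with the constant $C''$ absorbed into the factor $2$ by further shrinking the neighborhood of $0$.

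The only real subtlety is the standard analyticity-in-function-space issue for real analytic maps on infinite-dimensional target spaces, but this is already established through Lemma \ref{lem:ift} and the polynomial form of the nonlinearity in $F$, so I expect no substantive obstacle. The main conceptual content is simply recognizing that the \emph{full} Fr\'echet gradient $-2(\Delta\xi_z+b\xi_z+\xi_z^{\frac{n+2}{n-2}})$ of $F$ at $\xi_z$ is automatically supported on the finite-dimensional subspace $\operatorname{span}\{v_\infty^{4/(n-2)}\phi_l\}_{l=1}^L$ thanks to \eqref{eq:projection0}, which reduces the infinite-dimensional {\L}ojasiewicz-Simon question to its genuinely finite-dimensional, classical form.
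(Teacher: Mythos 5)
Your overall route is the same as the paper's: apply the finite-dimensional {\L}ojasiewicz inequality to the real analytic function $z\mapsto F(\xi_z)$ (analyticity from Lemma \ref{lem:ift}), note $g(0)=0$, $\nabla g(0)=0$ since $\xi_0=v_\infty$, and then identify $\partial_{z_l}F(\xi_z)$ with the integrals $\int_\om(\Delta\xi_z+b\xi_z+\xi_z^{\frac{n+2}{n-2}})\phi_l$ using \eqref{eq:projection0}. However, your final chaining step is logically backwards as written. From the invertibility of $A(z)$ you deduce $\sup_l|c_l(z)|\le C'|\nabla g(z)|$, and then claim that combining this with $|g(z)|\le C|\nabla g(z)|^{1+\gamma}$ yields $g(z)\le C''\sup_l|c_l(z)|^{1+\gamma}$. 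That inference does not follow: from $a\le Cb^{1+\gamma}$ and $d\le C'b$ one cannot bound $a$ by a power of $d$. What you actually need is the reverse estimate $|\nabla g(z)|\le C\sup_l|c_l(z)|$, and for that the relevant property of $A(z)$ is its \emph{boundedness}, not its invertibility; the invertibility of $A(0)$ plays no role.

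The fix is immediate, and in fact the structure is cleaner than a perturbative argument: since $h(z)\in\mathcal{H}=\mathrm{span}\{\phi_1,\dots,\phi_L\}^{\perp}$ for all $z$ (orthogonality in the inner product \eqref{eq:inner-product}, which for the $\phi_k$ is equivalent to $\int_\om v_\infty^{\frac{4}{n-2}}\phi_k\,\cdot=0$ because $\mu_k>0$), the derivative $\partial_{z_l}h(z)$ also lies in $\mathcal{H}$, so $A_{lk}(z)=\int_\om v_\infty^{\frac{4}{n-2}}\phi_k\,\partial_{z_l}\xi_z$ is independent of $z$: it equals $\delta_{lk}$ for $l\ge 2$ and $\delta_{1k}\big(\int_\om v_\infty^{\frac{2n}{n-2}}\big)^{1/2}$ for $l=1$. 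Hence $\partial_{z_l}F(\xi_z)=-2c_l(z)$ up to a fixed harmless constant in the $l=1$ slot, which is exactly the identity the paper uses, and the desired bound $|\nabla g(z)|\le C\sup_l|c_l(z)|$ is then trivial; your absorption of constants by shrinking $\gamma$ and the neighborhood (using $c_l(z)\to 0$ as $z\to 0$) then closes the proof. So the proposal contains all the right ingredients, but the one-line deduction you wrote in its place fails and must be replaced by the boundedness (indeed constancy) of $A(z)$ rather than its invertibility.
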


\begin{proof} 
Since $z\mapsto \xi_z$ is real analytic by Lemma \ref{lem:ift}, and $F(\cdot)$ is also real analytic by Lemma 5.3 of \cite{FS}, then the function $z\mapsto F(\xi_z)$ is real analytic. Using the {\L}ojasiewicz inequality (see Th\'eor\`eme 4 of \cite{Lo1} or Proposition 1 of \cite{Lo2} on page 92), we have  
\[
|F(\xi_z)-F(v_\infty)| \le \sup_{l} \Big|\frac{\pa }{\pa z_l} F(\xi_z)\Big|^{1+\gamma}
\]
if $z$ is sufficiently small, where $\gamma\in(0,1)$ depends only on $n,b,\Omega$ and $v_\infty$, but is not explicit. By a direct computation, we have 
\begin{align*}
\frac{\pa }{\pa z_l} F(\xi_z) =-2 \int_{\om} (\Delta \xi_z+b\xi_z +\xi_z^{\frac{n+2}{n-2}})  \frac{\pa \xi_z }{\pa z_l}\,\ud x=-2  \int_{\om} (\Delta \xi_z+b\xi_z +\xi_z^{\frac{n+2}{n-2}})  \phi_l\,\ud x. 
\end{align*}
Therefore, the proof is completed. 
\end{proof}

For every $\nu$, as in the beginning of Section \ref{sec: zero}, let $\mathcal{A}_\nu$ be the closed set of all $m$-tuplets $(x_k,\lda_k, \al_k)_{1\le k\le m}$ satisfying $(x_k,\lda_k, \al_k) \in \overline B_{\frac{1}{\lda_{k,\nu}^*}}(x_{k,\nu}^*) \times [\frac{\lda_{k,\nu}^*}{2}, \frac{3\lda_{k,\nu}^*}{2} ] \times [\frac{1}{2},\frac32]$.  Let  $\delta_1>0$ be the constant in Lemma \ref{lem:ift} and $\overline B_{\delta_1}^L$ is the open ball in $\R^L$ centered at origin with radius $\delta_1$. 
Choose an element  $(z_\nu, (x_{k,\nu},\lda_{k,\nu}, \al_{k,\nu})_{1\le k\le m}) \in  \overline B_{\delta_1}^L\times \mathcal{A}_\nu$ such that  
\be\label{eq:finite-minimum-2}
\left\|v_\nu- \xi_{z_\nu}-\sum_{k=0}^m \al_{k,\nu} \xi_{x_{k,\nu}, \lda_{k,\nu}}\right\| =\inf_{(z, (x_k,\lda_k, \al_k)_{1\le k\le m}) \in \overline B_{\delta_1}^L\times \mathcal{A}_\nu } \left\|v_\nu- \xi_z-\sum_{k=0}^m \al_{k} \xi_{x_{k}, \lda_{k}}\right\|.
\ee
Similar to \eqref{eq:bubbles-1} - \eqref{eq:bubbles-3}, we have 
\be\label{eq:bubbles-1'}
\frac{ \lda_{i, \nu} }{\lda_{j, \nu}}+\frac{ \lda_{j, \nu} }{\lda_{i, \nu}}+\lda_{i, \nu} \lda_{j, \nu} |x_{i,\nu}-x_{j,\nu} |^2 \to \infty,
\ee
and for all $k$
\be\label{eq:bubbles-2'}
\lda_{k, \nu} d(x_{k,\nu}) \to \infty 
\ee
as $\nu \to \infty$. 
In addition, $d(x_{k,\nu})>\delta/2$, and 
\be\label{eq:bubbles-3'}
\left\|v_\nu-\xi_{z_\nu}-\sum_{k=1}^m  \al_k\xi_{x_{k,\nu}, \lda_{k,\nu}}\right\| \to 0
\ee
as $\nu\to \infty$.  

By the triangle inequality, 
\begin{align*}
&\left\| \xi_{z_\nu} -v_\infty+\sum_{k=1}^m  \al_k\xi_{x_{k,\nu}, \lda_{k,\nu}}- \sum_{k=1}^m \xi_{x^*_{k,\nu}, \lda_{k,\nu}^*} \right\| \\& \le \left\|v_\nu- \xi_{z_\nu}-\sum_{k=1}^m  \al_k\xi_{x_{k,\nu}, \lda_{k,\nu}} \right\|+\left\|v_\nu-v_\infty- \sum_{k=1}^m \xi_{x^*_{k,\nu}, \lda_{k,\nu}^*} \right\|=o(1).
\end{align*}
It follows that, for all $1\le k\le m$,  
\be\label{eq:bubbles-4'}
|z_{\nu}|=o(1), \quad |x_{k,\nu}- x_{k,\nu}^{*}|=o(1) \frac{1}{\lda_{k,\nu}^*},\quad  \frac{\lda_{k,\nu}}{\lda_{k,\nu}^*}=1+o(1), \quad \al_{k,\nu}=1+o(1).  
\ee
In particular, $(z_\nu, (x_{k,\nu},\lda_{k,\nu}, \al_{k,\nu})_{1\le k\le m}) \in  \overline B_{\delta_1}^L\times \mathcal{A}_\nu$ is an interior point.  

In the sequel, we assume 
\be \label{eq:order'}
\lda_{1,\nu}\ge \lda_{2,\nu} \ge \dots \ge \lda_{m,\nu}.
\ee
 Let 
\be \label{eq:decomposition-1'}
U_\nu=\xi_{z_\nu}+ \sum_{k=1}^m  \al_{k,\nu}\xi_{x_{k,\nu}, \lda_{k,\nu}}, \quad w_\nu=v_{\nu}-U_{\nu}. 
\ee

\begin{lem} \label{lem:proj-2}  We have for $1\le l \le L$, 
\be\label{eq:proj-2}
\Big|\int_\om v_\infty^{\frac{4}{n-2}} \phi_l w_\nu\,\ud x \Big|\le o(1) \int_{\om} |w_\nu|\,\ud x,
\ee 
and for $1\le k\le m$,  
\be\label{eq:proj-3}
\begin{split}
&\Big|\int_\om \xi_{(x_{k,\nu}, \lda_{k,\nu})}^{\frac{n+2}{n-2}}  w_\nu \,\ud x\Big|+\Big|\int_\om \xi_{(x_{k,\nu}, \lda_{k,\nu})}^{\frac{n+2}{n-2}}  \frac{1-\lda^2|x-x_{k,\nu}|^2}{1+\lda^2|x-x_{k,\nu}|^2} w_\nu \,\ud x\Big |\\[2mm] &\quad  +\Big|\int_\om \xi_{(x_{k,\nu}, \lda_{k,\nu})}^{\frac{n+2}{n-2}}  \frac{\lda^2(x-x_{k,\nu})}{1+\lda^2|x-x_{k,\nu}|^2} w_\nu \,\ud x\Big | \le o(1) \Big(\int_{\om} |w_\nu|^{\frac{2n}{n-2}}\,\ud x\Big)^{\frac{n-2}{2n}}.
\end{split}
\ee
\end{lem}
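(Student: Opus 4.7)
Both estimates are consequences of first-order optimality at the interior minimizer $(z_\nu,(x_{k,\nu},\lda_{k,\nu},\al_{k,\nu})_{1\le k\le m})$ of the functional in \eqref{eq:finite-minimum-2}. Differentiating $\|v_\nu - U_\nu\|^2$ in each parameter and using the inner product \eqref{eq:inner-product} yields the orthogonality relation
\[
\int_{\om}\bigl[\nabla(\partial_\star U_\nu)\cdot\nabla w_\nu - b\,(\partial_\star U_\nu)\,w_\nu\bigr]\,\ud x = 0
\]
for each parameter derivative $\partial_\star\in\{\partial_{z_l},\partial_{x_k},\partial_{\lda_k},\partial_{\al_k}\}$. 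After integration by parts (all relevant derivatives lie in $H^1_0(\om)$) the task is to read off the leading term and absorb the rest into the appropriate $o(1)$ error.

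\textbf{Proof of \eqref{eq:proj-3}.} The variations in $(x_{k,\nu},\lda_{k,\nu},\al_{k,\nu})$ affect only the bubble part of $U_\nu$, so the resulting orthogonality conditions are identical to those in the proof of Lemma \ref{lem:proj-1}. The argument there applies verbatim: integration by parts together with $-\Delta\bar\xi_{a,\lda}=\bar\xi_{a,\lda}^{(n+2)/(n-2)}$ and $\Delta h_{a,\lda}=0$ extracts the leading terms of $\partial_a\bar\xi_{a,\lda}$ and $\partial_\lda\bar\xi_{a,\lda}$ given in Lemma \ref{lem:bubble-derivative}; the contributions from the harmonic correction $h_{x_{k,\nu},\lda_{k,\nu}}$ and from the zeroth-order term $b\xi_{x_{k,\nu},\lda_{k,\nu}}$ are $O(\lda_{k,\nu}^{-(n-2)/2}) = o(1)$ (using \eqref{eq:bubbles-2'}) and pair with $\|w_\nu\|_{L^{2n/(n-2)}}$ via H\"older's inequality, yielding \eqref{eq:proj-3}.

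\textbf{Proof of \eqref{eq:proj-2}.} This is the new ingredient. By Lemma \ref{lem:ift}, $z\mapsto\xi_z$ is real-analytic in a neighborhood of $0$ with $\partial_{z_1}\xi_z|_{z=0} = v_\infty$ and $\partial_{z_l}\xi_z|_{z=0}=\phi_l$ for $2\le l\le L$. Writing $\partial_{z_l}\xi_{z_\nu} = c_l\phi_l + r_{l,\nu}$, where $c_l\neq 0$ and $c_l=1$ for $l\ge 2$, real analyticity in $z$ combined with elliptic regularity for the implicit-function construction in Lemma \ref{lem:ift} gives $\|r_{l,\nu}\|_{C^2(\overline\om)}=O(|z_\nu|)=o(1)$ by \eqref{eq:bubbles-4'}. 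The orthogonality $\int_{\om}[\nabla\partial_{z_l}\xi_{z_\nu}\cdot\nabla w_\nu - b\,\partial_{z_l}\xi_{z_\nu}\,w_\nu]\,\ud x = 0$, after integration by parts and the eigenvalue identity $-(\Delta+b)\phi_l=\mu_l v_\infty^{4/(n-2)}\phi_l$, becomes
\[
c_l\mu_l\int_{\om} v_\infty^{\frac{4}{n-2}}\phi_l w_\nu\,\ud x \;=\; \int_{\om}(\Delta+b)r_{l,\nu}\,w_\nu\,\ud x.
\]
Since $\|(\Delta+b)r_{l,\nu}\|_{L^\infty(\om)}=o(1)$ and $c_l\mu_l$ is bounded away from zero (using $\mu_l\ge\mu_1=1$), the right-hand side is bounded by $o(1)\int_{\om}|w_\nu|\,\ud x$, which proves \eqref{eq:proj-2}.

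\textbf{Main obstacle.} The delicate point is to verify that $r_{l,\nu}=\partial_{z_l}\xi_{z_\nu}-c_l\phi_l$ has a small norm in a space strong enough to pair with the $L^1$-norm of $w_\nu$: this demands uniform $C^2$-control on $\xi_z$ in a neighborhood of $z=0$, which follows from the analyticity of $z\mapsto\xi_z$ in Lemma \ref{lem:ift} combined with Schauder estimates for the equation \eqref{eq:projection0}. Once this regularity is in hand, both estimates reduce to integration by parts and the identification of leading terms.
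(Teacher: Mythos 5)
Your argument is correct and is essentially the paper's own proof: the paper likewise derives \eqref{eq:proj-3} by repeating Lemma \ref{lem:proj-1}, and proves \eqref{eq:proj-2} by setting $\tilde\phi_l=\partial_{z_l}\xi_{z_\nu}$, using the first-order condition $\int_\om(\nabla\tilde\phi_l\nabla w_\nu-b\tilde\phi_l w_\nu)=0$, the eigenvalue equation for $\phi_l$, and the $C^2$-closeness $\|\tilde\phi_1-v_\infty\|_{C^2}+\|\tilde\phi_l-\phi_l\|_{C^2}=o(1)$ coming from $|z_\nu|=o(1)$ and the analyticity in Lemma \ref{lem:ift}. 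Your explicit bookkeeping of the constant $c_1=(\int_\om v_\infty^{2n/(n-2)})^{1/2}$ for $l=1$ is a minor refinement of the same computation.
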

\begin{proof} Let $\tilde \phi_l=\frac{\pa }{\pa z_l} \xi_z$. By \eqref{eq:bubbles-4'}, we have $\|\tilde \phi_1 -v_\infty\|_{C^2(\om)}=o(1)$ and $\|\tilde \phi_l -\phi_l\|_{C^2(\om)}=o(1)$ for $l=2,\dots, L$.  By the definition of $(z_\nu, (x_{k,\nu},\lda_{k,\nu}, \al_{k,\nu})_{1\le k\le m})$, we have 
\[
\int \nabla \tilde \phi_l \nabla w_\nu- b\tilde \phi_l w_\nu=0. 
\]
Hence,
\begin{align*}
\mu_l\int_\Omega v_\infty^\frac{4}{n-2} \phi_lw_\nu\,\ud x&=\int_\Omega \Big(-\Delta\phi_l-b\phi_l \Big)w_\nu\,\ud x\\
&=\int_\Omega \Big(\Delta(\tilde\phi_l- \phi_l)+b(\tilde\phi_l- \phi_l) \Big)w_\nu\,\ud x.
\end{align*}
Since $\mu_l>0$, then we can conclude  \eqref{eq:proj-2}. The proof of \eqref{eq:proj-3} is the same as that of Lemma \ref{lem:proj-1}.
\end{proof}

Now we can show the  non-degeneracy estimates of the second variation of $F$ for $w_\nu$.
\begin{lem} \label{lem:2nd-variational-2} For large $\nu$, we have 
\[
\frac{n+2}{n-2} \int_\om \big(v_\infty^{\frac{4}{n-2}}+ \sum_{k=1}^m  \xi_{x_{k,\nu},\lda_{k,\nu}}  ^{\frac{4}{n-2}}\big) w_\nu^2   \le (1-c) \int_{\om} (|\nabla w_\nu|^2-bw_\nu^2)\,\ud x,
\]
where $c>0$ is independent of $\nu$.
\end{lem}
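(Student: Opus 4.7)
The proposal is to mirror the strategy of Lemma \ref{lem:2nd-variational} via contradiction, but now with an additional outer region carrying the non-trivial limit $v_\infty$, and with the orthogonality against $\phi_1,\dots,\phi_L$ supplied by Lemma \ref{lem:proj-2}. Assume the conclusion fails along a subsequence; normalize $\tilde w_\nu := w_\nu/\|w_\nu\|$ so that $\|\tilde w_\nu\|=1$ and
\[
\limsup_{\nu\to\infty}\frac{n+2}{n-2}\int_\om\Big(v_\infty^{\frac{4}{n-2}}+\sum_{k=1}^m\xi_{x_{k,\nu},\lda_{k,\nu}}^{\frac{4}{n-2}}\Big)\tilde w_\nu^2\ge 1.
\]
By \eqref{eq:sobolev-b}, $\tilde w_\nu$ is bounded in $L^{\frac{2n}{n-2}}(\om)$, and Lemma \ref{lem:proj-2} gives
\[
\Big|\int_\om v_\infty^{\frac{4}{n-2}}\phi_l\tilde w_\nu\,\ud x\Big|=o(1),\quad 1\le l\le L,
\]
together with the bubble-orthogonality relations \eqref{eq:proj-3}.

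Pick $R_\nu\to\infty$ with $R_\nu/\lda_{k,\nu}\to 0$ and satisfying \eqref{eq:coer-3}, and partition $\om$ into bubble annuli $\om_{j,\nu}:=B_{R_\nu/\lda_{j,\nu}}(x_{j,\nu})\setminus\bigcup_{i<j}B_{R_\nu/\lda_{i,\nu}}(x_{i,\nu})$ and the outer region $\om_{0,\nu}:=\om\setminus\bigcup_j B_{R_\nu/\lda_{j,\nu}}(x_{j,\nu})$. I will show that on each piece the localized quadratic form is bounded by $(1-c)$ times the corresponding Dirichlet energy; summing gives the desired contradiction. On $\om_{0,\nu}$, the bubble weights $\xi_{x_{k,\nu},\lda_{k,\nu}}^{\frac{4}{n-2}}$ vanish in $L^{n/2}$ so only $v_\infty^{\frac{4}{n-2}}\tilde w_\nu^2$ persists; on $\om_{j,\nu}$, $v_\infty$ is bounded and $|\om_{j,\nu}|\to 0$, so $\int_{\om_{j,\nu}}v_\infty^{\frac{4}{n-2}}\tilde w_\nu^2$ is negligible compared to the Dirichlet energy there (via H\"older and Sobolev).

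For the outer piece, $\tilde w_\nu$ converges weakly in $H^1_0(\om)$ to some $w_\infty$ satisfying $\int_\om v_\infty^{\frac{4}{n-2}}\phi_l w_\infty=0$ for $1\le l\le L$ by the first orthogonality in Lemma \ref{lem:proj-2}. Expanding $w_\infty$ in the orthonormal basis $\{\phi_l/\sqrt{\mu_l}\}$ of $(H_0^1(\om),\langle\cdot,\cdot\rangle)$, one has
\[
\frac{n+2}{n-2}\int_\om v_\infty^{\frac{4}{n-2}}w_\infty^2=\frac{n+2}{n-2}\sum_{l>L}\frac{c_l^2}{\mu_l}\le\frac{(n+2)/(n-2)}{\mu_{L+1}}\|w_\infty\|^2,
\]
and by the choice of $L$, $\mu_{L+1}>\frac{n+2}{n-2}$, yielding the strict-inequality gap for the outer part. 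For the $j$-th bubble piece, rescaling $\hat w_\nu(y):=\lda_{j,\nu}^{-\frac{n-2}{2}}\tilde w_\nu(x_{j,\nu}+\lda_{j,\nu}^{-1}y)$ produces, after passing to a weak limit in $H^1_{\mathrm{loc}}(\R^n)$, a function $\hat w_\infty$ orthogonal to $\bar\xi_{0,1}^{\frac{4}{n-2}}\nabla_{a,\lda}\bar\xi_{0,1}$ by \eqref{eq:proj-3}. Lemma \ref{lem:coercive} then gives the bubble gap
\[
\frac{n+2}{n-2}\int_{\R^n}\bar\xi_{0,1}^{\frac{4}{n-2}}\hat w_\infty^2\le(1-c_1)\int_{\R^n}|\nabla\hat w_\infty|^2.
\]

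Summing the outer and bubble contributions yields $\limsup\frac{n+2}{n-2}\int_\om(\cdots)\tilde w_\nu^2\le 1-c$ for some uniform $c>0$, contradicting the assumption. The main obstacle is technical: showing that the cross-terms between the outer region and the bubble regions, as well as the boundary terms on $\partial\om_{j,\nu}$ arising from the partition, contribute only $o(1)$ as $\nu\to\infty$, so that the quadratic form and the Dirichlet energy genuinely decouple across the partition and the strict gains $(1-c_1)$ and $\frac{(n+2)/(n-2)}{\mu_{L+1}}<1$ can be combined.
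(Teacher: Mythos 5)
Your proposal is correct and follows essentially the same route as the paper: the same partition into bubble regions and an outer region, the same rescaled weak limits handled via Lemma \ref{lem:coercive} together with the approximate orthogonalities \eqref{eq:proj-3}, and the same spectral-gap argument ($\mu_{L+1}>\frac{n+2}{n-2}$, using the orthogonality \eqref{eq:proj-2}) for the part carried by $v_\infty$. The only difference is bookkeeping: the paper isolates a single ``bad'' region through a dichotomy (its cases (i) and (ii)) rather than summing piecewise bounds, and your flagged worry about boundary terms on $\pa\om_{j,\nu}$ is moot since no integration by parts across the partition is involved --- the cross-terms are $o(1)$ exactly as you indicate, by the $L^{n/2}$-smallness of the bubble weights off their own regions and the vanishing measure of the bubble regions.
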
 

\begin{proof} We assume $w_\nu$ is not zero, otherwise there is nothing to prove. Define $\tilde w_\nu= \frac{w_\nu}{\|w_\nu\|}$. Suppose the lemma is not true.  Then we can find a subsequence of $\{\tilde w_\nu\}$ (still denoted by $\{\tilde w_\nu\}$) satisfying
\be \label{eq:coer-1b}
\lim_{\nu\to \infty}\frac{n+2}{n-2}  \int_\om  \big(v_\infty^{\frac{4}{n-2}}+ \sum_{k=1}^m  \xi_{x_{k,\nu},\lda_{k,\nu}}  ^{\frac{4}{n-2}}\big)  \tilde w_\nu^2  \ge 1. 
\ee
By \eqref{eq:sobolev-b},  
\be \label{eq:coer-2b}
\int_{\om} |\tilde w_\nu|^{\frac{2n}{n-2}} \le K_b^{\frac{n}{n-2}} \|\tilde w_\nu\|= K_b^{\frac{n}{n-2}}. 
\ee
By \eqref{eq:bubbles-1'} and \eqref{eq:order'}, we can find $R_\nu\to \infty$, $R_\nu \lda_{j,\nu}^{-1} \to 0 $ for all $1\le j\le m$, and 
\be \label{eq:coer-3b}
\frac{\lda_{i,\nu}}{R_\nu} (\lda_{j,\nu}^{-1} +|x_{i,\nu}-x_{j,\nu}|) \to \infty 
\ee
for all $i<j$. Set 
\[
\om_{j,\nu}= B_{R_\nu \lda_{j,\nu}^{-1}}(x_{j,\nu})\setminus \bigcup_{i=1}^{j-1} B_{R_\nu \lda_{i,\nu}^{-1}}(x_{i,\nu}).
\]
By \eqref{eq:coer-1b} and $\|\tilde w_\nu\|=1$, there are two cases: 
\begin{itemize} 
\item[(i).] We can find $1\le j\le m$ such that 
\[
\lim_{\nu\to \infty } \int_\om    \xi_{x_{j,\nu},\lda_{j,\nu}}  ^{\frac{4}{n-2}} \tilde w_\nu^2 >0
\]
and 
\[
\lim_{\nu\to \infty  } \int_{\om_{j,\nu}} (|\nabla \tilde w_\nu|^2- b\tilde w_\nu^2 ) \le \frac{n+2}{n-2}  \int_\om    \xi_{x_{j,\nu},\lda_{j,\nu}}  ^{\frac{4}{n-2}} \tilde w_\nu^2. 
\]
\item[(ii).] 
\[
\lim_{\nu\to \infty } \int_\om    v_\infty^{\frac{4}{n-2}} \tilde w_\nu^2 >0
\]
and 
\[
\lim_{\nu\to \infty  } \int_{\om\setminus \cup_{j}\om_{j,\nu}} (|\nabla \tilde w_\nu|^2- b\tilde w_\nu^2 ) \le \frac{n+2}{n-2}  \int_\om    v_\infty  ^{\frac{4}{n-2}} \tilde w_\nu^2. 
\]
\end{itemize}

In the first case, we can obtain a contradiction similar to that in the proof of Lemma \ref{lem:2nd-variational}. 

In the latter case, after passing to subsequence we suppose $ \tilde w_\nu \rightharpoonup \tilde w$ in $H_0^1$ as $\nu\to \infty$. It follows that 
\be \label{eq:non-trivial}
\int_\om    v_\infty^{\frac{4}{n-2}} \tilde w^2 >0
\ee
and 
\be \label{eq:lower-eigen-space}
 \int_{\om} (|\nabla \tilde w|^2- b\tilde w^2 ) \le \frac{n+2}{n-2}  \int_\om    v_\infty  ^{\frac{4}{n-2}} \tilde w^2. 
\ee
By \eqref{eq:proj-2}, we further have 
\be\label{eq:o-1}
\int_\om v_\infty^{\frac{4}{n-2}} \tilde w \phi_l=0 \quad \mbox{for }l=1,\dots, L. 
\ee
Combining \eqref{eq:lower-eigen-space} and \eqref{eq:o-1}, $\tilde w$ has to be identically zero, which contradicts \eqref{eq:non-trivial}. 

Therefore, Lemma \ref{lem:2nd-variational-2} is proved. 
\end{proof}

\begin{cor}\label{cor:coercive2} 
For large $\nu$, we have 
\[
\frac{n+2}{n-2} \int_\om U_\nu ^{\frac{4}{n-2}} w_\nu^2   \le (1-c) \int_{\om} (|\nabla w_\nu|^2-bw_\nu^2)\,\ud x,
\]
where $c>0$ is independent of $\nu$.
\end{cor}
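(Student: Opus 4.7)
The plan is to follow exactly the template of Corollary \ref{cor:coercive}: combine the coercivity estimate Lemma \ref{lem:2nd-variational-2} with an $L^{n/2}$ replacement of the coefficient $U_\nu^{4/(n-2)}$ by $v_\infty^{4/(n-2)}+\sum_{k}\xi_{x_{k,\nu},\lda_{k,\nu}}^{4/(n-2)}$. Concretely, it suffices to establish
\[
\int_{\om}\Big|U_\nu^{\frac{4}{n-2}}-v_\infty^{\frac{4}{n-2}}-\sum_{k=1}^m\xi_{x_{k,\nu},\lda_{k,\nu}}^{\frac{4}{n-2}}\Big|^{\frac{n}{2}}\,\ud x=o(1),
\]
since then H\"older's inequality and the Sobolev inequality \eqref{eq:sobolev-b} give
\[
\frac{n+2}{n-2}\int_{\om}U_\nu^{\frac{4}{n-2}}w_\nu^2\le \frac{n+2}{n-2}\int_{\om}\Big(v_\infty^{\frac{4}{n-2}}+\sum_{k=1}^m\xi_{x_{k,\nu},\lda_{k,\nu}}^{\frac{4}{n-2}}\Big)w_\nu^2+o(1)\|w_\nu\|^2,
\]
and Lemma \ref{lem:2nd-variational-2} absorbs the first term with a gap, while the $o(1)\|w_\nu\|^2$ is negligible for large $\nu$.

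To prove the $L^{n/2}$ identity, I would decompose $U_\nu=\xi_{z_\nu}+\sum_k \al_{k,\nu}\xi_{x_{k,\nu},\lda_{k,\nu}}$ and replace the two factors separately. Since by \eqref{eq:bubbles-4'} we have $z_\nu\to 0$ in $\R^L$ and by Lemma \ref{lem:ift} the map $z\mapsto \xi_z$ is analytic with $\xi_0=v_\infty$ in a strong topology (e.g.\ $C^2(\overline\om)$), we get $\xi_{z_\nu}^{4/(n-2)}\to v_\infty^{4/(n-2)}$ uniformly, hence in $L^{n/2}(\om)$. Similarly $\al_{k,\nu}^{4/(n-2)}\to 1$, so $\al_{k,\nu}^{4/(n-2)}\xi_{x_{k,\nu},\lda_{k,\nu}}^{4/(n-2)}-\xi_{x_{k,\nu},\lda_{k,\nu}}^{4/(n-2)}=o(1)\xi_{x_{k,\nu},\lda_{k,\nu}}^{4/(n-2)}$, which is $o(1)$ in $L^{n/2}$ since $\|\xi_{x_{k,\nu},\lda_{k,\nu}}\|_{L^{2n/(n-2)}}$ is uniformly bounded.

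The genuine work is to estimate the $L^{n/2}$-cross terms, i.e.\ to show
\[
\int_{\om}\Big|\Big(v_\infty+\sum_k\xi_{x_{k,\nu},\lda_{k,\nu}}\Big)^{\frac{4}{n-2}}-v_\infty^{\frac{4}{n-2}}-\sum_k\xi_{x_{k,\nu},\lda_{k,\nu}}^{\frac{4}{n-2}}\Big|^{\frac{n}{2}}\,\ud x=o(1).
\]
I would handle this via the elementary pointwise bound $\big|(\sum_i a_i)^{p}-\sum_i a_i^p\big|\le C\sum_{i\ne j}a_i^{p-\varepsilon}a_j^{\varepsilon}$ for appropriate small $\varepsilon>0$ (adjusted so both exponents are nonnegative for all relevant $n$), then check in $L^{n/2}$ that each mixed integral vanishes. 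For bubble--$v_\infty$ interactions the standard concentration argument applies: $v_\infty$ is uniformly bounded, while each bubble is $L^{2n/(n-2)}$-concentrated on a set of vanishing Lebesgue measure around $x_{k,\nu}$; for bubble--bubble interactions the separation \eqref{eq:bubbles-1'} makes the mixed $L^{n/2}$-integrals $o(1)$ by the usual change of variables. Alternatively, and perhaps more cleanly, one can split $\om$ into the regions $\{v_\infty \text{ dominates}\}$ and the concentration caps of each bubble, and on each region use the Mean Value Theorem applied to $t\mapsto t^{4/(n-2)}$ plus a concentration estimate.

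The expected main obstacle is organizing the cross-term bookkeeping when $n=3,4,5$, where the exponent $4/(n-2)\ge 1$ makes the convexity inequalities go the wrong way; this is why I prefer the $a_i^{p-\varepsilon}a_j^\varepsilon$-splitting, which is uniform in $n\ge 3$, together with the separation identities \eqref{eq:bubbles-1'} and the boundedness of $v_\infty$ in $C^0(\overline\om)$. Beyond this, the argument is a routine transcription of the proof of Corollary \ref{cor:coercive} enriched by the added $v_\infty^{4/(n-2)}$-piece.
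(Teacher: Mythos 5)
Your argument is correct and follows essentially the same route as the paper: Corollary \ref{cor:coercive2} is deduced from Lemma \ref{lem:2nd-variational-2} together with H\"older's inequality, the Sobolev inequality \eqref{eq:sobolev-b}, and the fact that $\int_\Omega \big|U_\nu^{\frac{4}{n-2}}-v_\infty^{\frac{4}{n-2}}-\sum_{k}\xi_{x_{k,\nu},\lda_{k,\nu}}^{\frac{4}{n-2}}\big|^{\frac n2}=o(1)$. The only difference is that the paper states this $L^{n/2}$ convergence without proof, whereas you supply the (routine but correct) verification via $\xi_{z_\nu}\to v_\infty$, $\al_{k,\nu}\to 1$, and the standard cross-term estimates using \eqref{eq:bubbles-1'} and the boundedness of $v_\infty$.
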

\begin{proof}
It follows from Lemma \ref{lem:2nd-variational-2}, H\"older's inequality, the Sobolev inequality \eqref{eq:sobolev-b} and the fact that
\[
\int_\Omega \Big|U_\nu ^{\frac{4}{n-2}} -v_\infty^{\frac{4}{n-2}}- \sum_{k=1}^m  \xi_{x_{k,\nu},\lda_{k,\nu}}  ^{\frac{4}{n-2}}\Big|^{\frac{n}{2}}=o(1).
\]
\end{proof}

The following two lemmas are estimates of $v_\nu-\xi_{z_\nu}$ in $L^{\frac{n+2}{n-2}}(\om)$ and $L^{1}(\om)$, respectively.
\begin{lem}\label{lem:n+2norm} 
For large $\nu$, we have 
\[
\|v_\nu-\xi_{z_\nu}\|_{L^{\frac{n+2}{n-2}}(\om)}^{\frac{n+2}{n-2}} \le C \|v_\nu^{\frac{n+2}{n-2}}(\mathcal{R}(t_\nu)-1)\|_{L^{\frac{2n}{n+2}}(\om)}^{\frac{n+2}{n-2}}+C  \sum_{k=1}^m \lda_{k,\nu}^{\frac{2-n}{2}},
\]
where $C>0$ is independent of $\nu$.
\end{lem}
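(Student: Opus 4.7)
The plan is to apply Lemma~\ref{lem:projectionest1}(i) to $f := v_\nu - \xi_{z_\nu}$ with $p = (n+2)/(n-2)$ and $q = n(n+2)/(n^2+4)$. From the definition of $\mathcal R$ and \eqref{eq:BN-3}, $v_\nu$ satisfies $-\Delta v_\nu - b v_\nu - v_\nu^{p} = (\mathcal{R}(t_\nu) - 1) v_\nu^{p}$, while $\xi_{z_\nu}$ satisfies $\Pi(\Delta\xi_{z_\nu} + b\xi_{z_\nu} + \xi_{z_\nu}^{p}) = 0$ by \eqref{eq:projection0}. Subtracting the two equations and adding $p\,v_\infty^{4/(n-2)} f$ to both sides yields
\[
\Pi\bigl(\Delta f + b f + p\,v_\infty^{4/(n-2)} f\bigr) = -\Pi\bigl((\mathcal{R}-1) v_\nu^{p}\bigr) - \Pi\bigl(v_\nu^{p} - \xi_{z_\nu}^{p} - p\,v_\infty^{4/(n-2)} f\bigr).
\]
Lemma~\ref{lem:projectionest1}(i) together with the $L^q$-boundedness of $\Pi$ then reduces the estimate to controlling three quantities: (a) $\|(\mathcal R-1)v_\nu^p\|_{L^q}$, (b) $\|v_\nu^{p} - \xi_{z_\nu}^{p} - p\,v_\infty^{4/(n-2)} f\|_{L^q}$, and (c) $\sup_l |\int_\Omega v_\infty^{4/(n-2)} \phi_l f|$.

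Term (a) is bounded by $\|(\mathcal R-1)v_\nu^p\|_{L^{2n/(n+2)}}$ via the embedding $L^{2n/(n+2)} \hookrightarrow L^q$ on $\Omega$, which is valid since $2n/(n+2) \ge q$ is equivalent to $(n-2)^2 \ge 0$; raising to the $p$-th power gives the main term on the right-hand side. For (b) I split the integrand as $T_1 + T_2$ with $T_1 := (\xi_{z_\nu} + f)^{p} - \xi_{z_\nu}^{p} - p\,\xi_{z_\nu}^{p-1} f$ and $T_2 := p(\xi_{z_\nu}^{p-1} - v_\infty^{4/(n-2)}) f$. Since $\xi_{z_\nu} \to v_\infty$ uniformly (Lemma~\ref{lem:ift} together with $|z_\nu| = o(1)$ from \eqref{eq:bubbles-4'}), $\|T_2\|_{L^q} \le o(1)\|f\|_{L^p}$. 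For $T_1$ the pointwise Taylor bound $|T_1| \le C|f|^p + C\,\xi_{z_\nu}^{p-2} |f|^2$ (the second piece absent when $n \ge 6$) reduces the work to controlling $\||f|^p\|_{L^q} = \|f\|_{L^{pq}}^p$. Writing $f = \sum_k \alpha_{k,\nu}\xi_{x_{k,\nu},\lambda_{k,\nu}} + w_\nu$, the explicit scaling identity $\|\xi_{a,\lambda}\|_{L^{pq}}^{p^2} \asymp \lambda^{(2-n)/2}$ handles each bubble, while the H\"older interpolation $\|w_\nu\|_{L^{pq}} \le \|w_\nu\|_{L^p}^{(n-2)/(n+2)} \|w_\nu\|_{L^{2n/(n-2)}}^{4/(n+2)}$ combined with Sobolev and $\|w_\nu\| \to 0$ (from \eqref{eq:bubbles-3'}) absorbs the $w_\nu$ part into $o(1)\|w_\nu\|_{L^p}^p \le o(1)\|f\|_{L^p}^p + o(1)\sum_k \lambda_{k,\nu}^{(2-n)/2}$. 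The lower-order $\xi_{z_\nu}^{p-2}|f|^2$ contribution (only when $n \in \{3,4,5\}$) is handled by an analogous interpolation between $L^p$ and $L^{2n/(n-2)}$. For (c), using the same decomposition, Lemma~\ref{lem:proj-2} controls the $w_\nu$ part by $o(1)\|w_\nu\|_{L^1}$, and the bubble integral $|\int_\Omega v_\infty^{4/(n-2)} \phi_l \xi_{x_{k,\nu},\lambda_{k,\nu}}| \le C\lambda_{k,\nu}^{(2-n)/2}$ (with the $h$-correction bounded by Lemma~\ref{lem:bubble-derivative}) controls the bubble part.

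Finally I raise the resulting inequality to the $p$-th power, using the elementary estimate $(\sum_k a_k)^p \le m^{p-1}\sum_k a_k^p$ together with $\sum_k \lambda_{k,\nu}^{p(2-n)/2} \le \sum_k \lambda_{k,\nu}^{(2-n)/2}$ (valid since $\lambda_{k,\nu} > 1$ and $p > 1$), and absorb the accumulated $o(1)\|f\|_{L^p}^p$ terms into the left-hand side. The hardest step is producing exactly the error $\sum_k \lambda_{k,\nu}^{(2-n)/2}$, and nothing larger, from the Taylor nonlinearity $\||f|^p\|_{L^q}$. This hinges on the precise scaling identity $\|\xi_{a,\lambda}\|_{L^{pq}}^{p^2} \asymp \|\xi_{a,\lambda}\|_{L^p}^p \asymp \lambda^{(2-n)/2}$, which is why $q = n(n+2)/(n^2+4)$ is the exact exponent for which the projection inequality Lemma~\ref{lem:projectionest1}(i) closes the argument; the smallness $\|w_\nu\| \to 0$ at the Sobolev endpoint is what turns the interpolation's endpoint factor into $o(1)$ rather than merely a bounded constant, which is what ultimately allows the estimate to be absorbed.
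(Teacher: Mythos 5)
Your proposal is correct, and its skeleton coincides with the paper's: you derive the same projected equation as \eqref{eq:projcorrection}, apply Lemma~\ref{lem:projectionest1}(i) to $f=v_\nu-\xi_{z_\nu}$, use the same pointwise Taylor bound \eqref{eq:pointwise3} (your $T_1$, $T_2$ split is exactly the paper's grouping), and treat the projection term (c) exactly as in \eqref{eq:projectiondifference} via Lemma~\ref{lem:proj-2} and the $L^1$-smallness of the bubbles. Where you genuinely differ is in the key estimate of the nonlinear remainder $\||f|^{p}\|_{L^q}$ (and its quadratic companion for $n\le 5$): the paper splits the \emph{domain} into the balls $B_{N/\lda_{k,\nu}}(x_{k,\nu})$ and their complement, bounds the ball contributions by their measure via H\"older (yielding $(N/\lda_{k,\nu})^{\frac{(n-2)^2}{2(n+2)}}$), and makes the off-ball factor small by first choosing $N$ large and using $\|w_\nu\|\to 0$; you instead split the \emph{function} $f=\sum_k\al_{k,\nu}\xi_{x_{k,\nu},\lda_{k,\nu}}+w_\nu$, compute the bubbles' $L^{pq}$ norms by exact scaling (indeed $\|\bar\xi_{a,\lda}\|_{L^{pq}}^{p}\asymp\lda^{-\frac{(n-2)^2}{2(n+2)}}$, the same exponent the paper obtains), and use the sharp interpolation $\frac{1}{pq}=\frac{n-2}{n+2}\cdot\frac{1}{p}+\frac{4}{n+2}\cdot\frac{n-2}{2n}$ so that the $w_\nu$ piece carries exactly one power of $\|w_\nu\|_{L^p}$ times the $o(1)$ factor $\|w_\nu\|_{L^{2n/(n-2)}}^{4/(n-2)}$, which is then absorbed. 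Both routes give the same pre-powering error $\sum_k\lda_{k,\nu}^{-\frac{(n-2)^2}{2(n+2)}}$, hence $\sum_k\lda_{k,\nu}^{\frac{2-n}{2}}$ after raising to the power $\frac{n+2}{n-2}$; your version avoids the auxiliary parameter $N$ and the two-step limiting argument at the price of more delicate exponent bookkeeping, while the paper avoids computing bubble $L^{pq}$ norms. Two small polish points: the intermediate claim ``$w_\nu$ part $\le o(1)\|w_\nu\|_{L^p}^p$'' is only correct if read at the level of the already $p$-th-powered inequality (before powering the interpolation gives $o(1)\|w_\nu\|_{L^p}$ to the first power, which is what you actually absorb), and for the quadratic term with $n\in\{3,4\}$ no interpolation is needed since $2q<p$ there, so plain H\"older already yields $o(1)\|f\|_{L^p}$.
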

\begin{proof}
From \eqref{eq:Q}, we have
\[
\Delta v_\nu+bv_\nu+v_\nu^{\frac{n+2}{n-2}}=(1-\mathcal{R}(t_\nu)) v_\nu^{\frac{n+2}{n-2}}.
\]
Combining with \eqref{eq:projection0}, we obtain
\be\label{eq:projcorrection}
\begin{split}
&\Pi\Big(\Delta (v_\nu-\xi_{z_\nu})+b(v_\nu-\xi_{z_\nu})+ \frac{n+2}{n-2} v_\infty^\frac{4}{n-2}(v_\nu-\xi_{z_\nu})\Big)\\
&=\Pi\Big((1-\mathcal{R}(t_\nu))v_\nu^{\frac{n+2}{n-2}} -\frac{n+2}{n-2} (\xi_{z_\nu}^{\frac{4}{n-2}}-v_\infty^\frac{4}{n-2})(v_\nu-\xi_{z_\nu})\\
&\quad\quad\quad+ \xi_{z_\nu}^{\frac{n+2}{n-2}}+ \frac{n+2}{n-2}\xi_{z_\nu}^{\frac{4}{n-2}}(v_\nu-\xi_{z_\nu})-v_\nu^{\frac{n+2}{n-2}}\Big).
\end{split}
\ee
Apply (i) of Lemma \ref{lem:projectionest1}  to $v_\nu-\xi_{z_\nu}$, we obtain
\begin{align*}
&\left\|v_\nu-\xi_{z_\nu}\right\|_{L^\frac{n+2}{n-2}(\Omega)}\\
&\le C\left\|(1-\mathcal{R}(t_\nu))v_\nu^{\frac{n+2}{n-2}}\right\|_{L^\frac{n(n+2)}{n^2+4}(\Omega)} +C\left\|(\xi_{z_\nu}^{\frac{4}{n-2}}-v_\infty^\frac{4}{n-2})(v_\nu-\xi_{z_\nu})\right\|_{L^\frac{n(n+2)}{n^2+4}(\Omega)}\\
&\quad+ C\left\|\xi_{z_\nu}^{\frac{n+2}{n-2}}+ \frac{n+2}{n-2}\xi_{z_\nu}^{\frac{4}{n-2}}(v_\nu-\xi_{z_\nu})-v_\nu^{\frac{n+2}{n-2}}\right\|_{L^\frac{n(n+2)}{n^2+4}(\Omega)}+C\sup_{1\le l\le L}\left|\int_\om v_\infty^{\frac{4}{n-2}} \phi_l (v_\nu-\xi_{z_\nu})\right|.
\end{align*}
Using the estimates for all $a,b\ge 0$ that
\be\label{eq:pointwise3}
\left|a^{\frac{n+2}{n-2}} +\frac{n+2}{n-2} a^\frac{4}{n-2}(b-a)-b^\frac{n+2}{n-2}\right|\le C a^{\max(0,\frac{4}{n-2}-1)}|b-a|^{\min(\frac{n+2}{n-2},2)}+ C |b-a|^\frac{n+2}{n-2},
\ee
we obtain
\begin{align*}
&\left\|\xi_{z_\nu}^{\frac{n+2}{n-2}}+ \frac{n+2}{n-2}\xi_{z_\nu}^{\frac{4}{n-2}}(v_\nu-\xi_{z_\nu})-v_\nu^{\frac{n+2}{n-2}}\right\|_{L^\frac{n(n+2)}{n^2+4}(\Omega)}\\
&\le C \left\| |v_\nu-\xi_{z_\nu}|^{\min(\frac{n+2}{n-2},2)}+  |v_\nu-\xi_{z_\nu}|^\frac{n+2}{n-2}\right\|_{L^\frac{n(n+2)}{n^2+4}(\Omega)}\\
&\le C \left\| |v_\nu-\xi_{z_\nu}|^{\min(\frac{n+2}{n-2},2)}+  |v_\nu-\xi_{z_\nu}|^\frac{n+2}{n-2}\right\|_{L^\frac{n(n+2)}{n^2+4}(\cup_{k=1}^m B_{N/\lambda_{k,\nu}}(x_{k,\nu}))}\\
&\quad +C \left\| |v_\nu-\xi_{z_\nu}|^{\min(\frac{n+2}{n-2},2)}+  |v_\nu-\xi_{z_\nu}|^\frac{n+2}{n-2}\right\|_{L^\frac{n(n+2)}{n^2+4}(\Omega\setminus \cup_{k=1}^m B_{N/\lambda_{k,\nu}}(x_{k,\nu}))},
\end{align*}
where $N$ is a large real number to be chosen later. Using H\"older's inequality, we have
\begin{align*}
&\left\| |v_\nu-\xi_{z_\nu}|^{\min(\frac{n+2}{n-2},2)}+  |v_\nu-\xi_{z_\nu}|^\frac{n+2}{n-2}\right\|_{L^\frac{n(n+2)}{n^2+4}(\cup_{k=1}^m B_{N/\lambda_{k,\nu}}(x_{k,\nu}))}\\
&\le C \sum_{k=1}^m (N/\lambda_{k,\nu})^\frac{(n-2)^2}{2(n+2)} \left\| |v_\nu-\xi_{z_\nu}|^{\min(\frac{n+2}{n-2},2)}+  |v_\nu-\xi_{z_\nu}|^\frac{n+2}{n-2}\right\|_{L^\frac{2n}{n+2}(\Omega)}\\
& \le C \sum_{k=1}^m (N/\lambda_{k,\nu})^\frac{(n-2)^2}{2(n+2)}
\end{align*}
and
\begin{align*}
&\left\| |v_\nu-\xi_{z_\nu}|^{\min(\frac{n+2}{n-2},2)}+  |v_\nu-\xi_{z_\nu}|^\frac{n+2}{n-2}\right\|_{L^\frac{n(n+2)}{n^2+4}(\Omega\setminus \cup_{k=1}^m B_{N/\lambda_{k,\nu}}(x_{k,\nu}))}\\
&\le \left\| |v_\nu-\xi_{z_\nu}|^{\min(\frac{4}{n-2},1)}+  |v_\nu-\xi_{z_\nu}|^\frac{4}{n-2}\right\|_{L^\frac{n}{2}(\Omega\setminus \cup_{k=1}^m B_{N/\lambda_{k,\nu}}(x_{k,\nu}))}\cdot  \left\| v_\nu-\xi_{z_\nu}\right\|_{L^\frac{n+2}{n-2}(\Omega)}.
\end{align*}
Since
\begin{align*}
&\left\| v_\nu-\xi_{z_\nu}\right\|_{L^\frac{2n}{n-2}(\Omega\setminus \cup_{k=1}^m B_{N/\lambda_{k,\nu}}(x_{k,\nu}))}\\
&=  \left\|  \sum_{k=1}^m  \al_{k,\nu}\xi_{x_{k,\nu}, \lda_{k,\nu}}+ w_\nu   \right\|_{L^\frac{2n}{n-2}(\Omega\setminus \cup_{k=1}^m B_{N/\lambda_{k,\nu}}(x_{k,\nu}))}\\
&\le \sum_{k=1}^m  \al_{k,\nu}\left\|  \xi_{x_{k,\nu}, \lda_{k,\nu}} \right\|_{L^\frac{2n}{n-2}(\Omega\setminus B_{N/\lambda_{k,\nu}}(x_{k,\nu}))} + \left\|  w_\nu  \right\|_{L^\frac{2n}{n-2}(\Omega)} \\
&\le C N^{-\frac{n-2}{2}}+o(1),
\end{align*}
we have
\begin{align*}
&\left\|\xi_{z_\nu}^{\frac{n+2}{n-2}}+ \frac{n+2}{n-2}\xi_{z_\nu}^{\frac{4}{n-2}}(v_\nu-\xi_{z_\nu})-v_\nu^{\frac{n+2}{n-2}}\right\|_{L^\frac{n(n+2)}{n^2+4}(\Omega)}\\
&\le C \sum_{k=1}^m (N/\lambda_{k,\nu})^\frac{(n-2)^2}{2(n+2)} +C (N^{-\frac{n-2}{2}}+N^{-2}+o(1))\left\| v_\nu-\xi_{z_\nu}\right\|_{L^\frac{n+2}{n-2}(\Omega)}.
\end{align*}
Also,
\begin{align}
\sup_{1\le l\le L}\left|\int_\om v_\infty^{\frac{4}{n-2}} \phi_l (v_\nu-\xi_{z_\nu})\right|
&=\sup_{1\le l\le L}\left|\int_\om v_\infty^{\frac{4}{n-2}} \phi_l \Big( \sum_{k=1}^m  \al_{k,\nu}\xi_{x_{k,\nu}, \lda_{k,\nu}}+ w_\nu \Big)\right|\nonumber\\
& \le C \sum_{k=1}^m  \lda_{k,\nu}^{\frac{2-n}{2}}+ o(1)\|w_\nu\|_{L^1(\Omega)}\nonumber\\
&\le C \sum_{k=1}^m  \lda_{k,\nu}^{\frac{2-n}{2}}+o(1)\left\|v_\nu -\xi_{z_\nu}-  \sum_{k=1}^m  \al_{k,\nu}\xi_{x_{k,\nu}, \lda_{k,\nu}}\right\|_{L^1(\Omega)}\nonumber\\
&\le C \sum_{k=1}^m  \lda_{k,\nu}^{\frac{2-n}{2}}+o(1)\left\|v_\nu -\xi_{z_\nu}\right\|_{L^1(\Omega)}\label{eq:projectiondifference}.
\end{align}
Putting these facts together, we have
\begin{align*}
&\left\|v_\nu-\xi_{z_\nu}\right\|_{L^\frac{n+2}{n-2}(\Omega)}\\
&\le C\left\|(1-\mathcal{R}(t_\nu))v_\nu^{\frac{n+2}{n-2}}\right\|_{L^\frac{2n}{n+2}(\Omega)}+C \sum_{k=1}^m (N/\lambda_{k,\nu})^\frac{(n-2)^2}{2(n+2)}  \\
&+C (N^{-\frac{n-2}{2}}+N^{-2}+o(1))\left\| v_\nu-\xi_{z_\nu}\right\|_{L^\frac{n+2}{n-2}(\Omega)}+C \sum_{k=1}^m  \lda_{k,\nu}^{\frac{2-n}{2}}.
\end{align*}
By choosing $N$ sufficiently large, we obtain
\begin{align*}
\left\|v_\nu-\xi_{z_\nu}\right\|_{L^\frac{n+2}{n-2}(\Omega)}\le C\left\|(1-\mathcal{R}(t_\nu))v_\nu^{\frac{n+2}{n-2}}\right\|_{L^\frac{2n}{n+2}(\Omega)}+C \sum_{k=1}^m \lambda_{k,\nu}^{-\frac{(n-2)^2}{2(n+2)}},
\end{align*}
from which the conclusion follows.
\end{proof}

\begin{lem}\label{lem:1norm} 
For large $\nu$, we have 
\[
\|v_\nu-\xi_{z_\nu}\|_{L^{1}(\om)}\le C \|v_\nu^{\frac{n+2}{n-2}}(\mathcal{R}(t_\nu)-1)\|_{L^{\frac{2n}{n+2}}(\om)}^{\frac{n+2}{n-2}}+C  \sum_{k=1}^m \lda_{k,\nu}^{\frac{2-n}{2}},
\]
where $C>0$ is independent of $\nu$.
\end{lem}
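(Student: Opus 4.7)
The strategy is to mirror the proof of Lemma \ref{lem:n+2norm} line by line, but to apply part (ii) of Lemma \ref{lem:projectionest1} to $f=v_\nu-\xi_{z_\nu}$ in place of part (i). Using equation \eqref{eq:projcorrection} this yields
\begin{align*}
\|v_\nu-\xi_{z_\nu}\|_{L^1(\Omega)}\le\;&C\|(1-\mathcal{R}(t_\nu))v_\nu^{\frac{n+2}{n-2}}\|_{L^1(\Omega)}+C\|(\xi_{z_\nu}^{\frac{4}{n-2}}-v_\infty^{\frac{4}{n-2}})(v_\nu-\xi_{z_\nu})\|_{L^1(\Omega)}\\
&+C\|\xi_{z_\nu}^{\frac{n+2}{n-2}}+\tfrac{n+2}{n-2}\xi_{z_\nu}^{\frac{4}{n-2}}(v_\nu-\xi_{z_\nu})-v_\nu^{\frac{n+2}{n-2}}\|_{L^1(\Omega)}\\
&+C\sup_{1\le l\le L}\Big|\int_{\Omega}v_\infty^{\frac{4}{n-2}}\phi_l(v_\nu-\xi_{z_\nu})\Big|,
\end{align*}
and I would then estimate each of the four terms separately. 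Write $A:=\|v_\nu^{\frac{n+2}{n-2}}(\mathcal{R}(t_\nu)-1)\|_{L^{2n/(n+2)}(\Omega)}$.

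The second term is the easy one: since $\|\xi_{z_\nu}-v_\infty\|_{L^\infty}=o(1)$ by Lemma \ref{lem:ift} and \eqref{eq:bubbles-4'}, it contributes $o(1)\|v_\nu-\xi_{z_\nu}\|_{L^1}$, absorbable on the left. The fourth term is handled exactly as in the computation \eqref{eq:projectiondifference} of Lemma \ref{lem:n+2norm}: decompose $v_\nu-\xi_{z_\nu}=\sum_k\alpha_{k,\nu}\xi_{x_{k,\nu},\lda_{k,\nu}}+w_\nu$, use $|\int \xi_{x_{k,\nu},\lda_{k,\nu}}v_\infty^{\frac{4}{n-2}}\phi_l|\le C\lda_{k,\nu}^{(2-n)/2}$ from Lemma \ref{lem:bubble-derivative}, and smallness of $w_\nu$ in $H^1_0$ to get $C\sum_k\lda_{k,\nu}^{(2-n)/2}+o(1)\|v_\nu-\xi_{z_\nu}\|_{L^1}$.

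For the nonlinear remainder I would apply the pointwise bound \eqref{eq:pointwise3}. When $n\ge 6$ one has $\min(\tfrac{n+2}{n-2},2)=\tfrac{n+2}{n-2}$ and $\max(0,\tfrac{4}{n-2}-1)=0$, so the $L^1$ norm of the remainder reduces to $C\|v_\nu-\xi_{z_\nu}\|_{L^{(n+2)/(n-2)}}^{(n+2)/(n-2)}$, and Lemma \ref{lem:n+2norm} immediately gives $CA^{(n+2)/(n-2)}+C\sum_k\lda_{k,\nu}^{(2-n)/2}$. For $n=4,5$ an extra quadratic term $C\|\xi_{z_\nu}^{(6-n)/(n-2)}|v_\nu-\xi_{z_\nu}|^2\|_{L^1}$ appears; I would handle it by splitting $\Omega$ into the concentration regions $B_{N/\lda_{k,\nu}}(x_{k,\nu})$ and their complement exactly as in the proof of Lemma \ref{lem:n+2norm}, using the smallness of $\|v_\nu-\xi_{z_\nu}\|_{L^{2n/(n-2)}}$ off the bubbles (from Proposition \ref{prop:s-b-c}) together with the $L^{(n+2)/(n-2)}$ bound from Lemma \ref{lem:n+2norm}, choosing $N$ large so that the coefficient in front of $\|v_\nu-\xi_{z_\nu}\|_{L^1}$ is small enough to absorb.

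The delicate step, and the main obstacle, is the first term $\|(1-\mathcal{R}(t_\nu))v_\nu^{(n+2)/(n-2)}\|_{L^1}$. A bare H\"older inequality with exponents $2n/(n+2)$ and $2n/(n-2)$ yields only the linear bound $CA$, which is weaker than the advertised $A^{(n+2)/(n-2)}$. To reach the sharp power I would combine H\"older with the uniform $L^\infty$ decay $\|\mathcal{R}(t_\nu)-1\|_{L^\infty(\Omega)}\to 0$ from the corollary following Proposition \ref{prop:Mq}, interpolating between the $L^\infty$ smallness of $1-\mathcal{R}$ and the $L^{2n/(n+2)}$ smallness of $(1-\mathcal{R})v_\nu^{(n+2)/(n-2)}$; equivalently, one exploits that $\Pi$ strips off the low-mode component of $(1-\mathcal{R})v_\nu^{(n+2)/(n-2)}$, which is precisely the part of size $O(A)$, leaving a remainder of the correct order. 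Collecting all four estimates and absorbing the $o(1)\|v_\nu-\xi_{z_\nu}\|_{L^1}$ pieces on the left-hand side then yields the claimed bound.
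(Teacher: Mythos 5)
Your skeleton coincides with the paper's: starting from \eqref{eq:projcorrection}, apply part (ii) of Lemma \ref{lem:projectionest1} to $v_\nu-\xi_{z_\nu}$, absorb the term $\|(\xi_{z_\nu}^{\frac{4}{n-2}}-v_\infty^{\frac{4}{n-2}})(v_\nu-\xi_{z_\nu})\|_{L^1}$ as $o(1)\|v_\nu-\xi_{z_\nu}\|_{L^1}$, treat the eigenfunction pairings by \eqref{eq:projectiondifference}, and control the Taylor remainder via \eqref{eq:pointwise3}. One difference: for $n=4,5$ the paper does not redo the bubble-region splitting of Lemma \ref{lem:n+2norm}; it interpolates $\||v_\nu-\xi_{z_\nu}|^{2}\|_{L^1}$ between $\|v_\nu-\xi_{z_\nu}\|_{L^1}$ and $\|v_\nu-\xi_{z_\nu}\|_{L^{\frac{n+2}{n-2}}}$ and uses Young's inequality to absorb the $L^1$ factor into the left-hand side, then invokes Lemma \ref{lem:n+2norm}. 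Your splitting variant is probably salvageable, but as sketched it is shaky: off the bubbles you only have integral (not pointwise) smallness of $v_\nu-\xi_{z_\nu}$, so to produce a small coefficient in front of $\|v_\nu-\xi_{z_\nu}\|_{L^1}$ you would end up needing the very interpolation-plus-Young step the paper uses; the paper's route is cleaner.

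The genuine gap is your handling of the term $\|(1-\mathcal{R}(t_\nu))v_\nu^{\frac{n+2}{n-2}}\|_{L^1(\om)}$. Neither of your two proposed mechanisms produces the power $A^{\frac{n+2}{n-2}}$, $A:=\|(1-\mathcal{R}(t_\nu))v_\nu^{\frac{n+2}{n-2}}\|_{L^{\frac{2n}{n+2}}(\om)}$. The decay $\|\mathcal{R}(t)-1\|_{L^\infty(\om)}\to 0$ is purely qualitative: there is no inequality of the form $\|\mathcal{R}(t_\nu)-1\|_{L^\infty}\le CA^{\frac{4}{n-2}}$, so interpolating with the $L^\infty$ smallness only upgrades the H\"older bound $CA$ to $o(1)\,A$, which is not $O(A^{\frac{n+2}{n-2}})$. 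The projection argument is also unfounded: $\Pi$ removes only the finitely many components along $v_\infty^{\frac{4}{n-2}}\phi_l$, $1\le l\le L$, and in the estimate one only uses $\|\Pi g\|_{L^1}\le C\|g\|_{L^1}$; there is no cancellation making the projected part of $(1-\mathcal{R}(t_\nu))v_\nu^{\frac{n+2}{n-2}}$ of size $A^{\frac{n+2}{n-2}}$. You should be aware, however, that the paper's own proof does not achieve this power on that term either: its final display keeps $C\|(1-\mathcal{R}(t_\nu))v_\nu^{\frac{n+2}{n-2}}\|_{L^1}$, which by H\"older is only $\le CA$, so what the argument actually yields is $\|v_\nu-\xi_{z_\nu}\|_{L^1}\le CA+CA^{\frac{n+2}{n-2}}+C\sum_k\lda_{k,\nu}^{\frac{2-n}{2}}$. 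Since $A\to 0$ by Proposition \ref{prop:Mq}, this is weaker than the exponent displayed in the lemma, but it is exactly what is used in Proposition \ref{prop:regular-part}, where a first-power term $C\,A$ appears anyway. The correct repair of your write-up is therefore not to invent a sharper estimate for this term, but to record the bound with the first power of $A$ (which is all the subsequent argument needs), rather than assert $A^{\frac{n+2}{n-2}}$ via the invalid interpolation or projection claims.
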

\begin{proof}
Using \eqref{eq:projcorrection}, and applying (ii) of Lemma \ref{lem:projectionest1}  to $v_\nu-\xi_{z_\nu}$, we obtain
\begin{align*}
&\left\|v_\nu-\xi_{z_\nu}\right\|_{L^1(\Omega)}\\
&\le C\left\|(1-\mathcal{R}(t_\nu))v_\nu^{\frac{n+2}{n-2}}\right\|_{L^1(\Omega)} +C\left\|(\xi_{z_\nu}^{\frac{4}{n-2}}-v_\infty^\frac{4}{n-2})(v_\nu-\xi_{z_\nu})\right\|_{L^1(\Omega)}\\
&\quad+ C\left\|\xi_{z_\nu}^{\frac{n+2}{n-2}}+ \frac{n+2}{n-2}\xi_{z_\nu}^{\frac{4}{n-2}}(v_\nu-\xi_{z_\nu})-v_\nu^{\frac{n+2}{n-2}}\right\|_{L^1(\Omega)}+C\sup_{1\le l\le L}\left|\int_\om v_\infty^{\frac{4}{n-2}} \phi_l (v_\nu-\xi_{z_\nu})\right|.
\end{align*}
It follows from \eqref{eq:pointwise3} that
\begin{align*}
&\left\|\xi_{z_\nu}^{\frac{n+2}{n-2}}+ \frac{n+2}{n-2}\xi_{z_\nu}^{\frac{4}{n-2}}(v_\nu-\xi_{z_\nu})-v_\nu^{\frac{n+2}{n-2}}\right\|_{L^1(\Omega)}\\
&\le C \left\| |v_\nu-\xi_{z_\nu}|^{\min(\frac{n+2}{n-2},2)}+  |v_\nu-\xi_{z_\nu}|^\frac{n+2}{n-2}\right\|_{L^1(\Omega)}\\
&\le C \left\| v_\nu-\xi_{z_\nu}\right\|_{L^1(\Omega)}^{\max(0,1-\frac{n-2}{4})} \left\||v_\nu-\xi_{z_\nu}|^\frac{n+2}{n-2}\right\|_{L^1(\Omega)}^{\min(1,\frac{n-2}{4})} + C \left\| v_\nu-\xi_{z_\nu}\right\|_{L^{\frac{n+2}{n-2}}(\Omega)}^{\frac{n+2}{n-2}}\\
& \le  C \left\| v_\nu-\xi_{z_\nu}\right\|_{L^1(\Omega)}^{\max(0,1-\frac{n-2}{4})} \left\|v_\nu-\xi_{z_\nu}\right\|_{L^\frac{n+2}{n-2}(\Omega)}^{\frac{n+2}{n-2}\min(1,\frac{n-2}{4})} + C \left\| v_\nu-\xi_{z_\nu}\right\|_{L^{\frac{n+2}{n-2}}(\Omega)}^{\frac{n+2}{n-2}}\\
&\le  \frac{1}{2C} \left\| v_\nu-\xi_{z_\nu}\right\|_{L^1(\Omega)}+ C \left\| v_\nu-\xi_{z_\nu}\right\|_{L^{\frac{n+2}{n-2}}(\Omega)}^{\frac{n+2}{n-2}},
\end{align*}
where we used H\"older's inequality in the second inequality and the Young inequality in the last inequality. Combining \eqref{eq:projectiondifference}, we have
\begin{align*}
&\left\|v_\nu-\xi_{z_\nu}\right\|_{L^1(\Omega)}\\
&\le C\left\|(1-\mathcal{R}(t_\nu))v_\nu^{\frac{n+2}{n-2}}\right\|_{L^1(\Omega)}+o(1)\left\| v_\nu-\xi_{z_\nu}\right\|_{L^1(\Omega)}  \\
&+ \frac{1}{2} \left\| v_\nu-\xi_{z_\nu}\right\|_{L^1(\Omega)}+ C \left\| v_\nu-\xi_{z_\nu}\right\|_{L^{\frac{n+2}{n-2}}(\Omega)}^{\frac{n+2}{n-2}} +C \sum_{k=1}^m  \lda_{k,\nu}^{\frac{2-n}{2}}.
\end{align*}
Then the conclusion follows from Lemma \ref{lem:n+2norm}.
\end{proof}

Using the above two lemmas, we can continue to estimate $F(\xi_{z_\nu})- F(v_\infty)$ from  Lemma \ref{lem:lojasiewiczgradient}.

\begin{prop} \label{prop:regular-part}
For all large $\nu$, we have
\[
F(\xi_{z_\nu})- F(v_\infty) \le  C\left (\int_\om |\mathcal{R}(t_\nu)-1|^{\frac{2n}{n+2}} v_\nu^{\frac{2n}{n-2}}\right)^{\frac{n+2}{2n} (1+\gamma)} +C \sum_{k=1}^m \lda_{k,\nu}^{\frac{2-n}{2} (1+\gamma)},
\]
where $\gamma\in (0,1)$ is the one in Lemma \ref{lem:lojasiewiczgradient}.
\end{prop}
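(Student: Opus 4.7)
The plan is to apply Lemma \ref{lem:lojasiewiczgradient}, which reduces the proposition to establishing, uniformly in $1\le l\le L$, the bound
\[
\Big|\int_\om (\Delta \xi_{z_\nu}+b\xi_{z_\nu}+\xi_{z_\nu}^{\frac{n+2}{n-2}})\phi_l\,\ud x\Big| \le C\Big(\int_\om |\mathcal{R}(t_\nu)-1|^{\frac{2n}{n+2}} v_\nu^{\frac{2n}{n-2}}\,\ud x\Big)^{\frac{n+2}{2n}} + C\sum_{k=1}^m \lda_{k,\nu}^{\frac{2-n}{2}}.
\]
Once this is in hand, raising to the $(1+\gamma)$-th power, using $(A+B)^{1+\gamma}\le 2^\gamma(A^{1+\gamma}+B^{1+\gamma})$ together with $(\sum_k a_k)^{1+\gamma}\le m^\gamma \sum_k a_k^{1+\gamma}$ applied to $a_k=\lda_{k,\nu}^{\frac{2-n}{2}}$, gives the proposition directly.

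To prove the displayed bound I would write $\xi_{z_\nu}=v_\nu-(v_\nu-\xi_{z_\nu})$ and plug in the identity $\Delta v_\nu+bv_\nu+v_\nu^{\frac{n+2}{n-2}}=(1-\mathcal{R}(t_\nu))v_\nu^{\frac{n+2}{n-2}}$ coming from \eqref{eq:Q}. Integrating the linear piece by parts against $\phi_l$ and using $-(\Delta+b)\phi_l=\mu_l v_\infty^{\frac{4}{n-2}}\phi_l$ decomposes the left-hand side into the three summands
\[
\int_\om (1-\mathcal{R}(t_\nu))v_\nu^{\frac{n+2}{n-2}}\phi_l\,\ud x + \mu_l\int_\om v_\infty^{\frac{4}{n-2}}\phi_l(v_\nu-\xi_{z_\nu})\,\ud x - \int_\om (v_\nu^{\frac{n+2}{n-2}}-\xi_{z_\nu}^{\frac{n+2}{n-2}})\phi_l\,\ud x.
\]
H\"older's inequality handles the first by $C\|v_\nu^{\frac{n+2}{n-2}}(\mathcal{R}(t_\nu)-1)\|_{L^{\frac{2n}{n+2}}(\om)}$; the second is already bounded by $C\sum_k\lda_{k,\nu}^{\frac{2-n}{2}}+o(1)\|v_\nu-\xi_{z_\nu}\|_{L^1(\om)}$ in \eqref{eq:projectiondifference}.

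For the third (nonlinear) summand I would invoke the pointwise inequality \eqref{eq:pointwise3} with $a=\xi_{z_\nu}$ and $b=v_\nu$ to peel off its linear approximation $-\frac{n+2}{n-2}\int\xi_{z_\nu}^{\frac{4}{n-2}}(v_\nu-\xi_{z_\nu})\phi_l$. Since by \eqref{eq:bubbles-4'} one has $\xi_{z_\nu}^{\frac{4}{n-2}}=v_\infty^{\frac{4}{n-2}}+o(1)$ uniformly, a further application of \eqref{eq:projectiondifference} absorbs the linear part into $O(\sum_k\lda_{k,\nu}^{\frac{2-n}{2}})+o(1)\|v_\nu-\xi_{z_\nu}\|_{L^1(\om)}$. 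The Taylor remainder from \eqref{eq:pointwise3} is then majorized, using boundedness of $\xi_{z_\nu}$ and $\phi_l$, by $C\|v_\nu-\xi_{z_\nu}\|_{L^{\frac{n+2}{n-2}}(\om)}^{\frac{n+2}{n-2}}$ plus, when $n\le 5$, a quadratic contribution $C\int|v_\nu-\xi_{z_\nu}|^2\,\ud x$; Lemmas \ref{lem:n+2norm} and \ref{lem:1norm} bound the first by $C\|v_\nu^{\frac{n+2}{n-2}}(\mathcal{R}(t_\nu)-1)\|_{L^{\frac{2n}{n+2}}(\om)}^{\frac{n+2}{n-2}}+C\sum_k\lda_{k,\nu}^{\frac{2-n}{2}}$, and since this $L^{\frac{2n}{n+2}}$ norm tends to $0$ by the corollary to Proposition \ref{prop:Mq}, the higher power can be absorbed into the first power.

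The main subtlety is the quadratic piece $\int|v_\nu-\xi_{z_\nu}|^2\,\ud x$ arising for $n\in\{3,4,5\}$, because then $\min(\frac{n+2}{n-2},2)=2$ and neither lemma supplies an $L^2$ bound of linear order. I would close this gap by H\"older interpolation
\[
\|v_\nu-\xi_{z_\nu}\|_{L^2(\om)}^{2}\le \|v_\nu-\xi_{z_\nu}\|_{L^1(\om)}^{\frac{6-n}{4}}\|v_\nu-\xi_{z_\nu}\|_{L^{\frac{n+2}{n-2}}(\om)}^{\frac{n+2}{4}};
\]
plugging in Lemmas \ref{lem:1norm} and \ref{lem:n+2norm}, the exponents $\frac{6-n}{4}$ and $\frac{n-2}{4}$ on the common right-hand side $C\|v_\nu^{\frac{n+2}{n-2}}(\mathcal{R}(t_\nu)-1)\|_{L^{\frac{2n}{n+2}}(\om)}^{\frac{n+2}{n-2}}+C\sum_k\lda_{k,\nu}^{\frac{2-n}{2}}$ sum to exactly $1$, so the quadratic remainder obeys the same linear-order bound as the cubic one. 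Collecting all three summands then gives the displayed estimate, and Lemma \ref{lem:lojasiewiczgradient} finishes the proof.
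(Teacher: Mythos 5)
Your proposal is correct and follows essentially the same route as the paper: the same three-term decomposition of $\int_\om(\Delta\xi_{z_\nu}+b\xi_{z_\nu}+\xi_{z_\nu}^{\frac{n+2}{n-2}})\phi_l$ after using the equation for $v_\nu$ and the eigenfunction equation for $\phi_l$, the same invocation of Lemmas \ref{lem:n+2norm} and \ref{lem:1norm} together with Proposition \ref{prop:Mq} to absorb higher powers, and the same final application of Lemma \ref{lem:lojasiewiczgradient}. The only divergence is minor: the paper bounds the nonlinear term with the first-order pointwise estimate $|v_\nu^{\frac{n+2}{n-2}}-\xi_{z_\nu}^{\frac{n+2}{n-2}}|\le C\xi_{z_\nu}^{\frac{4}{n-2}}|v_\nu-\xi_{z_\nu}|+C|v_\nu-\xi_{z_\nu}|^{\frac{n+2}{n-2}}$, which avoids the quadratic remainder for $n\le 5$ altogether, whereas your second-order expansion via \eqref{eq:pointwise3} plus the $L^1$--$L^{\frac{n+2}{n-2}}$ interpolation (whose exponents indeed sum to one) handles it correctly but with extra, unnecessary work.
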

\begin{proof}
It follows from integration by parts that
\begin{align*}
& \int_{\om} (\Delta \xi_{z_\nu}+b\xi_{z_\nu} +\xi_{z_\nu}^{\frac{n+2}{n-2}})  \phi_l\,\ud x \\
 & =  \int_{\om} (\Delta v_\nu+bv_\nu +v_\nu^{\frac{n+2}{n-2}})  \phi_l\,\ud x + \mu_l \int_\om v_\infty^\frac{4}{n-2}\phi_l (v_\nu- \xi_{z_\nu})\,\ud x - \int_\om \phi_l (v_\nu^{\frac{n+2}{n-2}}- \xi_{z_\nu}^{\frac{n+2}{n-2}})\,\ud x\\
 & =\int_{\om} (1-\mathcal{R}(t_\nu))v_\nu^{\frac{n+2}{n-2}} \phi_l\,\ud x + \mu_l \int_\om v_\infty^\frac{4}{n-2}\phi_l (v_\nu- \xi_{z_\nu})\,\ud x - \int_\om \phi_l (v_\nu^{\frac{n+2}{n-2}}- \xi_{z_\nu}^{\frac{n+2}{n-2}})\,\ud x.
\end{align*}
Using the pointwise estimate
\[
|v_\nu^{\frac{n+2}{n-2}}- \xi_{z_\nu}^{\frac{n+2}{n-2}}| \le C \xi_{z_\nu}^{\frac{4}{n-2}} |v_\nu- \xi_{z_\nu}|+ C |v_\nu- \xi_{z_\nu}|^{\frac{n+2}{n-2}},
\]
we have
\begin{align*}
&\sup_{1\le l\le L} \Big|  \int_{\om} (\Delta \xi_z+b\xi_z +\xi_z^{\frac{n+2}{n-2}})  \phi_l\,\ud x \Big|\\
&\le C\left\|(1-\mathcal{R}(t_\nu))v_\nu^{\frac{n+2}{n-2}}\right\|_{L^{\frac{2n}{n+2}}(\Omega)} + C\left\| v_\nu-\xi_{z_\nu}\right\|_{L^1(\Omega)}+ C\left\| v_\nu-\xi_{z_\nu}\right\|_{L^\frac{n+2}{n-2}(\Omega)}^\frac{n+2}{n-2}\\
&\le C\left\|(1-\mathcal{R}(t_\nu))v_\nu^{\frac{n+2}{n-2}}\right\|_{L^{\frac{2n}{n+2}}(\Omega)} +C \left\|v_\nu^{\frac{n+2}{n-2}}(\mathcal{R}(t_\nu)-1)\right\|_{L^{\frac{2n}{n+2}}(\om)}^{\frac{n+2}{n-2}}+C  \sum_{k=1}^m \lda_{k,\nu}^{\frac{2-n}{2}}\\
&\le C\left\|(1-\mathcal{R}(t_\nu))v_\nu^{\frac{n+2}{n-2}}\right\|_{L^{\frac{2n}{n+2}}(\Omega)} +C  \sum_{k=1}^m \lda_{k,\nu}^{\frac{2-n}{2}},
\end{align*}
where we used Lemma \ref{lem:n+2norm} and Lemma \ref{lem:1norm} in the second inequality, and Proposition \ref{prop:Mq} in the last inequality. 

Then the conclusion follows from Lemma \ref{lem:lojasiewiczgradient}.
\end{proof}

\begin{cor} \label{cor:regular+singular} If $n\ge 4$ and $b>0$ satisfying \eqref{eq:b}, we have 
\[
F(U_\nu) \le F(v_\infty) +  \frac{2m}{n} Y(\mathbb{S}^n)^{n/2} + C \left(\int_\om |\mathcal{R}(t_\nu)-1|^{\frac{2n}{n+2}} v_\nu^{\frac{2n}{n-2}}\right)^{\frac{n+2}{2n} (1+\gamma)}. 
\]
\end{cor}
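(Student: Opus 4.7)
I would mirror the structure of Proposition \ref{prop:bubble-energy} and Corollary \ref{cor:pure-singular}, but now accounting for the nontrivial regular part $\xi_{z_\nu}$. Write $U_\nu = \xi_{z_\nu} + V_\nu$ with $V_\nu := \sum_{k=1}^{m}\al_{k,\nu}\xi_{x_{k,\nu},\lda_{k,\nu}}$ and expand
\[
F(U_\nu) = F(\xi_{z_\nu}) + F(V_\nu) + 2\langle \xi_{z_\nu}, V_\nu\rangle - \tfrac{n-2}{n}\int_{\om}\bigl[(\xi_{z_\nu}+V_\nu)^{\frac{2n}{n-2}} - \xi_{z_\nu}^{\frac{2n}{n-2}} - V_\nu^{\frac{2n}{n-2}}\bigr]\,\ud x.
\]
The pointwise inequality \eqref{eq:basic} applied to the two non-negative functions $\xi_{z_\nu}$ and $V_\nu$, after discarding the non-positive $(\cdot\lor\cdot)^{4/(n-2)}(\cdot\land\cdot)^{2}$ piece, yields
\[
F(U_\nu) \le F(\xi_{z_\nu}) + F(V_\nu) + \mathcal{C}_\nu, \qquad \mathcal{C}_\nu := 2\langle \xi_{z_\nu}, V_\nu\rangle - 2\int_{\om} \xi_{z_\nu}^{\frac{n+2}{n-2}}V_\nu\,\ud x.
\]

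\textbf{Plan.} The bubble energy $F(V_\nu)$ is handled by applying Proposition \ref{prop:bubble-energy} directly to $V_\nu$ (whose proof only uses that $V_\nu$ is a sum of bubbles satisfying \eqref{eq:bubbles-1'}--\eqref{eq:bubbles-3'}), and then invoking the identity $F(\xi_{x_{k,\nu},\lda_{k,\nu}}) = \tfrac{2}{n}Y(\Sn)^{n/2} - b\int \xi_{x_{k,\nu},\lda_{k,\nu}}^{2} + O(\lda_{k,\nu}^{2-n})$ together with the Corollary \ref{cor:pure-singular} argument: the hypotheses $n\ge 4$ and $b>0$ give, for $\nu$ large,
\[
F(V_\nu) \le \tfrac{2m}{n}Y(\Sn)^{n/2} - \tfrac{b}{2}\sum_{k=1}^m \int_\om \xi_{x_{k,\nu},\lda_{k,\nu}}^{2}\,\ud x,
\]
since $\int\xi_{x_{k,\nu},\lda_{k,\nu}}^2$ dominates $\lda_{k,\nu}^{2-n}$ for $n\ge 4$. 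For $F(\xi_{z_\nu})$ I invoke Proposition \ref{prop:regular-part}, which gives $F(\xi_{z_\nu}) - F(v_\infty) \le CA_\nu^{1+\gamma} + C\sum_{k}\lda_{k,\nu}^{(2-n)(1+\gamma)/2}$, where $A_\nu := \bigl\|v_\nu^{(n+2)/(n-2)}(\mathcal{R}(t_\nu)-1)\bigr\|_{L^{2n/(n+2)}(\om)}$.

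\textbf{Plan.} The crux is controlling $\mathcal{C}_\nu$. Integrating by parts in $2\langle\xi_{z_\nu},V_\nu\rangle$ and using \eqref{eq:projection0} in the form
\[
-\Delta \xi_{z_\nu} - b\xi_{z_\nu} = \xi_{z_\nu}^{(n+2)/(n-2)} - \sum_{l=1}^{L} e_{l,\nu}\, v_\infty^{4/(n-2)}\phi_l, \qquad e_{l,\nu} := \int_\om \bigl(\Delta\xi_{z_\nu}+b\xi_{z_\nu}+\xi_{z_\nu}^{(n+2)/(n-2)}\bigr)\phi_l\,\ud x,
\]
the $\int \xi_{z_\nu}^{(n+2)/(n-2)}V_\nu$ terms exactly cancel, leaving
\[
\mathcal{C}_\nu = -2\sum_{l=1}^{L} e_{l,\nu}\int_\om v_\infty^{4/(n-2)}\phi_l \, V_\nu\,\ud x.
\]
Since each $v_\infty^{4/(n-2)}\phi_l\in L^\infty(\overline \om)$ and a direct bubble integration gives $\int_\om \xi_{x_{k,\nu},\lda_{k,\nu}}\,\ud x \le C\lda_{k,\nu}^{-(n-2)/2}$, we get $|\mathcal{C}_\nu|\le C\sup_l |e_{l,\nu}|\cdot \sum_k \lda_{k,\nu}^{-(n-2)/2}$. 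The proof of Proposition \ref{prop:regular-part} already supplies $\sup_l|e_{l,\nu}|\le CA_\nu + C\sum_k \lda_{k,\nu}^{(2-n)/2}$, and Young's inequality with conjugate exponents $1+\gamma$ and $(1+\gamma)/\gamma$ yields
\[
|\mathcal{C}_\nu| \le \varepsilon A_\nu^{1+\gamma} + C_\varepsilon \Bigl(\sum_k \lda_{k,\nu}^{(2-n)/2}\Bigr)^{(1+\gamma)/\gamma} + C\Bigl(\sum_k \lda_{k,\nu}^{(2-n)/2}\Bigr)^{2}.
\]

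\textbf{Plan.} Summing the three bounds,
\[
F(U_\nu)\le F(v_\infty)+\tfrac{2m}{n}Y(\Sn)^{n/2}+CA_\nu^{1+\gamma}-\tfrac{b}{2}\sum_k\int\xi_{x_{k,\nu},\lda_{k,\nu}}^{2}+(\text{$\lda_{k,\nu}$-power errors}),
\]
where every residual error is of the form $\lda_{k,\nu}^{\theta}$ for some $\theta$ that is at worst $(2-n)(1+\gamma)/2$; by the hypothesis $n\ge 4$ together with $\int\xi^2\ge c\lda^{-4/(n-2)}$ for $n\ge 5$ (respectively $\int\xi^2\ge c\lda^{-2}\log\lda$ for $n=4$), each such term is $o\bigl(\sum_k\int\xi_{x_{k,\nu},\lda_{k,\nu}}^{2}\bigr)$ and is therefore absorbed into $-\frac{b}{2}\sum\int\xi^2$ for $\nu$ large. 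This gives the stated bound. The main technical challenge is the careful bookkeeping of $\mathcal{C}_\nu$, in particular engineering the cancellation via \eqref{eq:projection0}, and then verifying the absorption of every $\lda$-power residue; the borderline dimension $n=4$, where the logarithmic gain in $\int\xi^2$ must be used, is the most delicate case.
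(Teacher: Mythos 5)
Your scheme parallels the paper's up to one decisive step, and that step is where it breaks. After applying \eqref{eq:basic} to the pair $\xi_{z_\nu}, V_\nu$ you \emph{discard} the term $c_{n,2}\int_\om (\xi_{z_\nu}\lor V_\nu)^{\frac{4}{n-2}}(\xi_{z_\nu}\land V_\nu)^2$. In the paper's proof this is precisely the term that is kept: since $\xi_{z_\nu}\ge v_\infty/2>0$ near each concentration point and $\int_{|x-x_{k,\nu}|<\lda_{k,\nu}^{-1/2}}\bar\xi_{x_{k,\nu},\lda_{k,\nu}}^{\frac{n+2}{n-2}}\ge c\,\lda_{k,\nu}^{-\frac{n-2}{2}}$, it is bounded below by $c\sum_k \lda_{k,\nu}^{\frac{2-n}{2}}$, and this negative contribution to $F(U_\nu)$ absorbs every residual power of $\lda$, in particular the Lojasiewicz error $C\sum_k\lda_{k,\nu}^{\frac{2-n}{2}(1+\gamma)}$ from Proposition \ref{prop:regular-part}, because $\lda^{\frac{2-n}{2}(1+\gamma)}=o\big(\lda^{\frac{2-n}{2}}\big)$. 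Having thrown this term away, you propose instead to absorb all residues into $-\tfrac b2\sum_k\int_\om\xi_{x_{k,\nu},\lda_{k,\nu}}^2$. That works for $n\ge 5$ (one needs $(n-2)^2(1+\gamma)>8$, which holds for any $\gamma>0$), but it fails exactly in the borderline case $n=4$ that you flag as "delicate": there $\int_\om\xi_{x_{k,\nu},\lda_{k,\nu}}^2\sim \lda_{k,\nu}^{-2}\ln\lda_{k,\nu}$ while the error from Proposition \ref{prop:regular-part} is $\lda_{k,\nu}^{-(1+\gamma)}$ with $\gamma<1$, and $\lda^{-(1+\gamma)}/(\lda^{-2}\ln\lda)\to\infty$. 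The logarithmic gain cannot save this; no choice of $\gamma\in(0,1)$ does. So as written the argument does not prove the corollary for $n=4$.

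The remaining ingredients of your plan are sound and partly cleaner than the paper's: the exact cancellation of the cross term via \eqref{eq:projection0}, leaving $\mathcal{C}_\nu=-2\sum_l e_{l,\nu}\int v_\infty^{\frac{4}{n-2}}\phi_l V_\nu$ with $\sup_l|e_{l,\nu}|\le CA_\nu+C\sum_k\lda_{k,\nu}^{\frac{2-n}{2}}$ (the paper instead estimates the cross term directly by $o(1)\sum_k\lda_{k,\nu}^{\frac{2-n}{2}}$, using $\xi_{z_\nu}\to v_\infty$ in $C^2$ and the equation of $v_\infty$), and the strengthened form of Corollary \ref{cor:pure-singular} for $F(V_\nu)$ is indeed what its proof gives. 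The fix is simply not to discard the interaction term: keep it, bound it below by $c\sum_k\lda_{k,\nu}^{\frac{2-n}{2}}$ exactly as the paper does, and let it (rather than the $b\int\xi^2$ term) absorb the $\lda$-power residues; the hypotheses $n\ge4$, $b>0$ are then only needed where they enter Corollary \ref{cor:pure-singular}.
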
 

\begin{proof} Let $\widetilde U_{\nu}= \sum_{k=1}^m \al_k \xi_{x_{k,\nu}, \lda_{k,\nu}} $. 
\begin{align*}
F(U_\nu)&= \int_\om |\nabla( \xi_{z_\nu}+\widetilde U_{\nu})|^2- b(\xi_{z_\nu}+\widetilde U_{\nu})^2 -\frac{n-2}{n} \int_\om (\xi_{z_\nu}+\widetilde U_{\nu})^{\frac{2n}{n-2}} \\
&= F(\xi_{z_\nu})+ F(\widetilde U_{\nu}) +2  \int_\om (\nabla  \xi_{z_\nu} \nabla \widetilde U_{\nu} -b  \xi_{z_\nu}  \widetilde U_{\nu} - \xi_{z_\nu} ^{\frac{n+2}{n-2}}  \widetilde U_{\nu})\\
&\quad- \frac{n-2}{n}\int_\om\left((\xi_{z_\nu}+ \widetilde U_{\nu})^{\frac{2n}{n-2}}-\frac{2n}{n-2}\xi_{z_\nu}^{\frac{n+2}{n-2}}\widetilde U_{\nu}- \xi_{z_\nu}^{\frac{2n}{n-2}}-\widetilde U^{\frac{2n}{n-2}} \right).
\end{align*}
We have
\begin{align*}
& \left|\int_\om (\nabla  \xi_{z_\nu} \nabla \widetilde U_{\nu} -b  \xi_{z_\nu}  \widetilde U_{\nu} - \xi_{z_\nu} ^{\frac{n+2}{n-2}}  \widetilde U_{\nu})\right|\\
 &= \left|\int_\om \Big(\Delta(\xi_{z_\nu}-v_\infty)  +b  (\xi_{z_\nu}-v_\infty)  + \xi_{z_\nu} ^{\frac{n+2}{n-2}} - v_\infty^{\frac{n+2}{n-2}} \Big) \widetilde U_{\nu})\right|\\
 & \le o(1)  \sum_{k=1}^m \lda_k^{\frac{2-n}{2}}.
 \end{align*}
By Lemma \ref{lem:basic-1}, there exists $c>0$, depending only on $n$ such that 
\begin{align*}
(\xi_{z_\nu}+ \widetilde U_{\nu})^{\frac{2n}{n-2}}-\frac{2n}{n-2}\xi_{z_\nu}^{\frac{n+2}{n-2}}\widetilde U_{\nu}- \xi_{z_\nu}^{\frac{2n}{n-2}}-\widetilde U^{\frac{2n}{n-2}}\ge
 \begin{cases}  c\xi_{z_\nu}^{\frac{4}{n-2}}(\widetilde U_{\nu})^2 ,& \quad \mbox{if } \xi_{z_\nu} 
 \ge U_{\nu}', \\ 
 c\xi_{z_\nu}(\widetilde U_{\nu})^{\frac{n+2}{n-2}} , &\quad \mbox{if } \xi_{z_\nu} < U_{\nu}'. 
 \end{cases}
\end{align*}
Since $v_\infty/2\le \xi_{z_\nu}\le 2 v_\infty$, and 
\[
\int_{|x|<\sqrt{\lda^{-1}}}  \left(\frac{\lda}{1+\lda^2 |x|^2}\right)^{\frac{n+2}{2}} \ge \lda^{-\frac{n-2}{2}} \int_{|y|<1} \left(\frac{1}{1+ |y|^2}\right)^{\frac{n+2}{2}} \ge c \lda^{-\frac{n-2}{2}}, 
\]
we have
\[
\int_\om\left((\xi_{z_\nu}+ \widetilde U_{\nu})^{\frac{2n}{n-2}}-\frac{2n}{n-2}\xi_{z_\nu}^{\frac{n+2}{n-2}}\widetilde U_{\nu}- \xi_{z_\nu}^{\frac{2n}{n-2}}-\widetilde U^{\frac{2n}{n-2}} \right)\ge c \sum_{k=1}^m \lda_k^{\frac{2-n}{2}}.
\]
Then, the conclusion follows from Proposition \ref{prop:regular-part} and Corollary \ref{cor:pure-singular}. 
\end{proof} 

\section{Convergence}\label{sec:convergence} 

Using the  estimates in Corollaries \ref{cor:coercive} and \ref{cor:coercive2}, we have the following estimate of $F(v_{\nu}) -F_\infty $ for any sequence of times $\{t_\nu: \nu\in\mathbb{N}\}$.
\begin{prop}\label{prob:subsequenceestimate}  
Let $n\ge 4$, and $b>0$ satisfy \eqref{eq:b}. Let $\{t_\nu: \nu\in\mathbb{N}\}$ be a sequence of times such that $t_\nu\to \infty$ as $\nu\to\infty$. Then, we can find a real number $\gamma\in (0,1)$ and a
constant $C>0$ such that, after passing to a subsequence, we have
\[
F(v_{\nu}) -F_\infty \le  C \left(\int_\om |\mathcal{R}(t_\nu)-1|^{\frac{2n}{n+2}} v_\nu^{\frac{2n}{n-2}}\right)^{\frac{n+2}{2n} (1+\gamma)}
\]
for all integers $\nu$ in that subsequence, where $F_\infty$ is the one defined in \eqref{eq:f_infty}. Note that $\gamma$ and $C$ may depend
on the sequence $\{t_\nu: \nu\in\mathbb{N}\}$.
\end{prop}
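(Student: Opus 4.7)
The plan is to combine the bubble-decomposition estimate for $F(U_\nu)$ with a coercive Taylor expansion of $F$ around $U_\nu$. First, by Proposition \ref{prop:c-c}, Proposition \ref{prop:away} and Proposition \ref{prop:s-b-c}, after passing to a subsequence (still indexed by $\nu$) we have $v_\nu:=v(\cdot,t_\nu)\rightharpoonup v_\infty$ weakly in $H^1_0(\Omega)$ together with a bubble decomposition into some $m\ge 0$ bubbles. Comparing the energy formula in Proposition \ref{prop:s-b-c} with the monotone convergence $F(v(t))\to F_\infty$ from \eqref{eq:f_infty} forces
\[
F_\infty = F(v_\infty) + \tfrac{2m}{n} Y(\mathbb{S}^n)^{n/2}.
\]
Choose the finite-dimensional parameters as in \eqref{eq:finite-minimum} (if $v_\infty\equiv 0$) or \eqref{eq:finite-minimum-2} (if $v_\infty>0$), and set $U_\nu$ by \eqref{eq:decomposition-1}/\eqref{eq:decomposition-1'} and $w_\nu=v_\nu-U_\nu$. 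Then $\|w_\nu\|\to 0$ by \eqref{eq:bubbles-3}/\eqref{eq:bubbles-3'}, and Corollary \ref{cor:pure-singular} (in the first case) or Corollary \ref{cor:regular+singular} (in the second case) yields
\[
F(U_\nu)-F_\infty \;\le\; C A_\nu^{1+\gamma}, \qquad A_\nu := \left(\int_\Omega|\mathcal{R}(t_\nu)-1|^{\frac{2n}{n+2}} v_\nu^{\frac{2n}{n-2}}\right)^{\frac{n+2}{2n}},
\]
for some $\gamma\in(0,1)$ (with $\gamma$ and $C$ depending on the chosen subsequence).

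The remaining task is to estimate $F(v_\nu)-F(U_\nu)$. Taylor expanding both $F$ and $F'$ at $U_\nu$ and using $w_\nu=v_\nu-U_\nu$ produces the identity
\[
F(v_\nu)-F(U_\nu) = \langle dF(v_\nu), w_\nu\rangle - \tfrac12 F''(U_\nu)[w_\nu,w_\nu] + \mathcal{E}_\nu,
\]
where $F''(U_\nu)[w_\nu,w_\nu]=2\|w_\nu\|^2-\tfrac{2(n+2)}{n-2}\int U_\nu^{4/(n-2)}w_\nu^2$ and $\mathcal{E}_\nu$ collects the cubic (and higher) remainders from expanding $t\mapsto t^{2n/(n-2)}$ and $t\mapsto t^{(n+2)/(n-2)}$. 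The coercivity estimates Corollary \ref{cor:coercive}/\ref{cor:coercive2} give $\tfrac12 F''(U_\nu)[w_\nu,w_\nu]\ge c\|w_\nu\|^2$ for some $c>0$ independent of $\nu$. By the same H\"older--Sobolev argument used in the proof of Proposition \ref{prop:c-c}, together with the equation \eqref{eq:Q} for $v_\nu$,
\[
|\langle dF(v_\nu), w_\nu\rangle| = \Bigl|2\int_\Omega(\mathcal{R}(t_\nu)-1) v_\nu^{(n+2)/(n-2)} w_\nu \Bigr| \le C A_\nu \|w_\nu\|.
\]
Using the uniform bound $\|U_\nu\|_{L^{2n/(n-2)}}\le C$ and $\|w_\nu\|_{L^{2n/(n-2)}}\to 0$, one checks $|\mathcal{E}_\nu|=o(\|w_\nu\|^2)$ and absorbs it into the quadratic coercive term, and Young's inequality absorbs the cross term $CA_\nu\|w_\nu\|$ likewise. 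This yields
\[
F(v_\nu)-F(U_\nu) \le C A_\nu^{2}.
\]

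Finally, Proposition \ref{prop:Mq} applied with $q=\tfrac{2n}{n+2}$ gives $A_\nu\to 0$; since $1+\gamma<2$, we have $A_\nu^{2}\le A_\nu^{1+\gamma}$ for all large $\nu$. Summing the two bounds produces $F(v_\nu)-F_\infty\le CA_\nu^{1+\gamma}$, which is exactly the claim. The main technical obstacle is the verification $|\mathcal{E}_\nu|=o(\|w_\nu\|^2)$: the cubic remainders must be split according to whether the relevant Sobolev exponents $\tfrac{2n}{n-2}$ and $\tfrac{n+2}{n-2}$ exceed $3$ and $2$ (equivalently, whether $n\le 6$ or $n\ge 7$), then controlled by H\"older interpolation between the uniform $L^{2n/(n-2)}$ bound on $U_\nu$ and the vanishing of $\|w_\nu\|_{L^{2n/(n-2)}}$.
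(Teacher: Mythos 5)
Your proposal is correct and follows essentially the same route as the paper's proof: the same expansion of $F(v_\nu)-F(U_\nu)$ into the gradient term $2\int_\om(\mathcal{R}-1)v_\nu^{\frac{n+2}{n-2}}w_\nu$, the coercive quadratic form handled via Corollaries \ref{cor:coercive} and \ref{cor:coercive2}, and a pointwise-controlled higher-order remainder absorbed by smallness of $w_\nu$, then combined with Corollaries \ref{cor:pure-singular} and \ref{cor:regular+singular} and Proposition \ref{prop:Mq} to dominate the quadratic error by the $(1+\gamma)$-power term. Your explicit bookkeeping $F_\infty=F(v_\infty)+\frac{2m}{n}Y(\mathbb{S}^n)^{n/2}$ via Proposition \ref{prop:s-b-c} is exactly what the paper uses implicitly when it invokes Corollary \ref{cor:regular+singular} at the end.
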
 

\begin{proof} 
It follows from \eqref{eq:f_infty} that $F(v_\infty)=F_\infty$. Recall that $U_\nu=v_\nu-w_\nu$.  We have  
\begin{align*}
F(v_{\nu}) -F(U_\nu)&= \frac{n-2}{n}\int_\om (U_\nu^{\frac{2n}{n-2}}-v_\nu^{\frac{2n}{n-2}}) +2 \int_\om (\nabla v_\nu \nabla w_\nu -b v_\nu w_\nu)    -\int_\om (| \nabla w_\nu|^2 -b  w_\nu^2)  \\&
= \frac{n-2}{n}\int_\om (U_\nu^{\frac{2n}{n-2}}-v_\nu^{\frac{2n}{n-2}}) +2 \int_\om \mathcal{R} v_\nu^{\frac{n+2}{n-2}} w_\nu    -\int_\om (| \nabla w_\nu|^2 -b  w_\nu^2)\\&= 
\frac{n-2}{n}\int_\om \left(U_\nu^{\frac{2n}{n-2}}-v_\nu^{\frac{2n}{n-2}} +\frac{2n}{n-2} v_\nu^{\frac{n+2}{n-2}} w_\nu -\frac{n(n+2)}{(n-2)^2} U_\nu^{\frac{4}{n-2}} w_\nu^2\right) \\& \quad  +2 \int_\om (\mathcal{R}-1)v_\nu^{\frac{n+2}{n-2}} w_\nu -\int_\om \left(| \nabla w_\nu|^2 -b  w_\nu^2 -\frac{n+2}{n-2} U_\nu^{\frac{4}{n-2}} w_\nu^2\right)
\end{align*}
Using the pointwise estimate
\begin{align*}
 &\left|U_\nu^{\frac{2n}{n-2}}-v_\nu^{\frac{2n}{n-2}} +\frac{2n}{n-2} v_\nu^{\frac{n+2}{n-2}} w_\nu -\frac{n(n+2)}{(n-2)^2} U_\nu^{\frac{4}{n-2}} w_\nu^2\right|\\
&=\left|U_\nu^{\frac{2n}{n-2}}- (w_\nu+U_\nu)^{\frac{2n}{n-2}}+\frac{2n}{n-2}(w_\nu+U_\nu)^{\frac{n+2}{n-2}} w_\nu -\frac{n(n+2)}{(n-2)^2} U_\nu^{\frac{4}{n-2}} w_\nu^2\right|    
 \\& \le C U_\nu^{\max\{0,\frac{4}{N-2}-1\}} |w_\nu|^{\min\{\frac{2N}{N-2},3\}} +C |w_\nu |^{\frac{2N}{N-2}},
 \end{align*}
 it follows that 
 \begin{align*}
 &\int_\om \left|U_\nu^{\frac{2n}{n-2}}-v_\nu^{\frac{2n}{n-2}} +\frac{2n}{n-2} v_\nu^{\frac{n+2}{n-2}} w_\nu -\frac{n(n+2)}{(n-2)^2} U_\nu^{\frac{4}{n-2}} w_\nu^2\right|  \\& 
 \le C \int_\om U_\nu^{\max\{0,\frac{4}{n-2}-1\}} |w_\nu|^{\min\{\frac{2n}{n-2},3\}} +C \int_\om |w_\nu |^{\frac{2n}{n-2}} \\&
 \le C \left(\int_\om |w_\nu |^{\frac{2n}{n-2}} \right)^{\frac{n-2}{n}\min\{\frac{n}{n-2}, \frac32\}}. 
 \end{align*}
 By H\"older inequality and Cauchy inequality, we have 
 \begin{align*}
\left| \int_\om (\mathcal{R}-1)v_\nu^{\frac{n+2}{n-2}} w_\nu \right| & \le C\left(\int_\om |\mathcal{R}-1|^{\frac{2n}{n+2}} v_\nu^{\frac{2n}{n-2}}\right)^{\frac{n+2}{2n}} \left(\int_\om |w_\nu|^{\frac{2n}{n-2}} \right)^{\frac{n-2}{2n}} \\&
\le \va \left(\int_\om |w_\nu|^{\frac{2n}{n-2}} \right)^{\frac{n-2}{n}} + C(\va)\left(\int_\om |\mathcal{R}-1|^{\frac{2n}{n+2}} v_\nu^{\frac{2n}{n-2}}\right)^{\frac{n+2}{n}} . 
 \end{align*}
 Finally, by Corollaries \ref{cor:coercive} and \ref{cor:coercive2}, we have  
 \[
 \int_\om |\nabla w_\nu|^2 -b w_\nu^2- \frac{n+2}{n-2} U_\nu^{\frac{4}{n-2}} w_\nu^2\ge 
 \frac{1}{C} \left(\int_\om |w_\nu|^{\frac{2n}{n-2}} \right)^{\frac{n-2}{n}}. 
 \]
 Since $\displaystyle\int_\om |w_\nu|^{\frac{2n}{n-2}}  \to 0$, we have, by choosing $\va $ being small, that 
 \begin{align*}
 F(v_{\nu}) -F(U_\nu) &\le C \left(\int |w_\nu |^{\frac{2n}{n-2}} \right)^{\frac{n-2}{n}\min\{\frac{n}{n-2}, \frac32\}} +C\left(\int |\mathcal{R}-1|^{\frac{2n}{n+2}} v_\nu^{\frac{2n}{n-2}}\right)^{\frac{n+2}{n}}  \\& \quad - \frac{1}{2C} \left(\int |w_\nu|^{\frac{2n}{n-2}} \right)^{\frac{n-2}{n}}\\&
 \le C\left(\int |\mathcal{R}-1|^{\frac{2n}{n+2}} v_\nu^{\frac{2n}{n-2}}\right)^{\frac{n+2}{n}}. 
 \end{align*}
 By Corollary \ref{cor:regular+singular}, the proof is completed. 
 \end{proof} 
 
Then we can show the estimate for all large time. 
\begin{cor}\label{cor:fullestimate}
There exist real numbers $\gamma\in (0,1)$ and $t_0>0$ such that
\[
F(v(t)) -F_\infty \le  \left(\int_\om |\mathcal{R}-1|^{\frac{2n}{n+2}} v(x,t)^{\frac{2n}{n-2}}\,\ud x\right)^{\frac{n+2}{2n} (1+\gamma)}
\]
for all $t\ge t_0$.
\end{cor}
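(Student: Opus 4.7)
The strategy is a contradiction argument that upgrades the subsequential estimate of Proposition \ref{prob:subsequenceestimate} to one valid for all sufficiently large $t$, simultaneously absorbing the constant $C$ by exploiting $M_{\frac{2n}{n+2}}(t)\to 0$.

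Set $I(t):=\int_\Omega|\mathcal{R}(t)-1|^{\frac{2n}{n+2}}v(t)^{\frac{2n}{n-2}}\,\ud x=M_{\frac{2n}{n+2}}(t)$, which tends to $0$ as $t\to\infty$ by Proposition \ref{prop:Mq}, and recall $F(v(t))\ge F_\infty$ from \eqref{eq:F-dcr} and \eqref{eq:f_infty}. If $\mathcal{R}(t_0)\equiv 1$, then $v$ is stationary for $t\ge t_0$ by \eqref{eq:volumn}, so $F(v(t_0))=F_\infty$; equivalently, $F(v(t))>F_\infty$ forces $I(t)>0$, which lets us freely take positive powers of $I$ below.

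Suppose the corollary fails. Then for every integer $k\ge 2$ the choice $\gamma=1/k$, $t_0=k$ must break, so there exists $t_k\ge k$ with
\[
F(v(t_k))-F_\infty \;>\; I(t_k)^{\frac{n+2}{2n}(1+1/k)}.
\]
Apply Proposition \ref{prob:subsequenceestimate} to $\{t_k\}$ to obtain $\gamma^*\in(0,1)$, $C>0$, and a subsequence $\{t_{k_j}\}$ along which
\[
F(v(t_{k_j}))-F_\infty \;\le\; C\,I(t_{k_j})^{\frac{n+2}{2n}(1+\gamma^*)}.
\]
Combining these two estimates along the subsequence and dividing yields
\[
I(t_{k_j})^{\frac{n+2}{2n}(1/k_j-\gamma^*)} \;<\; C.
\]
Once $j$ is large enough that $1/k_j<\gamma^*/2$, the exponent is at most $-\frac{(n+2)\gamma^*}{4n}<0$, so since $I(t_{k_j})\to 0$ the left side tends to $+\infty$, contradicting the bound by $C$.

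The main obstacle is that the Lojasiewicz exponent $\gamma^*$ produced by Proposition \ref{prob:subsequenceestimate} depends a priori on the extracted subsequential limit $v_\infty$, so no single $\gamma$ is visibly known to work uniformly in $t$; one cannot just invoke the proposition once. The crucial maneuver is to let the putative failure exponents $\gamma_k=1/k$ decay to $0$: eventually $\gamma_k$ is smaller than whichever positive $\gamma^*$ the proposition hands us along any subsequence, and this, together with $I\to 0$, simultaneously absorbs the constant $C$ and delivers the desired uniform-in-$t$ estimate.
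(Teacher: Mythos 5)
Your proposal is correct and follows essentially the same argument as the paper: negate the statement with failure exponents $\gamma_k=1/k\to 0$ at times $t_k\ge k$, apply Proposition \ref{prob:subsequenceestimate} to that sequence to get a fixed $\gamma^*$ and $C$ along a subsequence, and derive a contradiction from $M_{\frac{2n}{n+2}}(t)\to 0$ (Proposition \ref{prop:Mq}); your extra remark justifying $I(t_k)>0$ before dividing is a harmless refinement the paper leaves implicit.
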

\begin{proof}
Suppose this is not true. Then, there exists a sequence of
times $\{t_\nu: \nu\in\mathbb{N}\}$ such that $t_\nu>\nu$ and 
\[
F(v(t_\nu)) -F_\infty \ge  \left(\int_\om |\mathcal{R}(t_\nu)-1|^{\frac{2n}{n+2}} v(x,t_\nu)^{\frac{2n}{n-2}}\,\ud x\right)^{\frac{n+2}{2n} (1+\frac{1}{\nu})}
\]
for all $\nu\in\mathbb{N}$. By applying Proposition \ref{prob:subsequenceestimate} to this sequence $\{t_\nu: \nu\in\mathbb{N}\}$, there exists an infinite subset $I\subset\mathbb{N}$, a real number
$\alpha\in (0,1)$ and $C>0$ such that
\[
F(v(t_\nu)) -F_\infty \le  C \left(\int_\om |\mathcal{R}(t_\nu)-1|^{\frac{2n}{n+2}} v(x,t_\nu)^{\frac{2n}{n-2}}\,\ud x\right)^{\frac{n+2}{2n} (1+\alpha)}
\]
for all $\nu\in I$. Thus, we have
\[
1\le C \left(\int_\om |\mathcal{R}(t_\nu)-1|^{\frac{2n}{n+2}} v(x,t_\nu)^{\frac{2n}{n-2}}\,\ud x\right)^{\frac{n+2}{2n} (\alpha-\frac{1}{\nu})}
\]
for all $\nu\in I$. However, from Proposition \ref{prop:Mq}, we have
\[
\lim_{\nu\to\infty}\left(\int_\om |\mathcal{R}(t_\nu)-1|^{\frac{2n}{n+2}} v(x,t_\nu)^{\frac{2n}{n-2}}\,\ud x\right)^{\frac{n+2}{2n} (\alpha-\frac{1}{\nu})}=0.
\]
We have reached a contradiction. 
\end{proof}

Now we can use a differential inequality of $F$ to obtain a decay estimate.
\begin{prop} \label{prop:converge-1}
There exist $\theta>0$ and $C>0$ such that for all $T>1$, there holds
\[
\int_{T}^\infty M_2(t)^{1/2} \,\ud t \le C T^{-\theta},
\]
where $M_2$ is defined in \eqref{eq:definitionMq}.
\end{prop}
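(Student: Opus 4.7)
The plan is a Łojasiewicz--Simon type argument, converting the energy inequality of Corollary \ref{cor:fullestimate} into a differential inequality for $H(t) := F(v(t)) - F_\infty$, and then passing from the resulting $L^1$-decay of $M_2$ to the claimed $L^1$-decay of $M_2^{1/2}$ via a dyadic decomposition.

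First, by \eqref{eq:F-dcr} and the relation $\partial_t v = \frac{n-2}{n+2} v(1-\mathcal{R})$ we have the identity $H'(t) = -\frac{2(n-2)}{n+2} M_2(t)$. By H\"older's inequality with exponents $\frac{n+2}{n}$ and $\frac{n+2}{2}$, together with the uniform bound \eqref{eq:BN-5}, I get
\[
M_{\frac{2n}{n+2}}(t) \le C\, M_2(t)^{\frac{n}{n+2}}.
\]
Inserting this into Corollary \ref{cor:fullestimate} yields, for all $t\ge t_0$,
\[
H(t) \le C\, M_2(t)^{\frac{1+\gamma}{2}},
\]
which combined with $-H'(t) = \frac{2(n-2)}{n+2} M_2(t)$ gives the differential inequality
\[
-H'(t) \ge c_1\, H(t)^{\frac{2}{1+\gamma}}.
\]
Since $\frac{2}{1+\gamma} > 1$, integrating this ODE (after noting $H(t)\to 0^+$) produces
\[
H(t) \le C\, t^{-\frac{1+\gamma}{1-\gamma}} \quad \text{for } t \ge t_0.
\]

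Second, to control $\int_T^\infty M_2(t)^{1/2}\,\ud t$, I decompose into dyadic intervals and apply Cauchy--Schwarz on each:
\[
\int_{2^k T}^{2^{k+1} T} M_2(t)^{1/2}\,\ud t \le (2^k T)^{1/2}\left(\int_{2^k T}^{2^{k+1} T} M_2(t)\,\ud t\right)^{1/2} \le C (2^k T)^{1/2}\, H(2^k T)^{1/2},
\]
where I used $\int_{s}^{\infty} M_2 = \frac{n+2}{2(n-2)} H(s)$. Plugging in the polynomial decay of $H$ gives a bound of $C (2^k T)^{-\gamma/(1-\gamma)}$ per dyadic block. Summing the geometric series in $k$ yields
\[
\int_T^\infty M_2(t)^{1/2}\,\ud t \le C\, T^{-\gamma/(1-\gamma)},
\]
and setting $\theta := \gamma/(1-\gamma)$ (replacing $\gamma$ by a smaller value if necessary, which only weakens the Łojasiewicz estimate in Lemma \ref{lem:lojasiewiczgradient}) produces $\theta \in (0,1)$.

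The main obstacle is really only bookkeeping at this stage: the substantive analytic inputs---the Łojasiewicz gradient estimate on the finite-dimensional bubble manifold (Lemma \ref{lem:lojasiewiczgradient}) and the bubble-corrected energy expansion (Corollary \ref{cor:fullestimate})---have already been established. The one subtle point is that the natural conservation law $\int_T^\infty M_2 \,\ud t \lesssim H(T)$ only controls $M_2$ in $L^1_t$, whereas the claim concerns $M_2^{1/2}$; the dyadic Cauchy--Schwarz trick above absorbs a factor of $t^{1/2}$ in exchange for halving the decay exponent of $H$, and this is exactly what allows the improvement from $H$-decay to $L^1_t$-decay of $M_2^{1/2}$.
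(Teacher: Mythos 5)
Your proposal is correct and follows essentially the same route as the paper: the same H\"older step turning Corollary \ref{cor:fullestimate} into $F(v(t))-F_\infty\le CM_2(t)^{\frac{1+\gamma}{2}}$, the same ODE integration giving $F(v(t))-F_\infty\le Ct^{-\frac{1+\gamma}{1-\gamma}}$, and the same dyadic Cauchy--Schwarz summation to pass to $\int_T^\infty M_2^{1/2}$. Your remark about shrinking $\gamma$ (or simply noting $T^{-\theta}\le T^{-\theta'}$ for $T>1$, $\theta'\le\theta$) to place $\theta$ in $(0,1)$ is a harmless bookkeeping point the paper leaves implicit.
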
 

\begin{proof} 
It follows from  Corollary \ref{cor:fullestimate}, H\"older's inequality, and \eqref{eq:BN-5} that 
\[
0\le F(v(t))-F_\infty \le \left(\int_\om |\mathcal{R}-1|^{\frac{2n}{n+2}} v(x,t)^{\frac{2n}{n-2}}\,\ud x\right)^{\frac{n+2}{2n} (1+\gamma)}  \le C M_2(t)^{\frac{1+\gamma}{2}}. 
\]
It follows that 
\begin{align*}
\frac{\ud }{\ud t}(F(v(t))-F_\infty )= -\frac{2(n-2)}{n+2} M_2(t) \le -C (F(v(t))-F_\infty)^{\frac{2}{1+\gamma}}. 
\end{align*}
Hence, 
\[
\frac{\ud }{\ud t}(F(v(t))-F_\infty  )^{\frac{\gamma-1}{\gamma+1}} \ge C \frac{1-\gamma}{1+\gamma}>0. 
\] 
It follows that  
\[
F(v(t))-F_\infty  \le Ct^{-\frac{1+\gamma}{1-\gamma}}
\]
for sufficiently large $t$. 
Then we have 
\begin{align*}
\left(\int_{T}^{2T} M_2(s)^{1/2}\,\ud s\right)^2&  \le T \int_{T }^{2T } M_2(s) \,\ud s  
\\& \le \frac{n+2}{n-2}  T  (F(v(T))- F(v(2T))) \\&
\le  \frac{n+2}{n-2}  T  (F(v(T))- F_\infty)  \\& 
 \le C T^{-\frac{2\gamma}{1-\gamma}},
\end{align*} 
where we used the monotonicity of $F$. It follows that 
\be\label{eq:M2rate}
\int_{T}^\infty M_2(t)^{1/2} \,\ud t  = \sum_{k=0}^{\infty}  \int_{2^kT}^{2^{k+1}T} M_2(t)^{1/2} \,\ud t \le C T^{-\frac{\gamma}{1-\gamma}}\sum_{k=1}^{\infty} 2^{-\frac{\gamma}{1-\gamma} k}\le CT^{-\frac{\gamma}{1-\gamma}}. 
\ee
This finishes the proof. 
\end{proof} 

We are ready to show the uniform boundedness, and uniform higher order estimates.

\begin{prop}\label{prop:boundedness} 
For any $\va>0$, there exists $T_0>0$ such that 
\be\label{eq:closedness}
\|v(\cdot,t)-v(\cdot, T_0)\|_{L^{\frac{2n}{n-2}}(\om)} <\va\quad\mbox{for all }t>T_0.
\ee 
Consequently, there exists $C>0$ depending only $n,b,\Omega$ and $u_0$ such that
\be\label{eq:closedness2}
v(x, t)\le C\quad\mbox{ in }\ \Omega \mbox{ for all }t>1.
\ee 
\end{prop}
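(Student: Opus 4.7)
The plan is to prove \eqref{eq:closedness} by integrating the identity $\partial_t v^{\frac{n+2}{n-2}} = (1-\mathcal{R})v^{\frac{n+2}{n-2}}$ (which follows directly from the definition \eqref{eq:Q} of $\mathcal{R}$ and the equation \eqref{eq:BN-3}) in time, and then transferring the resulting estimate from $v^{\frac{n+2}{n-2}}$ back to $v$ via a pointwise power inequality. First I would use H\"older's inequality to estimate
\[
\Big\|\partial_t v^{\frac{n+2}{n-2}}\Big\|_{L^{\frac{2n}{n+2}}(\om)}^{\frac{2n}{n+2}} = \int_\om |1-\mathcal{R}|^{\frac{2n}{n+2}}v^{\frac{2n}{n-2}}\,\ud x \le M_2(t)^{\frac{n}{n+2}}\Big(\int_\om v^{\frac{2n}{n-2}}\,\ud x\Big)^{\frac{2}{n+2}},
\]
which together with the uniform bound \eqref{eq:BN-5} yields $\|\partial_t v^{\frac{n+2}{n-2}}\|_{L^{\frac{2n}{n+2}}(\om)} \le CM_2(t)^{1/2}$. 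Minkowski's integral inequality together with the integrated decay \eqref{eq:M2rate} from Proposition \ref{prop:converge-1} then gives, for every $t>T_0\ge 1$,
\[
\Big\|v^{\frac{n+2}{n-2}}(\cdot,t) - v^{\frac{n+2}{n-2}}(\cdot,T_0)\Big\|_{L^{\frac{2n}{n+2}}(\om)} \le \int_{T_0}^t\Big\|\partial_s v^{\frac{n+2}{n-2}}\Big\|_{L^{\frac{2n}{n+2}}(\om)}\,\ud s \le CT_0^{-\theta}.
\]

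To transfer this estimate to $v$ itself, I would invoke the elementary pointwise inequality $|a-b|^p \le |a^p-b^p|$ valid for all $a,b\ge 0$ and $p\ge 1$ (which holds because the function $x\mapsto x^p-(x-b)^p-b^p$ vanishes at $x=b$ and has a non-negative derivative on $[b,\infty)$). Applying this with $p=\frac{n+2}{n-2}$ and raising to the power $\frac{2n}{n+2}$ gives the pointwise bound $|v(\cdot,t)-v(\cdot,T_0)|^{\frac{2n}{n-2}} \le |v^{\frac{n+2}{n-2}}(\cdot,t)-v^{\frac{n+2}{n-2}}(\cdot,T_0)|^{\frac{2n}{n+2}}$, which after integration and taking the $\frac{n-2}{2n}$-th power becomes
\[
\|v(\cdot,t)-v(\cdot,T_0)\|_{L^{\frac{2n}{n-2}}(\om)} \le \Big\|v^{\frac{n+2}{n-2}}(\cdot,t) - v^{\frac{n+2}{n-2}}(\cdot,T_0)\Big\|_{L^{\frac{2n}{n+2}}(\om)}^{\frac{n-2}{n+2}} \le (CT_0^{-\theta})^{\frac{n-2}{n+2}}.
\]
Taking $T_0$ sufficiently large makes this smaller than any prescribed $\va>0$, establishing \eqref{eq:closedness}.

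For the consequent uniform $L^\infty$ bound \eqref{eq:closedness2}, the Cauchy property just obtained implies that $v(\cdot,t)$ converges strongly in $L^{\frac{2n}{n-2}}(\om)$ as $t\to\infty$, and the limit must coincide with the weak $H^1_0$ limit $v_\infty$ supplied by Proposition \ref{prop:s-b-c}. By absolute continuity of the Lebesgue integral applied to $v_\infty^{\frac{2n}{n-2}}$ combined with this strong convergence, I would extract a uniform non-concentration statement: for any prescribed $\va_0>0$ there is a uniform radius $r_0>0$ such that $\int_{B_{r_0}(x_0)\cap\om} v(x,t)^{\frac{2n}{n-2}}\,\ud x \le \va_0$ for every $t\ge T_0$ and every $x_0\in\overline\om$. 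Together with the uniform $L^\infty$ bound on $\mathcal{R}$ supplied by Lemma \ref{lem:Rlowerbound} and the corollary at the end of Section \ref{sec:integralbound}, a standard small-energy $\va$-regularity argument of Brezis--Kato/Moser iteration type, applied to the elliptic equation $-\Delta v - bv = \mathcal{R}v^{\frac{n+2}{n-2}}$ satisfied at each fixed time $t\ge T_0$, yields a uniform $L^\infty$ bound on $[T_0,\infty)$; boundedness on the compact time interval $[1,T_0]$ is immediate from the local parabolic regularity of \cite{JX19}. The main subtle point is the mismatch between the $M_2^{1/2}$-type quantity provided by Proposition \ref{prop:converge-1} and the $L^{\frac{2n}{n-2}}$ norm we want to control directly; working with $v^{\frac{n+2}{n-2}}$ rather than $v$ resolves this mismatch at the price of a weaker exponent $\frac{n-2}{n+2}$ in the final decay rate, which is acceptable since any positive rate suffices.
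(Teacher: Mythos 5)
Your proof is correct and follows essentially the same route as the paper: the paper integrates $\pa_t v^{\frac{n}{n-2}}$ in $L^2(\om)$ (whose norm is exactly a constant times $M_2^{1/2}$) and uses the same pointwise power inequality, whereas you integrate $\pa_t v^{\frac{n+2}{n-2}}$ in $L^{\frac{2n}{n+2}}(\om)$ and recover $M_2^{1/2}$ via H\"older and \eqref{eq:BN-5}, losing only a harmless power in the final rate. The $L^\infty$ step is likewise the paper's: non-concentration of $v^{\frac{2n}{n-2}}$ coming from the Cauchy property, combined with Brezis--Kato/Moser iteration applied to the elliptic equation \eqref{eq:ellipticsplit} at each fixed time.
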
 

\begin{proof} 
For $b>a>1$, using the pointwise estimate
\[
 |v(x,b)-v(x,a)|^{\frac{n}{n-2}}\le |v(x,b)^{\frac{n}{n-2}} -v(x,a) ^{\frac{n}{n-2}}|,
\]
we have
\begin{equation}  \label{eq:stable-5}
\begin{split}
\left(\int_{\om} |v(x,b)-v(x,a)|^{\frac{2n}{n-2}}\,\ud x\right)^{1/2} &\le\left( \int_{\om} |v(x,b)^{\frac{n}{n-2}} -v(x,a) ^{\frac{n}{n-2}} |^2\,\ud x\right)^{1/2} \\&
\le\left(\int_{\om} \Big(\int_{a}^{b} |\pa_t (v(x,t)^{\frac{n}{n-2}})| \,\ud t\Big) ^2 \,\ud x \right)^{1/2}\\&
\le \int_{a}^{b} \left(\int_{\om} | \pa_t (v(x,t)^{\frac{n}{n-2}})| ^2 \,\ud x\right)^{1/2}\,\ud t\\&
\le C \int_{a}^{\infty} M_2(t)^{1/2} \,\ud t\\
&\le C a^{-\theta},
\end{split}
\end{equation} 
where we used Minkowski's integral inequality in the third inequality, and  Proposition \ref{prop:converge-1} in the last inequality. Hence, for any $\va>0$, there exists $T_0>0$ such that \eqref{eq:closedness} holds. 

To show the $L^\infty$ bound in \eqref{eq:closedness2}, we need to use the following Brezis-Kato \cite{BrezisKato} estimate (see also Lemma B.3 in Appendix B of Struwe \cite{St2}): there exists $\delta>0$ depending only on $n$ and $\Omega$ such that if $v\in H^1_0(\Omega)$ is a weak solution of 
\[
-\Delta v=c_1 v+ c_2 v \quad\mbox{in }\Omega,
\]
where $\|c_1\|_{L^{\frac{n}{2}}(\Omega)}\le \delta$ and $c_2\in L^p(\Omega)$ for some $p>\frac{n}{2}$, then there exist $C>0$ and $q>\frac{2n}{n-2}$ depending only on $n,\delta,p,\Omega$ and $\|c_2\|_{L^p(\Omega)}$ such that
\[
\|v\|_{L^q(\Omega)}\le C \|v\|_{L^2(\Omega)}.
\]

Let $T_0>0$ be the one in \eqref{eq:closedness} with some $\varepsilon < \delta/2$  that $(2\va)^{\frac{4}{n-2}}\le \delta/2$. By Proposition \ref{prop:Mq}, there exists $T_1>0$ such that $M(t)_{\frac{n}{2}}<\delta/2$ for all $t>T_1$. Let $T_2=\max(T_0,T_1)$,  
\[
U=\{x\in\Omega: |v(x,t)-v(x,T_2)|>\max_\Omega v(\cdot,T_2)\}
\]
 and $\chi_{U}$ be the characteristic function of $U$. Then for $t>T_2$, we have
\begin{align*}
\|v^{\frac{4}{n-2}} \chi_{U}\|_{L^{\frac{n}{2}}(\Omega)}&=(\|v\|_{L^{\frac{2n}{n-2}}(U)})^{\frac{4}{n-2}}\\
&\le (\|v(\cdot,T_2)\|_{L^{\frac{2n}{n-2}}(U)}+\|v(\cdot,t)-v(\cdot,T_2)\|_{L^{\frac{2n}{n-2}}(U)})^{\frac{4}{n-2}}\\
&\le (\max_\Omega |v(\cdot,T_2)|\cdot |U|^\frac{n-2}{2n}+\|v(\cdot,t)-v(\cdot,T_2)\|_{L^{\frac{2n}{n-2}}(U)})^{\frac{4}{n-2}}\\
&\le (\|v(\cdot,t)-v(\cdot,T_2)\|_{L^{\frac{2n}{n-2}}(U)}+\|v(\cdot,t)-v(\cdot,T_2)\|_{L^{\frac{2n}{n-2}}(U)})^{\frac{4}{n-2}}\\
&\le (2\va)^{\frac{4}{n-2}}\\
&\le \delta/2.
\end{align*}
where we used Chebyshev's inequality in the third inequality, and \eqref{eq:stable-5} in the fourth inequality. 
 From the definition of $\mathcal{R}$ in \eqref{eq:Q},  we have
\be\label{eq:ellipticsplit}
-\Delta v=bv+\mathcal{R} v^{\frac{n+2}{n-2}} =: V_1 v + V_2 v \quad \mbox{in }\om, \quad v=0 \quad \mbox{on }\pa \om,
\ee
where $V_1= (\mathcal{R}-1) v^{\frac{4}{n-2}} +  v^{\frac{4}{n-2}} \chi_{U}$ and $V_2=  (1-\chi_{U})v^{\frac{4}{n-2}}+b$.  Then $V_2\in L^\infty(\Omega)$ and
\[
\|V_1\|_{L^{\frac{n}{2}}(\Omega)}\le M(t)_{\frac{n}{2}}+\|v^{\frac{4}{n-2}} \chi_{U}\|_{L^{\frac{n}{2}}(\Omega)}\le \delta/2+\delta/2=\delta
\]
for all $t>T_2$. Then by the Brezis-Kato estimate, there exist $q>\frac{2n}{n-2}$ and $C>0$ such that
\[
\|v\|_{L^q(\Omega)}\le C \|v\|_{L^2(\Omega)}\le C,
\] 
where we used H\"older's inequality and \eqref{eq:BN-5} in the last inequality. Now $V_1$  belongs to $L^p$ for some $p>\frac{n}{2}$, and then the standard Moser's iteration will lead to \eqref{eq:closedness2}.
\end{proof}

\begin{thm}\label{thm:smoothbound}
There exists  $C>0$ depending only $n,b,\Omega$ and $u_0$ such that
\be\label{eq:smoothestimate}
\|v(\cdot,t)\|_{C^{2+\frac{n+2}{n-2}}(\overline\Omega)}\le C\quad\mbox{for all }t>1.
\ee 
\end{thm}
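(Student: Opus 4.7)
The plan is to upgrade the local-in-time regularity estimates of \cite{JX19} to uniform-in-time bounds by using the a priori controls already established in this paper. The three ingredients are: the uniform $L^\infty$ bound $v\le C$ on $\overline\Omega\times[1,\infty)$ from Proposition \ref{prop:boundedness}; the boundary decay $v(x,t)\le Cd(x)$ from Proposition \ref{prop:away}; and $\|\mathcal{R}-1\|_{L^\infty(\Omega)}\to 0$ from the corollary to Proposition \ref{prop:Mq}.

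First I would rewrite \eqref{eq:BN-3} in the two equivalent forms
\[
\partial_t v=\tfrac{n-2}{n+2}\,v\,(1-\mathcal{R}),\qquad -\Delta v-bv=\mathcal{R}\,v^{\frac{n+2}{n-2}}.
\]
By the three ingredients above, both $v$ and $\partial_t v$ are uniformly bounded in $L^\infty(\Omega\times[1,\infty))$, and the right-hand side of the elliptic form is uniformly bounded in $L^\infty$. In particular, the degeneracy factor $v^{\frac{4}{n-2}}$ in the parabolic operator $\partial_t v^{\frac{n+2}{n-2}}$ is controlled from above, and near the boundary the quantity $v/d$ is controlled from above (by Proposition \ref{prop:away}), so the equation falls inside the class covered by the optimal boundary regularity theory of \cite{JX19}.

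Next, for each $t_0\ge 1$ I would apply the regularity estimates of \cite{JX19} on the time slab $\Omega\times[t_0,t_0+2]$, evaluating the resulting $C^{\frac{3n-2}{n-2}}(\overline\Omega)$ bound at time $t_0+1$. A careful inspection of \cite{JX19} shows that the constants in those bounds depend only on $n$, $b$, the geometry of $\partial\Omega$, the $L^\infty$ norm of $v$ on the slab, and the quantity $\|\mathcal{R}-1\|_{L^\infty}$ on the slab — every one of which is uniformly bounded in $t_0$. Since $\frac{3n-2}{n-2}=2+\frac{n+2}{n-2}$, this gives \eqref{eq:smoothestimate}.

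The main obstacle, which is also the reason such a theorem is nontrivial, is the degeneracy of the equation on $\partial\Omega$: the coefficient in front of $\partial_t v$ degenerates like $v^{\frac{4}{n-2}}\sim d(x)^{\frac{4}{n-2}}$, so standard parabolic Schauder theory does not apply, and the Hölder exponent $\frac{n+2}{n-2}$ at the boundary is sharp rather than arbitrary. The delicate intrinsic-metric analysis required is precisely what \cite{JX19} provides; the contribution here is to observe that once the uniform $L^\infty$ bound of Proposition \ref{prop:boundedness} and the uniform boundary control of Proposition \ref{prop:away} are in place, the constants in that analysis no longer blow up as $t\to\infty$, so the estimate becomes global in time.
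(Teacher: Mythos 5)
Your overall strategy (feed uniform-in-time a priori bounds into the regularity theory of \cite{JX19}) is the same as the paper's, but there is a genuine gap: you never establish the uniform \emph{lower} bound $v(\cdot,t)\ge \frac{1}{C}\,d(x)$ for all $t>1$. The optimal boundary regularity theorem (Theorem 5.1 in \cite{JX19}) is a statement about solutions that are comparable to the distance function from \emph{both} sides; its constants depend on the nondegeneracy constant in $v\ge d/C$, not only on $n$, $b$, $\partial\Omega$, $\|v\|_{L^\infty}$ and $\|\mathcal{R}-1\|_{L^\infty}$ as you assert. With only the upper bounds from Propositions \ref{prop:away} and \ref{prop:boundedness}, the coefficient $v^{\frac{4}{n-2}}$ in front of the time derivative could a priori degenerate much faster than $d^{\frac{4}{n-2}}$ near $\partial\Omega$ (or $v$ could become small in the interior along some sequence of times), and in that regime the $C^{2+\frac{n+2}{n-2}}$ estimate is simply not what \cite{JX19} provides; this is exactly why the estimates quoted earlier in the paper are only locally uniform in $t$. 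Your ``careful inspection'' claim is therefore doing all the work and is not substantiated.

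The paper closes this gap before invoking \cite{JX19}: using the uniform upper bound \eqref{eq:closedness2} together with the non-extinction bound \eqref{eq:BN-5}, the proof of Proposition 6.2 in \cite{DKV} yields the two-sided comparison $\frac{1}{C}\,d(x)\le v(x,t)\le C\,d(x)$ uniformly for $t>1$, and only then does Theorem 5.1 of \cite{JX19} give \eqref{eq:smoothestimate}. To repair your argument you should insert this step (or an equivalent uniform Hopf-type lower bound argument based on the elliptic form $-\Delta v-bv=\mathcal{R}v^{\frac{n+2}{n-2}}$ with $\mathcal{R}$ bounded and $\int_\Omega v^{\frac{2n}{n-2}}\ge 1/C$); the rest of your outline then matches the paper's proof.
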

\begin{proof}
By using \eqref{eq:closedness2}, it follows from Proposition 6.2 in \cite{DKV} (more precisely, its proof) that
\[
\frac{1}{C} d(x)\le v(\cdot,t)\le C d(x)\quad\mbox{for all }x\in\Omega,\ t>1,
\] 
where $d(x):=\mbox{dist}(x,\partial\Omega)$. Then \eqref{eq:smoothestimate} follows from Theorem 5.1 in \cite{JX19}.
\end{proof}

Let us conclude this section with the proof of Theorem \ref{thm:critical}.

\begin{proof}[Proof of Theorem \ref{thm:critical}]
It follows from Proposition \ref{prop:boundedness},  Theorem \ref{thm:smoothbound} and \eqref{eq:BN-5}  that there exists a nonzero stationary solution $v_\infty$ of \eqref{eq:BN-3} such that
\[
\lim_{t\to\infty}\|v(\cdot,t)-v_\infty\|_{C^3(\overline\Omega)}=0.
\]
From \eqref{eq:stable-5}, we know that there exist $C>0$ and $\theta>0$ such that
\[
\|v(\cdot,t)-v_\infty\|_{L^\frac{2n}{n-2}(\Omega)}\le C t^{-\theta}\quad\mbox{for all }t>1.
\]
Using \eqref{eq:smoothestimate} and Gagliardo interpolation inequalities (see, e.g., (12)--(13) in Blanchet-Bonforte-Dolbeault-Grillo-V\'azquez \cite{BB+}), we have
\[
\|v(\cdot,t)-v_\infty\|_{C^1(\overline\Omega)}\le C t^{-\theta}\quad\mbox{for all }t>1
\]
with a possibly different $\theta$. Since $v(\cdot,t)\equiv v_\infty\equiv 0$ on $\partial\Omega$, we have for all $x\in\Omega$ that
\[
\left|\frac{v(x,t)-v_\infty(x)}{v_\infty(x)}\right| \le \frac{\|v(\cdot,t)-v_\infty\|_{C^1(\overline\Omega)} \ d(x)}{d(x)/C} \le C \|v(\cdot,t)-v_\infty\|_{C^1(\overline\Omega)} \le C t^{-\theta}\quad\mbox{for all }t>1.
\]
That is,
\[
\left\|\frac{v(\cdot,t)}{v_\infty}-1\right\|_{L^\infty(\om)}\le Ct^{-\theta}\quad\mbox{for all }t\ge 1.
\]
Using \eqref{eq:smoothestimate} again, we have
\[
\left\|\frac{v(\cdot,t)}{v_\infty}\right\|_{C^{1+\frac{n+2}{n-2}}(\overline\Omega)}\le C\quad\mbox{for all }t>1.
\]
Then by interpolation inequalities, we have
\be\label{eq:relativealgebraic}
\left\|\frac{v(\cdot,t)}{v_\infty}-1\right\|_{C^2(\overline\om)}\le Ct^{-\theta}\quad\mbox{for all }t\ge 1.
\ee

Now let us assume that $\Omega$ satisfies the condition \eqref{condition}. Then there exists $C>0$  such that for all $\varphi\in H^1_0$ satisfying
\[
-\Delta \varphi- b \varphi -\frac{n+2}{n-2}v_\infty^{\frac{4}{n-2}}\varphi=f\quad\mbox{in }\Omega,
\]
there holds
\be\label{eq:invertibleestimate}
\|\varphi\|_{H^1_0(\Omega)}\le C \|f\|_{L^2(\Omega)}.
\ee
Here, the $C$ depends only on $n,b,\Omega$ and $v_\infty$. This estimate can be proved as follows. First, it follows Theorem 6 in Section 6.2.3 of Evans \cite{Evans} that there exists $C>0$ such that $\|\varphi\|_{L^2(\Omega)}\le C \|f\|_{L^2(\Omega)}$. Secondly, multiplying $\varphi$ to its equation and integrating by part, it leads to \eqref{eq:invertibleestimate}.

On one hand, using the equation of $v_\infty$, we have
\begin{align*}
&F(v(t))-F_\infty
\\&= \int_{\om } \Big( |\nabla v(\cdot,t)|^2 -b v(\cdot,t)^2 -\frac{n-2}{n} v(\cdot,t)^{\frac{2n}{n-2}}\Big) - \int_{\om } \Big( |\nabla v_\infty|^2 -b v_\infty^2 -\frac{n-2}{n} v_\infty^{\frac{2n}{n-2}}\Big)\\
&\quad-2\int_\om \Big(\nabla v_\infty\nabla (v(\cdot,t)-v_\infty) -b v_\infty (v(\cdot,t)-v_\infty)- v_\infty^{\frac{n+2}{n-2}} (v(\cdot,t)-v_\infty)\Big)\\
&= \int_{\om }  |\nabla (v(\cdot,t)- v_\infty)|^2 -b (v(\cdot,t)- v_\infty)^2 -\frac{n-2}{n} \Big(v(\cdot,t)^{\frac{2n}{n-2}}-v_\infty^{\frac{2n}{n-2}}-\frac{2n}{n-2} (v(\cdot,t)- v_\infty)\Big)\\
&\le C \|v(\cdot,t)-v_\infty\|_{H^1_0(\Omega)}^2.
\end{align*}
On the other hand,  using the equation of $v_\infty$, we have
\begin{align*}
&\|\Delta v(\cdot, t)+b v(\cdot,t)+v(\cdot,t)^{\frac{n+2}{n-2}} \|_{L^2(\Omega)}\\
&=\|\Delta (v(\cdot, t)-v_\infty)+b (v(\cdot,t)-v_\infty)+v(\cdot,t)^{\frac{n+2}{n-2}}-v_\infty^{\frac{n+2}{n-2}} \|_{L^2(\Omega)}\\
&\ge \left\|\Delta (v(\cdot, t)-v_\infty)+b (v(\cdot,t)-v_\infty)+\frac{n+2}{n-2} v_\infty^\frac{4}{n-2}(v(\cdot,t)-v_\infty)\right\|_{L^2(\Omega)}\\
&\quad- \left\|v(\cdot,t)^{\frac{n+2}{n-2}}-v_\infty^{\frac{n+2}{n-2}}-\frac{n+2}{n-2} v_\infty^\frac{4}{n-2}(v(\cdot,t)-v_\infty)\right\|_{L^2(\Omega)}\\
&\ge \frac 1C \|v(\cdot, t)-v_\infty\|_{H^1_0(\Omega)}- C \left\| v_\infty ^{\max(0, \frac{4}{n-2}-1)}|v(\cdot,t)-v_\infty|^{\min(2, \frac{n+2}{n-2})}\right\|_{L^2(\Omega)}\\
&\ge \frac 1C \|v(\cdot, t)-v_\infty\|_{H^1_0(\Omega)} -o(1) \|v(\cdot, t)-v_\infty\|_{H^1_0(\Omega)} \quad \mbox{for large }t\\
&\ge \frac 1C \|v(\cdot, t)-v_\infty\|_{H^1_0(\Omega)}\quad \mbox{for large }t,
\end{align*}
where we used \eqref{eq:invertibleestimate} in the second inequality. The constant $C$ depends only on $n,b,\Omega$ and $u_0$. Combining these two inequalities together, we have 
\[
F(v(t))-F_\infty\le \|\Delta v(\cdot, t)+b v(\cdot,t)+v(\cdot,t)^{\frac{n+2}{n-2}} \|_{L^2(\Omega)}^2=CM_2(t).
\]
It follows that 
\begin{align*}
\frac{\ud }{\ud t}(F(v(t))-F_\infty )= -\frac{2(n-2)}{n+2} M_2(t) \le -C (F(v(t))-F_\infty),
\end{align*}
and thus,
\[
F(v(t))-F_\infty  \le Ce^{-\gamma t}
\]
for some $C>0,\gamma>0$.
Hence, the proof of \eqref{eq:M2rate} will give
\[
\int_{T}^\infty M_2(t)^{1/2} \,\ud t \le Ce^{-\gamma t}.
\]
From \eqref{eq:stable-5}, we obtain that 
\[
\|v(\cdot,t)-v_\infty\|_{L^\frac{2n}{n-2}(\Omega)} \le Ce^{-\gamma t}.
\]
Then the proof of \eqref{eq:relativealgebraic} gives
\be\label{eq:relativeexp}
\left\|\frac{v(\cdot,t)}{v_\infty}-1\right\|_{C^2(\overline\om)}\le Ce^{-\gamma t}\quad\mbox{for all }t\ge 1.
\ee
This finishes the proof of Theorem \ref{thm:critical}.
\end{proof}

\section{Subcritical case}\label{sec:subcritical}
In this last section, we consider the Sobolev subcritical case \eqref{eq:subcritical2}, and prove Theorem \ref{thm:subcritical}. The proof is similar to that of Theorem \ref{thm:critical}.
\begin{proof}[Proof of Theorem \ref{thm:subcritical}]
First, we know from Proposition 6.2 in \cite{DKV} that
\[
\frac{1}{C} d(x)\le v(\cdot,t)\le C d(x)\quad\mbox{for all }x\in\Omega,\ t>1,
\] 
Secondly, it follows from Theorem 1.1 in \cite{FS} and Theorem 5.1 in \cite{JX19}  that there exists a nonzero stationary solution $v_\infty$ of \eqref{eq:subcritical2} such that
\[
\lim_{t\to\infty}\|v(\cdot,t)-v_\infty\|_{C^3(\overline\Omega)}=0.
\]
Let  
\begin{align*}
F(v(t))&= \int_{\om}\left( |\nabla v(x,t)|^2 -\frac{2}{p+1} v(x,t)^{p+1}\right)\,\ud x, \\
\mathcal{R}&= -v^{-p} \Delta v=1-\frac{p\partial_t v}{v},\\
M_q(t)&=\int_{\om} |\mathcal{R}-1|^q v^{p+1} \,\ud x, 
\end{align*}
where $q\ge 1$. Note that 
\[
\frac{\ud }{\ud t} F(v(t))= -2 \int_\om (\Delta v+v^p) \pa_t v= -2p\int_\om |\pa_t v|^2 v^{p-1}=-\frac{2}{p }M_2(t).
\] 
Hence $F(v(\cdot,t))$ deceases to $F(v_\infty)$ as $t\to\infty$. Furthermore, 
\[
\ud  F(v)=-2\Delta v-2v^p=2(\mathcal{R}-1)v^p.
\]
Hence,
\[
\|\ud  F(v(\cdot,t))\|_{L^2(\Omega)} \le C M_{2}(t)^{1/2}. 
\]
Then it follows from Proposition 6.1 in \cite{FS} that there exist $C>0, T_0>0, \gamma>0$ such that for all $t>T_0$, we have
\[
F(v(t))-F(v_\infty)\le C\|\ud  F(v(\cdot,t))\|_{L^2(\Omega)}^{1+\gamma}\le C M_{2}(t)^{\frac{1+\gamma}{2}}. 
\]
Therefore, similar to the proof of Proposition \ref{prop:converge-1}, there exist $\theta\in (0,1)$ and $C>0$ such that for all $T>1$, there holds
\[
\int_{T}^\infty M_2(t)^{1/2} \,\ud t \le C T^{-\theta},
\]
Then it follows from the proof of \eqref{eq:stable-5} that
\be\label{eq:subM2exp}
\|v(\cdot,t)-v_\infty\|_{L^{p+1}(\om)}\le  C \int_t^\infty M_2(s)^{1/2}\,\ud s\le C t^{-\theta},
\ee
Using Theorem 5.1 in \cite{JX19} (which is the regularity estimate) and interpolation inequalities, we have
\[
\|v(\cdot,t)-v_\infty\|_{C^1(\overline\Omega)}\le C t^{-\theta}\quad\mbox{for all }t>1.
\]
Since $v(\cdot,t)\equiv v_\infty\equiv 0$ on $\partial\Omega$, we have for all $x\in\Omega$ that
\[
\left|\frac{v(x,t)-v_\infty(x)}{v_\infty(x)}\right| \le C \|v(\cdot,t)-v_\infty\|_{C^1(\overline\Omega)} \le C t^{-\theta}\quad\mbox{for all }t>1.
\]
That is,
\[
\left\|\frac{v(\cdot,t)}{v_\infty}-1\right\|_{L^\infty(\om)}\le Ct^{-\theta}\quad\mbox{for all }t\ge 1.
\]
Using Theorem 5.1 in \cite{JX19} again, we have
\[
\left\|\frac{v(\cdot,t)}{v_\infty}\right\|_{C^{1+p}(\overline\Omega)}\le C\quad\mbox{for all }t>1.
\]
Then by interpolation inequalities, we have
\be\label{eq:relativealgebraicsub} 
\left\|\frac{v(\cdot,t)}{v_\infty}-1\right\|_{C^2(\overline\om)}\le Ct^{-\theta}\quad\mbox{for all }t\ge 1,
\ee
with a possibly different $\theta$.

Now let us assume that $\Omega$ satisfies the condition \eqref{condition2}. Similar to \eqref{eq:invertibleestimate},  there exists $C>0$ such that for all $\varphi\in H^1_0$ satisfying
\[
-\Delta \varphi-pv_\infty^{p-1}\varphi=f\quad\mbox{in }\Omega,
\]
there holds
\be\label{eq:invertibleestimate2}
\|\varphi\|_{H^1_0(\Omega)}\le C \|f\|_{L^2(\Omega)}.
\ee

As before, on one hand, using the equation of $v_\infty$, we have
\begin{align*}
F(v(t))-F_\infty \le C \|v(\cdot,t)-v_\infty\|_{H^1_0(\Omega)}^2.
\end{align*}
On the other hand,  using the equation of $v_\infty$, we have
\begin{align*}
&\|\Delta v(\cdot, t)+v(\cdot,t)^{p} \|_{L^2(\Omega)}\\
&\ge \left\|\Delta (v(\cdot, t)-v_\infty)+p v_\infty^{p-1}(v(\cdot,t)-v_\infty)\right\|_{L^2(\Omega)}\\
&\quad- \left\|v(\cdot,t)^p-v_\infty^p-p v_\infty^{p-1}(v(\cdot,t)-v_\infty)\right\|_{L^2(\Omega)}\\
&\ge \frac 1C \|v(\cdot, t)-v_\infty\|_{H^1_0(\Omega)}- C \left\| v_\infty ^{\max(0, p-2)}|v(\cdot,t)-v_\infty|^{\min(2, p)}\right\|_{L^2(\Omega)}\\
&\ge \frac 1C \|v(\cdot, t)-v_\infty\|_{H^1_0(\Omega)} -o(1) \|v(\cdot, t)-v_\infty\|_{H^1_0(\Omega)} \quad \mbox{for large }t\\
&\ge \frac 1C \|v(\cdot, t)-v_\infty\|_{H^1_0(\Omega)}\quad \mbox{for large }t,
\end{align*}
where we used \eqref{eq:invertibleestimate2} in the second inequality. The constant $C$ depends only on $n,p,\Omega$ and $u_0$. Combining these two inequalities together, we have 
\[
F(v(t))-F_\infty\le C\|\Delta v(\cdot, t)+v(\cdot,t)^{p} \|_{L^2(\Omega)}^2\le CM_2(t).
\]
It follows that 
\begin{align*}
\frac{\ud }{\ud t}(F(v(t))-F_\infty )= -\frac{1}{p} M_2(t) \le -C (F(v(t))-F_\infty),
\end{align*}
and thus,
\[
F(v(t))-F_\infty  \le Ce^{-\gamma t}
\]
for some $C>0,\gamma>0$.
Hence, the proof of \eqref{eq:M2rate} will give
\[
\int_{T}^\infty M_2(t)^{1/2} \,\ud t \le Ce^{-\gamma t}.
\]
From \eqref{eq:subM2exp}, we obtain that 
\[
\|v(\cdot,t)-v_\infty\|_{L^{p+1}(\Omega)} \le Ce^{-\gamma t}.
\]
Then the proof of \eqref{eq:relativealgebraicsub} gives
\be\label{eq:relativeexpsub}
\left\|\frac{v(\cdot,t)}{v_\infty}-1\right\|_{C^2(\overline\om)}\le Ce^{-\gamma t}\quad\mbox{for all }t\ge 1.
\ee

This finishes the proof of Theorem \ref{thm:subcritical}.
\end{proof}

\appendix

\section{Bubbles interactions} 
In the end of our proof of Proposition \ref{prop:bubble-energy}, we need to calculate and compare the following two quantities:
\begin{align*}
I_1&=\int_{\R^n} \left(\frac{\lda_{1,\nu}}{1+\lda_{1,\nu}^2|x-x_{1,\nu}|^2}\right)^{\frac{n+2}{2}}  \left(\frac{\lda_{2,\nu}}{1+\lda_{2,\nu}^2|x-x_{2,\nu}|^2}\right)^{\frac{n-2}{2}}\,\ud x,\\
I_2&= \int_{\R^n} \left\{\Big(\frac{\lda_{1,\nu}}{1+\lda_{1,\nu}^2|x-x_{1,\nu}|^2}\Big)\lor \Big(\frac{\lda_{2,\nu}}{1+\lda_{2,\nu}^2|x-x_{2,\nu}|^2}\Big)\right\}^{2}\\
&\quad\quad\cdot \left\{\Big(\frac{\lda_{1,\nu}}{1+\lda_{1,\nu}^2|x-x_{1,\nu}|^2}\Big)\land \Big(\frac{\lda_{2,\nu}}{1+\lda_{2,\nu}^2|x-x_{2,\nu}|^2}\Big)\right\}^{n-2} \,\ud x.
\end{align*}
We want to show that under \eqref{eq:bubbles-1}, there holds
\begin{equation}\label{eq:I1I2}
I_1/\sqrt{I_2}=o(1)\quad\mbox{as }\nu\to\infty.
\end{equation}
 
 Recall that $\lda_{1,\nu}\ge \lda_{2,\nu}$.
 
 If $x_{1,\nu}=x_{2,\nu}$, then using the change of variables: $y=\lambda_{2,\nu}x$ and $\Lambda=\lambda_{1,\nu}/\lambda_{2,\nu}$, we have
\begin{align*}
I_1&=\int_{\R^n} \left(\frac{\Lambda}{1+\Lambda^2|y|^2}\right)^{\frac{n+2}{2}}  \left(\frac{1}{1+|y|^2}\right)^{\frac{n-2}{2}}\,\ud y\\
&=\left(\int_{|y|\le \Lambda^{-1}}+\int_{\Lambda^{-1}\le |y|\le 1}+\int_{|y|\ge 1}\right) \left(\frac{\Lambda}{1+\Lambda^2|y|^2}\right)^{\frac{n+2}{2}}  \left(\frac{1}{1+|y|^2}\right)^{\frac{n-2}{2}}\,\ud y\\
&\le C\Lambda^{\frac{2-n}{2}}\\
&\le C \lambda_{1,\nu}^{\frac{2-n}{2}}\lambda_{2,\nu}^{\frac{n-2}{2}},
\end{align*}
and
\begin{align*}
I_2&= \int_{\R^n} \left\{\Big(\frac{\Lambda}{1+\Lambda^2|y|^2}\Big)\lor \Big(\frac{1}{1+|y|^2}\Big)\right\}^{2}\left\{\Big(\frac{\Lambda}{1+\Lambda^2|y|^2}\Big)\land \Big(\frac{1}{1+|y|^2}\Big)\right\}^{n-2} \,\ud y\\
&\ge  \int_{|y|\le 1/\sqrt{\Lambda}} \left(\frac{\Lambda}{1+\Lambda^2|y|^2}\right)^{2}\left(\frac{1}{1+|y|^2}\right)^{n-2} \,\ud y\\
&\ge 2^{2-n} \Lambda^{2-n}\int_{|z|\le \sqrt{\Lambda}} \left(\frac{1}{1+|z|^2}\right)^{2} \,\ud y\\
&\ge c  \Lambda^{2-n} \log\Lambda \quad (\mbox{ since } n\ge 4\mbox{ and }\Lambda\ge 1)\\
&\ge c \lambda_{1,\nu}^{2-n}\lambda_{2,\nu}^{n-2}\ln( \lambda_{1,\nu}/ \lambda_{2,\nu}).
\end{align*}
Since $\lambda_{1,\nu}/ \lambda_{2,\nu}\to\infty$, we have \eqref{eq:I1I2}.

If $x_{1,\nu}\neq x_{2,\nu}$, then we use the following change of variables:
\[
\tilde\lda_{1,\nu}=\lda_{1,\nu}|x_{2,\nu}-x_{1,\nu}|,\ \tilde\lda_{2,\nu}=\lda_{2,\nu}|x_{2,\nu}-x_{1,\nu}|,\ e_\nu=\frac{x_{2,\nu}-x_{1,\nu}}{|x_{2,\nu}-x_{1,\nu}|}.
\]
Then
\begin{align*}
I_1&=\int_{\R^n} \left(\frac{\tilde\lda_{1,\nu}}{1+\tilde\lda_{1,\nu}^2|x|^2}\right)^{\frac{n+2}{2}}  \left(\frac{\tilde\lda_{2,\nu}}{1+\tilde\lda_{2,\nu}^2|x-e_\nu|^2}\right)^{\frac{n-2}{2}}.
\end{align*}
By \eqref{eq:bubbles-1}, we know that
\[
\frac{\tilde\lda_{1,\nu}}{\tilde\lda_{2,\nu}}+\frac{\tilde\lda_{2,\nu}}{\tilde\lda_{1,\nu}} + \tilde\lda_{1,\nu}\tilde\lda_{2,\nu}\to\infty.
\]
Recall that $\lda_{1,\nu}\ge \lda_{2,\nu}$ for all $\nu=1,2,\dots.$ Hence, if $\{\tilde\lda_{1,\nu}\}$ is bounded, then $\tilde\lda_{2,\nu} \to 0 $ and $\frac{\tilde\lda_{1,\nu}}{\tilde\lda_{2,\nu}} \to \infty$.

Case A: $\tilde\lda_{2,\nu}\ge 1$. Then $\tilde\lda_{1,\nu}\to\infty$, and thus
\begin{align*}
I_1&=\int_{B_{1/4}} \left(\frac{\tilde\lda_{1,\nu}}{1+\tilde\lda_{1,\nu}^2|x|^2}\right)^{\frac{n+2}{2}}  \left(\frac{\tilde\lda_{2,\nu}}{1+\tilde\lda_{2,\nu}^2|x-e_\nu|^2}\right)^{\frac{n-2}{2}}\,\ud x\\
&\quad+\int_{\R^n\setminus B_{1/4}} \left(\frac{\tilde\lda_{1,\nu}}{1+\tilde\lda_{1,\nu}^2|x|^2}\right)^{\frac{n+2}{2}}  \left(\frac{\tilde\lda_{2,\nu}}{1+\tilde\lda_{2,\nu}^2|x-e_\nu|^2}\right)^{\frac{n-2}{2}}\,\ud x\\
&\le C \tilde\lda_{1,\nu}^{\frac{2-n}{2}}\tilde\lda_{2,\nu}^{\frac{2-n}{2}}.
\end{align*}

Case B: Otherwise.  Then
\begin{align*}
I_1&\le\tilde\lda_{2,\nu}^{\frac{n-2}{2}}\int_{\R^n} \left(\frac{\tilde\lda_{1,\nu}}{1+\tilde\lda_{1,\nu}^2|x|^2}\right)^{\frac{n+2}{2}}\,\ud x\le C \tilde\lda_{1,\nu}^{\frac{2-n}{2}}\tilde\lda_{2,\nu}^{\frac{n-2}{2}}.
\end{align*}

For $I_2$, we have
\begin{align*}
I_2&=\int_{\R^n} \left\{\Big(\frac{\tilde\lda_{1,\nu}}{1+\tilde\lda_{1,\nu}^2|x|^2}\Big)\lor \Big(\frac{\tilde\lda_{2,\nu}}{1+\tilde\lda_{2,\nu}^2|x-e_{\nu}|^2}\Big)\right\}^{2}\\
&\quad\quad\cdot \left\{\Big(\frac{\tilde\lda_{1,\nu}}{1+\tilde\lda_{1,\nu}^2|x|^2}\Big)\land \Big(\frac{\tilde\lda_{2,\nu}}{1+\tilde\lda_{2,\nu}^2|x-e_{\nu}|^2}\Big)\right\}^{n-2} \,\ud x.
\end{align*}

Let $\va>0$ be sufficiently small. The constant $c$ in the below will be independent of $\va.$

Case A: $\tilde\lda_{2,\nu}\ge 1$. Then $\tilde\lda_{1,\nu}\to\infty$. We split it into two cases:

Case A1: $\tilde\lda_{2,\nu}\ge \va\tilde\lda_{1,\nu}$. Then for $\nu$ large,
\begin{align*}
I_2&\ge \int_{B_{1/2}(e_\nu)} \left\{\frac{\tilde\lda_{2,\nu}}{1+\tilde\lda_{2,\nu}^2|x-e_{\nu}|^2}\right\}^{2}
\cdot \left\{\frac{\tilde\lda_{1,\nu}}{1+\tilde\lda_{1,\nu}^2|x|^2}\right\}^{n-2} \,\ud x\\
&\ge c \tilde\lda_{2,\nu}^{2-n} \tilde\lda_{1,\nu}^{2-n}  \int_{B_{\lda_{2,\nu}/2}} \left\{\frac{1}{1+|x|^2}\right\}^{2} \,\ud x\\
&\ge c \tilde\lda_{2,\nu}^{2-n} \tilde\lda_{1,\nu}^{2-n}(\ln \tilde \lda_{2,\nu})\quad\mbox{if }n\ge 4\\
&\ge c \tilde\lda_{2,\nu}^{2-n} \tilde\lda_{1,\nu}^{2-n}\ln (\va \tilde \lda_{1,\nu})\quad\mbox{if }n\ge 4.
\end{align*}

Case A2: $1\le \tilde\lda_{2,\nu}<\va\tilde\lda_{1,\nu}$. Then for $\nu$ large,
\begin{align*}
I_2&\ge \int_{B_{1/2}\setminus B_{2\sqrt{\va}}} \left\{\frac{\tilde\lda_{2,\nu}}{1+\tilde\lda_{2,\nu}^2|x-e_{\nu}|^2}\right\}^{2}
\cdot \left\{\frac{\tilde\lda_{1,\nu}}{1+\tilde\lda_{1,\nu}^2|x|^2}\right\}^{n-2} \,\ud x\\
&\ge c \tilde\lda_{2,\nu}^{-2}\tilde\lda_{1,\nu}^{2-n} \int_{B_{1/2}\setminus B_{2\sqrt{\va}}} |x|^{4-2n}\,\ud x      \\
&\ge c \tilde\lda_{2,\nu}^{2-n} \tilde\lda_{1,\nu}^{2-n}|\ln \va|\quad\mbox{if }n\ge 4.
\end{align*}

Case B: $ \tilde\lda_{2,\nu}\le 1$. Then $\frac{\tilde\lda_{1,\nu}}{\tilde\lda_{2,\nu}}\to \infty$. We split it into two cases.

Case B1: $\tilde\lda_{1,\nu}\tilde\lda_{2,\nu}\ge \va^{-1}$. Then $\tilde\lda_{1,\nu}\to \infty$. We have
\begin{align*}
I_2&\ge \int_{B_{1/2}\setminus B_{2\sqrt{\va}}} \left\{\frac{\tilde\lda_{2,\nu}}{1+\tilde\lda_{2,\nu}^2|x-e_{\nu}|^2}\right\}^{2}
\cdot \left\{\frac{\tilde\lda_{1,\nu}}{1+\tilde\lda_{1,\nu}^2|x|^2}\right\}^{n-2} \,\ud x\\
&\ge c \tilde\lda_{2,\nu}^{2} \tilde\lda_{1,\nu}^{2-n}\int_{B_{1/2}\setminus B_{2\sqrt{\va}}} |x|^{4-2n}\,\ud x      \\
&\ge c \tilde\lda_{2,\nu}^{n-2} \tilde\lda_{1,\nu}^{2-n} |\ln \va|\quad\mbox{if }n\ge 4.
\end{align*}

Case B2: $\tilde\lda_{1,\nu}\tilde\lda_{2,\nu}\le \va^{-1}$.  Then for large $\nu$,
\begin{align*}
I_2&\ge c \int_{B_{\sqrt{1/(2 \tilde\lda_{1,\nu}\tilde\lda_{2,\nu})}}} \left\{\frac{\tilde\lda_{1,\nu}}{1+\tilde\lda_{1,\nu}^2|x|^2}\right\}^{2}\left\{\frac{\tilde\lda_{2,\nu}}{1+\tilde\lda_{2,\nu}^2|x-e_{\nu}|^2}\right\}^{n-2} \,\ud x\\
&\ge c\tilde\lda_{2,\nu}^{n-2} \int_{B_{\sqrt{1/(2 \tilde\lda_{1,\nu}\tilde\lda_{2,\nu})}}}\left\{\frac{\tilde\lda_{1,\nu}}{1+\tilde\lda_{1,\nu}^2|x|^2}\right\}^{2} \,\ud x \quad(\mbox{since }\tilde\lda_{2,\nu}^2|x-e_{\nu}|^2\le 3)\\
&\ge c \tilde\lda_{1,\nu}^{2-n}\tilde\lda_{2,\nu}^{n-2} \ln (\tilde\lda_{1,\nu}/\tilde\lda_{2,\nu})\quad\mbox{if }n\ge 4.
\end{align*}
Therefore, \eqref{eq:I1I2} holds.




The following calculus lemma was used.
\begin{lem} \label{lem:basic-1}  For $p>2$, $0\le \va\le 1$, we have 
\[
(1+\va)^p \ge 1+\va^p +p \va +c_p \va^2 
\]
and 
\[
(1+\va)^p  \ge 1+\va^p +p \va^{p-1} +c_p \va
\]
for some $c_p>0$ independent of $\va$.

\end{lem}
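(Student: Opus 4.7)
My plan is to prove both inequalities by a Taylor-plus-compactness argument, exploiting the fact that $p>2$ makes $\va^{p-2}$ and $\va^{p-1}$ continuous (and vanishing) at $\va=0$; this is what allows a clean analysis on the closed interval $[0,1]$.

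For the first inequality I would set $H(\va):=(1+\va)^p-1-\va^p-p\va$ and verify by direct differentiation that $H(0)=H'(0)=0$ and
\[
H''(\va)=p(p-1)\bigl[(1+\va)^{p-2}-\va^{p-2}\bigr].
\]
Since $p-2>0$, the function $\va\mapsto \va^{p-2}$ extends continuously to $\va=0$ with value $0$, so $H''$ is continuous on $[0,1]$ with $H''(0)=p(p-1)$, and it is strictly positive on all of $[0,1]$ because $(1+\va)^{p-2}>\va^{p-2}$ whenever $\va\ge 0$. A compactness argument then produces a positive lower bound $2c_p>0$ for $H''$ on $[0,1]$, and Taylor's formula with integral remainder $H(\va)=\int_0^\va(\va-s)H''(s)\,\ud s$ yields $H(\va)\ge c_p\va^2$.

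For the second inequality I would use the analogous first-order version: set $G(\va):=(1+\va)^p-1-\va^p-p\va^{p-1}$, so that $G(0)=0$ and
\[
G'(\va)=p(1+\va)^{p-1}-p\va^{p-1}-p(p-1)\va^{p-2}.
\]
By the same continuity observation, $G'$ extends continuously to $[0,1]$ with $G'(0)=p$. To verify that $G'$ is strictly positive on $(0,1]$, the plan is to apply the mean value theorem to $t\mapsto t^{p-1}$ on $[\va,1+\va]$, which produces some $\eta\in(\va,1+\va)$ with $(1+\va)^{p-1}-\va^{p-1}=(p-1)\eta^{p-2}$, and hence
\[
G'(\va)=p(p-1)\bigl[\eta^{p-2}-\va^{p-2}\bigr]>0,
\]
since $\eta>\va$ and $p-2>0$. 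Then $G'$ will be continuous and strictly positive on the compact set $[0,1]$, hence bounded below by some constant $c_p>0$, and integrating from $0$ will give $G(\va)\ge c_p\va$.

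There is no real obstacle in either case. The only point worth highlighting is that the strict bound $p>2$ (as opposed to $p\ge 2$) is what makes $\va^{p-2}$ vanish continuously at the origin; if one attempted the same argument at $p=2$ one would only obtain $c_p=0$, which is consistent with both inequalities degenerating to the identity $(1+\va)^2=1+\va^2+2\va$ in that limiting case.
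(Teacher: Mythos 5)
Your proof is correct: both Taylor-with-integral-remainder arguments are sound, the mean value theorem step giving $G'(\va)=p(p-1)[\eta^{p-2}-\va^{p-2}]>0$ is valid, and the compactness of $[0,1]$ together with continuity of $\va^{p-2}$ at the origin (which is exactly where $p>2$ is used) legitimately produces a uniform positive lower bound, hence a constant $c_p$ independent of $\va$. The paper states this calculus lemma in the appendix without proof, so there is no argument to compare against; your elementary verification fills that gap completely.
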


\small

\bigskip

\noindent T. Jin

\noindent Department of Mathematics, The Hong Kong University of Science and Technology\\
Clear Water Bay, Kowloon, Hong Kong\\[2mm]
 \textsf{Email: tianlingjin@ust.hk}

\bigskip

\noindent J. Xiong

\noindent School of Mathematical Sciences, Laboratory of Mathematics and Complex Systems, MOE\\
Beijing Normal University, Beijing 100875, China\\[2mm]
 \textsf{Email: jx@bnu.edu.cn}

\end{document}